\newtheorem{theorem}{Theorem}[section]
\newtheorem{proposition}[theorem]{Proposition}
\newtheorem{corollary}[theorem]{Corollary}
\newtheorem{lemma}[theorem]{Lemma}
\newtheorem{ex}[theorem]{Example}
\theoremstyle{definition}
\newtheorem{definition}[theorem]{Definition}
\theoremstyle{remark}
\newtheorem{remark}[theorem]{Remark}
\newcommand{\nwc}{\newcommand}
\nwc{\eps}{\varepsilon}
\nwc{\ep}{\epsilon}
\nwc{\vareps}{\varepsilon}
\nwc{\Oph}{\operatorname{Op}_\hbar}
\nwc{\la}{\langle}
\nwc{\ra}{\rangle}
\nwc{\mf}{\mathbf} 
\nwc{\blds}{\boldsymbol} 
\nwc{\ml}{\mathcal} 
\nwc{\defeq}{\stackrel{\rm{def}}{=}}
\nwc{\cE}{\ml{E}}
\nwc{\cN}{\ml{N}}
\nwc{\cO}{\ml{O}}
\nwc{\cP}{\ml{P}}
\nwc{\cU}{\ml{U}}
\nwc{\cV}{\ml{V}}
\nwc{\cW}{\ml{W}}
\nwc{\tU}{\widetilde{U}}
\nwc{\IN}{\mathbb{N}}
\nwc{\IR}{\mathbb{R}}
\nwc{\R}{\mathbb{R}}
\nwc{\IZ}{\mathbb{Z}}
\nwc{\Z}{\mathbb{Z}}
\nwc{\N}{\mathbb{N}}
\nwc{\IC}{\mathbb{C}}
\nwc{\C}{\mathbb{C}}
\nwc{\IT}{\mathbb{T}}
\nwc{\T}{\mathbb{T}}
\nwc{\IS}{\mathbb{S}}
\nwc{\tP}{\widetilde{P}}
\nwc{\tPi}{\widetilde{\Pi}}
\nwc{\tV}{\widetilde{V}}
\nwc{\f}{\mathsf{f}}
\nwc{\supp}{\operatorname{supp}}
\nwc{\rest}{\restriction}
\let \d \relax
\nwc{\d}{\partial}
\nwc{\Cor}{\mathscr{C}}
\nwc{\CLam}{\overline{\C}_+^\Lambda}
\nwc{\todo}[1]{$\clubsuit$ {\tt #1}}
\DeclareMathOperator{\vect}{span}
\DeclareMathOperator{\Vol}{Vol}
\DeclareMathOperator{\Hess}{Hess}
\DeclareMathOperator{\Id}{Id}
\DeclareMathOperator{\Crit}{Crit}
\DeclareMathOperator{\GL}{GL}
\DeclareMathOperator{\Ran}{Ran}
\DeclareMathOperator{\dist}{dist}
\DeclareMathOperator{\Sp}{Sp}
\renewcommand{\Im}{\operatorname{Im}}
\renewcommand{\Re}{\operatorname{Re}}
\begin{document}

\title[Poincar\'e series for analytic convex bodies]{Poincar\'e series for analytic convex bodies}

\date{\today}

\author[N.V.~Dang]{Nguyen Viet Dang}
\address{IRMA, Universit\'e de Strasbourg, 7 rue Ren\'e
Descartes, 67084 Strasbourg Cedex, France}
\address{Institut Universitaire de France, Paris, France}
\email{nvdang@unistra.fr}

\author[Y.~Guedes Bonthonneau]{Yannick Guedes Bonthonneau}
\address{D\'epartement de math\'ematiques et applications, \'Ecole normale supérieure, CNRS, 45 rue d’Ulm, 75230 Paris Cedex 05, France} 
\email{yguedesbonthonne@dma.ens.fr}

\author[M.~L\'eautaud]{Matthieu L\'eautaud}
\address{Laboratoire de Math\'ematiques d’Orsay, Universit\'e Paris-Saclay, B\^atiment 307, 91405 Orsay Cedex France} 
\address{Institut Universitaire de France, Paris, France}
\email{matthieu.leautaud@universite-paris-saclay.fr}

\author[G.~Rivi\`ere]{Gabriel Rivi\`ere}
\address{Laboratoire de Math\'ematiques Jean Leray, Universit\'e de Nantes, UMR CNRS 6629, 2 rue de la Houssini\`ere, 44322 Nantes Cedex 03, France}
\address{Institut Universitaire de France, Paris, France}
\email{gabriel.riviere@univ-nantes.fr}

\begin{abstract} 

We study Poincar\'e series associated to strictly convex bodies in the Euclidean space. 
These series are Laplace transforms of the distribution of lengths (measured with the Finsler metric associated to one of the bodies) from one convex body to a lattice. 
Assuming that the convex bodies have analytic boundaries, we prove that the Poincar\'e series, originally defined in the right complex half-plane, continues holomorphically to a conical neighborhood of this set, removing a countable set of cuts and points. The latter correspond to the spectrum of a dual elliptic operator. 
We describe singularities of the Poincar\'e series at each of these branching points.
One of the steps of the proof consists in showing analytic continuation of the resolvent of multiplication operators by a real-valued analytic Morse function on the sphere as a branched holomorphic function, a result of independent interest. \\

AMS 2000 subject classifications: Primary 52A23, 52C07, 32C05; secondary
58J60, 35P99.

Keywords and phrases: Convex bodies, zeta function, Finsler metrics, real-analytic functions and manifolds, resolvent estimates.

\end{abstract}

\maketitle
\tableofcontents

\section{Introduction} 

\subsection{Poincar\'e series: setting and main results}
We consider a smooth strictly convex and compact body $K$ of $\IR^d$ ($d\geq 2$), containing $0$ in its interior ($0 \in \mathring{K}$). All along the article, we use the following  terminology.
 \begin{definition}
 \label{e:def-convex-body}
 We say that the convex compact set $K$ of $\IR^d$ ($d\geq 2$) is 
\begin{itemize}
\item smooth (resp. analytic) if $\partial K$ is a $\mathcal{C}^\infty$ (resp. real-analytic) submanifold of $\R^d$, 
\item strictly convex if $\partial K$ has all its sectional curvatures positive. 
\end{itemize}
\end{definition}
A smooth strictly convex and compact body $K$ defines a Finsler (or Minkowski) pseudo-distance on $\R^d$:
\[
\text{for } x,y\in\IR^d,\quad d_K(x,y):=\inf\{t>0:\ y\in x+tK\}.
\]
If in addition $K=-K$, then the function $d_K$ becomes a genuine distance on $\R^d$.
Given any other compact convex set $K_0\subset \R^d$, a classical topic in harmonic analysis consists in counting (in the large $T$--asymptotics) those lattice points in $2\pi \Z^d \subset \R^d$ that are at $d_K$ distance less than $T$ of the set\footnote{Strictly speaking, these references deal with the case where $K_0$ is reduced to a point but the question still makes sense for more general convex sets and the leading asymptotic remains unchanged.} $K_0$~\cite{vanderCorput20, Herz62b, Randol66, ColindeVerdiere77, Randol84}.
A first standard result in the field provides the rough estimate
\begin{equation}\label{e:countingfunction}
\sharp \left\{x\in 2\pi\IZ^d: d_K(K_0,x)\leq T\right\}\sim \frac{\Vol(K)}{(2\pi)^d}T^d,\quad\text{as}\quad T\rightarrow+\infty.
\end{equation}
This counting problem is naturally associated with the length distribution 
\begin{align}
\label{e:def-TK0}
T_{K_0}(t):=\sum_{x\in 2\pi\IZ^d\setminus K_0}\delta\left(t-d_K(K_0,x)\right) ,
\end{align}
where $\delta$ denotes the Dirac distribution at $0$ in $\R$.
Thanks to~\eqref{e:countingfunction}, one can verify that $T_{K_0}(t)$ defines indeed a tempered distribution in $\mathcal{S}'(\IR)$, supported in $\R_+$. Its Laplace transform
\begin{align}
\label{e:def-PK0}
\mathcal{P}_{K_0}(s):=\sum_{x\in 2\pi\IZ^d\setminus K_0}e^{-sd_K(K_0,x)} , \quad \text{for } s\in \C \text{ with }\Re(s)>0,
\end{align}
 is a so-called \emph{Poincar\'e series}, and it is holomorphic on its domain of definition. This article is devoted to the study of analytic continuation of the Poincaré series $\mathcal{P}_{K_0}(s)$ beyond $\{\Re(s)>0\}$. 
Note that, even if not emphasized by the notation, the distribution $T_{K_0}$ and the function $\mathcal{P}_{K_0}$ do depend on the convex set $K$ used to define the Finsler distance $d_K$.

In order to state our main results, we introduce the support function $h_K$ of the convex set $K$, defined by
\begin{align}
\label{e:defh_K}
h_K(\xi) :=  \sup\{ \xi\cdot x , x \in  K  \} , \quad \xi\in \R^{d}  , 
\end{align}
and we recall that it is homogeneous of degree one (and actually defined on the dual set $(\R^d)^*$ which we identify with $\R^d$ through the Euclidean inner product $(\xi,x)\mapsto \xi\cdot x$). Note also that $-K=\{-x,x\in K\}$ is convex if $K$ is, and we have $h_{-K}(\xi)=h_K(-\xi)$ for all $\xi\in \R^d$, as a consequence of~\eqref{e:defh_K}.
The first main result of this article is the following.
\begin{theorem}
\label{t:maintheo1} 
Let $K\subset \R^d$ ($d\geq 2$) be an analytic strictly convex compact body in the sense of Definition~\ref{e:def-convex-body}, such that $0 \in\mathring{K}$. Suppose that $K_0$ is either a point or an analytic strictly convex compact body.
Then, the following three statements hold.
\begin{enumerate}
\item \label{i:cas-general-thm} There exists $\delta_0>0$ such that $\mathcal{P}_{K_0}$ extends meromorphically to 
\[
 \left\{ \Re(w)> -\delta_0(1+|\Im(w)|) \right\}\setminus \left(\Lambda_K+\R_-\right),
\]
where 
\begin{align}
\label{e:def-lambdak}
\Lambda_K:= \left\{  i h_{ K}(\xi):\ \xi\in\IZ^d\setminus \{0\}\right\}\cup \left\{  -i h_{ -K}(\xi):\ \xi\in\IZ^d\setminus \{0\}\right\}.
\end{align}
\item \label{i:symmetric-case-main-thm} If in addition $K$ is symmetric about zero, then $\Lambda_K= \left\{ \pm i h_{ K}(\xi):\ \xi\in\IZ^d\setminus \{0\}\right\}$ and there is $\delta_0>0$ such that for all $\delta\in (0, \delta_0]$, $\mathcal{P}_{K_0}$ extends meromorphically to the connected set $$
\C \setminus  \bigg( \bigcup_{\xi \in \Z^d\setminus\{0\}} h_K(\xi) \mathscr{C}\bigg), \quad \text{with}\quad \mathscr{C}= [i , i-\delta] \cup [ i-\delta ,-i  -\delta ] \cup  [-i  -\delta , -i] .$$  
\item \label{i:pole-at-zero} The extended function $\mathcal{P}_{K_0}$ admits a single pole located at $s=0$ and the function 
\[
\mathcal{P}_{K_0}(s)-\frac{1}{(2\pi)^d}\sum_{\ell=1}^d\frac{\ell!V_{d-\ell}(K_0,K)}{s^\ell}
\]
is holomorphic near $s=0$ with $V_{d-\ell}(K_0,K)$ defined by 
\begin{align}
\label{e:mixed-volumes}
\text{ for } t>0,\quad \Vol_{\IR^d}\left(K_0+tK\right)=\sum_{\ell=0}^dt^\ell V_{d-\ell}(K_0,K).
\end{align}
\end{enumerate}
\end{theorem}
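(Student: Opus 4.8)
The plan is to reduce everything, via Poisson summation, to a Laplace transform in $t$ of the Fourier transform of the indicator of the dilated body $K_0+tK$, and then to analyse that transform by real-analytic stationary phase. Since $\{x:d_K(K_0,x)\le t\}=K_0+tK$, the layer-cake identity gives $e^{-sd_K(K_0,x)}=\int_0^\infty se^{-st}\mathbf 1_{K_0+tK}(x)\,dt$ for every $x$; summing over $x\in2\pi\IZ^d$ and applying Poisson summation on the lattice $2\pi\IZ^d$ (dual lattice $\IZ^d$) to $x\mapsto\mathbf 1_{K_0+tK}(x)$ yields, for $\Re(s)>0$,
\[
\mathcal{P}_{K_0}(s)=c_{K_0}+\frac{1}{(2\pi)^d}\sum_{\ell=1}^d\frac{\ell!\,V_{d-\ell}(K_0,K)}{s^\ell}+\frac{1}{(2\pi)^d}\sum_{\xi\in\IZ^d\setminus\{0\}}\mathcal{F}_\xi(s),\qquad \mathcal{F}_\xi(s):=\int_0^\infty se^{-st}\,\widehat{\mathbf 1_{K_0+tK}}(\xi)\,dt .
\]
Here the $\xi=0$ term equals $\frac1{(2\pi)^d}\int_0^\infty se^{-st}\Vol(K_0+tK)\,dt$, which by~\eqref{e:mixed-volumes} is precisely the displayed polar part plus the constant $\Vol(K_0)/(2\pi)^d$, and $c_{K_0}$ is this constant minus $\sharp(2\pi\IZ^d\cap K_0)$ (the points of $K_0$ being excluded from the defining series). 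Consequently statement~\eqref{i:pole-at-zero} amounts to showing that $\sum_{\xi\ne0}\mathcal{F}_\xi$ is holomorphic near $s=0$ — plausible because $\Lambda_K$ is bounded away from $0$: if $B(0,\rho)\subset K$ then $h_{\pm K}(\xi)\ge\rho|\xi|$ for $\xi\ne0$ — while statements~\eqref{i:cas-general-thm}--\eqref{i:symmetric-case-main-thm} amount to locating the singularities of this series and continuing it to the announced sets.

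Next I would study $\widehat{\mathbf 1_{K_0+tK}}(\xi)$ for $\xi\ne0$. The divergence theorem rewrites it as $\frac{i}{|\xi|^2}\int_{\partial(K_0+tK)}(\xi\cdot\nu)\,e^{-ix\cdot\xi}\,d\sigma(x)$, an oscillatory integral over the analytic, strictly convex hypersurface $\partial(K_0+tK)$. Parametrizing the latter by the Gauss map $\theta\mapsto n_{K_0}(\theta)+t\,n_K(\theta)$ — additive under Minkowski sum — the phase becomes $\theta\mapsto-(n_{K_0}(\theta)+t\,n_K(\theta))\cdot\xi$ on $\IS^{d-1}$, whose only critical points are $\pm\xi/|\xi|$ (strict convexity), with affine-in-$t$ critical values $-(h_{K_0}(\xi)+t\,h_K(\xi))$ and $h_{-K_0}(\xi)+t\,h_{-K}(\xi)$ and nondegenerate Hessians. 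Real analyticity then allows an analytic stationary-phase / steepest-descent analysis producing
\[
\widehat{\mathbf 1_{K_0+tK}}(\xi)=e^{-i(h_{K_0}(\xi)+th_K(\xi))}A_+(\xi,t)+e^{i(h_{-K_0}(\xi)+th_{-K}(\xi))}A_-(\xi,t)+R(\xi,t),
\]
where the $A_\pm(\xi,\cdot)$ extend holomorphically in $t$ to a sector around $\IR_+$ avoiding $\IR_-$ (away from the finitely many zeros of the relevant curvature determinant, all lying on $\IR_-$), satisfy $A_\pm(\xi,t)=O(|\xi|^{-(d+1)/2}\la t\ra^{(d-1)/2})$ along with their $t$-derivatives, and $R(\xi,t)$ is exponentially small in $|\xi|$ — all uniformly for $|t|\le c|\xi|$ in that sector. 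The structural point is that the $t$-dependence of the phases is exactly affine while the amplitudes remain holomorphic in $t$ on a fixed sector, which is what makes the ensuing Laplace transform computable.

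Then I would evaluate $\mathcal{F}_\xi(s)=\int_0^\infty se^{-st}(\cdots)\,dt$ by rotating $\IR_+$ inside that $t$-sector. The $A_+$-term gives $s\,e^{-ih_{K_0}(\xi)}$ times the Laplace transform of $A_+(\xi,\cdot)$ at $s+i\,h_K(\xi)$; because $A_+(\xi,t)\sim|\xi|^{-(d+1)/2}t^{(d-1)/2}$ at infinity, this is holomorphic in $s$ away from a cut issuing from $s=-i\,h_K(\xi)$, with a branch point there whose order and leading singular coefficient are dictated by the top WKB amplitude — a Gauss-curvature / mixed-volume quantity of $K_0$ and $K$, as needed for the branching description and for~\eqref{i:pole-at-zero}. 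Symmetrically the $A_-$-term is singular at $s=i\,h_{-K}(\xi)$; letting $\xi$ run over $\IZ^d\setminus\{0\}$ and using $h_{-K}(\xi)=h_K(-\xi)$, the branch points fill exactly $\Lambda_K$. The bound on $A_\pm$ yields $\mathcal{F}_\xi(s)=O(|s|\,|\xi|^{-(d+1)})$, summable over $\IZ^d$ uniformly on compacta of $\IC\setminus(\Lambda_K+\IR_-)$ and in particular near $s=0$; the restriction $|t|\lesssim|\xi|$ in the WKB form translates into ``the continuation of $\mathcal{F}_\xi$ stays controlled only for $\Re(s)\gtrsim-c|\xi|\sim-c|\Im(s)|$''. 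Combining, $\sum_{\xi\ne0}\mathcal{F}_\xi$ continues holomorphically, with no poles, to $\{\Re(w)>-\delta_0(1+|\Im(w)|)\}\setminus(\Lambda_K+\IR_-)$, which together with the first paragraph gives~\eqref{i:cas-general-thm} and~\eqref{i:pole-at-zero}. When $K=-K$ one has $h_K=h_{-K}$, so the branch points occur in conjugate pairs $\pm i\,h_K(\xi)$; using that $\mathcal{P}_{K_0}$ has real Taylor coefficients, hence $\mathcal{P}_{K_0}(\bar s)=\overline{\mathcal{P}_{K_0}(s)}$, one may replace each leftward cut by the $h_K(\xi)$-dilate of the bracket-shaped path $\mathscr{C}$ from $i$ to $-i$ passing through real part $-\delta$, and check that the determinations match across consecutive levels and that $\IC$ minus this nested family of brackets is connected; this gives~\eqref{i:symmetric-case-main-thm}.

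The hard part, which I expect to be the main obstacle, is the uniformity of this scheme near the imaginary axis, where the branch points $\pm i\,h_K(\xi)$ accumulate: at height $R$ the gaps between consecutive values $h_K(\xi)$ are only $O(R^{-(d-1)})$, so continuing $\sum_{\xi\ne0}\mathcal{F}_\xi$ into the thin pockets between successive cuts — and not merely into a neighbourhood of each individual branch point — demands exploiting the cancellation among the prefactors $e^{-ih_{K_0}(\xi)}$ over the $\xi$ with $h_K(\xi)\approx R$, which a termwise estimate discards. After rescaling $\xi=R\theta$ and a Poisson summation in the radial variable, the resulting collective sum is governed by the analytic continuation — as a branched holomorphic function — of the resolvent $(\lambda-h_K|_{\IS^{d-1}})^{-1}$, and of parameter-dependent variants of it, of the operator of multiplication by the analytic function $h_K|_{\IS^{d-1}}$ on the sphere; this is the statement of independent interest announced in the abstract. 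Establishing it — identifying the cut with the range of $h_K$, the branch points with its critical values, and computing the monodromy via an analytic Morse lemma and stationary phase — is where the bulk of the technical work sits, and it must accommodate the possible failure of $h_K|_{\IS^{d-1}}$ to be Morse (for instance $h_K\equiv\mathrm{const}$ when $K$ is a Euclidean ball), which I would handle by a perturbation/reduction argument or by treating such highly symmetric bodies directly.
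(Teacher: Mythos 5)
Your reduction is, at bottom, the paper's own starting point in disguise: expanding the lattice sum in Fourier modes (whether by Poisson summation applied to $\mathbf 1_{K_0+tK}$ or, as in the paper, by Fourier-expanding the integration currents) leads to the same sum over $\xi\in\IZ^d$ of Laplace transforms in $t$ of oscillatory integrals over $\IS^{d-1}$ with phase $\xi\cdot(t\mathbf{v}(\theta)+x_{K_0}(\theta))$, and your identification of the polar part at $s=0$ with the $\xi=0$ (mixed-volume) term is exactly the paper's. The genuine gap is in the continuation step. First, with amplitude control only for $|t|\le c|\xi|$ in a fixed sector, your continuation of $\mathcal{F}_\xi$ is, by your own account, restricted to $\Re(s)\gtrsim -c|\xi|$; hence the low modes (say $|\xi|=1$) are never continued into the far-left part of the cone $\{\Re(w)>-\delta_0(1+|\Im(w)|)\}$ at large $|\Im(w)|$, so the scheme as stated only reaches a vertical strip, not the region of Item (1). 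Second, rotating the $t$-contour inside a fixed sector yields holomorphy only outside a cone of fixed aperture issuing from each branch point, which misses precisely the points just above and below the horizontal cuts; entering there (and a fortiori moving the cuts to the bracket-shaped curves of Item (2)) requires the full branched local structure $(s\mp ih)^{-\frac{d+1}{2}}\cdot(\text{holomorphic})$ plus $\ln$-terms, which you assert via ``analytic stationary phase'' but do not obtain. The paper gets both points by treating the modes asymmetrically: finitely many low modes are rewritten, after integrating out $t$, as resolvent-type integrals $\int_{\IS^{d-1}}G(\theta)\,(s-i\xi\cdot\mathbf{v}(\theta))^{-\ell-1}d\Vol_{\IS^{d-1}}$, manifestly holomorphic on $\C$ minus the compact segment $i[-h_K(-\xi),h_K(\xi)]$, whose branched structure across the segment is established by the coarea decomposition, analytic continuation of the fiber integral and an analytic Morse lemma; the high modes keep a time cutoff supported in a compact interval and are summed via a contour deformation in the spectral variable, giving uniform bounds in a box of size $\delta_0N$.

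Your final paragraph misdiagnoses the main obstacle. No cancellation among the modes with $h_K(\xi)\approx R$, no radial Poisson summation, and no branched continuation of $(\lambda-h_K|_{\IS^{d-1}})^{-1}$ (multiplication in the dual variable) is needed: holomorphy is local, and for any compact subset of the complement of the cuts one separates the finitely many modes whose singularities come close from the remaining modes, whose singularities lie at height $\gtrsim\rho N$; termwise uniform bounds then suffice, which is exactly the paper's $N$-splitting and already disposes of the ``thin pockets'' issue. The multiplication operator that genuinely underlies the analysis is $\mathbf{m}_{\xi\cdot\mathbf{v}}$, multiplication by $\theta\mapsto\xi\cdot\mathbf{v}(\theta)$ on $L^2(\IS^{d-1})$ at fixed $\xi$; by strict convexity this function is automatically a Morse function with exactly two nondegenerate critical points ($\theta=\pm\xi/|\xi|$), so your worry about $h_K|_{\IS^{d-1}}$ failing to be Morse (e.g.\ the ball) concerns the wrong object --- the ball is in fact the explicitly solvable case via~\eqref{e:laplacianformula}. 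In short, the effort you allocate to a collective-summation phenomenon is misplaced, while the two places where the real work sits --- the uniform-in-$\xi$ branched continuation of each mode with its precise square-root/logarithmic form at the two critical values, and the summability of the continued high-frequency tail --- are left as assertions in your sketch.
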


We refer to Figures~\ref{f:cuts} and~\ref{f:agrafes-triangles} for an illustration of Items~\eqref{i:cas-general-thm} and~\eqref{i:symmetric-case-main-thm} of Theorem~\ref{t:maintheo1} respectively.\\

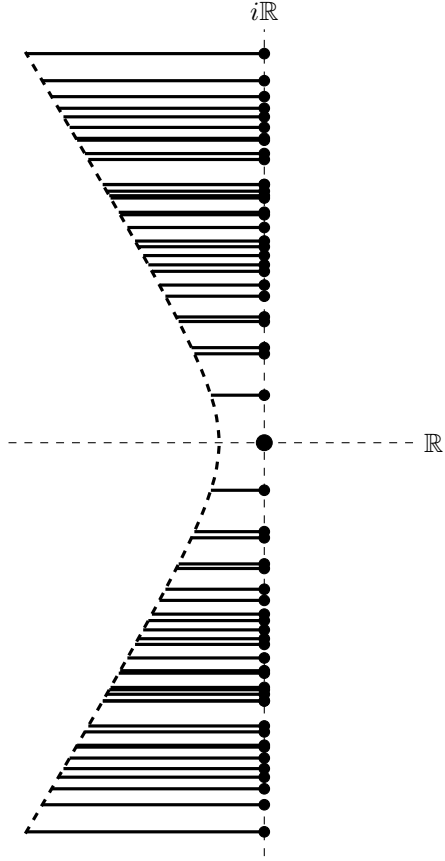
\begin{figure}[!h]
			\centering
			\newcounter{cx3}
			\begin{tikzpicture}		
 			\pgfmathsetmacro{\Si}{1} 
 			\pgfmathsetmacro{\nun}{0.6} 
 			\pgfmathsetmacro{\lambd}{4*\Si} 
 			\pgfmathsetmacro{\asy}{1} 
			\pgfmathsetmacro{\trans}{0.6} 
			
				\draw[dashed] (-\nun*\lambd-1,0) -- (2,0) node[anchor=west]{$\R$};
				\draw[dashed] (0,-\lambd-1.5)--(0,\lambd+1.5) node[above]{$i\R$};
           		  	\draw[very thick,dashed,domain=-5.2*\Si:5.2*\Si,smooth,variable=\t,-] plot ({-{sqrt(\nun*\nun*(\t*\t+1))},\t});

				\filldraw[black] (0,0) circle (3pt) ;  

			\foreach \j in {0,1,...,\lambd} 
			\foreach \k in {0,1,...,\lambd} 	
			{ 
			\setcounter{cx3}{\number\k*\number\k*\asy+\number\j*\number\j} 
				\ifnum\value{cx3}>0
				\ifnum\value{cx3}<\lambd*\lambd+1
				\pgfmathsetmacro{\Aa}{sqrt(\k*(\k-\trans)*\asy+\j*\j)*\Si} 
				\pgfmathsetmacro{\Bb}{sqrt((-\k)*(-\k-\trans)*\asy+\j*\j)*\Si} 

				\filldraw[black] (0,\Aa) circle (2pt) ;  
				\filldraw[black] (0,-\Aa) circle (2pt) ; 
				\draw[very thick]  (0,\Aa) -- (-{sqrt(\nun*\nun*(\Aa*\Aa+1))},\Aa); 
				\draw[very thick]  (0,-\Aa) -- (-{sqrt(\nun*\nun*(\Aa*\Aa+1))},-\Aa); 
				\filldraw[black] (0,\Bb) circle (2pt) ;  
				\filldraw[black] (0,-\Bb) circle (2pt) ; 
				\draw[very thick]  (0,\Bb) -- (-{sqrt(\nun*\nun*(\Bb*\Bb+1))},\Bb); 
				\draw[very thick]  (0,-\Bb) -- (-{sqrt(\nun*\nun*(\Bb*\Bb+1))},-\Bb); 
				\fi
				\fi
				}
 			\end{tikzpicture}
			\caption{Illustration of Item~\eqref{i:cas-general-thm} of Theorem~\ref{t:maintheo1}. The dots (except zero) on the imaginary axis correspond to the set $\Lambda_K$ (defined in~\eqref{e:def-lambdak}) and the cuts are given by $\Lambda_K+\R_-$.
			The (single) pole at zero is described by Item~\eqref{i:pole-at-zero} of Theorem~\ref{t:maintheo1}, whereas the singularity of $\mathcal{P}_{K_0}$ at each point of $\Lambda_K$ is described in Theorem~\ref{t:maintheo2}.}
			\label{f:cuts}
		\end{figure}

	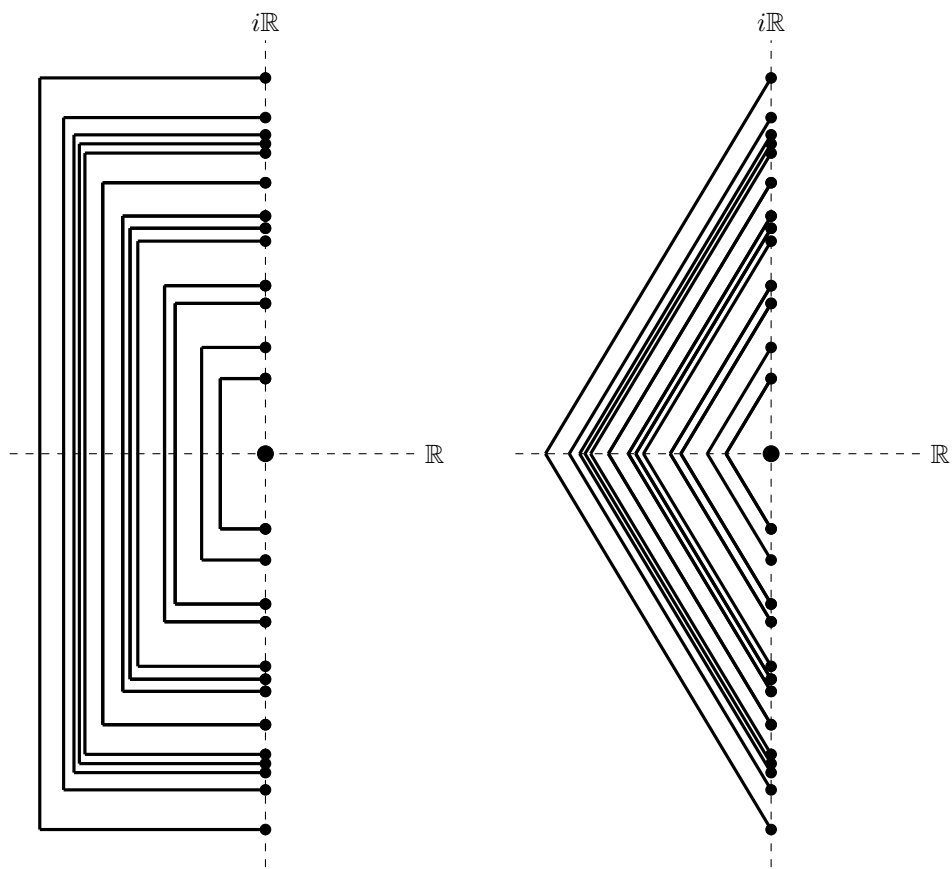
\begin{figure}[!h]
			\centering
			\newcounter{cx}
			\begin{tikzpicture}		
 			\pgfmathsetmacro{\Si}{1} 
 			\pgfmathsetmacro{\nun}{0.6} 
 			\pgfmathsetmacro{\lambd}{4*\Si} 

				\draw[dashed] (-\nun*\lambd-1,0) -- (2,0) node[anchor=west]{$\R$};
				\draw[dashed] (0,-\lambd-1.5)--(0,\lambd+1.5) node[above]{$i\R$};
				\filldraw[black] (0,0) circle (3pt) ;  

			\foreach \j in {0,1,...,\lambd} 
			\foreach \k in {0,1,...,\lambd} 	
			{ 
			\setcounter{cx}{\number\k*\number\k+\number\j*\number\j} 
				\ifnum\value{cx}<\lambd*\lambd
				\filldraw[black] (0,{sqrt(\k*\k+\j*\j)*\Si}) circle (2pt) ; 
				\filldraw[black] (0,-{sqrt(\k*\k+\j*\j)*\Si}) circle (2pt); 
				\draw[very thick] (-{sqrt(\k*\k+\j*\j)*\Si*\nun},{sqrt(\k*\k+\j*\j)*\Si}) -- (0,{sqrt(\k*\k+\j*\j)*\Si}) ; 
				\draw[very thick]  (-{sqrt(\k*\k+\j*\j)*\Si*\nun},-{sqrt(\k*\k+\j*\j)*\Si}) -- (-{sqrt(\k*\k+\j*\j)*\Si*\nun},{sqrt(\k*\k+\j*\j)*\Si})  ;
				\draw[very thick]  (0,-{sqrt(\k*\k+\j*\j)*\Si}) -- (-{sqrt(\k*\k+\j*\j)*\Si*\nun},-{sqrt(\k*\k+\j*\j)*\Si})  ;
				\fi
				}	

 			\end{tikzpicture}
\quad \quad 
			\newcounter{cx2}
			\begin{tikzpicture}		
 			\pgfmathsetmacro{\Si}{1} 
 			\pgfmathsetmacro{\nun}{0.6} 
 			\pgfmathsetmacro{\lambd}{4*\Si} 

				\draw[dashed] (-\nun*\lambd-1,0) -- (2,0) node[anchor=west]{$\R$};
				\draw[dashed] (0,-\lambd-1.5)--(0,\lambd+1.5) node[above]{$i\R$};
				\filldraw[black] (0,0) circle (3pt) ;  

			\foreach \j in {0,1,...,\lambd} 
			\foreach \k in {0,1,...,\lambd} 	
			{ 
			\setcounter{cx2}{\number\k*\number\k+\number\j*\number\j} 
				\ifnum\value{cx2}<\lambd*\lambd
				\filldraw[black] (0,{sqrt(\k*\k+\j*\j)*\Si}) circle (2pt) ; 
				\filldraw[black] (0,-{sqrt(\k*\k+\j*\j)*\Si}) circle (2pt); 
				\draw[very thick] (-{sqrt(\k*\k+\j*\j)*\Si*\nun},0) -- (0,{sqrt(\k*\k+\j*\j)*\Si}) ; 
				\draw[very thick]  (0,-{sqrt(\k*\k+\j*\j)*\Si}) -- (-{sqrt(\k*\k+\j*\j)*\Si*\nun},0)  ;
				\fi
				}	
 			\end{tikzpicture}
			\caption{Illustration of Item~\eqref{i:symmetric-case-main-thm} of Theorem~\ref{t:maintheo1} in case $K=-K$, with two different choices of cut shapes $\mathscr{C}$. The dots (except zero) on the imaginary axis correspond to the set $\Lambda_K$ and each curve is $h_K(\xi) \mathscr{C}$ for some $\xi \in \Z^d\setminus\{0\}$. The function $\mathcal{P}_{K_0}$ extends meromorphically to $\C \setminus  \bigg( \bigcup_{\xi \in \Z^d\setminus\{0\}} h_K(\xi) \mathscr{C}\bigg)$. 
			The (single) pole at zero is described by Item~\eqref{i:pole-at-zero} of Theorem~\ref{t:maintheo1}, whereas the singularity of $\mathcal{P}_{K_0}$ at each point of $\Lambda_K$ is described in Theorem~\ref{t:maintheo2}.}
			\label{f:agrafes-triangles}
		\end{figure}

Several remarks are in order. 
That $\Vol_{\IR^d}\left(K_0+tK\right)$ is a polynomial of degree $d$, namely~\eqref{e:mixed-volumes}, is a classical fact in convex geometry~\cite[Chapter~5.1]{Schneider14} (see also~\cite[Remark~4.5]{DangLeautaudRiviere22}), and its coefficients $V_{d-\ell}(K_0,K)$ are called the mixed volumes of $K_0,K$. In Item~\ref{i:pole-at-zero} of Theorem~\ref{t:maintheo1}, they appear as residues of the Poincar\'e series $\mathcal{P}_{K_0}(s)$ at zero. It is also worth noticing that the singularities of the Poincar\'e series appear (up to multiplication by $\pm i$) at the eigenvalues of the elliptic operators $h_{\pm K}(D_x)$ (where $D_x = -i\partial_x$), viewed as a translation invariant selfadjoint pseudodifferential operator of order $1$ on the torus $\mathbb{T}^d=\mathbb{R}^d/2\pi\mathbb{Z}^d$. This is a manifestation of quantum--classical correspondence. We refer to Section~\ref{ss:classical-quantum} below for a more detailed discussion of this fact in relation with trace formulas and duality for convex sets. The shape of the ``cuts'' (namely $\Lambda_K+\R_-$ in the general statement of Item~\ref{i:cas-general-thm} and the union of the $h_K(\xi) \mathscr{C}$ in the symmetric case of Item~\ref{i:symmetric-case-main-thm}) is somewhat arbitrary.
We refer to Theorem~\ref{t:continuation-zeta} and Lemma~\ref{l:geometry-of-cuts} for more general choices of the cuts.

 In~\cite[Th.~9.6]{DangLeautaudRiviere22}, it was shown that, if $K,K_0$ are (only) smooth (i.e. $\mathcal{C}^\infty$), then the function $\mathcal{P}_{K_0}$ extends in a $\mathcal{C}^\infty$ way up to $\{\Re(s)=0\}\setminus(\{0\}\cup\Lambda_K)$. Theorem~\ref{t:maintheo1}  shows that analyticity allows to continue $\mathcal{P}_{K_0}$ beyond the imaginary axis modulo the forbidden lines $\Lambda_K+\R_-$ or the forbidden cuts $\Lambda_K \mathscr{C}$ in the symmetric case. Recall also from~\cite[\S9.5]{DangLeautaudRiviere22}  that, in the case where $K_0$ is reduced to a point $x_0$ not lying in $2\pi\Z^d$ and where $K=B_d(0,1)$ is the Euclidean unit ball in $\R^d$, an explicit computation shows\footnote{Note that the factor $4\pi^2$ in the formula~$(9.17)$ from this reference is incorrect due to a wrong factor $2\pi$ in Eq.~($6.3$) therein; this error has no other logical consequence.}
 \begin{equation}\label{e:laplacianformula}
 \mathcal{P}_{x_0}(s)=d!\text{Vol}_{\R^d}(B_d(0,1))s\left(s^2-\Delta_{\T^d}\right)^{-\frac{d+1}{2}}\left(x_0,0\right),
 \end{equation}
 where $\left(s^2-\Delta_{\T^d}\right)^{-\frac{d+1}{2}}\left(x,y\right)$ is the Schwartz kernel of the resolvent (to the power $\frac{d+1}{2}$) of the standard Euclidean Laplacian on $\T^d$. From this formula and from the spectral theory of the Laplacian, one directly obtains Theorem~\ref{t:maintheo1} when $K=B_d(0,1)$ and $K_0=\{x_0\}$: the conclusion is even stronger in that case. For more general $K$ and $K_0$, such a formula does not hold true a priori and, as in~\cite{DangLeautaudRiviere22}, we will need to proceed in a different way to prove the holomorphic continuation statement of Theorem~\ref{t:maintheo1}.

An analysis of the singularities occurring at points of $\Lambda_K$ allows for more precise extension properties of the function $\mathcal{P}_{K_0}$, as stated by the following result.
\begin{theorem}
\label{t:maintheo2} 
Under the assumptions of Theorem~\ref{t:maintheo1}, there exists $\delta_0>0$ such that, for any $\lambda\in\Lambda_K$, the following holds. 
There exist $\delta_\lambda>0$ and {\em holomorphic} functions $f_\lambda,g_\lambda$ on 
$$
\Omega_\lambda:=\{w:\ \Re(w)> -\delta_0(1+|\Im(w)|) \ \text{and}\ |\operatorname{Im}(w-\lambda)|<\delta_\lambda\}
$$
such that
 $$
\mathcal{P}_{K_0}(s) = \frac{1}{(s-\lambda)^{\frac{d+1}{2}}}f_\lambda(s) + g_\lambda(s), \quad \text{if $d$ is even}, \quad s \in \Omega_\lambda , 
 $$
 or
 $$
 \mathcal{P}_{K_0}(s) = \frac{1}{(s-\lambda)^{\frac{d+1}{2}}}f_\lambda(s) + \ln(s-\lambda) g_\lambda(s), \quad \text{if $d$ is odd}, \quad s \in \Omega_\lambda.
 $$

\end{theorem}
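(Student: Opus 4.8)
The plan is to localize the analysis near a fixed branching point $\lambda\in\Lambda_K$, to isolate there the finitely many ``resonant'' terms carrying the singularity, and to reduce these to explicit model integrals; everything else will be holomorphic on $\Omega_\lambda$ by virtue of (the proof of) Theorem~\ref{t:maintheo1}.

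\emph{Step 1: reduction to model integrals.} I would start from the integral representation underlying Theorem~\ref{t:maintheo1}. Using $d_K(K_0,x)\leq t \iff x\in K_0+tK$, Poisson summation over $2\pi\Z^d$, the divergence theorem on the level sets $\partial(K_0+tK)$ — parametrized by the outer unit normal $\eta\in\mathbb{S}^{d-1}$, with supporting point $\nabla h_{K_0}(\eta)+t\nabla h_K(\eta)$ — together with the fact that the reciprocal Gauss curvature of $K_0+tK$ is a polynomial in $t$ of degree $d-1$, one obtains (up to normalization and the $\xi=0$ term, responsible for the pole at $0$)
\[
\mathcal{P}_{K_0}(s)=\sum_{\xi\in\Z^d\setminus\{0\}}\ \sum_{j=0}^{d-1} s\,j!\int_{\mathbb{S}^{d-1}}\frac{A^{(j)}_{\xi}(\eta)\,e^{-i\xi\cdot\nabla h_{K_0}(\eta)}}{\bigl(s+i\,\xi\cdot\nabla h_K(\eta)\bigr)^{j+1}}\,d\eta,
\]
with analytic amplitudes $A^{(j)}_\xi$ (and $\nabla h_{K_0}(\eta)$ replaced by $x_0$ when $K_0=\{x_0\}$). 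For fixed $\xi$, set $\phi_\xi(\eta):=\xi\cdot\nabla h_K(\eta)$; by Euler's identity $\max_{\mathbb{S}^{d-1}}\phi_\xi=h_K(\xi)$ and $\min_{\mathbb{S}^{d-1}}\phi_\xi=-h_{-K}(\xi)$, attained at $\eta=\pm\xi/|\xi|$, where strict convexity of $K$ makes $\phi_\xi$ nondegenerate (its Hessian being, up to a positive factor, the second fundamental form of $\partial K$). Each summand is therefore, for $\Re(s)>0$, a matrix element of $(M_{\phi_\xi}-is)^{-(j+1)}$, where $M_{\phi_\xi}$ is multiplication by $\phi_\xi$ on $\mathbb{S}^{d-1}$, with spectrum $[-h_{-K}(\xi),h_K(\xi)]$; the values of $s$ at which it is singular fill the vertical segment with endpoints $-ih_K(\xi)$ and $ih_{-K}(\xi)$, whose union over $\xi$ has $\Lambda_K$ as its set of branch points. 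Since $0\in\mathring K$ forces $h_K(\xi)\geq c|\xi|$, the set $\Lambda_K$ is locally finite, so only finitely many $\xi$ are singular exactly at a given $\lambda$; for the remaining $\xi$, since $i\lambda$ is a \emph{regular} value of $\phi_\xi$, the summand continues holomorphically near $\lambda$ (by deforming the $\eta$-contour into the complexified sphere — the branched-holomorphic continuation of the resolvents of multiplication operators by analytic Morse functions used already for Theorem~\ref{t:maintheo1}), and the tail converges locally uniformly there by the same uniform-in-$\xi$ resolvent estimates. Discarding in addition the $j<d-1$ terms and the contribution of $\eta$ bounded away from $\pm\xi/|\xi|$ (holomorphic near $\lambda$), one is reduced, modulo a function holomorphic on $\Omega_\lambda$, to finitely many integrals of the form $\int_{|u|<\varepsilon}b(u)\,\bigl((s-\lambda)-i|u|^2\bigr)^{-d}\,du$ with $b$ real-analytic near $0\in\R^{d-1}$.

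\emph{Step 2: the model integral.} Here analyticity (not merely $\mathcal{C}^\infty$) is essential: an \emph{analytic} Morse lemma puts the phase into the normal form $|u|^2$ through an analytic change of variables with analytic Jacobian, so that $b$ may be taken real-analytic. Expanding $b$ in its convergent Taylor series, discarding odd monomials by the symmetry of $\{|u|<\varepsilon\}$, and passing to $\tau=|u|^2$, each monomial reduces to a radial integral $\int_0^{\varepsilon^2}\tau^{a-1}(w+\tau)^{-d}\,d\tau$ with $w:=i(s-\lambda)$ and $a=|\beta|+\tfrac{d-1}{2}$. Writing $\int_0^{\varepsilon^2}=\int_0^\infty-\int_{\varepsilon^2}^\infty$, the second piece is holomorphic in $w$ near $0$, while $\int_0^\infty\tau^{a-1}(w+\tau)^{-d}d\tau=\frac{\Gamma(a)\Gamma(d-a)}{\Gamma(d)}w^{a-d}$ for $0<a<d$; for $a\geq d$ the radial integral is holomorphic near $w=0$ unless $a-d\in\Z_{\geq0}$, in which case it contributes $w^{a-d}\ln w$ plus holomorphic terms. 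The exponent $a-d=|\beta|-\tfrac{d+1}{2}$ is an integer precisely when $d$ is odd, which is the source of the dichotomy. Summing over $\beta$: the finitely many $|\beta|<\tfrac{d+1}{2}$ give $w^{-\frac{d+1}{2}}\cdot(\text{holomorphic})$; for $d$ odd the convergent sum over $|\beta|\geq\tfrac{d+1}{2}$ gives $\ln w\cdot(\text{holomorphic})$; and all remaining pieces recombine — using uniform bounds on the Taylor coefficients of $b$ — into a single holomorphic function. Substituting $w=i(s-\lambda)$ back, and absorbing constants via $\ln w=\ln(s-\lambda)+\text{const}$ and $w^{-\frac{d+1}{2}}=\text{const}\cdot(s-\lambda)^{-\frac{d+1}{2}}$, this yields exactly the two asserted normal forms, with $f_\lambda,g_\lambda$ holomorphic on the full neighborhood $\Omega_\lambda$, the branch cut needed to make $(s-\lambda)^{-\frac{d+1}{2}}$ (for $d$ even) and $\ln(s-\lambda)$ (for $d$ odd) single-valued being the one already fixed in Theorem~\ref{t:maintheo1}.

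\emph{Main obstacle.} The delicate point is upgrading the pointwise (Taylor-remainder) estimates — which would only yield $\mathcal{C}^\infty$-regularity of the remainders in $w$ — to genuine \emph{holomorphy} of $f_\lambda$ and $g_\lambda$: this forces one to keep the full analytic amplitude rather than a finite jet and to control the resummation of the model integrals uniformly on a complex neighborhood of $w=0$, which is precisely the branched-holomorphic resolvent continuation for multiplication by analytic Morse functions established for Theorem~\ref{t:maintheo1}. A secondary, essentially bookkeeping, difficulty is to check that near $\lambda$ the infinite tail $\sum_\xi$, the subleading powers $j<d-1$, and the non-stationary parts of the integrals indeed recombine into one function holomorphic on $\Omega_\lambda$ — again resting on the uniform-in-$\xi$ resolvent estimates underlying Theorem~\ref{t:maintheo1}.
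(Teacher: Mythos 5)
Your overall architecture parallels the paper's: decompose $\mathcal{P}_{K_0}$ (via Fourier/Poisson summation, essentially the paper's Lemma~\ref{l:decomp-zeta-I}) into sphere integrals with denominators $(s-i\xi\cdot\mathbf{v}(\theta))^{j+1}$, isolate the finitely many $\xi$ resonant at $\lambda$, use an analytic Morse lemma and radial model integrals, and explain the even/odd dichotomy by the parity of $\tfrac{d+1}{2}$. But the central sorting step in your argument is wrong as written. The resonant terms with $j<d-1$ are \emph{not} holomorphic near $\lambda$: the same model computation you perform for $j=d-1$ gives them singularities of type $(s-\lambda)^{\frac{d-3}{2}-j}$ (positive and negative half-integer powers for $d$ even; $(s-\lambda)^{k}\ln(s-\lambda)$ and negative integer powers for $d$ odd), cf.\ Corollary~\ref{c:I-}. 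Likewise, for $d$ even the truncated radial integrals with $a\geq d$ are not holomorphic near $w=0$: your Beta identity $\int_0^\infty\tau^{a-1}(w+\tau)^{-d}d\tau=\frac{\Gamma(a)\Gamma(d-a)}{\Gamma(d)}w^{a-d}$ is only valid for $0<a<d$ (the integral diverges for $a\geq d$), and the truncated version still carries a branched term, e.g.\ $\int_0^c\tau^{1/2}(w+\tau)^{-1}d\tau=2\sqrt{c}-\pi\sqrt{w}+O(w)$. So, followed literally, your proof dumps genuinely branched contributions into the ``holomorphic remainder'', i.e.\ into $g_\lambda$ when $d$ is even, contradicting the statement. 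These pieces are compatible with the asserted normal form, since $(s-\lambda)^{\frac{d-3}{2}-j}=(s-\lambda)^{-\frac{d+1}{2}}(s-\lambda)^{d-1-j}$ with $d-1-j\in\N$, but only if you keep all $j$ and all Taylor modes and absorb them into $f_\lambda$ (and $\ln(s-\lambda)\,g_\lambda$), which your argument does not do.

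This leads to the second, decisive gap: the re-summation you yourself flag as ``the main obstacle'' --- showing that the infinitely many singular Taylor-mode contributions sum to $(s-\lambda)^{-\frac{d+1}{2}}$ times a function holomorphic on a \emph{fixed} complex neighborhood, and that the regular remainders sum to a holomorphic function --- is never carried out, and it cannot be outsourced to Theorem~\ref{t:maintheo1}: that theorem (and its proof of continuation across the cuts) says nothing about the local normal form at $\lambda$, which is precisely what is being proved here (in the paper it is Theorem~\ref{t:maintheo-multiplication}/Proposition~\ref{p:main-I}, proved in Section~\ref{s:low-freq}, not a consequence of Theorem~\ref{t:maintheo1}). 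The paper avoids the term-by-term Beta calculus by performing the sum \emph{inside} the integral before the $s$-dependence enters: the coarea density $\mathcal{J}(u)=\int_{\f^{-1}(u)}\iota_{Y_\f}(G\Vol_{\IS^{d-1}})$ is shown, via the analytic Morse lemma, to equal $(u-\min\f)^{\frac{d-3}{2}}\mathsf{A}(u-\min\f)$ with $\mathsf{A}$ holomorphic on a fixed disk (Lemma~\ref{l:J-near-crit}, Corollary~\ref{c:equality-J}), and then a single explicit Cauchy-transform computation (Lemma~\ref{l:laplace-bessel}, Proposition~\ref{p:main-I}) produces the singular factors; adopting this order of operations is the missing ingredient that would make your Step~2 rigorous. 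Two further points: the radial exponent should be $a=\frac{|\beta|}{2}+\frac{d-1}{2}$, not $|\beta|+\frac{d-1}{2}$ (harmless); and holomorphy of $f_\lambda,g_\lambda$ on the whole half-strip $\Omega_\lambda$ (not just a disk around $\lambda$) needs an argument for the non-resonant low frequencies whose spectral segments cross the strip and for the tail --- one must push their cuts to real part of order $-\nu|\xi|$, which is exactly where the conical constraint $\Re(w)>-\delta_0(1+|\Im(w)|)$ comes from; you assert this but do not justify it.
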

Here, $\ln$ denotes the principal value of the logarithm on $\C\setminus\R_-$ and $z^\gamma$ is understood as $\exp(\gamma\ln z)$ when $\gamma$ is not an integer. The logarithmic or squareroot singularities near the points in $\Lambda_K$ were already identified in the $\mathcal{C}^\infty$ setting treated in~\cite{DangLeautaudRiviere22}. Here, the additional analyticity assumption on the convex sets, allows to continue the function $\mathcal{P}_{K_0}$ deep in the complex plane. 
The precise knowledge of the singularities allows to extend $\mathcal{P}_{K_0}$ either as a certain multivalued holomorphic function or as a holomorphic function on an appropriate Riemann surface. See Appendix~\ref{a:branched} for a reminder on these notions.

\bigskip
The study of Poincar\'e series is a classical topic in hyperbolic geometry, see~\cite{BroiseParkkonenPaulin2019}. In that setting, the critical axis is given by $\{\Re(s)= h_{\text{top}}\}$, 
where $h_{\text{top}}>0$ is the topological entropy of the underlying dynamical system, or equivalently the critical exponent of the associated (non commutative) fundamental group. Poincar\'e series are a priori holomorphic on the half plane $\{\Re(s)> h_{\text{top}}\}$ and, to the best of our knowledge, the first results about their continuation are due to Huber~\cite{Huber56, Huber59} -- see also~\cite{Delsarte42}. In these references, Huber proves the meromorphic continuation of these series beyond the critical axis. More precisely, he works with \emph{hyperbolic} metrics and the role of $K$ is played by the unit ball in the universal cover of the hyperbolic manifold. Using tools from harmonic analysis on Lie groups and the connection with the spectral theory of the Laplacian in the spirit of formula~\eqref{e:laplacianformula}, the \emph{meromorphic} extension of Poincar\'e series can then be obtained provided that $K_0$ satisfies nice enough convexity properties (\emph{e.g.} if $K_0$ is reduced to a point). See for instance~\cite{Guillope94}. This result was recently extended to manifolds of \emph{variable} negative curvature by two of the authors~\cite{DangRiviere20d} using the spectral analysis of Anosov dynamical systems. See also~\cite{Chaubet22, ChaubetGuedesBonthonneau24, BenardChaubetDangSchick23, DangMehmeti24, Anantharaman24} for recent developments in other geometric settings.

In the present article, the geometric setting is related to integrable dynamical systems on the unit tangent bundle $S\mathbb{T}^d=\mathbb{T}^d\times\IS^{d-1}$ of the torus for which the topological entropy is $0$. Theorems~\ref{t:maintheo1} and~\ref{t:maintheo2} show that the behaviour on (and beyond) the critical axis  $\{\Re(s)=0\}$ becomes more complicated than in the hyperbolic setting. Indeed, rather than meromorphic continuation to the complex plane, we obtain a branched continuation of the corresponding Poincar\'e series with singularities located on an infinite discrete subset of $\{\Re(s)=0\}$. As in~\cite{DangLeautaudRiviere22}, the proofs of these results will be related to the spectral analysis of the underlying dynamical system as we shall discuss in Section~\ref{s:sketch} below. Finally, we remark that stochastic perturbations of the geodesic flow on $\T^2\times \IS^1$ were considered by Dyatlov and Zworski in~\cite[\S1]{DyatlovZworski15} and that the discrete $L^2$-spectrum of the associated stochastically perturbed  operator seems to accumulate on curves similar to the ones appearing in Figure~\ref{f:agrafes-triangles} (see Figure 3 in that reference). It would be interesting to understand why (or if) a particular choice of viscosity term in these stochastic results selects a specific family of cut-shapes in the spectral analysis of the present paper.

\subsection{Spectral theory of multiplication operators}
\label{s:multiplicatop}
As will appear in Section~\ref{s:sketch} below, the proofs of Theorems~\ref{t:maintheo1} and~\ref{t:maintheo2} rely on a fine understanding of the spectral theory of certain (families of) multiplication operators on $L^2(\IS^{d-1})$.
We present here one of the results needed in the proofs of Theorems~\ref{t:maintheo1} and~\ref{t:maintheo2}, which, we believe, is of independent interest.

A continuous function $\f:\IS^{d-1}\to \R$ naturally defines a bounded selfadjoint multiplication operator on $L^2(\IS^{d-1})$ by 
\begin{align}
\label{e:multiplication-f}
\mathbf{m}_\f:u\in L^2(\IS^{d-1})\mapsto \f\times u\in L^2(\IS^{d-1}), 
\end{align}
satisfying
$$
 \mathbf{m}_\f \in \mathcal{L}\big(L^2(\IS^{d-1}) \big) , \quad  \| \mathbf{m}_\f \|_{ \mathcal{L}\big(L^2(\IS^{d-1}) \big) } = \|\f\|_{L^\infty(\IS^{d-1})}\quad \mathbf{m}_\f^*=\mathbf{m}_\f,
$$ 
where the $L^2$ space is considered with respect to the canonical volume measure on $\mathbb{S}^{d-1}$. The analysis of multiplication operators is a cornerstone topic in spectral theory since, according to the multiplicative spectral theorem~\cite[Theorem VIII.4 p. 260]{RS80}, every (resp. bounded) selfadjoint operator is unitarily equivalent to a  multiplication operator (resp. by a bounded function) with respect to the spectral measure.

The spectrum of the operator $\mathbf{m}_\f$ is given by $\Sp(\mathbf{m}_\f) = [\min\f,\max\f]$. We prove in Theorem~\ref{t:maintheo-multiplication} that (assuming in addition that $\f$ is real-analytic, with only two critical points that are nondegenerate) changing the spaces of definition of the operator, one can continue the resolvent $(z-\mathbf{m}_\f)^{-1}$, originally defined (and holomorphic) on $\C \setminus [\min\f,\max\f]$, across the continuous spectrum, away from the critical values of $\f$. We also describe the singularities of the extended resolvent $(z-\mathbf{m}_\f)^{-1}$ near the critical values $\min\f$ and $\max\f$.

\begin{remark}[Conventions for the logarithm and square-root]
\label{r:log-definition}
All along the article, $\ln$ denotes the {\em principal } value of the logarithm on $\C\setminus\R_-$. Then, the function $$\ln_\theta(z) :=  \ln (e^{i\theta} z)-i\theta$$ defines the determination on $\C\setminus e^{-i\theta}\R_- = \C\setminus e^{i(\pi-\theta)}\R_+$ which is real on on $\R_+^*$. Accordingly, $\sqrt{\cdot}$ denotes the {\em principal } value of the square root on $\C\setminus\R_-$, defined by $\sqrt{z} = \exp\left(\frac12 \ln(z)\right)$ and we denote by
$$_\theta\sqrt{z}  :=  \exp\left(\frac12 \ln_\theta(z)\right) = e^{-i\frac{\theta}{2}}\sqrt{e^{i\theta}z}
$$ the determination  of the square root on $\C\setminus e^{-i\theta}\R_-$ which is real-valued on the real line.
Notice that $\ln_{\theta}$ and $\ln$ coincide on $\R_+^*$, and hence $\ln_{\theta}(z)=\ln(z)$ for all $z$ in the connected component of $\C\setminus(\R_-\cup e^{-i\theta}\R_-)$ containing $\R_+^*$. Consequently, we also have $_\theta\sqrt{z}  =\sqrt{z}$ on this set.
 \end{remark}

In the next statement, we denote by $\mathcal{A}_R(\IS^{d-1})$ a Banach space of real-analytic functions introduced formally in~\S\ref{s:analytic-norms} (see Definition~\ref{d:def-norms}), and by $\mathcal{A}_R(\IS^{d-1})'$ its dual space of ultradistributions. The parameter $R>0$ plays the role of a radius of analyticity.
\begin{theorem}
\label{t:maintheo-multiplication} Let $0<R<\frac{1}{2\sqrt{d-1}}$ and let $\f:\mathbb{S}^{d-1}\rightarrow \R$ be an analytic Morse function in $\mathcal{A}_R(\IS^{d-1})$ having only two critical points. 
Then, there exists an open neighborhood $\mathcal{U}$ of $[\min\f,\max\f]$ in $\C$ such that the holomorphic function
$$
(z-\mathbf{m}_\f)^{-1}:\mathcal{A}_R(\IS^{d-1})\mapsto  \mathcal{A}_R(\IS^{d-1})',\quad z\in\mathcal{U}\cap\{\Im(s)>0\}
$$
extends holomorphically to 
$$
\widetilde{\mathcal{U}}:=\mathcal{U}\setminus \left((\min\f+i\R_-)\cup(\max\f+i\R_-)\right).
$$
Moreover, for all $z \in \widetilde{\mathcal{U}}$ we have
$$
\left(z-\mathbf{m}_\f\right)^{-1}=\mathcal{H}(z)\mathrm{F}_{d}(z)+\mathcal{R}(z),
$$
where $\mathcal{H}(z),\mathcal{R}(z):\mathcal{A}_R(\IS^{d-1})\mapsto  \mathcal{A}_R(\IS^{d-1})'$ are holomorphic functions on $\mathcal{U}$ and where, using the notation of Remark~\ref{r:log-definition},
  \begin{align*}
  \mathrm{F}_{d}(z) & = \big(z -\min(\f)\big)^{\frac{d-3}{2}} \big(z-\max(\f)\big)^{\frac{d-3}{2}} \big(  \ln_{-\frac{\pi}{2}}(z-\min(\f)) - \ln_{-\frac{\pi}{2}}(z-\max(\f)) \big)   , \\
  & \quad \text{ for }d\geq 3 \text{ odd}, \\
  \mathrm{F}_{d}(z)&  =  \left(z -\min(\f)\right)^{\frac{d-3}{2}} \left(z-\max(\f)\right)^{\frac{d-3}{2}}    , \quad \text{ for }d\geq 2 \text{ even} ,
  \end{align*}
  where in the last line we have written $\left(\zeta\right)^{\frac{d-3}{2}} = {}_{-\frac{\pi}{2}}\sqrt{\zeta}^{d-3}$.
\end{theorem}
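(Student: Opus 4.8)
The plan is to test the resolvent against real-analytic functions, identify the outcome as a Cauchy transform, and analyse it by the coarea formula. For $u,v\in\mathcal{A}_R(\IS^{d-1})$ and $z\in\mathcal{U}\cap\{\Im z>0\}$, the (bilinear) duality pairing gives
$$
\langle (z-\mathbf{m}_\f)^{-1}u,v\rangle = \int_{\IS^{d-1}}\frac{u(x)v(x)}{z-\f(x)}\,d\mathrm{vol}(x) = \int_{\min\f}^{\max\f}\frac{\Phi_{u,v}(t)}{z-t}\,dt ,
$$
where $\Phi_{u,v}(t)\,dt=\f_*(uv\,d\mathrm{vol})$ is the pushforward density, equal by the coarea formula to $\int_{\{\f=t\}}\frac{uv}{|d\f|}\,d\sigma_t$. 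The theorem then splits into: (i) $\Phi_{u,v}$ extends holomorphically in $t$ to a complex neighbourhood of the open interval $(\min\f,\max\f)$, with $|\Phi_{u,v}(t)|\le C\|u\|_{\mathcal{A}_R}\|v\|_{\mathcal{A}_R}$ locally uniformly; (ii) the behaviour of $\Phi_{u,v}$ near the two endpoints; (iii) a lemma on the analytic continuation of a Cauchy transform $z\mapsto\int\frac{\phi(t)}{z-t}\,dt$ from $\{\Im z>0\}$ across the support of $\phi$, in terms of the boundary regularity of $\phi$.

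For (i): since $\f$ is Morse with exactly two critical points, these are necessarily one minimum $x_{\min}$ and one maximum $x_{\max}$, so every $t\in(\min\f,\max\f)$ is a regular value, the level sets $\{\f=t\}$ are connected real-analytic $(d-2)$-spheres varying real-analytically with $t$, and $|d\f|$ is analytic and nonvanishing. To get a holomorphic extension with quantitative control, I would complexify: by Definition~\ref{d:def-norms} and the hypothesis $0<R<\frac1{2\sqrt{d-1}}$, both $\f$ and every element of $\mathcal{A}_R(\IS^{d-1})$ extend holomorphically to a fixed complex neighbourhood $W$ of $\IS^{d-1}$, and I would express $\Phi_{u,v}(t)$, for $t$ near $(\min\f,\max\f)$ in $\IC$, as the integral over a real $(d-2)$-cycle drawn on the complex hypersurface $\{\f_\IC=t\}\subset W$ of the Gelfand--Leray form attached to $uv\,d\mathrm{vol}$ and $\f$. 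For such $t$ these cycles vary holomorphically and stay inside $W$, which yields holomorphy of $\Phi_{u,v}$ and, the cycle size being controlled by $R$, the uniform bound; in particular the continued $(z-\mathbf{m}_\f)^{-1}$ is a holomorphic family in $\mathcal{L}(\mathcal{A}_R(\IS^{d-1}),\mathcal{A}_R(\IS^{d-1})')$.

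For (ii): near $x_{\min}$, the real-analytic Morse lemma provides analytic coordinates $y\in\R^{d-1}$ with $\f=\min\f+|y|^2$; writing $t=\min\f+r^2$, $y=r\omega$ with $\omega\in\IS^{d-2}$, the contribution of a neighbourhood of $x_{\min}$ to $\Phi_{u,v}(t)$ is $\tfrac12 r^{d-3}\int_{\IS^{d-2}}h_{u,v}(r\omega)\,d\omega$ for an analytic $h_{u,v}$, and the angular average, being even in $r$, is analytic in $r^2=t-\min\f$. Hence, after the symmetric analysis at $x_{\max}$ and the (manifestly analytic) contribution of a region where $\f$ avoids both critical values,
$$
\Phi_{u,v}(t)=(t-\min\f)^{\frac{d-3}{2}}(\max\f-t)^{\frac{d-3}{2}}\,a_{u,v}(t),
$$
with $a_{u,v}$ holomorphic on a full complex neighbourhood of the \emph{closed} segment $[\min\f,\max\f]$ — the key point being that $\Phi_{u,v}$ vanishes at both endpoints to the same fractional order $\tfrac{d-3}{2}$, so that dividing by $(t-\min\f)^{\frac{d-3}{2}}(t-\max\f)^{\frac{d-3}{2}}$ regularises simultaneously at $\min\f$ and $\max\f$. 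For (iii), expanding $\int_0^\varepsilon\frac{s^{\frac{d-3}{2}}\psi(s)}{w-s}\,ds$ in $w$ (for analytic $\psi$) shows that the continuation from $\{\Im z>0\}$ of $\int_{\min\f}^{\max\f}\frac{\Phi_{u,v}(t)}{z-t}\,dt$ equals a function holomorphic on $\mathcal{U}$ plus $a_{u,v}(z)$ times the model singularity $(z-\min\f)^{\frac{d-3}{2}}(z-\max\f)^{\frac{d-3}{2}}$ when $d$ is even, respectively $(z-\min\f)^{\frac{d-3}{2}}(z-\max\f)^{\frac{d-3}{2}}\big(\ln(z-\min\f)-\ln(z-\max\f)\big)$ when $d$ is odd, the determinations being forced — by matching the upper half-plane data while cutting downward from the endpoints — to be $\ln_{-\frac\pi2}$ and ${}_{-\frac\pi2}\sqrt{\cdot}$; this is precisely $\mathrm{F}_d(z)$. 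Setting $\mathcal{H}(z): (u,v)\mapsto (\mathrm{const})\,a_{u,v}(z)=(\mathrm{const})\,\Phi_{u,v}(z)\,(z-\min\f)^{-\frac{d-3}{2}}(z-\max\f)^{-\frac{d-3}{2}}$, holomorphic on $\mathcal{U}$ by (ii), and $\mathcal{R}(z):=(z-\mathbf{m}_\f)^{-1}-\mathcal{H}(z)\mathrm{F}_d(z)$, which by construction has trivial monodromy around $\min\f$ and $\max\f$ (the singular parts matching) and no pole there, hence is holomorphic on $\mathcal{U}$, yields the stated decomposition; holomorphy in $z$ of the $\mathcal{L}(\mathcal{A}_R,\mathcal{A}_R')$-valued maps again comes from the uniform bound of (i).

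The hard part is the quantitative form of step (i): making rigorous the ``complexified coarea'', i.e. constructing and controlling the integration cycles on $\{\f_\IC=t\}$ inside $W$ for complex $t$, and extracting the uniform $\mathcal{A}_R$-bound — this is exactly where the precise threshold $R<\frac1{2\sqrt{d-1}}$ enters, balancing how far $\f$ can be complexified against how large the cycles must be. A secondary, more geometric subtlety is the degeneration of the cycle as $t$ tends to a critical value, where its topology collapses to a point; this is handled by the real-analytic Morse lemma and the blow-up $y=r\omega$ of step (ii), and it is precisely this degeneration that produces the branch points at $\min\f$ and $\max\f$, and pins down the exponent $\tfrac{d-3}{2}$ and the parity-dependent logarithm.
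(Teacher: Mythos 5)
Your architecture coincides with the paper's: testing the resolvent against $\mathcal{A}_R$ functions reduces everything to the Cauchy transform of the pushforward density $\Phi_{u,v}=\f_*(uv\,d\Vol)$ (the paper's $\mathcal{J}$ from Lemma~\ref{l:coarea}), the endpoint behaviour $(t-\min\f)^{\frac{d-3}{2}}(\max\f-t)^{\frac{d-3}{2}}\times(\text{holomorphic})$ is obtained exactly as in Lemma~\ref{l:J-near-crit} via the real-analytic Morse lemma and the vanishing of odd angular moments, and the final singularity structure comes from the explicit model Cauchy transform $\mathsf{F}_{\frac{d-3}{2}}$ (Lemma~\ref{l:laplace-bessel}) with the determinations fixed by continuation from one half-plane. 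The one place you genuinely diverge is the interior analyticity of $\Phi_{u,v}$: you propose to complexify $\f$ and integrate a Gelfand--Leray form over cycles on $\{\f_{\IC}=t\}$, whereas the paper stays entirely real, writes $\mathcal{J}(\tau_0+h)=\int_{\f^{-1}(\tau_0)}\iota_{Y_\f}\Phi^{h*}(G\Vol)$ for the flow of $Y_\f=\nabla\f/|\nabla\f|^2$, and bounds $\mathcal{J}^{(k)}(\tau_0)$ by quantitative Cauchy estimates on iterated Lie derivatives (Lemmas~\ref{l:power-lie-derivative} and~\ref{l:desintegration}). Your route is standard in singularity theory and would work, but you leave it as an acknowledged sketch; the paper's flow argument is more elementary and, importantly, delivers the explicit radius of analyticity $\sim\dist(\tau_0,\{\min\f,\max\f\})$ that is needed elsewhere (for the high-frequency summation), though not strictly for this theorem. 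Two smaller corrections: the threshold $R<\frac{1}{2\sqrt{d-1}}$ does not encode a balance between complexification of $\f$ and cycle size --- the paper uses it only so that the polydiscs $\mathcal{U}_R$ stay inside the domain of the charts $\kappa_{M_j}$, i.e.\ so that $\mathcal{A}_R(\IS^{d-1})$ is well defined; and your definition of $\mathcal{R}(z)$ by subtraction followed by a monodromy/removability argument is more fragile than needed (for $d=2$ the exponent $\frac{d-3}{2}=-\frac12$ is negative, so ``no pole'' requires care): the expansion you invoke in step (iii) already produces the holomorphic remainder directly, as the paper does by writing $\mathcal{H}(u)=\mathcal{H}(z)+(u-z)\mathbf{R}(u,z)$ under the integral.
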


Compared to results on multiplication operators~\cite{ Georgescu,MantoiuPascu96,JezequelWang2025} or more general pseudodifferential operators of order $0$~\cite{GalkowskiZworski19a,GalkowskiZworski19}, we gain a precise understanding of the branched analytic structure near singularities thanks to the simple structure of our operator. 
The restriction $0<R<\frac{1}{2\sqrt{d-1}}$  in the space $\mathcal{A}_R(\IS^{d-1})$ is here only to ensure that the spaces are well-defined and nonempty. 
Note that the fact that $\f$ has only two critical points actually implies that the underlying manifold is diffeomorphic to the sphere.
Equivalently, such results can be formulated using the terminology of multivalued holomorphic functions described in Appendix~\ref{a:branched}. With this point of view, we can say that $(z-\mathbf{m}_\f)^{-1}$ has a multivalued holomorphic extension to $\mathcal{U}\setminus\{\min\f,\max\f\}$ that belongs to the Nilsson class.

\begin{remark} Note that in~\cite{DangLeautaudRiviere22}, under the assumption that $\f \in \mathcal{C}^\infty(\IS^{d-1};\R)$ 
a limiting absorption principle was proved through a method closer to the so-called Mourre's commutator method~\cite{Mourre81, Georgescu, MantoiuPascu96} (even if not explicitly stated like this). More precisely, this method shows that, if $\f$ is a smooth Morse function with only two critical points, then the holomorphic function
$$
(z-\mathbf{m}_\f)^{-1}:H^{\frac{1}{2}+\epsilon}(\IS^{d-1})\mapsto  H^{-\frac{1}{2}-\epsilon}(\IS^{d-1}),\quad z\in\{\Im(s)>0\}
$$
extends in a continuous way to the interval $(\min\f,\max\f)$. Here, analyticity of $\f$ allows to continue $(z-\mathbf{m}_\f)^{-1}$ beyond the real axis.
\end{remark}

\subsection{Remarks on Laplacians on noncompact domains and trace formulae}
\subsubsection{Related results in the spectral analysis of Laplacians on noncompact domains} 

The results of the present article are also related to the spectral analysis of Laplace type operators on noncompact domains. A typical situation is the following, see e.g.~\cite{DyatlovZworski19}. Let $\mathcal{O}$ be a bounded open set with smooth boundary and denote by $\mathcal{U}=\mathbb{R}^d\setminus\mathcal{O}$. One can then consider the Laplacian $-\Delta:H^2(\mathcal{U})\cap H^1_0(\mathcal{U})\subset L^2(\mathcal{U})\rightarrow L^2(\mathcal{U})$ which an unbounded selfadjoint operator with spectrum equal to $\R_+$. In particular, the resolvent
$$
(-\Delta-\lambda^2)^{-1}:L^2(\mathcal{U})\rightarrow L^2(\mathcal{U})
$$
is well-defined and holomorphic on $\{\Im(\lambda)>0\}$. The theory of resonances (see e.g.~\cite[Th.~4.4]{DyatlovZworski19}) shows that, when $d$ is odd, 
$$
(-\Delta-\lambda^2)^{-1}:L^2_{\text{comp}}(\mathcal{U})\rightarrow L^2_{\text{loc}}(\mathcal{U})
$$
extends meromorphically to $\mathbb{C}$ while, for $d$ even, it extends to $\mathbb{C}\setminus i\R_-$ (and more precisely to the logarithmic cover of $\C$). The study of this resolvent is related to the study of the decay of the local energy for the wave equation. In odd dimension, under nontrapping (or hyperbolic trapping) conditions, one can for instance prove the existence of a spectral gap and of exponential decay for the local energy. In even dimension, one cannot expect better than polynomial decay due to the presence of the logarithmic singularity at $0$. Similarly, the squareroot/logarithmic singularities that one observes for the vector field $V$ defined in~\eqref{e:def-V} can be thought of as a reminiscence of the weak polynomial mixing properties of such vector fields -- see e.g.~\cite[\S6]{DangLeautaudRiviere22} and~\cite{DangLeautaudRiviereJEDP}. The fact that the Poincar\'e series $\mathcal{P}_{K_0}$ have infinitely many singularities can also be observed in the context of scattering resonances for the Laplacian if one considers waveguide situations, see e.g.~\cite{ChristiansenDatchev} and the references therein. More precisely, if one replaces $\mathcal{U}$ by $\mathcal{U}\times M$ where $M$ is a compact manifold (say the circle), then one ends up with infinitely scattering problems after using the spectral decomposition of the Laplacian (along the variables in $M$). This leads to infinitely many logarithmic singularities when $d$ is even. This phenomenon is similar to what we obtain here. 
For more details, related results and references on scattering theory, we refer to the survey~\cite{Zworski17} or to the recent  book~\cite{DyatlovZworski19}.

\subsubsection{Duality in convex geometry and trace formulae}\label{ss:classical-quantum}
As explained in~\cite[\S3.2.1]{DangLeautaudRiviere22}, the Hamiltonian vector field $X_{h_{\pm K}}|_{\{h_{\pm K}=1\}}$ induced by the pseudodifferential elliptic operator $h_{\pm K}(D_x)$ is smoothly conjugated to a contact vector field on the unit tangent bundle of $\mathbb{T}^d$. The analysis in this reference shows that the Poincar\'e series  $\mathcal{P}_{K_0}$ can in fact be expressed in terms of the resolvent of this vector field (see e.g.~\eqref{e:key-integration-formula2} below) whose singularities turn out to be in correspondance with the spectrum of $h_{\pm K}(D_x)$. This can be understood as an instance, in the case of certain completely integrable systems, of the classical-quantum correspondance obtained for contact Anosov flows in~\cite{DyatlovFaureGuillarmou2015, FaureTsujii17, FaureTsujii17b, FaureTsujii21}.

We first discuss the case where $K_0=\{0\}$ in which  the duality results of~\cite{DangLeautaudRiviere22}  have a simpler and clearer interpretation. For $x\in\mathbb{R}^d$, one has~\cite[Lemma~3.3]{DangLeautaudRiviere22}
\[
d_K(0,x)=\inf\left\{t>0:\ \frac{x}{t}\in K\right\}=\inf\left\{t>0:\ h_{K^\circ}\left(\frac{x}{t} \right)\leq 1\right\}=h_{K^\circ}(x),
\]
where $K^\circ :=\left\{\xi\in\mathbb{R}^d:\ \xi\cdot x\leq 1\ \text{for all } x\in K\right\}$ is the convex set dual to $K$ and $h_{K^\circ}$ denotes the support function of the convex set $K^\circ$, defined in~\eqref{e:defh_K} (with $K$ replaced by $K^\circ$). In particular, the distribution
\[
T_{0}(t) :=\sum_{\lambda\in \Sp(h_{K^\circ}(D_\xi))}\delta(t-\lambda), \quad D_\xi = -i\partial_\xi ,
\]
is an element of $\mathcal{S}'(\R)$ thanks to~\eqref{e:countingfunction}. Hence, one can consider its Fourier transform
\begin{equation}
\label{eq:Fourier-Trace}
\widehat{T}_0(\tau):=\sum_{\lambda\in \Sp(h_{K^\circ}(D_\xi))}e^{-i\tau\lambda}=\mathcal{P}_{\{0\}}(i\tau)\ \in\ \mathcal{S}'(\R).
\end{equation}
Theorem~$9.6$ from~\cite{DangLeautaudRiviere22} shows that $\widehat{T}_0(\tau)$ is smooth on $\R \setminus\big(-i\Lambda_K\cup\{0\}\big)$ when $\partial K$ is smooth while Theorem~\ref{t:maintheo1} proves that it is real analytic on the same subset as soon as $\partial K$ is analytic. It is also interesting to formulate these results in terms of the propagator $e^{-i\tau h_{K^\circ}(D_\xi)}$ of the Schr\"odinger equation (or rather the half-wave equation since the symbol $x \mapsto h_{K^\circ}(x)$ is homogeneous of degree one) 
\[
i\partial_\tau u=h_{K^\circ}(D_\xi) u,\quad (\tau,\xi)\in\R\times\R^d/\IZ^d, \quad D_\xi = -i\partial_\xi .
\]
Notice that $\widehat{T}_0(\tau)$ is the distributional trace of the group $(e^{-i\tau h_{K^\circ}(D_\xi)})_{\tau \in \R}$. Thus~\cite[Theorem~9.6]{DangLeautaudRiviere22} yields
\begin{align}
\label{e:formule-des-traces}
\text{singular support}\left(\text{Tr}\left(e^{-i\tau h_{K^\circ}(D_\xi)}\right)\right)\subset \Sp(h_{K}(D_x))\cup \Sp(-h_{-K}(D_x)),
\end{align}
and in addition describes precisely the distribution $\widehat T_0$ near its singularities in terms of standard distributions $F(\tau-\lambda\pm i0)$ for some explicit functions $F$. Note that even when $K$ is symmetric, contrary to the case of the round ball, there is no explicit relation between $\mathcal{P}_{x_0}(s)$ and the kernel of $(s^2+h_K(D_x)^2)^{\tfrac{d+1}{2}}$, even though their singular locus is the same. Here $h_{K}(D_x)$ is considered as a selfadjoint elliptic translation-invariant operator on $L^2(\T^d)$ and  
\[
\Sp(h_{K}(D_x)) = \{ h_{K}(\xi) , \xi \in \Z^d \} .
\]
Recalling that $K^{\circ\circ}=K$, one obtains a converse statement for $\text{Tr}\left(e^{-i\tau h_{K}(D_x)}\right)$. Hence, the singularities of the distributional trace $\text{Tr}\left(e^{-i\tau h_{K}(D_x)}\right)$ defined from $K$ are determined from the spectrum of the Schr\"odinger operator defined from its dual convex set $K^\circ$ and vice versa. 
Under this form,~\eqref{e:formule-des-traces} may be understood as a way to write the trace formula~\cite{ColindeVerdiere73a, ColindeVerdiere73b, Chazarain74, DuistermaatGuillemin75} from the point of view of convex geometry. Theorem~$9.6$ in~\cite{DangLeautaudRiviere22} shows that this property on the singular support holds true more generally when one considers the more geometric distribution 
\begin{equation}
\label{eq:with-a-K_0}
\widehat{T}_{K_0}(\tau)=\mathcal{P}_{K_0}(i\tau)=\sum_{\xi\in\Z^d}e^{-i\tau\inf\{h_{K^\circ}(\xi-x):\ x\in K_0\}},
\end{equation}
associated with any smooth strictly convex subset $K_0$ for which there is no obvious spectral interpretation. This is due to the connection of these zeta functions/distributions with the spectral analysis of the operator $V$ discussed above. Let us also add that while Formula \eqref{eq:Fourier-Trace} may suggest to use a Poisson summation formula to study $\mathcal{P}_{x}(s)$, it does not seem to be good direction to attack the general $K_0$ case, considering \eqref{eq:with-a-K_0}.

\subsection{Sketch of the proofs, dynamics and spectral theory}
\label{s:sketch}
In the present section, we briefly describe the proofs of Theorems~\ref{t:maintheo1},~\ref{t:maintheo2} and their links with dynamics and spectral theory of multiplication operators, namely Theorem~\ref{t:maintheo-multiplication}.
As noticed in~\cite{DangLeautaudRiviere22}, the study of the Poincar\'e series $\mathcal{P}_{K_0}$ is related to a natural dynamical system on $\mathbb{T}^d\times\IS^{d-1}$. The latter allows to rewrite $\mathcal{P}_{K_0}$ in terms of a certain spectral resolvent of a vector field. Such a dynamical reinterpretation of a geometric counting problem dates back to the seminal works of Margulis~\cite{Margulis69, Margulis04}, who dealt with this type of counting problems on manifolds with variable negative curvature and established the connection with the ergodic properties of the measure of maximal entropy. As explained in~\cite{DangLeautaudRiviere22}, this point of view can also be adopted in the present set-up and the dynamical system involved is given by 
\begin{align*}
e^{tV}:(x,\theta)\in \mathbb{T}^d\times\IS^{d-1}\mapsto (x+t\mathbf{v}(\theta),\theta)\in \mathbb{T}^d\times\IS^{d-1},
\end{align*}
where $\theta\in\mathbb{S}^{d-1}\mapsto \mathbf{v}(\theta)\in\IS^{d-1}$ is the parametrization of $\partial K$ by its outward normal. One can show (see~\cite[\S3.2.2]{DangLeautaudRiviere22}) that $e^{tV}$ is the pullback by the appropriate scaling of impulsions to $\T^d\times\IS^{d-1}$ of the Hamiltonian flow generated by $h_K$ on the energy layer $\{h_K=1\}$. Then, following~\cite{DangRiviere20d} and setting 
\begin{align}
\label{e:def-V}
V:=\mathbf{v}(\theta)\cdot\partial_x, \quad \text{ acting on } \mathbb{T}^d\times\IS^{d-1} ,
\end{align} 
the strategy in~\cite[\S9]{DangLeautaudRiviere22} (see also~\cite[Section~3.3]{DangLeautaudRiviereJEDP}) consists first in interpreting the length distribution $T_{K_0}$ in~\eqref{e:def-TK0} as a correlation function (rather a ``correlation distribution'') between two De Rham currents (carried by $\d K_0$ and $0$ in $\T^d$ respectively). Second, the Poincar\'e series $\mathcal{P}_{K_0}$ being the Laplace transform of the Distribution  $T_{K_0}$, it can be expressed in terms of the resolvent $(V+s)^{-1}$ acting on certain anisotropic spaces of De Rham currents. The $\mathcal{C}^\infty$-extension results obtained in~\cite{DangLeautaudRiviere22} can hence be interpreted as a form of limiting absorption principle for the first order differential operator $V$ on the imaginary axis. In this article, we will follow a similar strategy except that we will take advantage of the analytic regularity to go beyond the $\mathcal{C}^\infty$ continuation on the imaginary axis (away from the set $\Lambda_K$ defined in~\eqref{e:def-lambdak}).

In order to clarify the spectral results we are aiming at, let us discuss the spectral properties of the differential operator $V$ acting on functions (rather than De Rham currents as needed for Poincar\'e series). First, one can verify that 
$$
(V+s)^{-1}:L^{2}(\T^d\times\IS^{d-1})\rightarrow L^2(\T^d\times\IS^{d-1})
$$
defines a holomorphic family of bounded operator outside $\{\Re(s)=0\}$. This follows for instance from the fact the selfadjoint operator $-iV:\text{Dom}(-iV)\subset L^2\rightarrow L^2$ has spectrum exactly equal to $\mathbb{R}$. This result holds in fact for any volume preserving vector field which has a nonperiodic orbit~\cite{Guillemin77}. A convenient feature of working on the torus is that we can decompose the resolvent $(V+s)^{-1}$ according to the Fourier modes $e^{i\xi\cdot x}$, $\xi \in \Z^d$. Thus, analyzing the properties of the resolvent $(V+s)^{-1}$ amounts to analyze the resolvent of the (family, indexed by the Fourier mode $\xi \in \Z^d$, of) multiplication operator(s) by the function $\theta\mapsto i \xi\cdot \mathbf{v}(\theta)$, namely, 
$$
\text{ for } \xi\in\Z^d,\qquad \mathbf{m}_{i\xi\cdot \mathbf{v}}:u\in L^2(\IS^{d-1})\mapsto i(\xi\cdot \mathbf{v})\times u\ \in\ L^2(\IS^{d-1}), 
$$
that is to say $\mathbf{m}_{i\xi\cdot \mathbf{v}}= i \mathbf{m}_{\xi\cdot \mathbf{v}} = i \mathbf{m}_{\f}$ with the notation~\eqref{e:multiplication-f}, and with $\f(\theta)=\xi\cdot \mathbf{v}(\theta)$.
Equivalently, one can decompose $(V+s)^{-1}$ as
$$
(V+s)^{-1}\left(\widehat{u}_\xi\right)_{\xi\in\mathbb{Z}^d}=\left((\mathbf{m}_{i\xi\cdot \mathbf{v}}+s)^{-1}\widehat{u}_\xi\right)_{\xi\in\Z^d},
$$
where we have written $u=\sum_{\xi\in\Z^d}\widehat{u}_\xi(\theta)e^{i\xi\cdot x}$ and identified $u$ and $\left(\widehat{u}_\xi\right)_{\xi\in\mathbb{Z}^d}$. 
From the strict convexity assumption on $K$ (in the sense of Definition~\ref{e:def-convex-body}), one can in addition verify that, for $\xi\neq 0$, the function $\theta \mapsto \f(\theta)=\xi\cdot \mathbf{v}(\theta)$ is a Morse function with only two critical points given by $\min \f = -\xi\cdot \mathbf{v}(\frac{\xi}{|\xi|}) = -h_{-K}(\xi)<0$ and $\max \f = \xi\cdot \mathbf{v}(\frac{\xi}{|\xi|}) = h_{K}(\xi)>0$. 
As a consequence of the discussion on multiplication operators at the beginning of Section~\ref{s:multiplicatop}, the associated resolvent
$$
(\mathbf{m}_{i\xi\cdot \mathbf{v}}+s)^{-1}: L^2(\IS^{d-1})\mapsto L^2(\IS^{d-1})
$$
defines a holomorphic function on $\mathbb{C}\setminus i[-h_{-K}(\xi),h_K(\xi)]$. Extending the Poincaré series $\mathcal{P}_{K_0}$ (starting from $\Re(s)>0$) hence reduces to extending $(V+s)^{-1}$ and thus to:
\begin{enumerate}
\item \label{i:goal-1} extend all resolvents $(\mathbf{m}_{i\xi\cdot \mathbf{v}}+s)^{-1}$ starting from the region $\{\Re(s)>0\}$ across the slit $i[-h_{-K}(\xi),h_K(\xi)]$, in appropriate spaces, and 
\item \label{i:goal-2}  be able to make sums over the parameter  $\xi \in \Z^d$ converge.
\end{enumerate}

Recalling that $h_K$ is homogeneous of degree one, the interval $i[-h_{-K}(\xi),h_K(\xi)]$ increases with $|\xi|$.
The strategy we develop consists in splitting the sum over $\xi\in \Z^d$ into 
\begin{itemize}
\item large frequencies, i.e. large values of $|\xi|$, for which on the one hand, the summation issue of Goal~\eqref{i:goal-2} is taken care of by means of careful uniform estimates in terms of $|\xi|$, but on the other hand, only continuation (hence goal~\eqref{i:goal-1}) into the {\em interior }of the interval $i[-h_{-K}(\xi),h_K(\xi)]$ (that is, away from the critical values $\pm i h_K(\pm \xi)$) is required; 
\item small frequencies, i.e. a finite number of remaining values of $\xi$, for which on the one hand no uniform estimates are needed, but on the other hand, continuation (hence goal~\eqref{i:goal-1})  across  $i[-h_{-K}(\xi),h_K(\xi)]$ up to the critical values $\pm i h_K(\pm \xi)$ has to be precisely described. This is where Theorem~\ref{t:maintheo-multiplication} is applied.
\end{itemize}

In both situations (i.e. working on high and low frequencies), to extend the resolvent $(\mathbf{m}_{i\xi\cdot \mathbf{v}}+s)^{-1}$, we rely on ideas reminiscent from microlocal complex deformation methods of Helffer and Sj\"ostrand~\cite{HelfferSjostrand86} even if we do not formulate things in the exact same manner. See also~\cite{GalkowskiZworski19,GalkowskiZworski19a,JezequelWang2025} for recent applications of this mehod for models of operators close to ours by Galkowski--Zworski and J\'ez\'equel--Wang in the context of fluid mechanics. More precisely, in the ``low frequency situation'', when proving Theorem~\ref{t:maintheo-multiplication}, we perform an explicit analysis of the operator at hand to determine the precise shape of the singularities. Similar ideas were already used when studying resonances of Schr\"odinger operators with continuous spectrum. Here, they are applied in the simple setting of multiplication operators and, after summation, allow to homorphically continue $(V+s)^{-1}$ (in appropriate spaces). This constitutes the basic brick for our analysis. In order to reach the geometric application to the Poincar\'e series $\mathcal{P}_{K_0}$, several additional ingredients are required including (i) a uniform control on the size of the neighborhood $\mathcal{U}$ in the statement of Theorem~\ref{t:maintheo-multiplication} in terms of $\xi$ when $\f=\xi \cdot \mathbf{v}$, and 
(ii) the understanding of powers $(z-\mathbf{m}_\f)^{-\ell}$ of the resolvent in order to deal with the case of De Rham currents. Then, in the ``large frequency'' situation, away from the critical values of $\f$, we use the oscillatory structure of our integrals to ensure convergence in $\xi$ trough a fixed open interval $(-N_0,N_0)$. 

Finally, we emphasize that the idea of using complex deformation methods for studying the spectral properties of integrable systems on the torus can be traced back to the works of Degond on the linear Vlasov equation~\cite{Degond86}. In that reference, the analysis also boils down to the description of the resolvent for compact perturbations of multiplication operators. One of the main differences with the setting of the present article is that, due to the geometric applications to Poincar\'e series, we need to deal with multiplication operators on the sphere. This ultimately leads to the existence of logarithmic/squareroot singularities due to the presence of critical points for $\f$. In view of dealing with convergence to equilibrium for the (linear) Vlasov equation, Degond rather worked on the full space $\T^d\times\R^d$ and ended up with studying multiplication operators on the noncompact space $\R^d$ for functions $\f$ having no critical points. The absence of such points allowed him to prove, using analytic weights, the existence of a more standard resonance spectrum as in the case of Schr\"odinger type operators~\cite{HelfferSjostrand86}.

 \subsection{Organization of the article}

In Section~\ref{s:dynamics}, we begin by a short review of the results from~\cite{DangLeautaudRiviere22} that will be used in this article. As in this reference, this allows us to rewrite Poincar\'e series through Fourier series as an infinite sum of oscillatory integrals over $\mathbb{S}^{d-1}$. 
These oscillatory integrals will later be interpreted as powers of resolvent for multiplication operators on $\mathbb{S}^{d-1}$, amenable to statements like Theorem~\ref{t:maintheo-multiplication}. In particular, we split this sum into two parts: one involving the high Fourier modes (``high frequencies'') and another one involving only a finite number of modes (``low frequencies''). After that, we formulate our two main technical results: Proposition~\ref{p:low-freq} for low frequencies and Proposition~\ref{p:highfrequency} for high frequencies. The proofs of these two results is delayed to Sections~\ref{s:low-freq} and~\ref{s:high-freq} respectively. Section~\ref{s:dynamics} ends with the proof of Theorems~\ref{t:maintheo1} and~\ref{t:maintheo2} building on these two technical results.

Section~\ref{s:analytic-norms} is devoted to the description of the analytic norms that are used all along the article, along with their calculus properties. These are more or less standard tools that we review here for the sake of completeness and self-containedness.

Section~\ref{s:low-freq} deals with the analysis of a finite number of Fourier modes and thus of Proposition~\ref{p:low-freq}. We first focus on one multiplication operator and prove a version of Theorem~\ref{t:maintheo-multiplication} with parameters (see Theorem~\ref{t:multiplicationoperators}), using complex deformation arguments.

Section~\ref{s:high-freq} deals with the analysis of the high Fourier modes using complex deformation methods in the high frequency limit by optimizing the results of Section~\ref{s:low-freq} carefully.

The article also contains two appendices. Appendix~\ref{a:branched} collects some material on the notion of multivalued holomorphic functions, allowing to reformulate Theorems~\ref{t:maintheo1}--\ref{t:maintheo2} and~\ref{t:maintheo-multiplication} as continuation of objects as multivalued holomorphic functions.
Finally, Appendix~\ref{ss:analyticMorse} provides with a quantitative version of the analytic Morse Lemma needed in Section~\ref{s:low-freq} when analyzing the behaviour near critical values.

\subsection*{Acknowledgements}

We thank Tanya~Christiansen, Kiril~Datchev, Alix~Deleporte, Malo~J\'ez\'equel and Julien~Royer for useful discussions related to various aspects of the article. NVD, ML and GR are partially supported by the Institut Universitaire de France. YGB, ML and GR are partially supported by the Agence Nationale de la Recherche through the grant ADYCT (ANR-20-CE40-0017). GR also acknowledges partial support from the Centre Henri Lebesgue (ANR-11-LABX-0020-01).

\section{Rewriting Poincar\'e series as a sum of complex integrals}
\label{s:dynamics}

 In this preliminary section, we review the constructions from~\cite[\S8]{DangLeautaudRiviere22} in order to express Poincar\'e series for convex bodies in terms of infinite sums of certain oscillatory integrals. This decomposition is presented in Lemma~\ref{l:decomp-zeta-I} below.  These oscillatory integrals will then be analyzed in the upcoming sections using methods from complex analysis. 
In the present Section~\ref{s:dynamics}, we begin by recalling a few basic properties of strictly convex subsets of $\R^d$ in \S\ref{ss:convex}. In~\S\ref{ss:dynamics}, we explain how to interpret our counting problem in a dynamical manner and to rewrite Poincar\'e series by involving the vector field $V$ in~\eqref{e:def-V}. Next, \S\ref{ss:oscillatory} is devoted to the rewriting of Poinvar\'e series in terms of a sum of oscillatory/complex integrals using this alternate representation of Poincar\'e series. 
 We conclude this section by \S\ref{s:main-tech-res}, in which we formulate our main two technical results, namely Propositions~\ref{p:low-freq} and Proposition~\ref{p:highfrequency}, and apply them to provide a proof of Theorems~\ref{t:maintheo1} and~\ref{t:maintheo2}.

\subsection{Parametrization of convex sets}
\label{ss:convex}
If $K$ is a smooth strictly convex compact body (in the sense of Definition~\ref{e:def-convex-body}), then the \emph{Gauss map}
$$
G: (x,\theta)\in N_+(K)\mapsto \theta\in\IS^{d-1}
$$
is a smooth diffeomorphism, where $N_+(K)$ is the outward normal bundle to $K$ -- see~\cite[\S3.1]{DangLeautaudRiviere22} for more details.
We can write 
\begin{align}
\label{e:inverse-gauss}
G^{-1}(\theta)=(\mathbf{v}(\theta),\theta) ,\quad\text{with}\quad\mathbf{v}:\IS^{d-1}\to \d K,
\end{align}
being a smooth diffeomorphism. 
That is to say, $\mathbf{v}$ is the parametrization of $\partial K$ by its outward normal. 
We shall see in \S\ref{ss:analyticfunction} below (once spaces of analytic functions will be properly defined) that if $K$ is in addition analytic, then $\mathbf{v}$ is a real-analytic diffeomorphism and has all its components in a standard analytic space.
Note that the assumption $0 \in \mathring{K}$ translates as (see~\cite[Remark~3.2 p~942]{DangLeautaudRiviere22} for a proof)
\begin{equation}
\label{e:asspt-hyper-importante}
\theta \cdot \mathbf{v}(\theta)>0,\quad  \text{ for all }\theta\in \mathbb{S}^{d-1} .
\end{equation}
From this point forward, we will always assume that $K$ is strictly convex in the sense of Definition~\ref{e:def-convex-body} (so that $\mathbf{v}$ is well defined) and that~\eqref{e:asspt-hyper-importante} is satisfied.

In this parametrization, the support function $h_K$ (resp. $h_{-K}$) of the convex set $K$ (resp. $-K$), defined in~\eqref{e:defh_K} reads 
\begin{align}
\label{e:hK-fct-v}
h_{K}(\xi) = \xi\cdot\mathbf{v}\left(\frac{\xi}{|\xi|}\right) , \quad h_{-K}(\xi)= h_K(-\xi) = -\xi\cdot\mathbf{v}\left(-\frac{\xi}{|\xi|}\right),\quad \text{for all}\quad \xi\in\IR^d\setminus \{0\} ,
\end{align}
see~\cite[Section~3.2]{DangLeautaudRiviere22}. We conclude this paragraph with the following geometric lemma that will be used later on in our arguments.
\begin{lemma}\label{l:geometric-lemma}
Let $K\subset \R^d$ ($d\geq 2$) be a strictly convex compact body such that $0 \in\mathring{K}$,  and let $\mathbf{v}$ be the associated parametrization of $\d K$ according to~\eqref{e:inverse-gauss}.  Then there exists $c_0>0$ such that, for every $\theta\in \IS^{d-1}$ and for every $\omega\in\IS^{d-1}$, one has
\[ 
\left|\nabla_{\IS^{d-1}}(\omega\cdot \mathbf{v}(\theta))\right|^2\geq c_0d_{\IS^{d-1}}(\theta,\{\pm \omega\})^2,
\]
 where $\nabla_{\IS^{d-1}}$ denotes the gradient with respect to $\theta$, and
\[ 
|\omega\cdot\mathbf{v}(\theta)-\omega\cdot\mathbf{v}(\pm\omega)|\leq c_0^{-1}d_{\IS^{d-1}}(\theta,\pm\omega)^2.
\]
\end{lemma}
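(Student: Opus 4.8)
The statement is local and geometric, so I would reduce everything to a neighborhood of the ``poles'' $\pm\omega$ and use the strict convexity hypothesis, which says that the second fundamental form of $\partial K$ is positive definite everywhere; equivalently, the Gauss map $G$ and its inverse $\mathbf{v}$ are diffeomorphisms with non-degenerate differential. The key observation is that the function $\theta\mapsto \omega\cdot\mathbf{v}(\theta)$ on $\IS^{d-1}$ attains its maximum exactly at $\theta=\omega$ and its minimum exactly at $\theta=-\omega$: indeed $\max_{\theta}\omega\cdot\mathbf{v}(\theta)=h_K(\omega)=\omega\cdot\mathbf{v}(\omega)$ by~\eqref{e:hK-fct-v}, and since $\mathbf{v}$ is injective this maximum is attained at the single point $\theta=\omega$; similarly for the minimum at $-\omega$. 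So $\pm\omega$ are the only two critical points of this function, both non-degenerate — this is precisely the Morse structure recorded in Section~\ref{s:sketch}.

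\textbf{Step 1: the pointwise Hessian bound.} First I would show that at $\theta=\omega$ the Hessian of $\theta\mapsto\omega\cdot\mathbf{v}(\theta)$ (with respect to the round metric) is negative-definite with eigenvalues bounded away from zero, uniformly in $\omega$, and likewise positive-definite at $\theta=-\omega$. This is where strict convexity enters quantitatively: differentiating the relation $\nabla_{\IS^{d-1}}(\omega\cdot\mathbf{v}(\theta))\big|_{\theta=\omega}=0$ and using that $D\mathbf{v}$ at $\omega$ is (up to the identification of tangent spaces) the inverse of the shape operator, one gets $\Hess_\omega(\omega\cdot\mathbf{v})=-(\text{II})^{-1}$ essentially, which is $\leq -c\,\mathrm{Id}$ with $c>0$ depending only on $K$ by compactness and positivity of the sectional curvatures. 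By continuity of $\theta,\omega\mapsto \Hess_\theta(\omega\cdot\mathbf{v})$ on the compact set $\IS^{d-1}\times\IS^{d-1}$, the Hessian stays uniformly definite on a fixed-size geodesic ball around each pole.

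\textbf{Step 2: from Hessian bounds to the two displayed inequalities.} On a fixed neighborhood $U$ of $\omega$, the lower bound $|\nabla_{\IS^{d-1}}(\omega\cdot\mathbf{v}(\theta))|\geq c\, d_{\IS^{d-1}}(\theta,\omega)$ follows by Taylor expansion of the gradient (a $C^1$ function vanishing at $\omega$ with non-degenerate derivative there), and the upper bound $|\omega\cdot\mathbf{v}(\theta)-\omega\cdot\mathbf{v}(\omega)|\leq C\, d_{\IS^{d-1}}(\theta,\omega)^2$ from the second-order Taylor expansion of the function itself (which has vanishing gradient at $\omega$). The same holds near $-\omega$. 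Away from both poles — i.e. on the complement of the two fixed balls, a compact set on which $d_{\IS^{d-1}}(\theta,\{\pm\omega\})$ is bounded above and below — the gradient $\nabla_{\IS^{d-1}}(\omega\cdot\mathbf{v}(\theta))$ is non-vanishing (no other critical points), hence bounded below by a positive constant, which trivially dominates a constant times $d_{\IS^{d-1}}(\theta,\{\pm\omega\})^2$; and the upper bound there is trivial since $\omega\cdot\mathbf{v}$ is bounded. Patching the local estimates with these global ones and taking $c_0$ to be the worst constant over the (compact parameter space of) $\omega\in\IS^{d-1}$ gives the claim. Strictly speaking one should note that near $+\omega$ the second inequality compares to $d_{\IS^{d-1}}(\theta,+\omega)^2$ and this is $\leq d_{\IS^{d-1}}(\theta,\{\pm\omega\})^2$ only... actually the inequality as stated uses $d_{\IS^{d-1}}(\theta,\pm\omega)$ meaning one of the two, whichever pole $\theta$ is near, so this is consistent.

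\textbf{Main obstacle.} The only genuine subtlety is the \emph{uniformity in $\omega$}: one must ensure the constant $c_0$ and the size of the neighborhoods can be chosen independently of $\omega\in\IS^{d-1}$. This is handled by the compactness of $\IS^{d-1}\times\IS^{d-1}$ together with the fact that all the relevant quantities — the shape operator of $\partial K$, hence $D\mathbf{v}$, hence $\Hess(\omega\cdot\mathbf{v})$ — depend continuously (indeed smoothly) on $(\theta,\omega)$ and the critical-point structure is ``the same'' at every pole by the rotational symmetry of the construction in $\omega$. A clean way to organize this is to argue by contradiction: if no such $c_0$ existed, one could extract sequences $\theta_n,\omega_n$ violating the bounds with constants $\to 0$ (or $\to\infty$), pass to convergent subsequences, and contradict the non-degeneracy at the limiting pole. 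I would also double-check the edge case $d=2$, where $\IS^1$ is one-dimensional and ``sectional curvature'' of $\partial K$ should be read as positive geodesic curvature; the argument is unchanged.
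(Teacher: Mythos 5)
Your proposal is correct and follows essentially the same route as the paper: identify $\pm\omega$ as the only critical points of $\theta\mapsto\omega\cdot\mathbf{v}(\theta)$, derive non-degeneracy of these critical points from strict convexity via the shape operator (the paper phrases this on $\partial K$ for the linear function $\mathbf{v}\mapsto\mathbf{v}\cdot\omega$, you phrase it on $\IS^{d-1}$ after pulling back by $\mathbf{v}$, which is equivalent), and obtain uniformity in $(\theta,\omega)$ by compactness. Your Step~2 merely spells out the Taylor-expansion/patching argument that the paper leaves implicit when it says the lemma "amounts to" the Morse non-degeneracy statement.
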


\begin{proof} 
Let $\omega\in\IS^{d-1}$ and set $\phi_{\omega}(\theta):=\mathbf{v}(\theta)\cdot\omega,$ for every $\theta\in\IS^{d-1}$. 
According to~\eqref{e:inverse-gauss}, this defines a map $\tilde{\phi}_\omega:\mathbf{v}\in\partial K\mapsto \mathbf{v}\cdot\omega$. Since $\partial K$ is compact, proving the lemma amounts to prove that $\mathbf{v}(\pm \omega)$ are the unique critical points of $\tilde{\phi}_\omega$, and they are non-degenerate. The uniformity follows from the compactness of the parameter space $\omega\in\IS^{d-1}$ and $\mathbf{v}\in \d K$.

Observe  that the critical points of the map $\tilde{\phi}_\omega$ correspond to the points $\mathbf{v}$ in $\partial K$ such that $T_{\mathbf{v}}\partial K$ is orthogonal to $\omega.$ In other words, this map has exactly two critical points given by $\mathbf{v}(\pm\omega)\in\d K$. Let us now check that they are non-degenerate. Denote by $N_\mathbf{v}$ the unit outer normal to $\partial K$, and also one of its smooth extensions to a neighborhood of $\partial K$ in $\R^d$. We will need some basic concepts from the theory of isometric immersions to finish the proof, see \cite{Submanifolds-and-holonomy} for a presentation. The shape operator is an endomorphism of $T\partial K$ given by
\[
A : u \in T(\partial K) \mapsto - pr(d_u N_\mathbf{v}), 
\]
where $pr$ is the orthogonal projection on $T(\partial K)$~\cite[Def.~4.17]{Lee09}. The Gauss equation relates the curvature of $\partial K$ with the shape operator, so that the strict convexity assumption means that $A$ is negative definite. Now, since $\tilde{\phi}_\omega$ is the restriction to $\partial K$ of a linear function on $\R^d$, its hessian at the critical point is given by 
\[
\nabla^2_{\partial K} \tilde{\phi}_\omega(X,X) = \langle A(X),X\rangle 
\]
The strict convexity property thus translates into the non-degeneracy of the critical points. 
%
\end{proof}

\subsection{Lifting to the tangent bundle}
\label{ss:dynamics}

In order to describe the analytical properties of the function $\mathcal{P}_{K_0}$, it is convenient to lift the associated counting problem~\eqref{e:countingfunction} into a dynamical question on the unit tangent bundle $S\IT^d=\IT^d\times\IS^{d-1}$ of the torus. Doing so for counting lattice points relative to some subset with appropriate convexity assumptions is a classical idea that can be traced back to Margulis' PhD thesis~\cite{Margulis69, Margulis04} in the context of hyperbolic geometry. See also~\cite{BroiseParkkonenPaulin2019} for a recent book on these questions. Here, following~\cite{DangRiviere20d, DangLeautaudRiviere22}, we will rewrite the Poincar\'e series $\mathcal{P}_{K_0}$ in terms of De Rham currents and of the flow 
\begin{align}
\label{e:def-varphi-t}
\varphi^t=e^{tV}:(x,\theta)\in S\IT^d\mapsto(x+t\mathbf{v}(\theta),\theta)\in S\IT^d,
\end{align}
where $\mathbf{v}:\theta\in\IS^{d-1}\mapsto \mathbf{v}(\theta)\in\partial K$ is the parametrization of $\partial K$ by its outward normal $\theta\in\IS^{d-1}$. Note that the corresponding vector field $V=\mathbf{v}(\theta)\cdot\partial_x$ already appeared in~\eqref{e:def-V}. Before considering the analytical issues raised by the study of Poincar\'e series, let us briefly recall how this question is handled in~\cite{DangLeautaudRiviere22} in the $\mathcal{C}^\infty$ case. The key point is that, for any smooth function $\eta$ that is compactly supported in $(0,\infty)$, one has~\cite[Cor.~4.18]{DangLeautaudRiviere22}
\begin{equation}\label{e:key-integration-formula}
\sum_{\xi\in 2\pi\mathbb{Z}^d\setminus K_0}\eta(d_K(K_0,\xi))=(-1)^{d-1}\int_{S\IT^d}[\Sigma_1]\wedge\iota_V\left( \int_{\IR}\eta(t)\varphi^{-t*}([\Sigma_2]) dt \right),
\end{equation}
where $[\Sigma_1]$ and $[\Sigma_2]$ are the currents of integration on the submanifolds
\[
\Sigma_2:=N_+(K_0)=\{(x_{K_0}(\theta),\theta):\,\theta\in\IS^{d-1}\},\quad\text{and}\quad
\Sigma_1=S_{[0]}\IT^d=\{[0]\}\times\IS^{d-1},
\]
where $\theta\in\mathbb{S}^{d-1}\mapsto x_{K_0}(\theta)\in\partial K_0$ is the parametrization of $\partial K_0$ by its outward normal.
\begin{remark}
In the case of the convex $K$ defining the Finsler metric, we used the convention $\mathbf{v}=x_K$.
\end{remark}
Here both submanifolds are oriented with the canonical volume form $\Vol_{\IS^{d-1}}(\theta,d\theta)$. Recall from~\cite[Eq.~(4.6)]{DangLeautaudRiviere22} that these currents of integration can be written as
\[
[\Sigma_1](x,\theta,dx,d\theta)=\delta_0^{\IT^d}(x)dx^1\wedge\ldots \wedge dx^d,
\]
and
\begin{align}
\label{e:Sigma-current}
[\Sigma_2](x,\theta,dx,d\theta):=\delta_0^{\IT^d}(x-x_{K_0}(\theta))\bigwedge_{j=1}^dd\left(x^j-x_{K_0}^j(\theta)\right),
\end{align}
with 
\begin{align}
\label{e:def-delta}
\delta_0^{\IT^d}(x)=\sum_{\xi\in\IZ^d}\frac{e^{i\xi\cdot x}}{(2\pi)^d}.
\end{align}
According to Lemma 9.5 in~\cite{DangLeautaudRiviere22}, formula~\eqref{e:key-integration-formula} still makes sense for $\eta(t) = \chi_0(t)e^{-st}$ where $\Re(s)>0$ and $\chi_0$ is a smooth nondecreasing function on $\IR$ which is equal to $0$ on $(-\infty,0]$ and to $1$ on $[T_0,\infty)$, where $T_0$ is chosen small enough so that if $x\notin K_0$ and $x\in 2\pi \IZ^d$, $d_K(x,K_0)\geq T_0$. That is to say, with $\mathcal{P}_{K_0}$ defined in~\eqref{e:def-PK0}, for $\Re(s)>0$, one has
\begin{equation}\label{e:key-integration-formula2}
 \mathcal{P}_{K_0}(s)=(-1)^{d-1}\int_{S\IT^d}[\Sigma_1]\wedge\iota_V\left( \int_{\IR}\chi_0(t)e^{-st}\varphi^{-t*}([\Sigma_2]) dt \right).
\end{equation}
Hence, studying the analytical properties of the function $\mathcal{P}_{K_0}$ amounts to studying the analytical properties of the right-hand side of~\eqref{e:key-integration-formula2}.
\begin{remark}\label{r:support-chi0} In fact, up to removing a finite number of terms in the sum defining $\mathcal{P}_{K_0}$ (hence an entire function) and as we are only interested in the holomorphic properties of $\mathcal{P}_{K_0}$, we may without loss of generality replace $\chi_0$ by some $\chi_1$ satisfying
\begin{equation}\label{e:supportchi0}
\chi_1\in\mathcal{C}^\infty(\IR_+,[0,1]),\ \text{supp}(\chi_1)\subset (\kappa_1,\infty),\ \text{and}\ \text{supp}(1-\chi_1)\subset (-\infty,2\kappa_1),
\end{equation}
with $\kappa_1>0$ as large as we want. In particular, $\text{supp}(\chi_1')\subset(\kappa_1,2\kappa_1)$. Equivalently, for such a function $\chi_1$, the right hand side of~\eqref{e:key-integration-formula2} is equal to $\mathcal{P}_{K_0}(s)$ up to adding an entire function. Such a function will henceforth be generically denoted by $\mathsf{H}$. 
\end{remark}

\subsection{Decomposing Poincar\'e series as a sum of integrals}
\label{ss:oscillatory}

The goal of this section is to reduce the analysis to the study of certain oscillatory integrals, relying on the fact that we only need to understand the analytical properties of the right-hand side of~\eqref{e:key-integration-formula2}. For this, let us set
\begin{equation}\label{eq:def-Omega}
\Omega(t,\theta,d\theta):=\sum_{j=1}^d(-1)^{j+1}\mathbf{v}^{j}(\theta)\bigwedge_{k\neq j}d\left(t\mathbf{v}^k(\theta)+x_{K_0}^k(\theta)\right)=\left(\sum_{\ell=0}^{d-1} \Omega_\ell(\theta)t^\ell\right)\Vol_{\IS^{d-1}}(\theta,d\theta).
\end{equation}
This defines a volume form, polynomial in $t$, and the $\Omega_\ell(\theta)$ denotes its coefficient of degree $\ell$.
In the spirit of \cite[\S4]{DangLeautaudRiviere22} we find by direct computation that
\[
(-1)^{d-1}\int_{\IT^d} e^{i\xi\cdot x} \Big[dx^1\wedge\dots \wedge dx^d \wedge\iota_V \varphi^{-t*}([\Sigma_2])\Big] = \Omega(t,\theta,d\theta) e^{i\xi\cdot(t\mathbf{v}(\theta)+x_{K_0}(\theta))}.
\]
Expanding the integration currents $[\Sigma_i]$ in Fourier series in $x$, using Fourier inversion together with formul\ae~\eqref{e:Sigma-current}--\eqref{e:def-delta}, we may rewrite \eqref{e:key-integration-formula2} for $\Re(s)>0$ as (see \cite[\S4]{DangLeautaudRiviere22})
\begin{equation}\label{e:poincare-fourier}
 {\mathcal{P}}_{K_0}(s)
 = \mathsf{H}(s)+\frac{1}{(2\pi)^d}\sum_{\xi\in\IZ^d}\int_{\IR}\chi_1(t)e^{-st}\left( \int_{\IS^{d-1}}e^{i\xi\cdot\left(t\mathbf{v}(\theta)+x_{K_0}(\theta)\right)}\Omega(t,\theta,d\theta)\right) dt .
\end{equation}
\begin{remark}
Note that the fact that the sum over $\xi$ in~\eqref{e:poincare-fourier} is convergent comes from an integration by parts arguments present in the proof of Theorem 8.8 of \cite{DangLeautaudRiviere22}. 
We recall briefly that one uses, for $\theta$ close $\pm \xi/|\xi|$, integration by parts in time thanks to the identity
\[
\frac{1}{s-i\xi\cdot\mathbf{v}(\theta)}\frac{d}{dt}\left(e^{-t\left(s-i\xi\cdot\mathbf{v}(\theta)\right)}\right)=e^{-t\left(s-i\xi\cdot\mathbf{v}(\theta)\right)}.
\]
For points $\theta$ away from $\pm \xi/|\xi|$, the integration by parts is in the $\theta$ variable, using the identity 
\[
\frac{1}{it}\frac{\nabla_{\IS^{d-1}} \phi_t}{|\nabla_{\IS^{d-1}}\phi_t(\theta)|^2}\cdot \nabla_{\IS^{d-1}}\left(e^{it\phi_t(\theta)}\right)=e^{it\phi_t(\theta)},\quad\text{with}\ \phi_t(\theta):= \xi\cdot(\mathbf{v}(\theta)+ \frac{1}{t}x_{K_0}(\theta)),
\]
and the fact that $\kappa_1>0$ (and thus $t$) is large enough.
These integration by parts arguments will be used again in the proof of Lemma~\ref{l:decomp-zeta-I} below.
\end{remark}

To formulate the main decomposition formula, we introduce the standard form of integral
\begin{equation}\label{eq:def-I(stuff)}
I_{\lambda,\omega}^{(k)}(z,F,\phi) := \int_{\IS^{d-1}} \frac{e^{i\lambda\phi(\theta)}}{(z- i\omega\cdot \mathbf{v}(\theta))^{k+1}}F(\theta) d\Vol_{\IS^{d-1}}(\theta)
\end{equation}
Here, $\lambda\geq 1, \omega\in\IS^{d-1}, k \in \N$ and $F:\IS^{d-1}\rightarrow \IC$, $\phi:\IS^{d-1}\rightarrow \IR$ are assumed to be smooth functions.

\begin{lemma}
\label{l:decomp-zeta-I}
Assuming that $\chi_1$ satisfies~\eqref{e:supportchi0},
there are numbers $\mathsf{E}^{(\ell)}\in \mathbb{R}$ and for $N\geq 0$ an entire function $\mathsf{H}_N$ such that we have, for $\Re(s)>0$,
 \begin{align}
\label{e:decomposition-zeta-N}
\mathcal{P}_{K_0}(s)
 =\mathsf{H}_N(s) + \sum_{\ell=0}^{d-1}\frac{\mathsf{E}^{(\ell)}}{s^{\ell+1}} + \mathcal{P}_{K_0}^{<N}(s) + \mathcal{P}_{K_0}^{\geq N}(s).
\end{align}
where for some constant $C>0$ (that does not depend on $N$),
\begin{align}
\label{e:expo-bound}
|\mathsf{H}_N(s)| &\leq C(1+ N^d e^{- C\Re(s)}),\\
\intertext{and,}
\mathcal{P}_{K_0}^{<N}(s) & :=\frac{1}{(2\pi)^d}\sum_{0<|\xi|<N}\sum_{\ell=0}^{d-1} \frac{\ell !}{|\xi|^{\ell+1}}I_{|\xi|,\frac{\xi}{|\xi|}}^{(\ell)}\left(\frac{s}{|\xi|},\Omega_\ell, \frac{\xi}{|\xi|}\cdot x_{K_0}\right), \nonumber \\
\label{e:def-zeta-sup}
\mathcal{P}_{K_0}^{\geq N}(s) & :=\frac{1}{(2\pi)^d}\sum_{|\xi|\geq N}\sum_{\ell=0}^{d-1}\sum_{j=0}^\ell \frac{\ell!}{j!|\xi|^{1+\ell-j}} \int_{\IR}\chi_1'(t) t^j e^{-st}I_{|\xi|,\frac{\xi}{|\xi|}}^{(\ell-j)}\left(\frac{s}{|\xi|}, \Omega_\ell, \frac{\xi}{|\xi|}\cdot(t\mathbf{v}+x_{K_0})\right) dt .
\end{align}
\end{lemma}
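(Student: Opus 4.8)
The strategy is to start from the Fourier-series representation~\eqref{e:poincare-fourier} and perform two successive manipulations: (i) expand the $t$-polynomial $\Omega(t,\theta,d\theta) = \left(\sum_{\ell=0}^{d-1}\Omega_\ell(\theta)t^\ell\right)\Vol_{\IS^{d-1}}$ and organize the $\xi$-sum according to whether $|\xi|<N$ or $|\xi|\geq N$; and (ii) for the low-frequency block $|\xi|<N$, integrate out the $t$ variable completely, while for the high-frequency block we retain a single integration by parts in $t$ against $\chi_1'$.

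\medskip

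First I would treat the low-frequency terms $0<|\xi|<N$. Writing $\phi_\xi(\theta) := \frac{\xi}{|\xi|}\cdot x_{K_0}(\theta)$ and using homogeneity $\xi\cdot\mathbf{v}(\theta) = |\xi|\,\frac{\xi}{|\xi|}\cdot\mathbf{v}(\theta)$, the inner integral in~\eqref{e:poincare-fourier} over $\theta$ produces $e^{i|\xi|\,\frac{\xi}{|\xi|}\cdot x_{K_0}(\theta)}\,e^{it|\xi|\,\frac{\xi}{|\xi|}\cdot\mathbf{v}(\theta)}$ against $\sum_\ell \Omega_\ell(\theta)t^\ell\,\Vol_{\IS^{d-1}}$. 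The $t$-integral is then $\int_\IR \chi_1(t)t^\ell e^{-t(s - i|\xi|\,\frac{\xi}{|\xi|}\cdot\mathbf{v}(\theta))}\,dt$; since $\Re(s)>0$ and $\chi_1$ equals $1$ on $(2\kappa_1,\infty)$, splitting $\chi_1 = 1 - (1-\chi_1)$ gives, for each $\ell$,
$$
\int_0^\infty t^\ell e^{-ta}\,dt = \frac{\ell!}{a^{\ell+1}}, \qquad a := s - i\xi\cdot\mathbf{v}(\theta),
$$
plus a correction $-\int_\IR (1-\chi_1(t))t^\ell e^{-ta}\,dt$ whose $\theta$-integrand is smooth and compactly supported in $t$, hence yields an entire function of $s$ after summation over the finitely many $\xi$ with $|\xi|<N$; this is absorbed into $\mathsf{H}_N$. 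The main term $\ell!/a^{\ell+1} = \ell!|\xi|^{-(\ell+1)}\big(\tfrac{s}{|\xi|} - i\tfrac{\xi}{|\xi|}\cdot\mathbf{v}(\theta)\big)^{-(\ell+1)}$, integrated over $\theta$ against $\Omega_\ell(\theta)$, is by definition~\eqref{eq:def-I(stuff)} exactly $\ell!|\xi|^{-(\ell+1)} I^{(\ell)}_{|\xi|,\frac{\xi}{|\xi|}}\!\big(\tfrac{s}{|\xi|},\Omega_\ell,\tfrac{\xi}{|\xi|}\cdot x_{K_0}\big)$, giving the stated formula for $\mathcal{P}^{<N}_{K_0}$.

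\medskip

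Next, for the high-frequency terms $|\xi|\geq N$, I would \emph{not} integrate $t$ out (the resulting sum over $\xi$ would not obviously converge) but instead integrate by parts once in $t$ using $\frac{d}{dt}\big(e^{-t(s-i\xi\cdot\mathbf{v}(\theta))}\big) = -(s-i\xi\cdot\mathbf{v}(\theta))e^{-t(s-i\xi\cdot\mathbf{v}(\theta))}$, i.e. dividing by $a = s - i\xi\cdot\mathbf{v}(\theta)$ and differentiating the remaining factor $\chi_1(t)\big(\sum_\ell\Omega_\ell(\theta)t^\ell\big)$. Because $\chi_1\equiv 1$ near $+\infty$ and $\equiv 0$ near $-\infty$, the boundary terms vanish; the derivative hits either $\chi_1'(t)$ (the term kept explicitly in~\eqref{e:def-zeta-sup}) or the polynomial $t^\ell$, lowering its degree and introducing a factor $\ell$. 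Iterating this $\ell+1$ times on the $t^\ell$ piece annihilates the polynomial entirely, so only the $\chi_1'$ boundary-type terms survive, with an extra power of $a^{-1}$ — equivalently $|\xi|^{-1}(\tfrac{s}{|\xi|}-i\tfrac{\xi}{|\xi|}\cdot\mathbf{v})^{-1}$ — at each step; bookkeeping the binomial/factorial coefficients produced by $\frac{d^j}{dt^j}t^\ell = \frac{\ell!}{(\ell-j)!}t^{\ell-j}$ and recognizing each surviving $\theta$-integral as $I^{(\ell-j)}_{|\xi|,\frac{\xi}{|\xi|}}$ yields precisely~\eqref{e:def-zeta-sup}. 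The terms $\ell=0,\dots,d-1$ with $\xi$ replaced by the finitely many omitted frequencies, together with the $(1-\chi_1)$-corrections, furnish the constants $\mathsf{E}^{(\ell)}$ multiplying $s^{-(\ell+1)}$ (these come from the $|\xi|$-independent $\int_0^\infty t^\ell e^{-st}dt = \ell!/s^{\ell+1}$, i.e. the ``$\xi=0$'' contribution of $\delta_0^{\IT^d}$ in~\eqref{e:def-delta}, which must be extracted since $\xi=0$ is excluded from~\eqref{e:poincare-fourier} but contributes the pole at $s=0$); one sets $\mathsf{E}^{(\ell)} := \frac{1}{(2\pi)^d}\cdot(\text{appropriate }\int_{\IS^{d-1}}\Omega_\ell)$-type constant.

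\medskip

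The bound~\eqref{e:expo-bound} on the entire function $\mathsf{H}_N$ is then obtained by inspection: $\mathsf{H}_N$ collects (a) the finitely many $|\xi|<N$ contributions of the $(1-\chi_1)$-correction, each an integral of $e^{-ta}$ against a fixed compactly-supported-in-$t$ smooth density on $[0,2\kappa_1]\times\IS^{d-1}$, trivially bounded by $C$ uniformly in $s$ but in fact decaying like $e^{-c\Re(s)}$ when $\Re(s)\to+\infty$ (integrate $|e^{-ta}|=e^{-t\Re(s)}$); the factor $N^d$ records that there are $O(N^d)$ such $\xi$'s and the density depends on $\xi$ only through the bounded quantity $\xi/|\xi|$ and polynomially on $|\xi|\leq N$ through the phase — here one uses that the $\theta$-integral of an oscillatory integrand of frequency $|\xi|\leq N$ is still $O(1)$, not $O(N^{\#})$, because we do not integrate by parts in $\theta$; (b) a uniformly bounded piece coming from the finitely many omitted terms in the original sum (Remark~\ref{r:support-chi0}). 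I expect the only genuinely delicate point to be the bookkeeping in step (ii): keeping track of the exact factorial coefficients $\frac{\ell!}{j!}$ (resp. $\frac{\ell!}{(\ell-j)!}$) and the powers of $|\xi|$ through the repeated integration by parts, and checking that after all cancellations exactly the claimed combination of $I^{(\ell-j)}$'s remains with no leftover boundary or polynomial term. Everything else — the $t$-integral identity, the identification with $I^{(k)}_{\lambda,\omega}$, and the crude exponential bound — is routine, and the structure is parallel to the derivation of Theorem~8.8 in~\cite{DangLeautaudRiviere22}, which we may invoke for the convergence of the $\xi$-sum in the intermediate expression~\eqref{e:poincare-fourier}.
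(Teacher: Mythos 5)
Your proposal is correct and takes essentially the same route as the paper: the low-frequency block is obtained by replacing $\chi_1$ with integration over $\R_+$ and absorbing the $O(N^d)$ compactly supported (hence entire) corrections into $\mathsf{H}_N$, while your iterated integration by parts in $t$ for $|\xi|\geq N$ is algebraically the same bookkeeping as the paper's single integration by parts combined with the Leibniz expansion of $(-\partial_s)^\ell\big(e^{-st}(s-i\xi\cdot\mathbf{v}(\theta))^{-1}\big)$, and it reproduces exactly~\eqref{e:def-zeta-sup}. The only slip is presentational: the poles $\mathsf{E}^{(\ell)}/s^{\ell+1}$ come solely from the $\xi=0$ Fourier mode, which is already included in the sum~\eqref{e:poincare-fourier} (the finitely many omitted lattice points and the $(1-\chi_1)$-corrections contribute only entire functions), as your own parenthetical in fact correctly identifies.
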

Notice that $\mathsf{E}^{(\ell)} \in \mathbb{R}$ can be interpreted in terms of mixed volumes of the two convex sets $K$ and $K_0$ as it is shown in~\cite[\S~10]{DangLeautaudRiviere22}. This explains the behaviour at $s=0$ in Theorem~\ref{t:maintheo1}.

Notice also that we have kept the cutoff function in time $\chi_1$ in the series $\mathcal{P}_{K_0}^{\geq N}(s)$ (high frequencies), whereas we have removed it from the finite sum $\mathcal{P}_{K_0}^{<N}(s)$ (its c)

\begin{remark} In~\cite{DangLeautaudRiviere22}, slightly more general Poincar\'e series were considered associated with so-called admissible (Legendrian) submanifolds of $S\T^d$. For the the sake of simplicity, we avoid this discussion here but the proof of our main Theorems remains true in that case by replacing $x_{K_0}$ in the above formulas by the functions $\tilde{x}_2^\pm-\tilde{x}_1^\pm$ appearing in \S4 from this reference. 
\end{remark}

As a direct consequence of Lemma~\ref{l:decomp-zeta-I}, the proofs of Theorems~\ref{t:maintheo1} and~\ref{t:maintheo2} reduce to the analysis of integrals of the form $I^{(\ell)}_{\lambda,\omega}(z,F,\phi)$, defined in~\eqref{eq:def-I(stuff)}.
In the upcoming sections, we will analyze the holomorphic properties of the functions $z\mapsto I^{(\ell)}_{\lambda,\omega}(z,F,\phi) $ with uniform bounds with respect to the parameters $\lambda$, $\omega$, $\|\phi\|$ and $\|F\|$ (for some appropriate choices of analytic norms).

\begin{proof}[Proof of Lemma~\ref{l:decomp-zeta-I}]
We start by observing that the finite number of terms that we removed from the sum~\eqref{e:poincare-fourier} by changing $\chi_0$ into $\chi_1$ contributes by an entire function $\mathsf{H}$ satisfying
\begin{equation}\label{eq:bound-individual-contrib-entire}
|\mathsf{H}(s)|\leq C(1+e^{-C\Re s}),
\end{equation}
and in particular~\eqref{e:expo-bound}. Let us turn to the the series in the RHS of~\eqref{e:poincare-fourier}. Using~\eqref{eq:def-Omega}, it rewrites as
\begin{align}\label{e:poincare-fourier2}
 {\mathcal{P}}_{K_0}(s)-\mathsf{H}(s) & = \frac{1}{(2\pi)^d}\sum_{\ell=0}^{d-1}\sum_{\xi\in\IZ^d} M_\xi(s), \quad \text{ with }\\
 \nonumber
M_\xi(s) &:= \int_{\IR}\chi_1(t)t^\ell e^{-st}\left( \int_{\IS^{d-1}}e^{i\xi\cdot\left(t\mathbf{v}(\theta)+x_{K_0}(\theta)\right)}\Omega_\ell(\theta)d\Vol_{\IS^{d-1}}(\theta)\right) dt  .
\end{align}
Concerning the term with $\xi=0$, we have (see e.g.~\cite[Equation~(8.31)]{DangLeautaudRiviere22} for a justification)
\[
\frac{1}{(2\pi)^d}M_0(s) = \frac{1}{(2\pi)^d} \int_{\IR}\chi_1(t)t^\ell e^{-st}\left( \int_{\IS^{d-1}}\Omega_\ell(\theta)d\Vol_{\IS^{d-1}}(\theta)\right) dt  = \frac{\mathsf{E}^{(\ell)}}{s^{\ell+1}} + \mathsf{H}^{(\ell)}(s) ,
\]
where $\mathsf{E}^{(\ell)} \in \mathbb{R}$ (being interpreted in terms of mixed volumes of the two convex sets $K$ and $K_0$ in~\cite[Section~10]{DangLeautaudRiviere22}) and where $\mathsf{H}^{(\ell)}$ is an entire function, satisfying \eqref{eq:bound-individual-contrib-entire}.
We can therefore concentrate on the sum for  $\xi\neq 0$ in~\eqref{e:poincare-fourier2}.
Writing $(-\d_s)^\ell(e^{-st}) = t^\ell e^{-st}$ and integrating by parts in $t$, we can rewrite for $\xi \neq 0$
\begin{align*}
M_\xi(s) & = (-\partial_s)^\ell \int_\IR  \chi_1(t) e^{-st} \left(\int_{\IS^{d-1}} e^{i\xi\cdot(t\mathbf{v}(\theta) +x_{K_0}(\theta))} \Omega_\ell(\theta)d\Vol_{\IS^{d-1}}(\theta) \right) dt \\
	&=(-\partial_s)^\ell \int_\IR \chi'_1(t) \left(\int_{\IS^{d-1}} \frac{e^{i\xi\cdot(t\mathbf{v}(\theta) + x_{K_0}(\theta)) - ts}}{s - i\xi\cdot\mathbf{v}(\theta)} \Omega_\ell(\theta)d\Vol_{\IS^{d-1}}(\theta)\right) dt,\\
	&=\sum_{j=0}^{\ell} \frac{\ell !}{j !} \int_\IR \chi'_1(t) t^j e^{- st} \left(\int_{\IS^{d-1}} \frac{e^{i\xi\cdot(t\mathbf{v}(\theta) + x_{K_0}(\theta)) }}{(s - i\xi\cdot\mathbf{v}(\theta))^{\ell- j + 1}} \Omega_\ell(\theta)d\Vol_{\IS^{d-1}}(\theta)\right) dt,
\end{align*}
where the last identity follows from the Leibniz formula $(-\partial_s)^\ell(e^{-st}(s-a)^{-1})=\sum_{j=0}^{\ell} \frac{\ell !}{j !} t^je^{-st}(s-a)^{j-\ell-1}$.
Recalling the definition of $I_{\lambda,\omega}^{(k)}$ in~\eqref{eq:def-I(stuff)}, this is directly the expected expression for the part of the sum corresponding to $|\xi|\geq N$.\begin{remark}
Recall from the above discussion that, up to an extra integration by parts in the time variable or in the $\theta$ variable, this sum is convergent in $\xi$ for $\text{Re}(s)>0$.
\end{remark}
For the small frequencies $|\xi|<N$, we observe that for $\eta\in C^\infty_c([0,\infty))$, supported in $[0, 2\kappa_1)$,
\[
\mathsf{H}_\xi(s):=(-\partial_s)^\ell \int_\IR \eta(t) e^{-st} \left(\int_{\IS^{d-1}} e^{i\xi\cdot(t\mathbf{v}(\theta) + x_{K_0}(\theta))} \Omega_\ell(\theta)d\Vol_{\IS^{d-1}}(\theta) \right) dt
\]
is an entire function, satisfying also \eqref{eq:bound-individual-contrib-entire}. If we sum over $\{0<|\xi|<N\}$, i.e $\sim N^d$ terms, we obtain the bound~\eqref{e:def-zeta-sup} announced in the lemma. We apply this argument with $\eta = 1-\chi_1$ to replace $\chi_1$ by integration over $\R^+$. This leads to the formula for $\mathcal{P}^{<N}_{K_0}(s)$ with elementary computations.
\end{proof}

\begin{remark}
 In order to alleviate the notation and except if there may be a confusion, we will often drop the dependence in $F$, $\phi$ and $\ell$ in the following and just write 
\[
 I_{\lambda,\omega}(z) := I^{(\ell)}_{\lambda,\omega}(z,F,\phi) .
\]
\end{remark}

\subsection{Main technical results on $\mathcal{P}_{K_0}^{<N}(s)$ and $\mathcal{P}_{K_0}^{\geq N}(s)$}
\label{s:main-tech-res}
According to Lemma~\ref{l:decomp-zeta-I}, the proofs of Theorems~\ref{t:maintheo1} and~\ref{t:maintheo2} reduce to proving that the functions $\mathcal{P}_{K_0}^{<N}(s)$ and $\mathcal{P}_{K_0}^{\geq N}(s)$ have the expected holomorphic continuation properties.
In the present section, we present these two statements in Propositions~\ref{p:low-freq} and~\ref{p:highfrequency} respectively.

 In order to state the main continuation result for $\mathcal{P}_{K_0}^{<N}(s)$, we introduce, for $\xi \in \R^d$ and $\nu>0$, the scaled box
		\begin{align}
		\label{e:shape-cut}
		\mathcal{V}_{\xi,\nu} &  := (-\nu |\xi|, 0] + i \left(\xi \cdot\mathbf{v}\Big(-\frac{\xi}{|\xi|}\Big)-\nu |\xi|, \xi \cdot\mathbf{v}\Big(\frac{\xi}{|\xi|}\Big)+\nu |\xi|\right) .
		\end{align}
		The following statement, proved in Section~\ref{s:low-freq}, describes the continuation properties of $\mathcal{P}_{K_0}^{< N}(s)$ (low frequency), together with the analysis of the shape of the singularities.

\begin{proposition}[Holomorphic continuation of $\mathcal{P}_{K_0}^{<N}$ with cuts]
\label{p:low-freq}
Let $0<R<1/(2\sqrt{d-1})$  and assume $\mathbf{v}\in \ml{A}_{R}(\IS^{d-1})^d$.
Then, there is $\nu>0$ (given by Corollary~\ref{c:equality-J} below) depending only on $\mathbf{v}$ (and on $R$) such that for all $N>0$, for all $x_{K_0}\in \ml{A}_{R}(\IS^{d-1})^d$,  the function $\mathcal{P}_{K_0}^{<N}$, {\em a priori} defined in $\Re(s)>0$, satisfies the following statements.

\begin{enumerate}
\item 
For any choice of curves $(\mathscr{C}_\xi)_{\xi\in \Z^d,0<|\xi|<N}$ satisfying for all $\xi\in \Z^d,0<|\xi|<N$,
\begin{itemize}
\item $\mathscr{C}_\xi \in C^1_{pw} \big([0,1];  \mathcal{V}_{\xi,\nu} \big)$,
\item $\mathscr{C}_\xi (0) = i \xi\cdot\mathbf{v}(-\frac{\xi}{|\xi|})$ and $\mathscr{C}_\xi(1)=i \xi\cdot\mathbf{v}(\frac{\xi}{|\xi|})$,
\item  $\mathscr{C}_\xi([0,1])$ is {\em strictly homotopic}  to $ i\Big[ \xi\cdot\mathbf{v}(-\frac{\xi}{|\xi|}),   \xi\cdot\mathbf{v}(\frac{\xi}{|\xi|})\Big]$ in $\mathcal{V}_{\xi,\nu}$, 
\end{itemize} 
 the function $\mathcal{P}_{K_0}^{<N}$ extends holomorphically to the connected component of set 
$$
 \tilde\C_N(\mathscr{C}) := \C \setminus \left( \bigcup_{\xi \in \Z^d, 0<|\xi|<N} \mathscr{C}_\xi\right)
 $$
 containing $\{\Re(s)>0\}$.

\item 
For all $\xi_0 \in \Z^d\setminus\{0\}$, there exists a neighborhood $\mathcal{N}_{\xi_0}^\pm$ of $i\xi_0 \cdot  \mathbf{v}(\pm\frac{\xi_0}{|\xi_0|})$ in $\C$ and for all $N>1$ holomorphic functions $\mathcal{H}_{\xi_0,N}^\pm, \mathcal{K}_{\xi_0,N}^\pm$ on $\mathcal{N}_{\xi_0}^\pm$ such that, for all $s \in\mathcal{N}_{\xi_0}^\pm\cap\{ \Re(z)> 0\}$,
  \begin{align}
\mathcal{P}_{K_0}(s) =\mathcal{H}_{\xi_0,N}^\pm (s) 
 \left(s- i\xi_0 \cdot\mathbf{v}\Big(\pm\frac{\xi_0}{|\xi_0|}\Big) \right)^{- \frac{d+1}{2}} 
+ \mathcal{K}_{\xi_0,N}^\pm (s) ,  
\end{align}
 if $d\geq 2$ is even, and  
  \begin{align}
\mathcal{P}_{K_0}^{<N}(s) =\mathcal{H}_{\xi_0,N}^\pm (s) 
 \left(s- i\xi_0 \cdot\mathbf{v}\Big(\pm\frac{\xi_0}{|\xi_0|}\Big)  \right)^{- \frac{d+1}{2}} 
+ \mathcal{K}_{\xi_0,N}^\pm (s) \ln  \left(s- i\xi_0 \cdot\mathbf{v}\Big(\pm\frac{\xi_0}{|\xi_0|}\Big)  \right),  
\end{align}
 if $d\geq 3$ is odd.
 \end{enumerate}
\end{proposition}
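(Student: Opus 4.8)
The plan is to reduce Proposition~\ref{p:low-freq} to a statement about a single integral $I^{(\ell)}_{\lambda,\omega}(z,F,\phi)$ of the form~\eqref{eq:def-I(stuff)}, namely to a ``version of Theorem~\ref{t:maintheo-multiplication} with parameters'', and then to reassemble $\mathcal{P}_{K_0}^{<N}$ from the finitely many terms $0<|\xi|<N$ in the formula for $\mathcal{P}_{K_0}^{<N}(s)$ from Lemma~\ref{l:decomp-zeta-I}. Since the sum defining $\mathcal{P}_{K_0}^{<N}$ is \emph{finite}, all questions of uniformity in $\xi$ are irrelevant here; only the holomorphic continuation and the precise shape of the singularity of each summand matter. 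The key observation, already made in~\S\ref{s:sketch}, is that
$$
I^{(\ell)}_{|\xi|,\xi/|\xi|}\!\left(\tfrac{s}{|\xi|},\Omega_\ell,\tfrac{\xi}{|\xi|}\cdot x_{K_0}\right)
$$
is, up to the harmless oscillating prefactor $e^{i\xi\cdot x_{K_0}(\theta)}$ and the homogeneity rescaling, a matrix element $\langle (z-\mathbf{m}_\f)^{-\ell-1}u,v\rangle$ of a power of the resolvent of $\mathbf{m}_\f$ with $\f=\xi\cdot\mathbf{v}$ (after rescaling $z=s/|\xi|$, so that $\mathbf{m}_{\f}$ has critical values $\pm h_{\pm K}(\xi)/|\xi|=\pm \mathbf{v}(\pm\xi/|\xi|)\cdot\xi/|\xi|$, which after multiplying back by $|\xi|$ gives the points $i\xi\cdot\mathbf{v}(\pm\xi/|\xi|)$ appearing in the statement). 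So the first step is to invoke the with-parameters version of Theorem~\ref{t:maintheo-multiplication} announced in Section~\ref{s:low-freq} (Theorem~\ref{t:multiplicationoperators}), applied to the analytic Morse function $\f=\xi\cdot\mathbf{v}\in\mathcal{A}_R(\IS^{d-1})$ --- which by the strict convexity assumption, via Lemma~\ref{l:geometric-lemma}, is indeed Morse with exactly two nondegenerate critical points $\pm\xi/|\xi|$ --- together with the extra ingredients (powers of the resolvent, control of the neighborhood $\mathcal{U}$) mentioned in~\S\ref{s:sketch}.

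\textbf{Item (1): continuation along cuts.} For a single $\xi$, the with-parameters multiplication theorem gives a holomorphic continuation of $z\mapsto (z-\mathbf{m}_{i\xi\cdot\mathbf{v}})^{-\ell-1}$ (as an operator $\mathcal{A}_R\to\mathcal{A}_R'$) from $\{\Re z>0\}$ to a neighborhood of the slit $i[\xi\cdot\mathbf{v}(-\xi/|\xi|),\xi\cdot\mathbf{v}(\xi/|\xi|)]$, minus two vertical half-lines emanating downward from the two critical values; that neighborhood can be taken inside the box $\mathcal{V}_{\xi,\nu}$ once $\nu$ is fixed small enough (this is exactly the role of the $\nu$ produced in Corollary~\ref{c:equality-J}). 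Pairing with the currents $\Omega_\ell$ and $x_{K_0}$, which are analytic by hypothesis ($\mathbf{v},x_{K_0}\in\mathcal{A}_R(\IS^{d-1})^d$), transfers this to a holomorphic continuation of $s\mapsto I^{(\ell)}_{|\xi|,\xi/|\xi|}(s/|\xi|,\cdot,\cdot)$, hence of the $\xi$-summand of $\mathcal{P}_{K_0}^{<N}$, to $\mathcal{V}_{\xi,\nu}$ minus any admissible cut $\mathscr{C}_\xi$: the freedom in the \emph{shape} of the cut comes from the fact that the continued function is single-valued and holomorphic on the whole deformation region $\mathcal{V}_{\xi,\nu}$ away from the two branch points, so any two curves $\mathscr{C}_\xi$ strictly homotopic rel endpoints in $\mathcal{V}_{\xi,\nu}$ yield the same continuation (the monodromy is trivial on the complement of the slit). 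I would make this last point precise using the elementary homotopy/monodromy statements collected in Appendix~\ref{a:branched}. Summing the finitely many $\xi$ with $0<|\xi|<N$, each continued to $\C\setminus\mathscr{C}_\xi$, together with the entire function $\mathsf{H}_N$ and the explicit pole part $\sum_\ell \mathsf{E}^{(\ell)}s^{-\ell-1}$ from Lemma~\ref{l:decomp-zeta-I} (which is holomorphic on $\C\setminus\{0\}$ and contributes only the pole at $s=0$, harmless since $0$ lies in the closure of the cuts or can be absorbed), one gets holomorphy on the connected component of $\tilde\C_N(\mathscr{C})$ containing $\{\Re s>0\}$.

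\textbf{Item (2): shape of the singularity.} Fix $\xi_0\in\Z^d\setminus\{0\}$ and work near the upper branch point $i\xi_0\cdot\mathbf{v}(\xi_0/|\xi_0|)$ (the lower one is symmetric, or handled by replacing $\xi_0$ by $-\xi_0$). The with-parameters Theorem~\ref{t:multiplicationoperators} gives the normal form $(z-\mathbf{m}_\f)^{-1}=\mathcal{H}(z)\mathrm{F}_d(z)+\mathcal{R}(z)$ with $\mathcal{H},\mathcal{R}$ holomorphic near the critical value and $\mathrm{F}_d$ the explicit function from Theorem~\ref{t:maintheo-multiplication}; near the \emph{single} critical value $\max\f$ (the other one being far away, hence contributing an extra holomorphic term), $\mathrm{F}_d(z)$ behaves like $(z-\max\f)^{(d-3)/2}$ for $d$ even and like $(z-\max\f)^{(d-3)/2}\ln(z-\max\f)$ for $d$ odd, up to holomorphic factors. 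Taking the $(\ell+1)$-st power bumps the exponent: $(z-\max\f)^{(d-3)/2-\ell-?}$... more precisely, the key point is that for the top power $\ell=d-1$ appearing in the $\Omega_{d-1}t^{d-1}$ term one reaches the exponent $-(d+1)/2$; I would check by the explicit differentiation/multiplication rules that the most singular contribution across all $\ell=0,\dots,d-1$ is exactly $(z-\max\f)^{-(d+1)/2}$ in the even case and $(z-\max\f)^{-(d+1)/2}$ together with a $\ln$-term in the odd case, all lower powers being absorbed into the holomorphic remainders $\mathcal{K}^\pm_{\xi_0,N}$. Undoing the rescaling $z=s/|\xi_0|$ turns $\max\f=\xi_0\cdot\mathbf{v}(\xi_0/|\xi_0|)/|\xi_0|$ into the point $i\xi_0\cdot\mathbf{v}(\xi_0/|\xi_0|)$ and produces the stated expansions, with $\mathcal{H}^\pm_{\xi_0,N},\mathcal{K}^\pm_{\xi_0,N}$ holomorphic on a neighborhood $\mathcal{N}^\pm_{\xi_0}$. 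The remaining $\xi\neq\pm\xi_0$ summands are holomorphic near $i\xi_0\cdot\mathbf{v}(\pm\xi_0/|\xi_0|)$ (their branch points $i\xi\cdot\mathbf{v}(\pm\xi/|\xi|)$ are distinct, by strict convexity, generically; when they coincide one simply adds the singular parts, which does not change the form of the statement), and are absorbed into $\mathcal{K}^\pm_{\xi_0,N}$ --- note the statement is about $\mathcal{P}_{K_0}$ itself in the even case and about $\mathcal{P}_{K_0}^{<N}$ in the odd case, the difference ($\mathsf{H}_N$, the pole part, and $\mathcal{P}_{K_0}^{\geq N}$) being holomorphic near these points for $N$ large.

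\textbf{Main obstacle.} The genuinely hard step is the with-parameters multiplication theorem underlying everything: proving that the resolvent $(z-\mathbf{m}_{\xi\cdot\mathbf{v}})^{-1}$, and its powers, continue holomorphically across the continuous spectrum in the analytic-functional-analytic spaces $\mathcal{A}_R\to\mathcal{A}_R'$, and extracting the \emph{exact} branched structure $\mathrm{F}_d$ near the critical values. This requires the analytic Morse Lemma with quantitative control (Appendix~\ref{ss:analyticMorse}) to straighten $\f$ near its critical points, a complex-deformation (Helffer--Sj\"ostrand-type) argument to rotate the contour of the spectral integral off the real axis, and a careful bookkeeping of how the square-root/logarithmic singularity of the one-dimensional model $\int (z-t)^{-1}t^{(d-3)/2}\,dt$-type integral (coming from the $(d-1)$-dimensional level-set volume near a nondegenerate critical point) propagates through the $(\ell+1)$-st power. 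Here, since only finitely many $\xi$ are involved, one does \emph{not} need the delicate uniformity in $|\xi|$ of the neighborhood $\mathcal{U}$ --- that is deferred to Proposition~\ref{p:highfrequency} --- which makes the present proof essentially a clean corollary of Theorem~\ref{t:multiplicationoperators} once the latter is in place.
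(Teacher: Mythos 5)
Your proposal is correct and follows essentially the paper's own route: the paper likewise reduces to the finitely many integrals of Lemma~\ref{l:decomp-zeta-I}, treats each one as (a $z$-derivative of) the resolvent integral $\mathcal{I}(z)=\int_{\IS^{d-1}}\frac{G}{z-\f}\,d\Vol_{\IS^{d-1}}$ with $\f=\omega\cdot\mathbf{v}$ uniformly in $\omega$ (Lemmas~\ref{l:Iell-I} and~\ref{l:mathcal-I} and Corollary~\ref{c:I-}, which is precisely your ``with-parameters'' version of Theorem~\ref{t:multiplicationoperators} including the powers), obtains the freedom in the cut by deforming the contour in the explicit model integral $\mathsf{F}_\gamma$ (Lemma~\ref{l:laplace-bessel}), and then sums, the worst exponent $\frac{d-3}{2}-(d-1)=-\frac{d+1}{2}$ arising from $\ell=d-1$. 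Two small slips you should repair: the continued function is \emph{not} single-valued on the whole box minus the two branch points (only your parenthetical monodromy argument is correct --- loops in the complement of a cut joining the two branch points encircle both of them, so the square-root or logarithmic monodromies cancel; the paper instead simply moves the integration contour in $\mathsf{F}_\gamma$); moreover the milder powers $(s-\lambda)^{\frac{d-3}{2}-\ell}$ with $\ell<d-1$ are absorbed into $\mathcal{H}^\pm_{\xi_0,N}(s)\,(s-\lambda)^{-\frac{d+1}{2}}$ through polynomial factors rather than into the holomorphic remainder $\mathcal{K}^\pm_{\xi_0,N}$, and the claimed independence of $\nu$ from $N$ does require uniformity in the direction $\xi/|\xi|$ (furnished by Corollary~\ref{c:equality-J}), so uniformity in $\xi$ is not entirely irrelevant even at low frequencies.
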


		\begin{figure}[!h]
			\centering
			\begin{tikzpicture}			
				\draw[dashed] (-3,0) -- (3,0) node[anchor=west]{$\R$};
				\draw[dashed] (0,-3)--(0,3) node[above]{$i\R$};
				\filldraw[black] (0,2.5) circle (2pt) node[right]{$i\xi\cdot \mathbf{v}\Big(\frac{\xi}{|\xi|}\Big)$};
				\filldraw[black] (0,-2.2) circle (2pt) node[right]{$i\xi\cdot \mathbf{v}\Big(-\frac{\xi}{|\xi|}\Big)$};
				\filldraw[black] (-2,0) circle (2pt) node[anchor=north east]{$-\frac{\nu}2 |\xi|$};
				\draw[very thick] (-2,2.5) -- (-2,-2.2) node[anchor=north east]{$\mathscr{C}_\xi$} ; 
				\draw[very thick]  (-2,2.5) -- (0,2.5)  ; 
				\draw[very thick]  (0,-2.2) -- (-2,-2.2)  ; 
			\end{tikzpicture}
			\caption{A possible curve $\mathscr{C}_\xi$ in  the complex plane (in the general case in which $\mathbf{v}(-\theta)\neq-\mathbf{v}(\theta)$)}
			\label{f:slit}
		\end{figure}
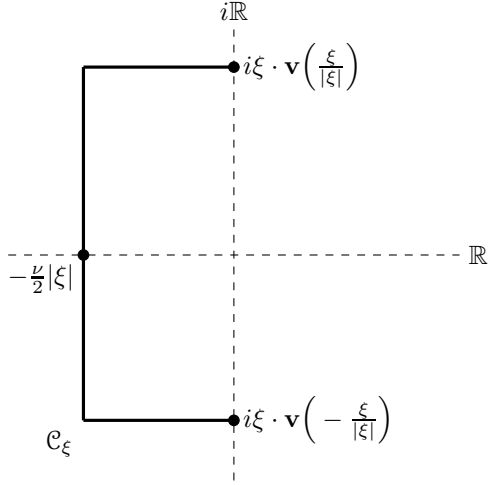

		See Figure~\ref{f:slit} for a picture of one possible curve $\mathscr{C}_\xi$. 
		We shall later on describe further the geometry of the set $\tilde\C_N$. This result will be proved in paragraph~\ref{ss:zeta-low-freq} and the functional space $\mathcal{A}_R(\IS^{d-1})$ is the space of analytic functions with analyticity radius $R>0$. See Section~\ref{s:analytic-norms} for the precise definition. Finally, $C^1_{pw} \big([0,1];  \mathcal{V}_{\xi,\nu} \big)$ denotes the space of continuous and piecewice $C^1$ paths in $\mathcal{V}_{\xi,\nu}$, and, by strictly homotopic, we mean that the endpoints remain the same all along the homotopy path.

\medskip
Once low frequencies are analyzed, the second main technical result concerns the continuation of $\mathcal{P}_{K_0}^{\geq N}(s)$ (high frequencies), where the summation issues over $\xi$ are considered. The following statement is proved in Section~\ref{s:high-freq}.

\begin{proposition}\label{p:highfrequency} Let $0<R<1/(2\sqrt{d-1})$ and assume that $\mathbf{v}$ and $x_{K_0}$  belong to $\ml{A}_{R}(\IS^{d-1})^d$. Then, there exist  $\delta_0,\kappa_1,N_0>0$ such that, for every $N>N_0$, the function $\mathcal{P}_{K_0}^{\geq N}$, initially defined in $\{\operatorname{Re}(w)>0\}$
extends holomorphically to
\begin{align}
\label{e:Omega-delta}
\Omega_{\delta_0}(N):=\left\{\operatorname{Re}(w)>-\delta_0 N,\ |\operatorname{Im}(w)|<\delta_0N\right\}
 \cup \{\operatorname{Re} (w)>0\}
,
\end{align}
 as soon as $\supp(\chi_1)\subset(\kappa_1,\infty)$ and $\supp(\chi_1')\subset(\kappa_1,2\kappa_1)$, where $\chi_1$ is the function appearing in the definition of $\mathcal{P}_{K_0}^{\geq N}(s)$.
\end{proposition}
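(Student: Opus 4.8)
The plan is to reduce the convergence of the series $\mathcal{P}_{K_0}^{\geq N}(s)$ in the enlarged region $\Omega_{\delta_0}(N)$ to uniform-in-$\xi$ estimates on the building blocks $I_{|\xi|,\xi/|\xi|}^{(k)}(z,F,\phi)$ from \eqref{eq:def-I(stuff)}, evaluated at $z=s/|\xi|$. Writing $\lambda=|\xi|$ and $\omega=\xi/|\xi|$, the point $z=s/|\xi|$ lies, for $s\in\Omega_{\delta_0}(N)$ with $|\xi|\geq N$, in a fixed $O(\delta_0)$-neighborhood of the imaginary segment $i[\,\omega\cdot\mathbf{v}(-\omega),\omega\cdot\mathbf{v}(\omega)\,]$ — crucially, since $|\xi|\geq N\geq N_0$ and $|\mathrm{Re}(s)|,|\mathrm{Im}(s)|<\delta_0 N$, we have $|\mathrm{Re}(z)|<\delta_0$ and $\mathrm{Im}(z)$ ranging over an interval of length $\leq$ (diam of $\partial K$) $+\delta_0$ independently of $\xi$. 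So the whole difficulty is scale-invariant: one must continue $z\mapsto I_{\lambda,\omega}^{(k)}(z,F,\phi)$ holomorphically from $\{\mathrm{Re}(z)>0\}$ across the slit $i[\omega\cdot\mathbf{v}(-\omega),\omega\cdot\mathbf{v}(\omega)]$ but only through its \emph{interior} (the critical endpoints $\pm i h_{\pm K}(\xi)$ are never approached because $s$ stays in the conical region $|\mathrm{Im}(s)|<\delta_0 N \leq \delta_0|\xi|$ while $h_{\pm K}(\xi)\sim |\xi|$), and with a bound decaying fast enough in $\lambda=|\xi|$ to sum over $\xi\in\Z^d$, $|\xi|\geq N$.

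First I would establish, via the complex-deformation machinery developed in Section~\ref{s:low-freq} (and invoked through a parametrized version of Theorem~\ref{t:maintheo-multiplication}), that away from the critical values of $\theta\mapsto\omega\cdot\mathbf{v}(\theta)$, i.e. on a region $\mathrm{dist}(z,\{i\omega\cdot\mathbf{v}(\pm\omega)\})\geq c>0$ inside a fixed neighborhood of the slit, the integral $I_{\lambda,\omega}^{(k)}(z,F,\phi)$ continues holomorphically, with a bound of the form $C_k\, e^{-c'\lambda}\,\|F\|_{\mathcal{A}_R}\,(1+\|\phi\|_{\mathcal{A}_R})^{N_k}$, exploiting that $F$ and $\phi=\omega\cdot(t\mathbf{v}+x_{K_0})$ are analytic (uniformly in $\omega\in\IS^{d-1}$ and, for the high-frequency series, with the $t$-integral against $\chi_1'$ supported in $(\kappa_1,2\kappa_1)$ contributing harmless polynomial factors in $t$). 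The exponential gain $e^{-c'\lambda}$ comes from pushing the contour of integration in $\theta$ into the complex domain by a distance $\sim R$: this is the non-stationary-phase mechanism, phrased complex-analytically, and it is where the restriction $0<R<1/(2\sqrt{d-1})$ and the analytic norms of Section~\ref{s:analytic-norms} enter. Here the key geometric input is Lemma~\ref{l:geometric-lemma}, which controls $|\nabla_{\IS^{d-1}}(\omega\cdot\mathbf{v}(\theta))|$ from below in terms of $d_{\IS^{d-1}}(\theta,\{\pm\omega\})$, so that the deformation can be carried out with quantitative, $\omega$-uniform control — and one only needs to stay away from $\theta=\pm\omega$, matching the fact that $z$ stays away from the critical endpoints.

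Then I would assemble: for $s\in\Omega_{\delta_0}(N)$ and $|\xi|\geq N$, the general term of $\mathcal{P}_{K_0}^{\geq N}(s)$ — which is a finite combination (over $0\leq j\leq\ell\leq d-1$) of $\frac{\ell!}{j!|\xi|^{1+\ell-j}}\int_\IR\chi_1'(t)t^j e^{-st}\,I_{|\xi|,\xi/|\xi|}^{(\ell-j)}\!\big(\tfrac{s}{|\xi|},\Omega_\ell,\tfrac{\xi}{|\xi|}\cdot(t\mathbf{v}+x_{K_0})\big)\,dt$ — is bounded by $C\,|\xi|^{-1}\,e^{-c'|\xi|}\,e^{2\kappa_1|\mathrm{Re}(s)|}\,(1+\|x_{K_0}\|_{\mathcal{A}_R})^{N'}$ on $\Omega_{\delta_0}(N)$, choosing $\delta_0$ small relative to $c'$ so that even with $|\mathrm{Re}(s)|$ possibly up to $\delta_0 N\leq\delta_0|\xi|$ the $t$-integral against $e^{-st}\chi_1'(t)$ (with $\mathrm{supp}\,\chi_1'\subset(\kappa_1,2\kappa_1)$) contributes at worst $e^{2\kappa_1\delta_0|\xi|}$, which is absorbed by $e^{-c'|\xi|}$ once $\delta_0<c'/(2\kappa_1)$. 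Summing the geometric-type bound $\sum_{|\xi|\geq N}|\xi|^{-1}e^{-(c'-2\kappa_1\delta_0)|\xi|}<\infty$ gives locally uniform convergence on $\Omega_{\delta_0}(N)$, hence a holomorphic limit by Montel/Weierstrass; since it agrees with $\mathcal{P}_{K_0}^{\geq N}$ on $\{\mathrm{Re}(s)>0\}$ (shrinking $\delta_0$ as needed so that $\Omega_{\delta_0}(N)$ is connected and meets that half-plane), this is the desired extension.

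The main obstacle will be obtaining the \emph{uniform in} $\xi$ and \emph{in} $z=s/|\xi|$ exponential bound $e^{-c'|\xi|}$ on the deformed oscillatory integrals while the imaginary part of $z$ is allowed to run over the full (fixed-length) slit minus a neighborhood of its two endpoints: near the endpoints the phase $\theta\mapsto\omega\cdot\mathbf{v}(\theta)$ becomes stationary and the clean non-stationary-phase deformation degenerates, so one must check that the conical constraint $|\mathrm{Im}(s)|<\delta_0 N\leq\delta_0|\xi|$ genuinely keeps $z=s/|\xi|$ at distance $\gtrsim (\min\theta\,\omega\cdot\mathbf{v}(\theta))-\delta_0>0$ from $i\,\omega\cdot\mathbf{v}(\pm\omega)$ for \emph{all} relevant $\xi$ — this uses \eqref{e:asspt-hyper-importante} ($\theta\cdot\mathbf{v}(\theta)>0$, so $h_{\pm K}(\xi)\geq c|\xi|$) — and then that the complex contour shift of size $\sim R$, with the quadratic lower bound of Lemma~\ref{l:geometric-lemma} on the real gradient and a Cauchy-estimate control of the analytic extension of $\mathbf{v}$ on a strip of width $R$, produces a phase with $\mathrm{Re}(i\lambda\phi)\leq -c'\lambda$ plus an $O(\lambda z)$ error controlled by $|\mathrm{Re}(z)|<\delta_0$. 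Interlocking these three scales — the analyticity width $R$, the frequency $\lambda=|\xi|$, and the cutoff width $\kappa_1$ — with $\delta_0$ chosen last and small is the delicate bookkeeping that the proof in Section~\ref{s:high-freq} must carry out carefully.
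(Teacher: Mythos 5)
Your overall reduction (rescale to $z=s/|\xi|$, note that the conical constraint $|\Im(s)|<\delta_0 N\le\delta_0|\xi|$ keeps $z$ uniformly away from the critical values $i\omega\cdot\mathbf{v}(\pm\omega)$ thanks to $h_{\pm K}(\xi)\gtrsim|\xi|$, then seek $\xi$-uniform bounds on the continued integrals) starts out in the right direction, but the central estimate you rely on is false, and this breaks the summation.

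The gap is the claimed uniform exponential bound $|I^{(k)}_{\lambda,\omega}(z,F,\phi)|\lesssim e^{-c'\lambda}$ for $z$ in the interior of the slit. You argue this by ``non-stationary phase'' via a complex shift of the $\theta$-contour, and you justify staying away from $\theta=\pm\omega$ by the fact that $z$ stays away from $i\omega\cdot\mathbf{v}(\pm\omega)$. But these are two different things: keeping $z$ away from the critical \emph{values} only keeps the amplitude $F/(z-i\omega\cdot\mathbf{v}(\theta))^{k+1}$ bounded and regular near $\theta=\pm\omega$; it does not remove the stationary \emph{points} of the phase $\theta\mapsto\xi\cdot(t\mathbf{v}(\theta)+x_{K_0}(\theta))$, which (for $t\ge\kappa_1$ large) sit at $\theta\approx\pm\omega$ where $\nabla_{\IS^{d-1}}(\omega\cdot\mathbf{v})$ vanishes (Lemma~\ref{l:geometric-lemma}). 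No contour shift in $\theta$ yields exponential decay across those neighborhoods; stationary phase gives a contribution of size $\asymp\lambda^{-(d-1)/2}$ with a nonvanishing amplitude. Plugging this into the general term of~\eqref{e:def-zeta-sup}, the worst case $j=\ell$ carries only the prefactor $|\xi|^{-1}$, so the term is $O(|\xi|^{-(d+1)/2})$, which is \emph{not} summable over $\xi\in\Z^d$ for any $d\ge2$. Your subsequent bookkeeping (absorbing $e^{2\kappa_1\delta_0|\xi|}$ into $e^{-c'|\xi|}$) therefore has nothing to absorb it into. A secondary point: even away from the stationary points, the analytic continuation across the slit is governed by the continued level-set density, which contains $e^{i\xi\cdot x_{K_0}}$ and \emph{grows} like $e^{C_1|\Im(\cdot)|}$ off the real axis (Lemma~\ref{l:tricky}); in your scheme $\kappa_1$ plays no quantitative role, whereas it must be taken larger than this growth constant.

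The paper's proof supplies exactly the missing ingredients. It first applies the coarea formula to reduce each $\theta$-integral to a one-dimensional integral in the level-set variable $\tau$ of the density $\mathcal{J}$, then performs $d$ integrations by parts in $t$, raising the power of $(s-i\tau)$ in the denominator to at least $d+1$. The $\tau$-range is split: near the critical values (your stationary-point region) one has $|s-i\tau|\gtrsim|\xi|$ directly from the conical constraint, so the power $d+1$ yields the summable bound $|\xi|^{-(d+1)}$ with no oscillation needed; in the middle region the $\tau$-contour is deformed to height $\epsilon_0|\xi|$, where again $|s-i\tau|\gtrsim|\xi|$, and the exponential growth $e^{C_1\Im\tau}$ of the continued $\mathcal{J}$ is dominated by $e^{-t\Im\tau}$ using $t\ge\kappa_1>C_1$ --- this is where the hypothesis on $\supp(\chi_1')$ is genuinely used. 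The decay obtained is polynomial, $|\xi|^{-(d+1)}$, locally uniformly in $s$, which suffices for Weierstrass convergence; the exponential decay you aim for is neither available nor needed. To repair your argument you would have to (i) treat the stationary points of the phase separately, extracting their $t$-oscillation $e^{ith_{\pm K}(\xi)}$ and trading it, via integration by parts in $t$, for powers of the large denominator, and (ii) tie $\kappa_1$ quantitatively to the analytic growth of $e^{i\xi\cdot x_{K_0}}$ --- at which point you have essentially reconstructed the paper's proof.
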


\subsection{Analytic continuation of $\mathcal{P}_{K_0}$: end of the proof of Theorems~\ref{t:maintheo1} and~\ref{t:maintheo2}}
Based on Propositions~\ref{p:low-freq} and~\ref{p:highfrequency}, we conclude the section by stating and proving holomorphic continuation of $\mathcal{P}_{K_0}$ which is the content of Theorems~\ref{t:maintheo1} and~\ref{t:maintheo2}. 
We recall from~\eqref{e:hK-fct-v}, that $\xi\cdot\mathbf{v}(\frac{\xi}{|\xi|})=h_K(\xi)>0$ and $\xi\cdot\mathbf{v}(-\frac{\xi}{|\xi|})=-h_K(-\xi)<0$, 
so that the set in~\eqref{e:shape-cut} can be rewritten as 
\begin{align}
\label{e:Vxinu-bis}
		\mathcal{V}_{\xi,\nu}=(-\nu |\xi|, 0] + i \big(-h_K(-\xi) -\nu |\xi|, h_K(\xi)+\nu |\xi|\big)  .
\end{align}
\begin{theorem}
\label{t:continuation-zeta}
Let $d\geq 2$ and $K\subset \R^d$ be an analytic strictly convex compact body in the sense of Definition~\ref{e:def-convex-body}, such that $0 \in\mathring{K}$. 
Then, there is $\nu>0$ (given by Corollary~\ref{c:equality-J} and depending only on $K$) such 
for any $K_0$ being either a point or an analytic strictly convex compact body, 
 the function $\mathcal{P}_{K_0}$, {\em a priori} defined in $\{\Re(s)>0\}$, satisfies the following statements.
\begin{enumerate}
\item 
For any choice of curves $(\mathscr{C}_\xi)_{\xi\in \Z^d\setminus\{0\}}$ satisfying for all $\xi\in \Z^d\setminus\{0\}$,
\begin{itemize}
\item $\mathscr{C}_\xi \in C^1_{pw} \big([0,1];  \mathcal{V}_{\xi,\nu} \big)$,
\item $\mathscr{C}_\xi (0) = - i h_K(-\xi)$ and $\mathscr{C}_\xi(1)=i h_K(\xi)$,
\item  $\mathscr{C}_\xi([0,1])$ is {\em strictly homotopic}  to $i[-h_K(-\xi), h_K(\xi)]$ in $\mathcal{V}_{\xi,\nu}$, 
\end{itemize} 
 the function $\mathcal{P}_{K_0}$ extends holomorphically to the connected component of set 
$$
 \tilde\C(\mathscr{C}) := \C \setminus \left( \{0\}  \cup \bigcup_{\xi \in \Z^d \setminus\{0\}} \mathscr{C}_\xi\right)
 $$
 containing $\{\Re(s)>0\}$.
 \item 
For all $\xi_0 \in \Z^d\setminus\{0\}$, there exists a neighborhood $\mathcal{N}_{\xi_0}^\pm$ of $\pm ih_K(\xi_0)$ in $\C$ and holomorphic functions $\mathcal{H}_{\xi_0}^\pm, \mathcal{K}_{\xi_0}^\pm$ on $\mathcal{N}_{\xi_0}^\pm$ such that, for all $s \in\mathcal{N}_{\xi_0}^\pm\cap\{ \Re(z)> 0\}$,
  \begin{align}
\mathcal{P}_{K_0}(s) =\mathcal{H}_{\xi_0}^\pm (s) 
 \left(s \mp ih_K(\xi_0) \right)^{- \frac{d+1}{2}} 
+ \mathcal{K}_{\xi_0}^\pm (s) ,  
\end{align}
 if $d\geq 2$ is even, and  
  \begin{align}
\mathcal{P}_{K_0}(s) =\mathcal{H}_{\xi_0}^\pm (s) 
 \left(s \mp ih_K(\xi_0) \right)^{- \frac{d+1}{2}} 
 + \mathcal{K}_{\xi_0}^\pm (s) \ln  \left(s \mp i h_K(\xi_0)\right),  
\end{align}
 if $d\geq 3$ is odd.
 \end{enumerate}
\end{theorem}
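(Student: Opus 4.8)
The plan is to assemble Theorem~\ref{t:continuation-zeta} from the decomposition formula of Lemma~\ref{l:decomp-zeta-I}, treating the low-frequency and high-frequency parts separately and then gluing. Recall from~\eqref{e:decomposition-zeta-N} that, for any $N>0$,
\[
\mathcal{P}_{K_0}(s) = \mathsf{H}_N(s) + \sum_{\ell=0}^{d-1} \frac{\mathsf{E}^{(\ell)}}{s^{\ell+1}} + \mathcal{P}_{K_0}^{<N}(s) + \mathcal{P}_{K_0}^{\geq N}(s),
\]
where $\mathsf{H}_N$ is entire. The term $\sum_{\ell} \mathsf{E}^{(\ell)} s^{-\ell-1}$ contributes only the pole at $s=0$ (this handles Item~\eqref{i:pole-at-zero} of Theorem~\ref{t:maintheo1}, which is why $\{0\}$ is excluded from $\tilde\C(\mathscr{C})$), and $\mathsf{H}_N$ is harmless. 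So the content is entirely carried by $\mathcal{P}_{K_0}^{<N}$ and $\mathcal{P}_{K_0}^{\geq N}$.

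First I would fix the curves $(\mathscr{C}_\xi)_{\xi \in \Z^d\setminus\{0\}}$ as in the hypotheses, and choose $N = N(s)$ adaptively. Given the desired domain, note that by Proposition~\ref{p:highfrequency} there are $\delta_0, \kappa_1, N_0$ such that for every $N>N_0$ the high-frequency part $\mathcal{P}_{K_0}^{\geq N}$ extends holomorphically to $\Omega_{\delta_0}(N) = \{\Re(w) > -\delta_0 N,\ |\Im(w)| < \delta_0 N\} \cup \{\Re(w)>0\}$; in particular, for a given $w$ with $\Re(w) > -\delta_0(1+|\Im(w)|)$ say, one can pick $N$ large enough (depending on $w$, roughly $N \gtrsim 1 + |\Im(w)|$) so that $w \in \Omega_{\delta_0}(N)$ and simultaneously $N > N_0$. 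Meanwhile, by Proposition~\ref{p:low-freq}, for \emph{each fixed} $N$ the finite sum $\mathcal{P}_{K_0}^{<N}$ extends holomorphically to the connected component of $\tilde\C_N(\mathscr{C}) = \C \setminus \bigcup_{0<|\xi|<N} \mathscr{C}_\xi$ containing $\{\Re(s)>0\}$. The key consistency point is that these extensions do not depend on the (arbitrarily chosen) splitting parameter $N$: on the overlap of two choices $N < N'$, the difference $\mathcal{P}_{K_0}^{<N'} - \mathcal{P}_{K_0}^{<N}$ equals (up to an entire function and the explicit $s^{-\ell-1}$ terms) $\mathcal{P}_{K_0}^{\geq N} - \mathcal{P}_{K_0}^{\geq N'}$, and both sides are holomorphic where defined, so by the identity theorem the glued function is well defined on the union of the domains, i.e. on the connected component of $\tilde\C(\mathscr{C})$ containing $\{\Re(s)>0\}$. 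This proves Item~(1). For the connectedness/geometry claims one invokes Lemma~\ref{l:geometry-of-cuts} (referred to but not reproduced in the excerpt); the homotopy hypothesis on $\mathscr{C}_\xi$ inside $\mathcal{V}_{\xi,\nu}$ is exactly what guarantees the extension obtained from pushing the original slit $i[-h_K(-\xi),h_K(\xi)]$ to $\mathscr{C}_\xi$ is unambiguous.

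For Item~(2), the singularity analysis: fix $\xi_0 \in \Z^d\setminus\{0\}$ and work near $\lambda = \pm i h_K(\xi_0) = i \xi_0 \cdot \mathbf{v}(\pm\xi_0/|\xi_0|)$. Choose $N > |\xi_0|$ so that the mode $\xi_0$ appears in the low-frequency sum. By Proposition~\ref{p:low-freq}(2), near $\lambda$ the function $\mathcal{P}_{K_0}^{<N}$ has exactly the stated singular structure $\mathcal{H}_{\xi_0,N}^\pm(s)(s-\lambda)^{-(d+1)/2} + \mathcal{K}_{\xi_0,N}^\pm(s)$ (even $d$), resp. with an extra $\ln(s-\lambda)$ factor on the second term (odd $d$), for holomorphic $\mathcal{H},\mathcal{K}$ on a neighborhood $\mathcal{N}_{\xi_0}^\pm$. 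The point is that no \emph{other} mode $\xi \neq \xi_0$ in the low-frequency sum is singular at $\lambda$: this requires checking that $h_K(\xi) \neq h_K(\xi_0)$ (and $\neq -h_K(-\xi_0)$, etc.) for $\xi \neq \pm\xi_0$, or, if accidental coincidences $h_K(\xi)=h_K(\xi_0)$ do occur, that the corresponding singular contributions simply add up into new holomorphic prefactors of the same type — here one has to be slightly careful, but it is absorbed into $\mathcal{H}^\pm_{\xi_0}, \mathcal{K}^\pm_{\xi_0}$. Finally, choosing $N$ large enough (depending on $\xi_0$) and shrinking $\mathcal{N}_{\xi_0}^\pm$ so that $\mathcal{N}_{\xi_0}^\pm \subset \Omega_{\delta_0}(N)$, the high-frequency part $\mathcal{P}_{K_0}^{\geq N}$ and the term $\mathsf{H}_N + \sum_\ell \mathsf{E}^{(\ell)} s^{-\ell-1}$ are holomorphic on $\mathcal{N}_{\xi_0}^\pm$ (the pole at $0$ being away from $\lambda \neq 0$), so they get absorbed into $\mathcal{K}_{\xi_0}^\pm$, yielding the claimed formula. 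Theorem~\ref{t:maintheo2} then follows by intersecting over the finitely many $\xi_0$ with $h_K(\xi_0)$ in a given bounded range and using homogeneity of $h_K$.

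The main obstacle, I expect, is not any single step above but the bookkeeping of the $N$-dependence: one must verify that the low-frequency extension from Proposition~\ref{p:low-freq} and the high-frequency extension from Proposition~\ref{p:highfrequency} are \emph{compatible on overlaps} for different $N$, so that the glued object is a genuine holomorphic function on $\tilde\C(\mathscr{C})$ (minus $\{0\}$) rather than a family of germs, and that the domain $\Omega_{\delta_0}(N)$ grows fast enough with $N$ to cover $\{\Re(w) > -\delta_0(1+|\Im(w)|)\}$ once $N$ is taken $\asymp 1 + |\Im(w)|$. A secondary delicate point is the treatment of possible coincidences among the values $\{h_K(\xi), -h_K(-\xi) : \xi \in \Z^d\setminus\{0\}\}$ when isolating the singularity at a given $\lambda \in \Lambda_K$; this is where one really uses that only finitely many modes can contribute to any given $\lambda$ (a consequence of~\eqref{e:countingfunction} / the homogeneity of $h_K$ and the discreteness of $\Z^d$), so the sum of their individual singular expansions is again of the stated form.
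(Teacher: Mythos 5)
Your proposal is correct and follows essentially the same route as the paper: decompose via Lemma~\ref{l:decomp-zeta-I}, apply Proposition~\ref{p:highfrequency} to $\mathcal{P}_{K_0}^{\geq N}$ and Proposition~\ref{p:low-freq} to $\mathcal{P}_{K_0}^{<N}$, let $N\to\infty$ using $\bigcup_N \Omega_{\delta_0}(N)=\C$, and read off the local singular structure from Proposition~\ref{p:low-freq}(2). The compatibility-in-$N$ issue you flag is handled in the paper implicitly (all extensions agree because they extend the same function from the connected set $\{\Re(s)>0\}$), and the possible coincidences $h_K(\xi)=h_K(\xi_0)$ are already absorbed at the level of Proposition~\ref{p:low-freq}(2), exactly as you suggest.
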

Theorem~\ref{t:continuation-zeta} does not describe the meromorphic continuation of $\mathcal{P}_{K_0}$ at zero but this was already discussed after Lemma~\ref{l:decomp-zeta-I}.
Theorem~\ref{t:continuation-zeta} is complemented by Lemma~\ref{l:geometry-of-cuts} below, describing nice choices of curves $(\mathscr{C}_\xi)_{\xi\in \Z^d\setminus\{0\}}$, and accordingly the geometry of the set $\tilde\C(\mathscr{C})$. The proof of Theorem~\ref{t:continuation-zeta} relies on the decomposition of the function $\mathcal{P}_{K_0}$  in~\eqref{e:decomposition-zeta-N} in terms of $\mathcal{P}_{K_0}^{<N}$ and $\mathcal{P}_{K_0}^{\geq N}$, together with Proposition~\ref{p:highfrequency} concerning  $\mathcal{P}_{K_0}^{\geq N}$ and Proposition~\ref{p:low-freq} concerning $\mathcal{P}_{K_0}^{<N}$.

\begin{proof}[Proof of Theorem~\ref{t:continuation-zeta}]
First, according to Lemma~\ref{l:analytic-analytic} below, there exists $R>0$ such that the parametrizations $\mathbf{v},x_{K_0}$ of $\d K,\d K_0$ respectively satisfy $\mathbf{v},x_{K_0}\in\mathcal{A}_R(\IS^{d-1})^{d}$ (and $\mathbf{v}$ satisfies~\eqref{e:asspt-hyper-importante}).
Then, on the one hand, Proposition~\ref{p:low-freq} furnishes existence of $\nu>0$ (depending only on $\mathbf{v}$ and on $R$) satisfying its statement. On the other hand, Proposition~\ref{p:highfrequency} furnishes $\delta_1>0$ such that $\mathcal{P}_{K_0}^{\geq N}$
extends holomorphically to $\Omega_{\delta_1}(N)$ defined in~\eqref{e:Omega-delta}.

Second, we recall the decomposition of the function $\mathcal{P}_{K_0}$ in~\eqref{e:decomposition-zeta-N}: for any $N>0$, we have  
 \begin{align*}
\mathcal{P}_{K_0}(s)
 =\mathsf{H}_N(s) + \sum_{\ell=0}^{d-1}\frac{\mathsf{E}^{(\ell)}}{s^{\ell+1}} + \mathcal{P}_{K_0}^{<N}(s) + \mathcal{P}_{K_0}^{\geq N}(s)  ,
\end{align*}
where $\mathsf{H}_N$ is an entire function. 
Now, we fix  any given $N>0$. This decomposition, together with applying Proposition~\ref{p:highfrequency} to $\mathcal{P}_{K_0}^{\geq N}$ and Proposition~\ref{p:low-freq} to $\mathcal{P}_{K_0}^{<N}$ implies that the function $\zeta_{\Sigma_1,\Sigma_2}$ extends from $\C_+:=\{\Re(z)>0\}$ to the connected component of $\Omega_{\delta_1}(N) \cap  \tilde\C_N(\mathscr{C})$  (where $\Omega_{\delta_1}(N)$ is defined in~\eqref{e:Omega-delta}) containing $\C_+$. Since $\tilde\C_N(\mathscr{C})\supset \tilde\C(\mathscr{C})$, we deduce that
$\zeta_{\Sigma_1,\Sigma_2}$ extends  to the connected component of $\Omega_{\delta_1}(N)\cap  \tilde\C(\mathscr{C})$ containing $\C_+$.
Finally, since this statement holds for all $N>0$, we conclude that 
$\mathcal{P}_{K_0}$ extends  to the connected component of $$\bigcup_{N>0}\Omega_{\delta_1}(N)\cap  \tilde\C(\mathscr{C}) =  \tilde\C(\mathscr{C})$$ containing $\C_+$,
which proves the first statement of the theorem.

The second statement is a straightforward consequence of the second item of Proposition~\ref{p:low-freq} when recalling from~\eqref{e:hK-fct-v} that $\xi\cdot\mathbf{v}(\frac{\xi}{|\xi|})=h_K(\xi)$ and $\xi\cdot\mathbf{v}(-\frac{\xi}{|\xi|})=-h_K(-\xi)$, together with the fact that $\Z^d$ is stable by $\xi\mapsto -\xi$.
\end{proof}

As a complement for Theorem~\ref{t:continuation-zeta}, we describe in the next lemma particular choices of curves $(\mathscr{C}_\xi)_{\xi\in \Z^d\setminus\{0\}}$, together with the geometry of the associated set $\tilde\C(\mathscr{C})$.
\begin{lemma}[Shape of the cuts]
\label{l:geometry-of-cuts}
 There is $m>0$ such that for all $\nu>0$ the following statements hold:
\begin{enumerate}
\item If one chooses for all $\xi \in \Z^d\setminus\{0\}$  
$$\mathscr{C}_\xi = [ih_K(\xi) , ih_K(\xi)-\frac{\nu}{2}|\xi|] \cup [ ih_K(\xi)-\frac{\nu}{2}|\xi| ,-ih_K(-\xi) -\frac{\nu}{2}|\xi|] \cup  [-ih_K(-\xi) -\frac{\nu}{2}|\xi|, -ih_K(-\xi)] $$ (as in Figure~\ref{f:slit}), then one has 
$$
 \tilde\C(\mathscr{C}) \cap \Big(\mathcal{S}^{\nu m} \cup \left\{\Re(s)\geq -\frac{\nu}{2} \right\}\Big) =  \Big(\mathcal{S}^{\nu m} \cup \left\{\Re(s)\geq -\frac{\nu}{2} \right\} \Big) \setminus (\Lambda_K+ \R_-) ,
$$
where $\Lambda_K$ is defined in~\eqref{e:def-lambdak} and $\mathcal{S}^{\nu m}$ denotes the angular sector
$$\mathcal{S}^{\nu m} := \left\{\Re(s)\geq - \frac{\nu m}{2}|\Im(s)|\right\}.$$
\item If we assume that $K$ is symmetric about zero (that is to say $h_K(-\xi)=h_K(\xi)$), then for all $\xi \in \R^d$, we have 
$$
\ml{V}_{\xi,\nu} \supset  h_K(\xi)  \ml{W}^{\nu m}  , \quad \text{with}\quad \ml{W}^{\nu m} :=  (-\nu m,0] + i(-1-\nu m,1+\nu m)   ,
$$
where $\ml{V}_{\xi,\nu}$ was defined in~\eqref{e:Vxinu-bis}. In this case, for any fixed curve $\mathscr{C}$ satisfying $\mathscr{C} \in C^1_{pw} \big([0,1];  \ml{W}^{\nu m}  \big)$, $\mathscr{C}(0) = - i$ and $\mathscr{C}(1)=i$,  $\mathscr{C}([0,1])$ {\em strictly homotopic}  to $i[-1,1]$ in $\ml{W}^{\nu m}$, one can choose $\mathscr{C}_\xi =  h_K(\xi)\mathscr{C}$ in the statement of Theorem~\ref{t:continuation-zeta}. If in addition 
$\mathscr{C}$ satisfies the property 
\begin{align}
\label{e:condition-scaling}
\lambda \in \R \setminus \{1\} \implies \mathscr{C} \cap \lambda \mathscr{C} \neq \emptyset ,
\end{align}
 then the associated set $\tilde\C(\mathscr{C})$ is connected. 
\end{enumerate}
\end{lemma}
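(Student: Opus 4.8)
The plan is to isolate a single geometric constant attached to $K$, fix $m$ once and for all, and then reduce everything to two elementary inclusions of boxes in $\C$ together with one topological statement. Set $C_0:=\max_{\theta\in\IS^{d-1}}|\theta\cdot\mathbf{v}(\theta)|>0$ (which depends only on $K$) and take $m:=1/(2C_0)$; I claim both items hold with this $m$ and every $\nu>0$. From the expression~\eqref{e:hK-fct-v} for $h_{\pm K}$ one gets $h_K(\xi)\le C_0|\xi|$ and $h_K(-\xi)\le C_0|\xi|$ for all $\xi\in\IR^d\setminus\{0\}$, hence $m\,h_K(\pm\xi)\le\tfrac12|\xi|\le|\xi|$. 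Granting this inequality, the inclusion $\ml{V}_{\xi,\nu}\supset h_K(\xi)\ml{W}^{\nu m}$ of item~(2) is a direct comparison of~\eqref{e:Vxinu-bis} with the definition of $\ml{W}^{\nu m}$: the real‑part inclusion $(-\nu m\,h_K(\xi),0]\subset(-\nu|\xi|,0]$ is exactly $m\,h_K(\xi)\le|\xi|$, and the imaginary‑part inclusion reduces to the same inequality once $h_K(-\xi)=h_K(\xi)$ is used. Then, to see that $\mathscr{C}_\xi:=h_K(\xi)\mathscr{C}$ is admissible in Theorem~\ref{t:continuation-zeta}, I would use that $z\mapsto h_K(\xi)z$ is a homeomorphism of $\C$ fixing $0$: it sends a $C^1_{pw}$ path in $\ml{W}^{\nu m}$ to a $C^1_{pw}$ path in $h_K(\xi)\ml{W}^{\nu m}\subset\ml{V}_{\xi,\nu}$, sends the endpoints $\mp i$ to $\mp i\,h_K(\xi)=\mp i\,h_K(\mp\xi)$, and pushes forward a strict homotopy from $\mathscr{C}$ to $i[-1,1]$ (inside $\ml{W}^{\nu m}$) to a strict homotopy from $\mathscr{C}_\xi$ to $i[-h_K(-\xi),h_K(\xi)]$ inside $h_K(\xi)\ml{W}^{\nu m}\subset\ml{V}_{\xi,\nu}$.

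For item~(1) I would compare set‑theoretically $\bigcup_{\xi\in\Z^d\setminus\{0\}}\mathscr{C}_\xi$ with $\Lambda_K+\R_-$ inside $\mathcal{R}:=\ml{S}^{\nu m}\cup\{\Re(s)>-\nu/2\}$. Each $\mathscr{C}_\xi$ consists of two horizontal legs issued leftwards from the points $ih_K(\xi)$ and $-ih_K(-\xi)$ of $\Lambda_K$, and one vertical leg lying on the line $\{\Re=-\tfrac{\nu}{2}|\xi|\}$. Since $|\xi|\ge1$ the vertical legs sit in $\{\Re\le-\nu/2\}$, hence miss $\{\Re>-\nu/2\}$; and on such a leg $|\Im(s)|\le\max(h_K(\xi),h_K(-\xi))\le C_0|\xi|$, so being in $\ml{S}^{\nu m}$ would force $\tfrac{\nu}{2}|\xi|\le\tfrac{\nu m}{2}|\Im(s)|\le\tfrac{\nu m C_0}{2}|\xi|<\tfrac{\nu}{2}|\xi|$, which is impossible. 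Thus inside $\mathcal R$ only the horizontal legs remain, and since their left endpoints $-\tfrac{\nu}{2}|\xi|$ lie to the left of both $-\nu/2$ and $-\tfrac{\nu m}{2}h_K(\pm\xi)$ (again by $m\,h_K(\pm\xi)\le|\xi|$), the part of a horizontal leg lying in $\mathcal R$ coincides with the part of the corresponding ray $ih_K(\xi)+\R_-$ (resp.\ $-ih_K(-\xi)+\R_-$) lying in $\mathcal R$. Unioning over $\xi$ and recalling $\Lambda_K=\{ih_K(\xi):\xi\neq0\}\cup\{-ih_K(-\xi):\xi\neq0\}$ yields $\big(\bigcup_\xi\mathscr{C}_\xi\big)\cap\mathcal R=(\Lambda_K+\R_-)\cap\mathcal R$, whence the claimed description of $\tilde\C(\mathscr{C})\cap\mathcal R$ (the point $0$, which lies on no $\mathscr{C}_\xi$ and carries the pole of $\mathcal P_{K_0}$ discussed after Lemma~\ref{l:decomp-zeta-I}, is the only point where the two sides formally differ).

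The substantive part is the last assertion of item~(2), the connectedness of $\tilde\C(\mathscr{C})$. Write $A:=\{h_K(\xi):\xi\in\Z^d\setminus\{0\}\}$; since $h_K(\xi)\ge c_1|\xi|\to\infty$ with $c_1:=\min_{\eta\in\IS^{d-1}}h_K(\eta)>0$, the set $A$ is a closed discrete subset of $(0,\infty)$, bounded away from $0$ and unbounded. The first step is to unpack~\eqref{e:condition-scaling} as the statement that $\mathscr{C}$ avoids $0$ and meets each ray from the origin at most once. Because $\mathscr{C}\subset\ml{W}^{\nu m}\subset\{\Re\le0\}$ joins $-i$ to $i$, this makes the continuous map $\arg:\mathscr{C}([0,1])\to[\tfrac\pi2,\tfrac{3\pi}2]$ a bijection (its image is connected, compact, and contains the endpoints $\tfrac\pi2,\tfrac{3\pi}2$), hence a homeomorphism; in particular $\mathscr{C}([0,1])$ is a simple arc and, after reparametrization, $\arg\circ\mathscr{C}$ is strictly monotone. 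I would then check that $\Phi(a,t):=a\,\mathscr{C}(t)$ is a homeomorphism of $(0,\infty)\times[0,1]$ onto $\{\Re(z)\le0\}\setminus\{0\}$ (both injectivity and surjectivity following from the description of $\arg|_{\mathscr{C}}$), carrying $A\times[0,1]$ onto $\bigcup_{\xi\neq0}h_K(\xi)\mathscr{C}$ and the edges $\{t=0\},\{t=1\}$ onto the two imaginary half‑axes. Consequently $\tilde\C(\mathscr{C})\cap\{\Re\le0\}=\Phi\big(((0,\infty)\setminus A)\times[0,1]\big)$ is the disjoint union of the connected ``strips'' $\Phi(I\times[0,1])$, $I$ a connected component of $(0,\infty)\setminus A$; each such strip contains an open segment of the imaginary axis missing $\{0\}\cup\bigcup_\xi h_K(\xi)\mathscr{C}$, and that segment borders the connected open half‑plane $\{\Re(z)>0\}\subset\tilde\C(\mathscr{C})$. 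Hence every strip is joined, inside $\tilde\C(\mathscr{C})$, to $\{\Re(z)>0\}$, so $\tilde\C(\mathscr{C})$ is connected.

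The step I expect to require the most care is the final one: checking that $\Phi$ is a homeomorphism \emph{up to} the boundary edges $t\in\{0,1\}$ — invariance of domain only handles the open part $(0,\infty)\times(0,1)$, so the behaviour along the two imaginary half‑axes must be verified directly — together with the translation of~\eqref{e:condition-scaling} into the ``avoids the origin and crosses each ray at most once'' statement (which is also what forces $\mathscr{C}([0,1])$ to be a genuine simple arc). That is where the geometric content of the lemma sits; item~(1), the box inclusion, and the admissibility of $\mathscr{C}_\xi$ are all routine bookkeeping with the explicit sets $\ml{V}_{\xi,\nu}$, $\ml{W}^{\nu m}$ and $\ml{S}^{\nu m}$.
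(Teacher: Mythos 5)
Your proof is correct and, for item~(1) and the box inclusion in item~(2), follows essentially the same route as the paper: everything rests on a constant $m$ with $m\,h_K(\pm\xi)\leq|\xi|$ (the paper extracts $m$ from the norm equivalence $m\,h_K(\xi)\leq|\xi|\leq M h_K(\xi)$; your $m=1/(2C_0)$ is the same thing up to a factor $2$, and the factor $2$ actually buys you strict inequalities where the paper only gets non-strict ones), followed by direct comparison of the explicit boxes, legs and sectors. Where you genuinely add content is the connectedness assertion, which the paper dismisses as a ``straightforward consequence'': your foliation of $\{\Re(z)\leq 0\}\setminus\{0\}$ by the pairwise disjoint leaves $a\,\mathscr{C}([0,1])$, $a>0$, the identification of the complement of the cuts with the union of connected strips indexed by the components of $(0,\infty)\setminus\{h_K(\xi):\xi\in\Z^d\setminus\{0\}\}$, and the observation that each strip contains an open segment $iI$ of the imaginary axis from which a short horizontal segment leads into $\{\Re(z)>0\}$, together constitute a complete argument that the paper does not supply.

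Three caveats, none of which invalidates your argument. First, condition~\eqref{e:condition-scaling} as printed reads $\mathscr{C}\cap\lambda\mathscr{C}\neq\emptyset$; for a compact curve avoiding the origin this fails for all large $\lambda$, so the printed condition is unsatisfiable except by curves through $0$, and both Figure~\ref{f:agrafes-triangles} and the intended argument require the opposite sign $\mathscr{C}\cap\lambda\mathscr{C}=\emptyset$. You have silently (and correctly) worked with the disjointness version — your ``unpacking'' as ``avoids $0$ and meets each ray from the origin at most once'' is an unpacking of the corrected condition, and you should say so explicitly. Second, the set identity in item~(1) fails at $s=0$ (which you flag) and also, for $|\xi|=1$, on the vertical leg $\{\Re(s)=-\nu/2\}$, which belongs to the \emph{closed} half-plane $\{\Re(s)\geq-\nu/2\}$ of the statement; your argument quietly uses the open half-plane at that step, which is the correct reading but should be acknowledged as a correction of the statement rather than a proof of it verbatim. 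Third, $\Phi(a,t)=a\,\mathscr{C}(t)$ need not be injective in $t$ — the hypotheses only force the \emph{image} of $\mathscr{C}$ to be a simple arc, not the parametrization — so the homeomorphism claim should be made after reparametrizing by $\arg$, or, as your argument in fact only requires, phrased purely in terms of the image sets $a\,\mathscr{C}([0,1])$ and the resulting partition of $\{\Re(z)\leq 0\}\setminus\{0\}$.
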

When taking $\mathscr{C}$ to be a triangle (see Figure~\ref{f:agrafes-triangles}, right), the cuts in Theorem~\ref{t:continuation-zeta} and Lemma~\ref{l:geometry-of-cuts} are reminiscent to the lines where the $L^2$-spectrum of stochastic perturbations of the operator $\theta\cdot\partial_x$ seem to accumulate in~\cite[Figure~3]{DyatlovZworski15}. Recall from \S\ref{s:dynamics} that the function $\mathcal{P}_{K_0}$ is naturally related to the resolvent of the operator $\mathbf{v}(\theta)\cdot\partial_x$ (see e.g.~\eqref{e:key-integration-formula2}). We do not however explain how the vanishing viscosity limit selects the triangular shape of the cuts. 
 
\begin{proof}
Firstly, since $0 \in\mathring{K}$, there exist $M\geq m>0$ such that 
\begin{align}
\label{e:equiv-norm}
m h_K(\xi) \leq |\xi| \leq M h_K(\xi) \quad \text{ for all }\quad \xi \in \R^d.
\end{align}
On $\mathcal{S}^{\nu m} := \{\Re(s)\geq - \frac{\nu m}{2}|\Im(s)|\}$ we have $ \Re(s)\geq  - \frac{\nu}{2} \frac{|\xi|}{\min (h_K(\xi), h_K(-\xi)) }|\Im(s)|$ for all $\xi \in \Z^d\setminus\{0\}$, and as a consequence,  for the curves $\mathscr{C}_\xi$ defined in the first item, we deduce
$$
 \mathcal{S}^{\nu m} \cap \mathscr{C}_\xi  =  \mathcal{S}^{\nu m} \cap  
\Big( [ih_K(\xi) , ih_K(\xi)-\frac{\nu}{2}|\xi|] \cup    [-ih_K(-\xi) -\frac{\nu}{2}|\xi|, -ih_K(-\xi)] \Big) .$$
Since this holds for all $\xi \in \Z^d\setminus\{0\}$, this concludes the proof of the first point of the lemma.

Secondly, notice that if $h_K(-\xi)=h_K(\xi)$, then the set $\mathcal{V}_{\xi,\nu}$ in~\eqref{e:Vxinu-bis} can be rewritten as 
$$
\mathcal{V}_{\xi,\nu}=(-\nu |\xi|, 0] + i \big(-h_K(\xi) -\nu |\xi|, h_K(\xi)+\nu |\xi|\big)  .
$$
 Hence, according to~\eqref{e:equiv-norm}, for any $\xi \in \R^d$, the set 
$\mathcal{V}_{\xi,\nu}$ contains the set 
$$
(-\nu m h_K(\xi), 0] + i \big(-h_K(\xi) -\nu m h_K(\xi) , h_K(\xi)+\nu m h_K(\xi)\big) = h_K(\xi) \ml{W}^{\nu m}.
$$
The last two statements of the lemma are straightforward consequences, the condition~\eqref{e:condition-scaling} ensuring that $h_K(\xi) \neq h_K(\xi') \implies \mathscr{C}_\xi \cap \mathscr{C}_{\xi'}= h_K(\xi)\mathscr{C}  \cap h_K(\xi')\mathscr{C} \neq \emptyset$.
\end{proof}

Finally, Theorems~\ref{t:maintheo1} and~\ref{t:maintheo2} are now particular cases of Theorem~\ref{t:continuation-zeta} combined with Lemma~\ref{l:geometry-of-cuts} (applied to particular choices of the curves $\mathscr{C}_\xi$).

\section{Analytic norms on $\IS^{d-1}$}
\label{s:analytic-norms}

In this section, we review the construction of standard families of analytic norms on $\IS^{d-1}$ and discuss some of their properties in view of our applications to the analysis of the oscillatory integrals $I_{\lambda,\omega}(z)$ appearing in Lemma~\ref{l:decomp-zeta-I}. The norms we introduce are more or less standard when dealing with analytic regularity issues. For the sake of completeness, we give in this section a detailed overview of their definition together with their main properties in view of their application to our problem. For more details and references, the reader is invited to consult the recent works~\cite{GalkowskiZworski19a, GuedesBonthonneauJezequel20}. In particular, besides dealing with manifolds, the second reference also includes the extension to Gevrey regularity.

This section is organized as follows. We begin in \S\ref{s:complexification} by describing the analytic structure on $\mathbb{S}^{d-1}$. Then, in~\S\ref{ss:analyticfunction}
, we describe standard spaces of analytic functions. Finally, in~\S\ref{s:calculus-analytic}, we collect a few useful properties of these spaces that are used all along the article. For simplicity of exposition, we only deal with the case of $\mathbb{S}^{d-1}$ but most of the results presented are valid for more general analytic manifolds. We refer to~\cite{GuedesBonthonneauJezequel20} for more details and references on the general set-up (including the case of Gevrey regularity) and also for a more geometric point of view.

\subsection{Complexification of $\IS^{d-1}$ and local charts}
\label{s:complexification}

We first recall the existence of a natural charts on $\IS^{d-1} \subset \R^d$ centered at $e_d=(0,\ldots, 0,1)$, namely
\begin{equation}
\label{e:def-kappa}
\begin{array}{rrcl}
\kappa: &U :=\IS^{d-1}\cap\{y_d>0\}& \to & B^{d-1}(0,1):=\{y'\in\IR^{d-1}:|y'|<1\}  , \\
& \left(y_1,\ldots,y_d\right) & \mapsto & y' =  \left(y_1,\ldots,y_{d-1}\right) , 
\end{array}
\end{equation}
whose inverse is given by the map
\begin{equation}
\label{e:def-kappa-1}
\begin{array}{rrcl}
\kappa^{-1}: &B^{d-1}(0,1) & \to &  U , \\
&y'=(y_1,\ldots,y_{d-1}) & \mapsto & \left(y_1,\ldots,y_{d-1},\sqrt{1-|y'|^2}\right) .
\end{array}
\end{equation}
Now given any point $M\in \IS^{d-1}$, we may choose a rotation $R_M$ such that $R_M(M)=e_d$, and it yields a chart centered at $M$ by
\begin{align}
\label{e:kappaM}
\kappa_M = \kappa \circ R_M : U_M \to B^{d-1}(0,1) , 
\end{align} 
where $U_M$ is the open half-sphere centered at $M$. One can verify that the family $(\kappa_M,U_M)$ endows $\IS^{d-1}$ with a real analytic structure. Indeed, as can be witnessed from their explicit expressions, the transition maps $\kappa_{M}\circ\kappa_{M'}^{-1}$ are given by analytic maps.

The sphere $\mathbb{S}^{d-1}$ can be explicitly complexified through the complex quadric
$$\mathbb{S}^{d-1}_{\IC}:=\left\{(z_1,\ldots, z_d)\in\IC^d: z_1^2+\ldots+z_d^2=1\right\},$$
and, if we set $z=\tilde{y}+i\tilde{\eta}$, then we deduce 
$$\mathbb{S}^{d-1}_{\IC}:=\left\{\tilde{y}+i\tilde{\eta} , (\tilde{y} ,\tilde{\eta})\in \R^d\times \R^d , \quad   |\tilde{y}|^2-|\tilde{\eta}|^2=1,\quad \tilde{y} \cdot \tilde{\eta}=0 \right\}.$$ 
The coordinate chart $\kappa$ in~\eqref{e:def-kappa} can be extended into a holomorphic chart on $\IS^{d-1}_{\IC}$ as follows. Set 
$$U_\IC:=\left\{z\in\IS^{d-1}_{\IC}: \sum_{j=1}^{d-1}|z_j|^2<1\ \text{and}\ \Re(z_d)>0\right\},$$
which is an open set containing $U$ as a (totally real) subset. Writing 
$$
B^{d-1}_{\IC}(0,1):=\{z'\in\IC^{d-1}: |z_1|^2+\ldots+|z_{d-1}|^2<1\} ,
$$
we can verify that the map
$$\tilde{\kappa}:z=(z',z_d)\in U_{\IC}\mapsto z'\in B^{d-1}_{\IC}(0,1),$$
with inverse 
$$\tilde{\kappa}^{-1}: z'\in B^{d-1}_{\IC}(0,1)\mapsto \left(z',\sqrt{1-\sum_{j=1}^{d-1}z_j^2}\right) ,$$
defines a chart in a complex neighborhood of $e_d=(0,\ldots, 0,1)$ inside $\IS^{d-1}_{\IC}$. Together with the rotation $R_M$ in~\eqref{e:kappaM}, this yields an atlas of holomorphic charts $(\tilde{\kappa}_M,U_M^{\IC})$ in a small neighborhood of $M$ inside $\IS^{d-1}_{\IC}$.

\begin{remark} For latter purposes, we can write the matrix of the metric $g_{\text{Can}}$ on $\IS^{d-1}$ induced by the Euclidean structure on $\IR^d$. In the coordinates $y'=(y_1,\ldots, y_{d-1})\in B^{d-1}(0,1)$, it reads 
$$
g(y')=\text{Id}+\frac{1}{1-|y'|^2}\left(y_iy_j\right)_{1\leq i,j\leq d-1},
$$ 
and
$$
g^*(y')=g(y')^{-1}=\text{Id}-\left(y_iy_j\right)_{1\leq i,j\leq d-1}.
$$ 
Recall also that the determinant is given by $|g(y')|=\frac{1}{1-|y'|^2}.$
Hence, we can write down the explicit expressions for the gradient:
\begin{equation}\label{e:gradient}
 \nabla_{\IS^{d-1}}f=\sum_{i=1}^{d-1}\left(\sum_{j=1}^{d-1}(\delta_{ij}-y_iy_j)\partial_{y_j}f\right)\partial_{y_i}.
\end{equation}
This operator has real analytic coefficients that are well defined on $B^{d-1}(0,1)$ and that can be holomorphically  extended to $B^{d-1}_{\IC}(0,1)$.
\end{remark}

\subsection{Analytic functions on $\IS^{d-1}$}\label{ss:analyticfunction}

We first define analytic norms in an open set of $\R^{n}$. Given $n\geq 1$ and $V \subset \R^{n}$ a bounded open set and $R>0$, we set
\begin{align}
\label{e:def-analytic-norm}
\|f\|_{\mathcal{A}(V, R)}:= \sup_{x \in \overline{V}, \alpha\in\mathbb{N}^{n}}\frac{R^{|\alpha|}|\partial^{\alpha} f(x)|}{\alpha!}  , 
\end{align}
and 
\begin{align}
\label{e:defA(V,R)}
\mathcal{A}(V,R) := \{f \in C^\infty_b(V) , \|f\|_{\mathcal{A}(V,R)} < \infty\} .
\end{align}
\begin{lemma}
\label{l:banach-A}
 The space $\mathcal{A}(V, R)$ is a Banach space for the norm $\|\cdot\|_{\mathcal{A}(V, R)}$.
 Moreover, we have $0<R\leq R' \implies \mathcal{A}(V, R')\subset \mathcal{A}(V, R)$ and $\|f\|_{\mathcal{A}(V,R)} \leq  \|f\|_{\mathcal{A}(V,R')}$ for all $f \in \mathcal{A}(V,R')$.
\end{lemma}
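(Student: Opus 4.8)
\textbf{Proof plan for Lemma~\ref{l:banach-A}.}

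The plan is to verify directly that $\|\cdot\|_{\mathcal{A}(V,R)}$ is a norm, that the space is complete, and then to read off the monotonicity in $R$ from the definition. First I would check that $\|\cdot\|_{\mathcal{A}(V,R)}$ is a genuine norm on the vector space $\mathcal{A}(V,R)$: positive homogeneity and the triangle inequality follow immediately from the corresponding properties of the supremum and of $|\partial^\alpha(\cdot)|$, and $\|f\|_{\mathcal{A}(V,R)} = 0$ forces $f \equiv 0$ already by looking at the $\alpha = 0$ term $\sup_{x}|f(x)|$. Note also that $\mathcal{A}(V,R)$ is stable under linear combinations since the norm of a sum is finite whenever the summands have finite norm, so it is indeed a linear subspace of $C^\infty_b(V)$.

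The main point is completeness. Let $(f_n)_{n}$ be a Cauchy sequence in $\mathcal{A}(V,R)$. For each fixed multi-index $\alpha$, the inequality
$$
\sup_{x \in \overline{V}} |\partial^\alpha f_n(x) - \partial^\alpha f_m(x)| \leq \frac{\alpha!}{R^{|\alpha|}} \|f_n - f_m\|_{\mathcal{A}(V,R)}
$$
shows that $(\partial^\alpha f_n)_n$ is Cauchy in $C_b(V)$ (equivalently in $C(\overline{V})$), hence converges uniformly to some continuous function $g_\alpha$ on $\overline{V}$. By the standard theorem on uniform convergence of derivatives (applied inductively on $|\alpha|$, using that $V$ is open and that each $\partial^\beta f_n$ converges uniformly for $\beta \leq \alpha$), the limit $g_0 =: f$ is $C^\infty$ on $V$ and $\partial^\alpha f = g_\alpha$ for every $\alpha$. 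It then remains to check that $f \in \mathcal{A}(V,R)$ and $f_n \to f$ in the $\mathcal{A}(V,R)$-norm: fix $\varepsilon > 0$ and $N$ such that $\|f_n - f_m\|_{\mathcal{A}(V,R)} < \varepsilon$ for $n,m \geq N$; then for every $x \in \overline{V}$ and every $\alpha$, letting $m \to \infty$ in $\frac{R^{|\alpha|}}{\alpha!}|\partial^\alpha f_n(x) - \partial^\alpha f_m(x)| < \varepsilon$ gives $\frac{R^{|\alpha|}}{\alpha!}|\partial^\alpha f_n(x) - \partial^\alpha f(x)| \leq \varepsilon$, and taking the supremum over $x$ and $\alpha$ yields $\|f_n - f\|_{\mathcal{A}(V,R)} \leq \varepsilon$ for $n \geq N$. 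In particular $\|f\|_{\mathcal{A}(V,R)} \leq \|f_N\|_{\mathcal{A}(V,R)} + \varepsilon < \infty$, so $f \in \mathcal{A}(V,R)$, and $f_n \to f$ in norm. The only mild subtlety — and the step I expect to require the most care — is the interchange of the limit in $n$ with the supremum over the infinite index set $\{\alpha \in \mathbb{N}^n\} \times \overline{V}$; this is handled exactly as above by first fixing $(\alpha, x)$, passing to the limit in the scalar inequality, and only afterwards taking the supremum, which is legitimate because the bound obtained is uniform in $(\alpha,x)$.

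Finally, for the monotonicity statement, suppose $0 < R \leq R'$ and $f \in \mathcal{A}(V,R')$. Then for every $x \in \overline{V}$ and every $\alpha$ we have $R^{|\alpha|} \leq (R')^{|\alpha|}$, hence
$$
\frac{R^{|\alpha|}|\partial^\alpha f(x)|}{\alpha!} \leq \frac{(R')^{|\alpha|}|\partial^\alpha f(x)|}{\alpha!} \leq \|f\|_{\mathcal{A}(V,R')},
$$
and taking the supremum over $x$ and $\alpha$ gives $\|f\|_{\mathcal{A}(V,R)} \leq \|f\|_{\mathcal{A}(V,R')} < \infty$, so in particular $f \in \mathcal{A}(V,R)$. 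This proves $\mathcal{A}(V,R') \subset \mathcal{A}(V,R)$ with the claimed norm inequality, completing the proof.
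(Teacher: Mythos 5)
Your proof is correct, but it follows a genuinely different route from the paper's. You argue entirely in the real domain: for each fixed $\alpha$ the weighted norm controls $\sup_{\overline V}|\partial^\alpha f_n-\partial^\alpha f_m|$, so each $(\partial^\alpha f_n)_n$ converges uniformly, the classical theorem on uniform convergence of derivatives (applied locally and inductively in $|\alpha|$) identifies the limit as a $C^\infty$ function with $\partial^\alpha f=g_\alpha$, and then the "fix $(\alpha,x)$, let $m\to\infty$, take the sup last" step gives both $f\in\mathcal{A}(V,R)$ and norm convergence; this last interchange is exactly the point that needs care, and you handle it correctly. The paper instead passes to the complex domain: the norm bound shows each $f_n$ extends holomorphically to a fixed complex neighborhood of $\overline V$, the sequence is Cauchy for uniform convergence on compacts there, so by the standard theorem (cited from H\"ormander) the limit is holomorphic, and the derivative bounds for the limit are then recovered via the Cauchy formula. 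Your argument is more elementary and self-contained (no complex analysis needed, and it does not even use that elements of $\mathcal{A}(V,R)$ are real-analytic, only the definition of the space), while the paper's argument fits the way these spaces are used later, where holomorphic extensions to polydisc neighborhoods and the auxiliary $\mathcal{H}_R$-type sup norms are the main tools. The monotonicity part is immediate in both treatments and your version is fine.
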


\begin{proof}
 Let $(f_j)_{j\in \N} \in \mathcal{A}(V, R)^\N$ be a Cauchy sequence. From the definition of $\|.\|_{\mathcal{A}(V,R)}$, each function $f_j$ can be extended to a holomorphic function $\tilde{f}_j$ in a small open neighborhood of $\overline{V}$ in $\mathbb{C}^n$. This neighborhood does not depend on $j \in \mathbb{N}$. The sequence $(\tilde{f}_j)_{j\in \N}$ is a Cauchy sequence for the uniform convergence in each compact subset of this open neighborhood.
Therefore $ \tilde{f}_j \rightarrow \tilde{f} $ uniformly on each such compact and by~\cite[Corollary 2.2.4 p.~26]{Hormandercomplex}, we find that the limit function $\tilde{f}$ is holomorphic on the open neighborhood of $\overline{V}$. We denote by $f$ its restriction to $\overline{V}$. We repeat the previous proof for each $\partial^\alpha f_j$ which is also Cauchy in $\mathcal{A}(V, R)$ by definition, this also tells us that $ \frac{R^{|\alpha|}|\partial^{\alpha} f_j (x)|}{\alpha!} $ is bounded uniformly in $x\in \overline{V}$, $\alpha$ and $j$. Since $\partial^{\alpha} f_j \rightarrow \partial^{\alpha} f $ uniformly on $\overline{V}$ by Cauchy formula, we deduce that $ \frac{R^{|\alpha|}|\partial^{\alpha} f(x)|}{\alpha!} $ is bounded uniformly in $x\in \overline{V}$, multi--index $\alpha$ yielding $f\in \mathcal{A}(V, R)$.
\end{proof}

Coming back to $\IS^{d-1}$ and using the conventions of the previous section, we now provide with a first natural way to define analytic norms on functions on $\IS^{d-1}$.
\begin{definition}
\label{d:def-norms}
We fix once and for all a finite family of points $(M_j)_{j=1}^{n_0}$ such that $\bigcup_{j=1}^{n_0} B_{\IS^{d-1}}(M_j,\pi/4)=\IS^{d-1}$.
For $f\in\ml{C}^\infty(\IS^{d-1})$, we  then set 
\begin{align}
\label{e:def-analytic-norm-sphere}
\|f\|_{R}:= 
\sum_{j=1}^{n_0}
\| f\circ\kappa_{M_j}^{-1} \|_{\mathcal{A}(B^{d-1}(0,1/2), R)}
=\sum_{j=1}^{n_0}\sup_{|x|\leq 1/2,\alpha\in\mathbb{N}^{d-1}}\frac{R^{|\alpha|}|\partial^{\alpha}(f\circ\kappa_{M_j}^{-1})(x)|}{\alpha!},
\end{align}
and 
\begin{align}
\label{e:def-AR-sphere}
\ml{A}_{R}(\IS^{d-1}):=\left\{f\in\ml{C}^\infty(\IS^{d-1}):\|f\|_R<\infty\right\}, 
\end{align}
\end{definition}
We refer to Lemma~\ref{r:bas-point} below concerning the dependence of the norms with respect to the choice of the points $M_j$.
A direct consequence of Lemma~\ref{l:banach-A} is the following result.
\begin{lemma}
\label{l:AR-Banach}
For all $R>0$, the space $\ml{A}_{R}(\IS^{d-1})$ is a Banach space for the norm $\| \cdot \|_{R}$ and we have $0<R\leq R' \implies\ml{A}_{R'}(\IS^{d-1})\subset \ml{A}_{R}(\IS^{d-1})$ together with $\|f\|_{R} \leq  \|f\|_{R'}$ for all $f \in\ml{A}_{R'}(\IS^{d-1})$.
\end{lemma}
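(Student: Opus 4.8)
The plan is to realize $\mathcal{A}_R(\IS^{d-1})$ as (an isometric copy of) a closed subspace of a finite direct sum of the Banach spaces produced by Lemma~\ref{l:banach-A}. Concretely, I would introduce the linear map
\[
\Phi:\mathcal{A}_R(\IS^{d-1})\longrightarrow E:=\bigoplus_{j=1}^{n_0}\mathcal{A}\big(B^{d-1}(0,1/2),R\big),\qquad \Phi(f):=\big(f\circ\kappa_{M_j}^{-1}\big)_{1\le j\le n_0},
\]
and endow $E$ with the norm $\|(g_j)_j\|_E:=\sum_{j=1}^{n_0}\|g_j\|_{\mathcal{A}(B^{d-1}(0,1/2),R)}$; a finite direct sum of Banach spaces is Banach, so $E$ is complete by Lemma~\ref{l:banach-A}. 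By the very definition~\eqref{e:def-analytic-norm-sphere} of $\|\cdot\|_R$ we have $\|\Phi(f)\|_E=\|f\|_R$, so $\Phi$ is an isometry onto its image. In particular $\|\cdot\|_R$ inherits homogeneity and the triangle inequality from the norms $\|\cdot\|_{\mathcal{A}(\cdot,R)}$; that it is definite (not merely a seminorm) follows because $\Phi(f)=0$ means $f$ vanishes on the nonempty open set $\bigcup_j\kappa_{M_j}^{-1}(B^{d-1}(0,1/2))$, forcing $f\equiv 0$ — here one uses that the chart domains cover $\IS^{d-1}$ (the role of the choice of points $M_j$ in Definition~\ref{d:def-norms}), or, alternatively, real-analyticity of $f$ together with connectedness of $\IS^{d-1}$.

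Completeness then reduces to showing that $\Phi\big(\mathcal{A}_R(\IS^{d-1})\big)$ is closed in $E$. Given a Cauchy sequence $(f_k)_k$ in $\mathcal{A}_R(\IS^{d-1})$, each sequence $(f_k\circ\kappa_{M_j}^{-1})_k$ is Cauchy in $\mathcal{A}(B^{d-1}(0,1/2),R)$, hence converges there to some $g_j$ by Lemma~\ref{l:banach-A}. Since convergence in the analytic norm entails uniform convergence (of all derivatives) on $\overline{B^{d-1}(0,1/2)}$, the functions $f_k$ converge uniformly on each chart domain, hence on their union, which is $\IS^{d-1}$; the limit is a globally defined smooth function $f$ with $f\circ\kappa_{M_j}^{-1}=g_j$ for every $j$, the compatibility of the local limits on overlaps being automatic because the $f_k$ are themselves globally defined on $\IS^{d-1}$. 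Then $\|f\|_R=\sum_j\|g_j\|_{\mathcal{A}(B^{d-1}(0,1/2),R)}<\infty$, so $f\in\mathcal{A}_R(\IS^{d-1})$, and $\|f_k-f\|_R=\sum_j\|f_k\circ\kappa_{M_j}^{-1}-g_j\|_{\mathcal{A}(B^{d-1}(0,1/2),R)}\to 0$, which is exactly $f_k\to f$ in $\mathcal{A}_R(\IS^{d-1})$.

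The monotonicity statement is then immediate: for $0<R\le R'$ and $f\in\mathcal{A}_{R'}(\IS^{d-1})$, Lemma~\ref{l:banach-A} gives $\|f\circ\kappa_{M_j}^{-1}\|_{\mathcal{A}(B^{d-1}(0,1/2),R)}\le\|f\circ\kappa_{M_j}^{-1}\|_{\mathcal{A}(B^{d-1}(0,1/2),R')}$ for each $j$, and summing over $j=1,\dots,n_0$ yields $\|f\|_R\le\|f\|_{R'}$, hence $\mathcal{A}_{R'}(\IS^{d-1})\subset\mathcal{A}_R(\IS^{d-1})$. The only point that deserves a line of care is the passage from the local limits $g_j$ to a single globally defined element $f$ of $\mathcal{A}_R(\IS^{d-1})$: one must know that the chart domains $\kappa_{M_j}^{-1}(B^{d-1}(0,1/2))$ exhaust $\IS^{d-1}$ and that the $g_j$ glue, both of which are settled as above; everything else is a formal consequence of Lemma~\ref{l:banach-A} and of the finiteness of the index set $\{1,\dots,n_0\}$.
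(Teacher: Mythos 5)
Your proof is correct and takes essentially the same route the paper intends: the paper states Lemma~\ref{l:AR-Banach} as a direct consequence of Lemma~\ref{l:banach-A}, and your realization of $\mathcal{A}_R(\IS^{d-1})$ as an isometric closed subspace of the finite direct sum $\bigoplus_{j=1}^{n_0}\mathcal{A}\big(B^{d-1}(0,1/2),R\big)$, with completeness transferred chartwise and the local limits glued into a global function, is precisely that elaboration. The one point you rightly flag --- that the sets $\kappa_{M_j}^{-1}\big(B^{d-1}(0,1/2)\big)$, i.e.\ the geodesic balls of radius $\arcsin(1/2)=\pi/6$, must cover $\IS^{d-1}$ for definiteness of the norm and for the glued limit to be determined everywhere --- is a requirement on the atlas implicit in Definition~\ref{d:def-norms} (whose stated covering by balls of radius $\pi/4$ is formally a bit weaker than what is used), not a defect of your argument.
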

With Definition~\ref{d:def-norms} at hand, one can verify that any function $f\in \ml{A}_R$ satisfies the following property: for all $j \in \{1,\dots,N\}$, the real-analytic function  
 $f\circ\kappa_{M_j}^{-1}:\{|x|\leq 1/2\}\rightarrow\IC$ admits a holomorphic extension $\tilde{f}_{M_j}$ to the complex (and polydisc shaped) neighborhood
 \begin{align}
 \label{e:def-UR}
 \ml{U}_{R}:=\bigg\{z\in\IC^{d-1}:\exists x\in\IR^{d-1}\ \text{with}\ |x|\leq 1/2\ \text{such that}\ \forall 1\leq j\leq d-1,\ |z_j-x_j|<R\bigg\}
 \end{align}
 of $\{x\in\IR^{d-1}:|x|\leq 1/2\}$ in $\IC^{d-1}$.
The holomorphic extension $\tilde{f}_{M_j}$ of  $f\circ\kappa_{M_j}^{-1}$ may be explicitly written as
 \begin{equation}
 \label{e:extension-expansion}
 \tilde{f}_{M_j}(z) := \sum_{\alpha \in \N^{d-1}} \frac{\partial^{\alpha}(f\circ\kappa_{M_j}^{-1})(x)}{\alpha!}(z-x)^\alpha ,
 \end{equation}
for some $x \in \R^{d-1},|x|\leq 1/2$ (and does not depend on the choice of $x$). We can then introduce the following auxiliary norm:
\begin{align}
\label{e:define-HR-norm}
\left\|f\right\|_{\mathcal{H}_R}:=\sum_{j=1}^{n_0}\left\|\tilde{f}_{M_j}\right\|_{L^\infty(\mathcal{U}_R)} .
\end{align}
The norm $\left\| \cdot \right\|_{\mathcal{H}_R}$ has the following nice property: 
\begin{align}
\label{e:norm-algebra}
\|f_1f_2\|_{\mathcal{H}_R} & = \sum_{j=1}^{n_0}\left\|\tilde{f}_{1,M_j}\tilde{f}_{2,M_j}\right\|_{L^\infty(\mathcal{U}_R)}\leq \sum_{j=1}^{n_0}\left\|\tilde{f}_{1,M_j}\right\|_{L^\infty(\mathcal{U}_R)}\left\|\tilde{f}_{2,M_j}\right\|_{L^\infty(\mathcal{U}_R)} \nonumber\\ 
& \leq\|f_1\|_{\mathcal{H}_R}\|f_2\|_{\mathcal{H}_R}.
\end{align}

 \begin{remark}
In the following, we will only consider $\mathcal{A}_R$ spaces for $R<1/(2\sqrt{d-1})$. In that way, $\ml{U}_{R}\cap\mathbb{R}^{d-1}\subset B_{\mathbb{R}}^{d-1}(0,1)$ in~\eqref{e:def-UR}, and thus points in $\ml{U}_{R}$ do not reach the singularity of the charts $\kappa_{M_j}$.
\end{remark}

\begin{lemma}\label{l:comparison-norm} Let $R_0<R_1<\frac{1}{2\sqrt{d-1}}$. Then, for every $f\in\mathcal{A}_{R_1}(\mathbb{S}^{d-1})$, one has
 $$
 \|f\|_{R_0}\leq \left\|f\right\|_{\mathcal{H}_{R_0}}\leq n_0\frac{R_1^{d-1}}{(R_1-R_0)^{d-1}}\|f\|_{R_1}.
 $$
\end{lemma}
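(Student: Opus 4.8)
The plan is to prove the two inequalities of Lemma~\ref{l:comparison-norm} separately, both by elementary polydisc arguments --- Cauchy estimates for the left-hand inequality and summation of the Taylor series~\eqref{e:extension-expansion} for the right-hand one. Throughout I would fix a chart index $j\in\{1,\dots,n_0\}$, write $g:=f\circ\kappa_{M_j}^{-1}$, and let $\tilde g=\tilde f_{M_j}$ be the holomorphic extension of $g$ to $\ml{U}_{R_0}$ (which exists: $f\in\mathcal{A}_{R_1}\subset\mathcal{A}_{R_0}$ by Lemma~\ref{l:AR-Banach} since $R_0<R_1$, and by uniqueness of holomorphic continuation it coincides on $\ml{U}_{R_0}$ with the restriction of the $\ml{U}_{R_1}$-extension). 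For the bound $\|f\|_{R_0}\leq\|f\|_{\mathcal{H}_{R_0}}$, I would first observe that for every real $x$ with $|x|\leq 1/2$ the open polydisc $\prod_{k=1}^{d-1}\{|z_k-x_k|<R_0\}$ lies in $\ml{U}_{R_0}$, directly from the definition~\eqref{e:def-UR}. The multivariable Cauchy integral formula on the distinguished boundary $\prod_k\{|z_k-x_k|=r\}$, $0<r<R_0$, then gives $|\partial^\alpha g(x)|=|\partial^\alpha\tilde g(x)|\leq \alpha!\,r^{-|\alpha|}\,\|\tilde g\|_{L^\infty(\ml{U}_{R_0})}$; letting $r\uparrow R_0$ yields $R_0^{|\alpha|}|\partial^\alpha g(x)|/\alpha!\leq\|\tilde f_{M_j}\|_{L^\infty(\ml{U}_{R_0})}$. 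Taking the supremum over $|x|\leq 1/2$ and $\alpha\in\N^{d-1}$ and summing over $j$ gives the inequality, by~\eqref{e:def-analytic-norm-sphere} and~\eqref{e:define-HR-norm}.

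For the bound $\|f\|_{\mathcal{H}_{R_0}}\leq n_0\frac{R_1^{d-1}}{(R_1-R_0)^{d-1}}\|f\|_{R_1}$, I would take $z\in\ml{U}_{R_0}$ and choose a real base point $x$ with $|x|\leq 1/2$ and $|z_k-x_k|<R_0$ for all $k$. Since $f\in\mathcal{A}_{R_1}$, the Taylor series~\eqref{e:extension-expansion} about $x$ converges on the polydisc of polyradius $(R_1,\dots,R_1)$, hence at $z$, where it represents $\tilde f_{M_j}(z)$. Using $|\partial^\alpha(f\circ\kappa_{M_j}^{-1})(x)|\leq\alpha!\,R_1^{-|\alpha|}\|f\|_{R_1}$ --- which holds because each chart summand defining $\|f\|_{R_1}$ in~\eqref{e:def-analytic-norm-sphere} is itself bounded by $\|f\|_{R_1}$ --- I bound the series termwise by a product of geometric series:
\[
|\tilde f_{M_j}(z)|\leq \|f\|_{R_1}\prod_{k=1}^{d-1}\sum_{\alpha_k\geq 0}\left(\frac{|z_k-x_k|}{R_1}\right)^{\alpha_k}=\|f\|_{R_1}\prod_{k=1}^{d-1}\frac{1}{1-|z_k-x_k|/R_1}\leq\|f\|_{R_1}\left(\frac{R_1}{R_1-R_0}\right)^{d-1},
\]
the last step using $|z_k-x_k|<R_0<R_1$. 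Taking the supremum over $z\in\ml{U}_{R_0}$ and summing over the $n_0$ charts gives the claimed estimate via~\eqref{e:define-HR-norm}.

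The argument is entirely routine and I do not expect a genuine obstacle. The only two points needing (minor) care are that the open polydisc of radius $R_0$ about any real point of $\{|x|\leq 1/2\}$ really sits inside $\ml{U}_{R_0}$ (immediate from~\eqref{e:def-UR}), and that the Taylor series at the nearest real base point has radius of convergence at least $R_1$ in each variable, so that geometric summation with ratio $R_0/R_1<1$ is legitimate; both are guaranteed by the hypothesis $R_0<R_1<\frac{1}{2\sqrt{d-1}}$ (the upper bound on $R_1$ only serving, as noted in the text, to keep the charts away from their singularities).
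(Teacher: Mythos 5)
Your proof is correct and follows essentially the same route as the paper: the first inequality by Cauchy estimates on polydiscs contained in $\ml{U}_{R_0}$, and the second by summing the Taylor expansion~\eqref{e:extension-expansion} at a nearby real base point against the coefficient bound $R_1^{|\alpha|}|\partial^\alpha(f\circ\kappa_{M_j}^{-1})(x)|/\alpha!\leq\|f\|_{R_1}$ and the geometric series identity $\sum_{\alpha}(R_0/R_1)^{|\alpha|}=(1-R_0/R_1)^{-(d-1)}$. Your factorization of the multivariate geometric series as a product over coordinates is in fact slightly cleaner than the paper's display, but the argument is identical in substance.
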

\begin{proof}
 The first inequality is a consequence of Cauchy formula and the polydisc shape of $\ml{U}_R$ (see e.g.~\cite[Theorem~2.2.3]{Hormandercomplex}). For the second one, notice that we may write the formula~\eqref{e:extension-expansion} for every $z\in\mathcal{U}_{R_0}$, 
 where $x$ lies in $\mathbb{R}^{d-1}$ with $|x|\leq 1/2$. Hence, one has for all $z\in\mathcal{U}_{R_0}$,
\begin{align*}
|\tilde{f}_{M_j}(z)|&\leq
\sum_{\alpha\in \mathbb{N}^{d-1}} \vert\tilde{f}_{M_j}^\alpha(x)\vert \frac{\vert z-x \vert^{\vert\alpha\vert}}{\alpha !} \leq \sum_{\alpha\in \mathbb{N}^{d-1}} \vert\tilde{f}_{M_j}^\alpha(x)\vert R_1^{\vert \alpha\vert} \frac{\vert z-x \vert^{\vert \alpha\vert }}{\alpha ! R_1^{\vert\alpha\vert} }\leq 
\sum_{\alpha\in \mathbb{N}^{d-1}} R_1^{\vert \alpha\vert}\frac{\vert f_{M_j}^\alpha(x)\vert}{\alpha !}  \frac{R_0^{\vert \alpha\vert }}{ R_1^{\vert\alpha\vert} }
\\
&\leq  \|f\|_{R_1} \sum_{\alpha\in\mathbb{N}^{d-1}}\left(\frac{R_0}{R_1}\right)^{|\alpha|} = \|f\|_{R_1} \frac{R_1^{d-1}}{(R_1-R_0)^{d-1}}.
\end{align*}
We used the generating function for 
multi--indices $\sum_{\alpha\in \mathbb{N}^{d-1}}z^{\vert\alpha\vert} = \left(\sum_{i=0}^\infty z^i \right)^{d-1}  $ which equals $(1-z)^{-(d-1)}$ when $\vert z\vert<1$.
The final result follows when adding the estimates for the $n_0$ charts which explains the $n_0$ in factor.
\end{proof}

\begin{remark}
\label{r:bas-point}
We note that the norms $\|.\|_{R}$ depend on the choice of the base points for our charts (and also on the coordinate map itself) even if we do not emphasize it in our notations. However, if we restore for a moment the dependence on the finite atlas (set of points) $(M_j)_{j=1}^{n_0}$ and introduce another choice $(M_j')_{j=1}^{n_0'}$ of a such atlas, we have,  for $R'>R$ some constant $C(R,R')>0$ such that
$$\|f\|_{R,(M_j)}\leq\sum_{j=1}^{n_0}\|\tilde{f}_{M_j}\|_{L^\infty(\ml{U}_{R})}\leq n_0\sum_{j=1}^{n_0'}\|\tilde{f}_{M_j'}\|_{L^\infty(\ml{U}_{R})}\leq n_0C(R,R')\|f\|_{R',(M_j')}.$$
\end{remark}

Finally, we conclude this paragraph by clarifying the relationship between real-analytic regularity of the convex set $K$ as assumed in Theorems~\ref{t:maintheo1},~\ref{t:maintheo2} etc. and regularity properties of its parametrization maps $\mathbf{v}$ defined in~\eqref{e:inverse-gauss}, and used all along the paper.
We recall that $\partial K$ is said to be a real-analytic submanifold of $\mathbb{R}^d$ of dimension $d-1$ if $\partial K$ can be covered by finitely many charts $\psi:U\subset\mathbb{R}^d\rightarrow B_{\mathbb{R}}^{d}(0,1)$ such that $\psi(\partial K \cap U)\subset \big( \R^{d-1}\times\{0\} \big) \cap B_{\mathbb{R}}^{d}(0,1)$ and whose transition maps are functions lying on some space $\mathcal{A}(V,R)^d$ for $R>0$ small enough and some bounded open set $V\subset \mathbb{R}^d$. 
\begin{lemma}
\label{l:analytic-analytic}
Assume that $K$ is an analytic  strictly convex compact body of $\IR^d$ in the sense of Definition~\ref{e:def-convex-body}, such that $0 \in \mathring{K}$. Then, there exists $R>0$ such that the inverse Gauss map $\mathbf{v}:\mathbb{S}^{d-1}\rightarrow \partial K$ defined in~\eqref{e:inverse-gauss} satisfies $\mathbf{v}\in\mathcal{A}_R(\IS^{d-1})^{d}$ (in the sense that all $d$ components of $\mathbf{v}$ belong to $\mathcal{A}_R(\IS^{d-1})$) together with~\eqref{e:asspt-hyper-importante}.
\end{lemma}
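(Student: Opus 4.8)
The plan is to exploit the fact that, under the stated hypotheses, the Gauss map $G:N_+(K)\to\IS^{d-1}$ and its inverse are real-analytic diffeomorphisms, and then to quantify the radius of analyticity. First I would observe that, since $\partial K$ is a compact real-analytic hypersurface, it admits finitely many charts $\psi_a:U_a\to B_{\R}^d(0,1)$ with analytic transition maps, and in each such chart the outer unit normal $\theta=N(x)$ is an explicit algebraic-analytic expression in the first derivatives of the local defining function of $\partial K$ (the normalized gradient). Hence $N:\partial K\to\IS^{d-1}$ is real-analytic; by Definition~\ref{e:def-convex-body} strict convexity means the shape operator is everywhere definite (as recalled in the proof of Lemma~\ref{l:geometric-lemma}), so $dN$ is everywhere invertible, and $N$ is an analytic diffeomorphism onto $\IS^{d-1}$. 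Its inverse, which is precisely $\mathbf{v}:\IS^{d-1}\to\partial K\subset\R^d$ from~\eqref{e:inverse-gauss}, is then real-analytic by the analytic inverse function theorem. Property~\eqref{e:asspt-hyper-importante} is just the restatement of $0\in\mathring K$ already justified in~\cite[Remark~3.2]{DangLeautaudRiviere22}.

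Next I would upgrade ``real-analytic'' to ``lies in $\mathcal{A}_R(\IS^{d-1})^d$ for some $R>0$''. The point is that $\mathbf{v}$ real-analytic on the compact manifold $\IS^{d-1}$ means: for each of the finitely many base-point charts $\kappa_{M_j}$ fixed in Definition~\ref{d:def-norms}, the map $\mathbf{v}\circ\kappa_{M_j}^{-1}$ is real-analytic on the closed ball $\{|x|\le 1/2\}$, hence extends holomorphically to some complex neighborhood of it; by compactness of $\{|x|\le 1/2\}$ one may take a uniform polydisc-type neighborhood, i.e. $\mathcal{U}_{R_j}$ for some $R_j>0$ as in~\eqref{e:def-UR}. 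Taking $R:=\min_j R_j$ (and shrinking further so that $R<\tfrac1{2\sqrt{d-1}}$, which is harmless and keeps us inside the chart domains per the Remark following Lemma~\ref{l:comparison-norm}), one gets a uniform holomorphic extension on $\mathcal{U}_R$ in every chart, hence $\|\mathbf{v}\|_{\mathcal{H}_R}<\infty$, and therefore $\|\mathbf{v}\|_R<\infty$ by the first inequality of Lemma~\ref{l:comparison-norm}. Thus $\mathbf{v}\in\mathcal{A}_R(\IS^{d-1})^d$.

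The one step that requires genuine care — rather than being a soft compactness argument — is producing the \emph{complex} extension of $\mathbf{v}$ with a controlled domain directly from the hypothesis, i.e. the analytic inverse function theorem with quantitative control. Concretely: the chart expression of $N$ (built from the defining function $\psi_a$ of $\partial K$) extends holomorphically to a fixed complex neighborhood since $\psi_a$ does; its complex Jacobian is nondegenerate on the real locus by strict convexity and hence, by continuity, on a slightly smaller complex neighborhood; then the holomorphic inverse function theorem yields a holomorphic local inverse whose domain size can be bounded below uniformly by a compactness argument over $\partial K$ (using quantitative estimates on the inverse, e.g. a fixed-point/majorant-series version). Patching these local holomorphic inverses against $\IS^{d-1}_\IC$ and reading them in the charts $\tilde\kappa_{M_j}$ gives the desired extension of $\mathbf{v}\circ\kappa_{M_j}^{-1}$ on $\mathcal{U}_R$. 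I expect this quantitative inverse-function-with-uniform-radius step to be the main (and essentially only) technical obstacle; everything else is bookkeeping with the norms of~\S\ref{s:analytic-norms}. Alternatively, and perhaps more cleanly, one can bypass the inverse function theorem by recalling that for a strictly convex body the parametrization by the outer normal is $\mathbf{v}=\nabla h_{K^\circ}$ composed with radial projection, or directly $\mathbf{v}(\theta)=\nabla h_K(\theta)$ extended homogeneously (the classical identity expressing the boundary point with outer normal $\theta$ via the support function), so that analyticity and the radius of analyticity of $\mathbf{v}$ follow from those of $h_K$, which in turn follow from analyticity of $\partial K$ by the same normal-parametrization identity; I would mention this as the conceptually shortest route.
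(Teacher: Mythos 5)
Your proposal is correct and follows essentially the same route as the paper: the real-analytic (local) inverse function theorem applied to the Gauss map gives real-analyticity of $\mathbf{v}$, and compactness of $\IS^{d-1}$ (together with Lemma~\ref{l:comparison-norm}) then yields membership in $\mathcal{A}_R(\IS^{d-1})^d$ for some small $R>0$. The quantitative inverse-function step you single out as the main obstacle is not actually needed, since the lemma only claims existence of \emph{some} $R>0$, which your soft compactness argument in the second paragraph already delivers.
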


For instance, this holds in the case where $K=B_{\mathbb{R}}^d(0,1)$ by extending, near the north pole, the local charts $\kappa$ given in \S\ref{s:complexification} to a small neighborhood of $\mathbb{S}^{d-1}$. In the case of $K=B_{\mathbb{R}}^d(0,1)$, one has $\mathbf{v}(\theta)=\theta$.
\begin{proof}
Recall that the Gauss map in~\eqref{e:inverse-gauss} is obtained from the local inversion theorem and this theorem holds true in the real-analytic category~\cite[Prop.~6.1]{cartanelementary}. As a consequence, the pullback $\tilde{\mathbf{v}}:B_{\mathbb{R}}^{d-1}(0,1)\rightarrow B_{\mathbb{R}}^{d-1}(0,1)$ of $\mathbf{v}$ by a local chart $\kappa$ given in \S\ref{s:complexification} has all its coordinates in an analytic space $\mathcal{A}(V,R)$ with $V\subset\R^{d-1}$. Hence, one can deduce that $\mathbf{v}$ has all its components in an analytic space $\mathcal{A}_R(\IS^{d-1})$ for $R>0$ small enough. 
\end{proof}

\subsection{Calculus properties of the spaces $\mathcal{A}_R(\IS^{d-1})$}
\label{s:calculus-analytic}
We collect in this section several elementary computational properties of the spaces $\mathcal{A}_R(\IS^{d-1})$, that will be helpful later on in the article.
First of all, it follows from the definition~\eqref{e:def-analytic-norm-sphere} that these analytic spaces are naturally included in $\mathcal{C}^0(\IS^{d-1})$ with an a priori bound.
\begin{lemma}\label{l:continuous} For any $R>0$ and $\psi\in\mathcal{A}_R(\IS^{d-1})$, we have $\psi \in \mathcal{C}^0(\IS^{d-1})$ with 
$$\|\psi\|_{\mathcal{C}^0(\IS^{d-1})}\leq \|\psi\|_{R}.$$
\end{lemma}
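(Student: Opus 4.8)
The statement to prove is Lemma~\ref{l:continuous}: that any $\psi \in \mathcal{A}_R(\IS^{d-1})$ is continuous on $\IS^{d-1}$ with $\|\psi\|_{\mathcal{C}^0(\IS^{d-1})} \leq \|\psi\|_R$. The plan is straightforward: the key observation is that any point of $\IS^{d-1}$ lies in one of the balls $B_{\IS^{d-1}}(M_j, \pi/4)$ from Definition~\ref{d:def-norms}, and the chart $\kappa_{M_j}$ maps this ball into the coordinate ball $B^{d-1}(0,1/2)$ on which the $\mathcal{A}(B^{d-1}(0,1/2),R)$ norm controls the sup of $|\psi \circ \kappa_{M_j}^{-1}|$ (this is the $\alpha = 0$ term in the definition~\eqref{e:def-analytic-norm}). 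Continuity itself is immediate since a function with finite $\mathcal{A}(V,R)$ norm is in particular $C^\infty_b(V)$ by definition~\eqref{e:defA(V,R)}, hence continuous in each chart, and the charts $\kappa_{M_j}$ are analytic diffeomorphisms.

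\textbf{Key steps.} First I would recall that $\bigcup_{j=1}^{n_0} B_{\IS^{d-1}}(M_j, \pi/4) = \IS^{d-1}$, so given any $p \in \IS^{d-1}$ there is some $j$ with $p \in B_{\IS^{d-1}}(M_j, \pi/4)$, and one checks that $\kappa_{M_j}(p) \in B^{d-1}(0,1/2)$ (since $\kappa_{M_j} = \kappa \circ R_{M_j}$ with $R_{M_j}(p)$ lying within angular distance $\pi/4$ of the north pole $e_d$, its first $d-1$ coordinates have norm $\sin(\pi/4) = \tfrac{\sqrt2}{2} < \tfrac12$ — actually one should be slightly careful here: $\sin(\pi/4) = \frac{\sqrt 2}{2} \approx 0.707 > 1/2$; the correct bound to invoke is that the family of points is chosen, or the radius adjusted, so that the relevant portion of the sphere lands in the coordinate ball of radius $1/2$; alternatively, one simply enlarges the finite atlas so that each $B_{\IS^{d-1}}(M_j,\pi/4)$ maps into $B^{d-1}(0,1/2)$, which is harmless). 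Then, taking $\alpha = 0$ in~\eqref{e:def-analytic-norm},
$$
|\psi(p)| = |(\psi \circ \kappa_{M_j}^{-1})(\kappa_{M_j}(p))| \leq \sup_{x \in \overline{B^{d-1}(0,1/2)}} |(\psi\circ\kappa_{M_j}^{-1})(x)| \leq \|\psi\circ\kappa_{M_j}^{-1}\|_{\mathcal{A}(B^{d-1}(0,1/2),R)} \leq \|\psi\|_R,
$$
using that $\|\psi\|_R$ is the \emph{sum} over all $n_0$ charts of these norms, each of which is nonnegative. Taking the supremum over $p \in \IS^{d-1}$ gives $\|\psi\|_{\mathcal{C}^0(\IS^{d-1})} \leq \|\psi\|_R$. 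Continuity of $\psi$ follows because in each chart $\psi\circ\kappa_{M_j}^{-1} \in C^\infty_b(B^{d-1}(0,1/2)) \subset C^0$, and the $\kappa_{M_j}$ are homeomorphisms onto their images, so $\psi$ is continuous on each $B_{\IS^{d-1}}(M_j,\pi/4)$, and these cover $\IS^{d-1}$.

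\textbf{Main obstacle.} There is essentially no obstacle: this is a bookkeeping lemma. The only point requiring minor care is the geometric fact that points within angular distance $\pi/4$ of $M_j$ are sent by $\kappa_{M_j}$ into the radius-$1/2$ coordinate ball on which the $\mathcal{A}$-norm is taken; if one insists on the literal radius $\pi/4$ this is false (as $\sin(\pi/4) > 1/2$), so in a fully rigorous write-up one would either shrink the covering radius, enlarge the number of charts, or note that only the $\alpha=0$ value at a single point is needed and invoke the (larger) domain $B^{d-1}(0,1)$ of $\kappa_{M_j}$ together with the fact that $\mathcal{A}(B^{d-1}(0,1/2),R)$ controls the sup on the \emph{closed} ball of radius $1/2$ — in practice the authors have surely arranged the atlas so that the covering radius is compatible with the $1/2$ appearing in~\eqref{e:def-analytic-norm-sphere}. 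I would simply state that the covering is chosen so that each $B_{\IS^{d-1}}(M_j,\pi/4)$ maps into $B^{d-1}(0,1/2)$ under $\kappa_{M_j}$ and proceed.
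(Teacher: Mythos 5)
Your proof is correct and is exactly the argument the paper has in mind: the lemma is stated there as an immediate consequence of the $\alpha=0$ term in the definition~\eqref{e:def-analytic-norm-sphere} of $\|\cdot\|_R$, with no further proof given. Your side remark about the mismatch between the covering radius $\pi/4$ in Definition~\ref{d:def-norms} and the coordinate ball $B^{d-1}(0,1/2)$ (whose preimage $\kappa_{M_j}^{-1}\big(\overline{B^{d-1}(0,1/2)}\big)$ is only the geodesic ball of radius $\arcsin(1/2)=\pi/6$ around $M_j$) is a fair catch concerning the paper's setup, and your fix — arranging the finite atlas so that these coordinate half-balls themselves cover $\IS^{d-1}$ — is precisely what is implicitly assumed there.
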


We also record the following useful properties:
\begin{lemma}\label{c:product} Let $k\geq 1$, let $R_1,\ldots,R_k>0$ and set $R<\min\{R_j: 1\leq j\leq k\}$. Then, for every $(\psi_1,\ldots,\psi_k)\in\ml{A}_{R_1}\times\ldots\times\ml{A}_{R_k}$, one has $\psi_1\psi_2\ldots\psi_k\in\mathcal{A}_R$ with
$$\|\psi_1\psi_2\ldots\psi_k\|_{R}\leq  n_0^k \prod_{\ell=1}^k\left(\frac{R_\ell}{R_\ell-R}\right)^{d-1}\|\psi_\ell\|_{R_\ell},$$
where $n_0$ is the number of charts in the atlas defining $\|\cdot\|_R$.

Moreover, for any $0<R<\min(R_1,R_2)$, and $\psi_2 \in \ml{A}_{R_2}$, we have $\mathbf{m}_{\psi_2} \in \ml{L}(\ml{A}_{R_1},\ml{A}_{R})$ with 
$$
\| \mathbf{m}_{\psi_2} \|_{\ml{A}_{R_1} \to \ml{A}_{R}} \leq n_0^2 \left(\frac{R_1}{R_1-R}\right)^{d-1} \left(\frac{R_2}{R_2-R}\right)^{d-1},
$$
where $\mathbf{m}_{\psi_2}$ is the multiplication operator as defined in \S\ref{s:multiplicatop}.
\end{lemma}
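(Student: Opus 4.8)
The plan is to deduce both statements of Lemma~\ref{c:product} from the single-chart estimate in Lemma~\ref{l:comparison-norm}, combined with the submultiplicativity of the $\mathcal{H}_R$-norm recorded in~\eqref{e:norm-algebra}. The mechanism is the same in both cases; the key observation is that multiplication is pointwise, hence behaves well on the holomorphic extensions in each local chart, and that the only cost in the argument is the passage back and forth between the $\|\cdot\|_R$ norms (defined via Taylor coefficients on $B^{d-1}(0,1/2)$) and the auxiliary $\|\cdot\|_{\mathcal{H}_R}$ norms (defined via sup norms of holomorphic extensions on the polydisc neighborhood $\mathcal{U}_R$).

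For the product estimate, first I would fix an auxiliary radius $R'$ with $R<R'<\min_j R_j$ (for instance $R'$ the midpoint, though any such choice works after relabelling, so I will simply take $R'=\min_j R_j$ up to shrinking). The chain is: by the first inequality in Lemma~\ref{l:comparison-norm}, $\|\psi_1\cdots\psi_k\|_R\leq \|\psi_1\cdots\psi_k\|_{\mathcal{H}_R}$; then by the submultiplicativity~\eqref{e:norm-algebra} applied $(k-1)$ times (which holds verbatim for $k$ factors since each summand over charts is a product of sup norms of holomorphic functions on the same $\mathcal{U}_R$), $\|\psi_1\cdots\psi_k\|_{\mathcal{H}_R}\leq \prod_{\ell=1}^k\|\psi_\ell\|_{\mathcal{H}_R}$; and finally by the second inequality in Lemma~\ref{l:comparison-norm}, applied with $R_0=R$ and $R_1=R_\ell$ for each factor, $\|\psi_\ell\|_{\mathcal{H}_R}\leq n_0 \frac{R_\ell^{d-1}}{(R_\ell-R)^{d-1}}\|\psi_\ell\|_{R_\ell}$. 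Multiplying these $k$ bounds gives exactly the claimed inequality with the factor $n_0^k\prod_\ell\big(\frac{R_\ell}{R_\ell-R}\big)^{d-1}$. One must check that $\psi_1\cdots\psi_k$ genuinely lies in $\mathcal{A}_R(\IS^{d-1})$: this is immediate since each $\psi_\ell\in\mathcal{A}_{R_\ell}\subset\mathcal{A}_{R'}$ extends holomorphically to $\mathcal{U}_{R'}\supset\mathcal{U}_R$, the product of holomorphic functions is holomorphic and bounded on $\mathcal{U}_R$, so the product has a bounded holomorphic extension, and Cauchy's estimates then bound its Taylor coefficients on $B^{d-1}(0,1/2)$, giving finiteness of $\|\cdot\|_R$.

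For the operator-norm bound on $\mathbf{m}_{\psi_2}$, this is just the case $k=2$ of the product estimate read as a linear map in $\psi_1$: for any $\psi_1\in\mathcal{A}_{R_1}$ we have $\psi_1\psi_2\in\mathcal{A}_R$ with $\|\psi_1\psi_2\|_R\leq n_0^2\big(\frac{R_1}{R_1-R}\big)^{d-1}\big(\frac{R_2}{R_2-R}\big)^{d-1}\|\psi_1\|_{R_1}\|\psi_2\|_{R_2}$, so the linear operator $\mathbf{m}_{\psi_2}:\psi_1\mapsto\psi_1\psi_2$ is bounded from $\mathcal{A}_{R_1}$ to $\mathcal{A}_R$ with operator norm at most $n_0^2\big(\frac{R_1}{R_1-R}\big)^{d-1}\big(\frac{R_2}{R_2-R}\big)^{d-1}$, absorbing the constant $\|\psi_2\|_{R_2}$ into the definition of the operator (or, if one prefers to keep it explicit, the bound holds with the extra factor $\|\psi_2\|_{R_2}$, which is presumably the intended reading). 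I do not anticipate a genuine obstacle here; the only point requiring a modicum of care is bookkeeping the auxiliary radius so that all the polydisc inclusions $\mathcal{U}_R\subset\mathcal{U}_{R_\ell}$ hold (which needs $R<R_\ell$, as assumed) and so that Lemma~\ref{l:comparison-norm} is applied within its hypothesis range $R<R_\ell<\frac{1}{2\sqrt{d-1}}$ — this last condition being guaranteed by the standing convention that all $\mathcal{A}_R$ spaces in the paper are taken with $R<\frac{1}{2\sqrt{d-1}}$.
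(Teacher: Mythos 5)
Your proof is correct and follows essentially the same route as the paper: pass to the $\|\cdot\|_{\mathcal{H}_R}$ norm via the first inequality of Lemma~\ref{l:comparison-norm}, use submultiplicativity~\eqref{e:norm-algebra} iterated over the $k$ factors, and return to the $\|\cdot\|_{R_\ell}$ norms via the second inequality; the operator bound is then the case $k=2$. Your side remark that the stated operator-norm bound appears to be missing the factor $\|\psi_2\|_{R_2}$ is also well taken.
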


\begin{proof} From the estimate~\eqref{e:norm-algebra} of the product and Lemma~\ref{l:comparison-norm}, we deduce  
\begin{align*}
\left\|\psi_1\psi_2\ldots\psi_k\right\|_R
& \leq  \left\|\psi_1\psi_2\ldots\psi_k\right\|_{\ml{H}_R} \leq 
\left\|\psi_1\right\|_{\ml{H}_R}\left\|\psi_2\right\|_{\ml{H}_R}\ldots \left\|\psi_k\right\|_{\ml{H}_R} \\
& \leq n_0^k \prod_{\ell=1}^k\left(\frac{R_\ell}{R_\ell-R}\right)^{d-1}\|\psi_\ell\|_{R_\ell},
\end{align*}
 whence the first statement of the lemma. The second statement is a direct consequence of the first.
\end{proof}

\begin{corollary}\label{c:exponential}
For every  $0<R<R_1$ and $\psi \in \ml{A}_{R_1}$, we have $e^\psi \in \ml{A}_R$ together with
$$
\|e^{\psi}\|_{R}\leq e^{\frac{ n_0R_1^{d-1}}{(R_1-R)^{d-1}}\|\psi\|_{R_1}}.
$$
\end{corollary}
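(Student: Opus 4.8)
The plan is to expand $e^{\psi}$ as a power series and to estimate each term using the submultiplicativity of the analytic norms. First I would write
$$
e^{\psi}=\sum_{k\geq 0}\frac{\psi^k}{k!},
$$
which, by Lemma~\ref{l:continuous}, converges uniformly on $\IS^{d-1}$ since $\psi\in\ml{A}_{R_1}\subset\mathcal{C}^0(\IS^{d-1})$; so this identity makes pointwise sense. Next, applying Lemma~\ref{c:product} to the $k$ factors $\psi_1=\dots=\psi_k=\psi$ with common radius $R_1$ (admissible since $0<R<R_1$), one gets
$$
\|\psi^k\|_R\leq n_0^k\left(\frac{R_1}{R_1-R}\right)^{(d-1)k}\|\psi\|_{R_1}^k .
$$

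Then I would sum. The partial sums $S_N:=\sum_{k=0}^N\frac{\psi^k}{k!}$ form a Cauchy sequence in the Banach space $\ml{A}_R$ (Lemma~\ref{l:AR-Banach}) because
$$
\sum_{k\geq 0}\frac{\|\psi^k\|_R}{k!}\leq\sum_{k\geq 0}\frac{1}{k!}\left(n_0\left(\frac{R_1}{R_1-R}\right)^{d-1}\|\psi\|_{R_1}\right)^{k}=\exp\left(\frac{n_0 R_1^{d-1}}{(R_1-R)^{d-1}}\|\psi\|_{R_1}\right)<\infty .
$$
Hence $S_N$ converges in $\ml{A}_R$ to some $g\in\ml{A}_R$ satisfying $\|g\|_R\leq\exp\!\big(\tfrac{n_0 R_1^{d-1}}{(R_1-R)^{d-1}}\|\psi\|_{R_1}\big)$. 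Finally, since the inclusion $\ml{A}_R\hookrightarrow\mathcal{C}^0(\IS^{d-1})$ is continuous (Lemma~\ref{l:continuous}), the $\ml{A}_R$-limit $g$ coincides with the uniform limit of $S_N$, namely $e^{\psi}$; this yields $e^{\psi}\in\ml{A}_R$ together with the announced bound.

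Alternatively, and slightly more directly, one can bypass Lemma~\ref{c:product} by using the algebra property~\eqref{e:norm-algebra} of $\|\cdot\|_{\ml{H}_R}$: it gives $\|\psi^k\|_{\ml{H}_R}\leq\|\psi\|_{\ml{H}_R}^{k}$, hence $\|e^{\psi}\|_R\leq\|e^{\psi}\|_{\ml{H}_R}\leq e^{\|\psi\|_{\ml{H}_R}}$, and then Lemma~\ref{l:comparison-norm} bounds $\|\psi\|_{\ml{H}_R}\leq\tfrac{n_0 R_1^{d-1}}{(R_1-R)^{d-1}}\|\psi\|_{R_1}$. Either way there is no real obstacle; the only point deserving a line of care is the justification that the exponential series converges \emph{in the Banach space} $\ml{A}_R$ — so that its sum genuinely belongs to $\ml{A}_R$ — and that this sum is the pointwise function $e^{\psi}$, which is exactly what the continuous embedding into $\mathcal{C}^0(\IS^{d-1})$ provides.
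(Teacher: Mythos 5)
Your proof is correct and follows essentially the same route as the paper: the paper's argument is exactly to apply Lemma~\ref{c:product} with $\psi_1=\dots=\psi_k=\psi$ to bound $\|\psi^k\|_R$ and then sum the exponential series, and your added justification of convergence in the Banach space $\ml{A}_R$ and identification of the limit via the embedding into $\mathcal{C}^0(\IS^{d-1})$ only makes explicit what the paper leaves implicit. The alternative via the algebra property of $\|\cdot\|_{\mathcal{H}_R}$ is also fine, but it is in substance the same computation, since Lemma~\ref{c:product} is itself proved that way.
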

\begin{proof}
 Taking $\psi_1=\ldots=\psi_k=\psi\in\mathcal{A}_{R_1}$ in Lemma~\ref{c:product}, one deduces for all $0<R<R_1$,
$$
\|\psi^k\|_{R}\leq   n_0^k \left(\frac{R_1^{d-1}}{(R_1-R)^{d-1}}\right)^k\|\psi\|_{R_1}^k, 
$$
from which the result follows.
\end{proof}

We record an auxiliary lemma to control the size of the functions appearing in our oscillatory integrals in terms of analytic norms.
\begin{lemma}\label{l:product-exp} Let $0<R<R_1<\frac{1}{2\sqrt{d-1}}$. There exists a constant $C_{R,R_1}>0$ such that, for every $F$ and $X$ in $\mathcal{A}_{R_1}(\IS^{d-1})$ and $\lambda \in \R$, one has
$$
\left\|e^{i \lambda X} F\right\|_R\leq C_{R,R_1}e^{C_{R,R_1}|\lambda|\|X\|_{R_1}}\|F\|_{R_1}. 
$$
\end{lemma}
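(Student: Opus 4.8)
The plan is to deduce the estimate by combining Corollary~\ref{c:exponential} (analyticity of the exponential) with the product estimate of Lemma~\ref{c:product}, at the cost of interposing an intermediate radius of analyticity. First I would fix $R_2:=\frac{R+R_1}{2}$, so that $R<R_2<R_1<\frac{1}{2\sqrt{d-1}}$ and all three spaces $\mathcal{A}_{R}(\IS^{d-1})$, $\mathcal{A}_{R_2}(\IS^{d-1})$, $\mathcal{A}_{R_1}(\IS^{d-1})$ are well defined (the spaces being understood, as in~\eqref{e:def-analytic-norm-sphere}, to consist of complex-valued functions since the norm only involves moduli of derivatives).

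Next, I would observe that for $\lambda\in\R$ the function $i\lambda X$ belongs to $\mathcal{A}_{R_1}(\IS^{d-1})$ with $\|i\lambda X\|_{R_1}=|\lambda|\,\|X\|_{R_1}$, which is immediate from the definition~\eqref{e:def-analytic-norm-sphere} since multiplication by the scalar $i\lambda$ multiplies each derivative $\partial^\alpha(X\circ\kappa_{M_j}^{-1})$ by $i\lambda$. Applying Corollary~\ref{c:exponential} with the pair of radii $R_2<R_1$ then yields $e^{i\lambda X}\in\mathcal{A}_{R_2}(\IS^{d-1})$ together with
\[
\|e^{i\lambda X}\|_{R_2}\leq \exp\!\left(\frac{n_0 R_1^{d-1}}{(R_1-R_2)^{d-1}}\,|\lambda|\,\|X\|_{R_1}\right).
\]
Since $R_1>R_2$, Lemma~\ref{l:AR-Banach} gives $F\in\mathcal{A}_{R_2}(\IS^{d-1})$ with $\|F\|_{R_2}\leq\|F\|_{R_1}$. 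I would then invoke the product part of Lemma~\ref{c:product} with $k=2$, the functions $e^{i\lambda X}\in\mathcal{A}_{R_2}$ and $F\in\mathcal{A}_{R_1}$, and target radius $R<R_2$, to obtain
\[
\|e^{i\lambda X}F\|_{R}\leq n_0^2\left(\frac{R_2}{R_2-R}\right)^{d-1}\left(\frac{R_1}{R_1-R}\right)^{d-1}\|e^{i\lambda X}\|_{R_2}\,\|F\|_{R_1}.
\]
Combining the two displayed inequalities produces exactly the claimed bound, with the explicit choice
\[
C_{R,R_1}:=\max\left\{\,n_0^2\left(\frac{R_2}{R_2-R}\right)^{d-1}\left(\frac{R_1}{R_1-R}\right)^{d-1}\,,\ \frac{n_0 R_1^{d-1}}{(R_1-R_2)^{d-1}}\,\right\},\qquad R_2=\frac{R+R_1}{2}.
\]

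There is no genuine obstacle here: the only subtlety is that the product $e^{i\lambda X}\cdot F$ must be controlled in $\mathcal{A}_{R}$ rather than in a strictly smaller space, which is precisely why one loses a little room and must work with the intermediate radius $R_2$ and with the factors $\left(\frac{R_\ell}{R_\ell-R}\right)^{d-1}$ coming from Lemma~\ref{c:product}; these are harmless since $R<R_2<R_1$ are fixed once and for all. The uniformity with respect to $\lambda\in\R$ is automatic, the $\lambda$-dependence entering only through the factor $|\lambda|\,\|X\|_{R_1}$ in the exponent; and the hypothesis $R_1<\frac{1}{2\sqrt{d-1}}$ serves merely to guarantee that the charts used to define the norms do not reach the singularity.
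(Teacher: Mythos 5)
Your argument is correct and is exactly the route the paper takes: its proof of this lemma is a one-line reduction to Lemma~\ref{c:product} and Corollary~\ref{c:exponential}, which you carry out in detail with the intermediate radius $R_2=\frac{R+R_1}{2}$ and an explicit constant. Nothing further is needed (the remark that $\|F\|_{R_2}\leq\|F\|_{R_1}$ is superfluous, since you ultimately apply the product estimate with $F\in\mathcal{A}_{R_1}$ directly).
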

\begin{proof} This lemma follows directly from Lemma~\ref{c:product} and Corollary~\ref{c:exponential}.
\end{proof}

We now turn to the action of analytic differential operators on the analytic spaces $\mathcal{A}_R(\IS^{d-1})$.

\begin{lemma}\label{l:gradient}  For every $0<R_1,R_2<\frac{1}{2\sqrt{d-1}}$, for every $0<R<\min\{R_1,R_2\}$, there exists a constant $C(R,R_2)>0$ (independent of $R_1$) such that, for every $f\in\ml{A}_{R_2}$ and for every $\psi\in\ml{A}_{R_1}$, one has $\mathcal{L}_{\nabla_{\IS^{d-1}}f}\psi \in \ml{A}_{R}$ with
$$\left\|\mathcal{L}_{\nabla_{\IS^{d-1}}f}\psi\right\|_R\leq  \frac{ C(R,R_2)}{(R_1-R)}\|f\|_{R_2}\|\psi\|_{R_1}.$$
\end{lemma}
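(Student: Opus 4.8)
The plan is to reduce everything to an estimate in the local charts $\kappa_{M_j}$ of Definition~\ref{d:def-norms}, and to combine two elementary facts about the scale of norms $\|\cdot\|_{\mathcal{A}(V,R)}$: a sharp Cauchy-type estimate for one differentiation, and the fact that multiplication by a function with a \emph{strictly larger} radius of analyticity is bounded \emph{without} shrinking the radius. First I would fix a chart and set $f_j := f\circ\kappa_{M_j}^{-1}$, $\psi_j := \psi\circ\kappa_{M_j}^{-1}$ on $B^{d-1}(0,1)$. Since each $\kappa_{M_j}=\kappa\circ R_{M_j}$ with $R_{M_j}$ a Euclidean rotation, and the round metric $g_{\mathrm{Can}}$ is rotation invariant, its coordinate expression in the chart $\kappa_{M_j}$ coincides with the one in the chart $\kappa$, so formula~\eqref{e:gradient} is valid verbatim in each chart. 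As $\mathcal{L}_{\nabla_{\IS^{d-1}}f}\psi=(\nabla_{\IS^{d-1}}f)(\psi)$ is just the directional derivative, this gives on $B^{d-1}(0,1/2)$
\[
(\mathcal{L}_{\nabla_{\IS^{d-1}}f}\psi)\circ\kappa_{M_j}^{-1}=\sum_{i,\ell=1}^{d-1}c_{i\ell}\,(\partial_{y_\ell}f_j)\,(\partial_{y_i}\psi_j),\qquad c_{i\ell}(y):=\delta_{i\ell}-y_iy_\ell,
\]
so that $\|\mathcal{L}_{\nabla_{\IS^{d-1}}f}\psi\|_R$ is bounded by $\sum_j\sum_{i,\ell}\|c_{i\ell}(\partial_{y_\ell}f_j)(\partial_{y_i}\psi_j)\|_{\mathcal{A}(B^{d-1}(0,1/2),R)}$.

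Next I would record the two calculus ingredients on $V:=B^{d-1}(0,1/2)$. (a) \emph{Cauchy estimate:} for $0<R<R'$ and $g\in\mathcal{A}(V,R')$, one has $\|\partial_k g\|_{\mathcal{A}(V,R)}\le (R'-R)^{-1}\|g\|_{\mathcal{A}(V,R')}$; this follows by writing $\partial^\alpha\partial_k g=\partial^{\alpha+e_k}g$, bounding $|\partial^{\alpha+e_k}g|$ by $\|g\|_{\mathcal{A}(V,R')}$, and using the elementary inequality $\sup_{n\ge 0}(n+1)t^n\le(1-t)^{-1}$ for $t\in[0,1)$ (itself a consequence of $t^n\le e^{-n(1-t)}$ and $(u+1)e^{-u}\le 1$ for $u\ge 0$). (b) \emph{Multiplication by a more analytic factor:} for $0<R<R'$, $g\in\mathcal{A}(V,R)$ and $c\in\mathcal{A}(V,R')$, the Leibniz rule gives $\|gc\|_{\mathcal{A}(V,R)}\le\big(\tfrac{R'}{R'-R}\big)^{d-1}\|g\|_{\mathcal{A}(V,R)}\|c\|_{\mathcal{A}(V,R')}$, the geometric series $\sum_{\gamma\in\N^{d-1}}(R/R')^{|\gamma|}$ producing the finite constant. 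Since the $c_{i\ell}$ are polynomials of degree two, $\|c_{i\ell}\|_{\mathcal{A}(V,R'')}$ is bounded by a dimensional constant uniformly for $R''$ in a bounded range.

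Finally I would assemble the estimate. Set $R':=\tfrac{R+R_2}{2}\in(R,R_2)$. By ingredient (a), $\|\partial_{y_\ell}f_j\|_{\mathcal{A}(V,R')}\le\tfrac{2}{R_2-R}\|f_j\|_{\mathcal{A}(V,R_2)}\le\tfrac{2}{R_2-R}\|f\|_{R_2}$; by ingredient (b) applied to $c_{i\ell}$ (of radius $2R'<\tfrac{1}{\sqrt{d-1}}$) times $\partial_{y_\ell}f_j$, $\|c_{i\ell}\partial_{y_\ell}f_j\|_{\mathcal{A}(V,R')}\le C_d\,\tfrac{1}{R_2-R}\|f\|_{R_2}$. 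By ingredient (a) applied to $\psi$ (here using only $R<R_1$), $\|\partial_{y_i}\psi_j\|_{\mathcal{A}(V,R)}\le(R_1-R)^{-1}\|\psi_j\|_{\mathcal{A}(V,R_1)}$; a last application of ingredient (b), with $g=\partial_{y_i}\psi_j$ of radius $R$ and $c=c_{i\ell}\partial_{y_\ell}f_j$ of radius $R'>R$, yields
\[
\|c_{i\ell}(\partial_{y_\ell}f_j)(\partial_{y_i}\psi_j)\|_{\mathcal{A}(V,R)}\le\Big(\tfrac{R'}{R'-R}\Big)^{d-1}C_d\,\tfrac{1}{R_2-R}\,\|f\|_{R_2}\cdot\tfrac{1}{R_1-R}\,\|\psi_j\|_{\mathcal{A}(V,R_1)}.
\]
Summing over the $(d-1)^2$ pairs $(i,\ell)$ and over the $n_0$ charts (using $\sum_j a_jb_j\le(\sum_j a_j)(\sum_j b_j)$ for nonnegative terms and $\sum_j\|\psi_j\|_{\mathcal{A}(V,R_1)}=\|\psi\|_{R_1}$) gives the claimed bound, with $C(R,R_2)$ depending only on $R$, $R_2$ (and $d$, $n_0$), not on $R_1$.

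The main obstacle — and the reason to argue directly on the factorial-weighted norms $\|\cdot\|_{\mathcal{A}(V,R)}$ rather than on the sup-norms $\|\cdot\|_{\mathcal{H}_R}$ of holomorphic extensions as in Lemma~\ref{c:product} — is exactly the \emph{single} power $(R_1-R)^{-1}$ in the statement. Going through holomorphic extensions would cost one $(R_1-R)^{-1}$-type loss to pass from $\|\psi\|_{R_1}$ to an $L^\infty$ bound on a slightly smaller polydisc, plus another to apply the Cauchy inequality on that polydisc, producing $(R_1-R)^{-d}$; only the factorial-norm Cauchy estimate (a) differentiates while losing a single such power. Everything else is routine bookkeeping with the algebra and differentiation properties already set up in \S\ref{s:calculus-analytic}.
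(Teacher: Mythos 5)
Your proof is correct, but it follows a genuinely different route from the paper's. The paper extends everything holomorphically: it writes the operator in a chart via~\eqref{e:gradient}, extends the coefficients and $\psi$ to the complex polydiscs $\mathcal{U}_\rho$, controls the coefficients through Lemma~\ref{l:comparison-norm}, gains the factor $(R_1-R)^{-1}$ from the Cauchy integral formula applied to $\partial_{z_j}\tilde\psi$, and then compares the $L^\infty$ bound on $\mathcal{U}_R$ back to $\|\cdot\|_R$. You instead never leave the factorial-weighted norms $\|\cdot\|_{\mathcal{A}(V,R)}$, combining a one-derivative estimate $\|\partial_k g\|_{\mathcal{A}(V,R)}\le (R'-R)^{-1}\|g\|_{\mathcal{A}(V,R')}$ (your verification is sound: $(\alpha_k+1)(R/R')^{|\alpha|}\le\sum_{j\le|\alpha|}(R/R')^{j}\le R'/(R'-R)$) with a Leibniz-type bound for multiplication by a factor of strictly larger radius; your observation that rotation invariance makes~\eqref{e:gradient} valid verbatim in every chart $\kappa_{M_j}$ is also correct. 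What each approach buys: your argument delivers exactly the single power $(R_1-R)^{-1}$ stated in the lemma, with $C(R,R_2)$ depending only on $R,R_2$ (and $d,n_0$), whereas the paper's route, if one tracks the constants, pays an extra $(R_1-R)^{-(d-1)}$ when converting $\|\tilde\psi\|_{L^\infty(\mathcal{U}_{(R_1+R)/2})}$ back to $\|\psi\|_{R_1}$ via Lemma~\ref{l:comparison-norm}, so as written it only gives the bound with $(R_1-R)^{-d}$ — a loss that is harmless in the applications (e.g.\ Corollary~\ref{c:normgradient} takes $\psi=f$, and Lemma~\ref{l:desintegration} does not need the sharp power) but makes your version sharper relative to the statement. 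Conversely, the paper's sup-norm-on-polydiscs formulation is the one that iterates conveniently into the $k$-fold estimates of Lemma~\ref{l:power-lie-derivative}, which is presumably why the authors set it up that way.
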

\begin{proof} The proof is close to the one of Lemma~\ref{c:product}, and we work similarly in one of the local charts $M_j$ (and for simplicity of the notation, we continue writing $\psi_1$ instead of $\psi_{1,M_j}$) with the induced spherical metric. 
We first write,
near every point $y_0$ with $|y_0|\leq 1/2$,
$$f(y)=\sum_{\alpha\in\IN_0^{d-1}}\frac{(y-y_0)2C_R\|\Omega_\ell\|_R\| e^{i\xi\cdot x_{K_0}}\|_{\mathcal{H}_{2R'}}^\alpha}{\alpha!}\partial^\alpha f(y_0).$$
Using~\eqref{e:gradient}, we can write
$$\mathcal{L}_{\nabla_{\IS^{d-1}}f}=\sum_{\alpha}\frac{\partial^\alpha f(y_0)}{\alpha!}\sum_{i,j=1}^{d-1}P_{i,j}(y)\partial_{y_j}(y-y_0)^\alpha\partial_{y_i},$$
where $P_{i,j}(y)=\delta_{ij}-y_iy_j$ is a polynomial in $y$, and in particular is uniformly bounded for $|y-y_0|\leq\min\{R_1,R_2\}$. This can be extended into a holomorphic operator on the open set $\mathcal{U}_{R_2}$:
$$
\mathcal{L}_{Y}=\sum_{j=1}^{d-1}\tilde{Y}_j(z)\partial_{z_j},
$$
where, according to Lemma~\ref{l:comparison-norm}, one can find $C_d(R,R_2)>0$ (depending only on $d$, $R$ and $R_2$) such that, for $\tilde{R}=\frac{R+R_2}{2}$ and for every $1\leq j\leq d-1$, 
\begin{equation}\label{e:gradient-step1}
\|\tilde{Y}_j\|_{L^{\infty}(\mathcal{U}_{\tilde{R}})}\leq C_d(R,R_2)\|f\|_{R_2}.
\end{equation}
We can now consider the holomorphic extension $\tilde{\psi}$ of $\psi$ to $\mathcal{U}_{R_1}$. One has
\begin{equation}\label{eq:gradient-step2}
\|\mathcal{L}_Y\tilde{\psi}\|_{L^\infty(\mathcal{U}_R)}\leq \sum_{j=1}^{d-1}\|\tilde{Y}_j\|_{L^\infty(\mathcal{U}_R)}\|\partial_{z_j}\tilde{\psi}\|_{L^{\infty}(\mathcal{U}_R)}\leq  \sum_{j=1}^{d-1}\|\tilde{Y}_j\|_{L^\infty(\mathcal{U}_{\tilde{R}})}\frac{2}{R_1-R}\|\tilde{\psi}\|_{L^{\infty}(\mathcal{U}_{\frac{R_1+R}{2}})},
\end{equation}
where the last inequality follows from the Cauchy formula applied along the variable $z_j$. Combining Lemma~\ref{l:comparison-norm} to~\eqref{e:gradient-step1} and~\eqref{eq:gradient-step2}, we get the expected result.
\end{proof}

Choosing $\psi=f$ in Lemma~\ref{l:gradient}, we deduce the  following corollary.
\begin{corollary}\label{c:normgradient} For every $0<R<R_1<\frac{1}{2\sqrt{d-1}}$, there exists $C(R,R_1)>0$ such that, for every $f\in\mathcal{A}_{R_1}(\IS^{d-1}),$
$$
\left\|  |\nabla_{\IS^{d-1}} f|^2   \right\|_R\leq C(R,R_1)\|f\|_{R_1}^2.
$$
\end{corollary}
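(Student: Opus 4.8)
The plan is to recognize the pointwise squared gradient as a Lie derivative and then to invoke Lemma~\ref{l:gradient} directly. Indeed, for the canonical metric on $\IS^{d-1}$ with inverse metric $g^*$, one has the pointwise identity
\[
\left|\nabla_{\IS^{d-1}}f\right|^2 = g^*(df,df) = df\big(\nabla_{\IS^{d-1}}f\big) = \mathcal{L}_{\nabla_{\IS^{d-1}}f}f ,
\]
which is checked in a line in each local chart $\kappa_{M_j}$ using the explicit formula~\eqref{e:gradient} together with the expression $g^*(y')=\mathrm{Id}-(y_iy_j)_{1\leq i,j\leq d-1}$ recalled in Section~\ref{s:complexification}: writing $\nabla_{\IS^{d-1}}f=\sum_{i,j}(g^*)^{ij}\partial_{y_j}f\,\partial_{y_i}$, applying this vector field to $f$ reproduces $\sum_{i,j}(g^*)^{ij}\partial_{y_i}f\,\partial_{y_j}f$, which is exactly $|\nabla_{\IS^{d-1}}f|^2$ in these coordinates.

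Next I would simply apply Lemma~\ref{l:gradient} with the choice $R_2=R_1$ and $\psi=f$. This is legitimate: since $R<R_1<\tfrac{1}{2\sqrt{d-1}}$, all the hypotheses of that lemma hold (in particular $R<\min\{R_1,R_2\}=R_1$), and $f\in\mathcal{A}_{R_1}(\IS^{d-1})$ plays both roles, as the potential defining the vector field $\nabla_{\IS^{d-1}}f$ and as the function $\psi$ on which it acts. The lemma then gives $|\nabla_{\IS^{d-1}}f|^2=\mathcal{L}_{\nabla_{\IS^{d-1}}f}f\in\mathcal{A}_R(\IS^{d-1})$ together with
\[
\left\|\,|\nabla_{\IS^{d-1}}f|^2\,\right\|_R \leq \frac{C(R,R_1)}{R_1-R}\,\|f\|_{R_1}\,\|f\|_{R_1}=\frac{C(R,R_1)}{R_1-R}\,\|f\|_{R_1}^2 ,
\]
and absorbing the factor $(R_1-R)^{-1}$ into a new constant, still denoted $C(R,R_1)>0$, yields the claimed estimate.

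There is no genuine obstacle here: the corollary is an immediate specialization of Lemma~\ref{l:gradient}, and the only point deserving a verification — a purely algebraic one — is the identity $|\nabla_{\IS^{d-1}}f|^2=\mathcal{L}_{\nabla_{\IS^{d-1}}f}f$ in the charts, after which the bound is read off directly from the conclusion of that lemma with $\psi=f$.
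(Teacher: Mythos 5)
Your proposal is correct and is exactly the paper's argument: the authors prove the corollary by "choosing $\psi=f$ in Lemma~\ref{l:gradient}", which is precisely your specialization $R_2=R_1$, $\psi=f$, combined with the identity $|\nabla_{\IS^{d-1}}f|^2=\mathcal{L}_{\nabla_{\IS^{d-1}}f}f$. Your explicit chart verification of that identity is a harmless (and correct) addition that the paper leaves implicit.
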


For later applications, we finally introduce a local version of the norm $\|\cdot\|_{R}$ and the associated space $\ml{A}_R$. Given an open subset $O$ of $\IS^{d-1}$ and $0<R<\frac{1}{2\sqrt{d-1}}$, we set
$$
\left\|\psi\right\|_{R,O}:=\sum_{j=1}^N\sup_{|x|\leq 1/2,\ x\in\kappa_{M_j}(\overline{O})}\sup_{\alpha\in\mathbb{N}^{d-1}}\frac{R^{|\alpha|}\left|\partial^\alpha(\psi\circ\kappa_{M_j}^{-1})(x)\right|}{\alpha!}, 
$$
and denote by $\ml{A}_{R,O}$ the set of functions $\psi : \IS^{d-1}\to \C$ such that $\|\psi\|_{R,O}<\infty$. We now turn to iterations of order one differential operators together with their restrictions to open subsets. Given a local chart $\kappa_M : U_M \to B_\R^{d-1}(0,1/2)$ as introduced in Section~\ref{s:complexification}, given an open subset $O$ of $\IS^{d-1}$ and $R>0$, we also define the set
$$
\mathcal{U}_{R,O}:=\{z\in\mathbb{C}^{d-1}:\ \exists x\in \kappa_M(\overline{O})\cap B_{\mathbb{R}}^{d-1}(0,1/2)\ \text{such that}\ \forall j,\ |z_j-x_j|<R\}.
$$ 
With these notations at hand, we can provide an estimate on the powers of a differential operator.
\begin{lemma}\label{l:power-lie-derivative} 
Let $0<R\leq \frac{1}{2\sqrt{d-1}}$. Then, there exists a constant $C(R)>0$ such that, for every $R<R_2\leq \frac{R_1}{2}$, for any open set $O\subset \IS^{d-1}$, for any vector field $Y \in \ml{A}_{R_1,O}^d$, for any functions $W\in \ml{A}_{R_1,O}$, the following two statements hold: 
\begin{enumerate}
\item for all local chart $(U_M,\kappa_M)$ as in Section~\ref{s:complexification}, the operator $\mathcal{L}_Y+W$ (acting on functions) extends holomorphically to the set $\mathcal{U}_{R_2,O}$ as
$$
(\widetilde{Y}+\widetilde{W})(z):=\sum_{j=1}^{d-1}\tilde{Y}_{j}(z)\partial_{z_j} +\widetilde{W}(z), \quad z \in \mathcal{U}_{ R_2,O}  
$$
and we have for all $k\in\N$ and all $\tilde{\psi}$ holomorphic in $\mathcal{U}_{R_1,O}$, 
\begin{align}
\left\|(\widetilde{Y}+\widetilde{W})^k \tilde{\psi}\right\|_{L^\infty(\mathcal{U}_{R,O})}\leq \left(\frac{C(R)}{R_2-R}\left(\max_{j=1,\ldots,d}\|Y_j\|_{R_1,O}+\|W\|_{R_1,O}\right)\right)^k k! \|\tilde{\psi}\|_{L^\infty(\mathcal{U}_{R_2,O})},
\end{align}
\item for all $\psi\in \ml{A}_{R_1,O}$, one has $(\mathcal{L}_Y+W)^k\psi \in \ml{A}_{R,O}$ with  
$$
\left\|(\mathcal{L}_Y+W)^k\psi\right\|_{R,O}\leq \left(\frac{C(R)}{R_2-R}\left(\max_{j=1,\ldots,d}\|Y_j\|_{R_1,O}+\|W\|_{R_1,O}\right)\right)^k k!\|\psi\|_{R_2,O},
$$
where $\mathcal{L}_Y$ is the Lie derivative along the vector field $Y$. 
\end{enumerate}
\end{lemma}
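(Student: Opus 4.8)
The plan is to reduce both assertions, chart by chart, to an iterated Cauchy estimate on holomorphic extensions. The case $k=0$ being immediate from $R<R_2$ together with Lemma~\ref{l:comparison-norm}, I would assume $k\geq 1$ throughout. First I would fix one of the charts $\kappa_M$ of Section~\ref{s:complexification} and write the analytic operator $\mathcal{L}_Y+W$ in this chart: its coefficients are analytic combinations of the components of $Y$ and of the (analytic) Jacobian of $\kappa_M$, so, arguing as in Remark~\ref{r:bas-point} and Lemma~\ref{c:product}, they together with $W$ lie in local analytic spaces $\mathcal{A}_{R_1',O}$ with $R_1'$ comparable to $R_1$, hence admit holomorphic extensions to $\mathcal{U}_{R_1,O}$. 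Since $R_2\leq R_1/2<R_1$, the closure $\overline{\mathcal{U}_{R_2,O}}$ is a compact subset of the open set $\mathcal{U}_{R_1,O}$, so these extensions are bounded on $\mathcal{U}_{R_2,O}$, and the local form of the Cauchy estimate in Lemma~\ref{l:comparison-norm} gives $\|\widetilde{Y}_j\|_{L^\infty(\mathcal{U}_{R_2,O})}+\|\widetilde{W}\|_{L^\infty(\mathcal{U}_{R_2,O})}\leq 2^{d-1}\big(\max_j\|Y_j\|_{R_1,O}+\|W\|_{R_1,O}\big)$, the factor $2^{d-1}$ coming from $R_1/(R_1-R_2)\leq 2$. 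This produces the holomorphic operator $\widetilde{Y}+\widetilde{W}=\sum_{j}\widetilde{Y}_j\partial_{z_j}+\widetilde{W}$ on $\mathcal{U}_{R_2,O}$ claimed in the first statement.

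For the quantitative bound (first statement) I would introduce the scale of radii $\rho_i:=R+\frac{i}{k}(R_2-R)$, $i=0,\dots,k$, so that $\rho_0=R$, $\rho_k=R_2$ and every gap $\rho_i-\rho_{i-1}$ equals $(R_2-R)/k$. For $\tilde g$ holomorphic on $\mathcal{U}_{\rho_i,O}$, the Cauchy inequality on polydiscs gives $\|\partial_{z_j}\tilde g\|_{L^\infty(\mathcal{U}_{\rho_{i-1},O})}\leq\frac{k}{R_2-R}\|\tilde g\|_{L^\infty(\mathcal{U}_{\rho_i,O})}$; combined with the bounds on $\widetilde{Y}_j,\widetilde{W}$ and with $\frac{k}{R_2-R}\geq 1$ (since $R_2-R<1\leq k$), one application of $\widetilde{Y}+\widetilde{W}$ sends $L^\infty(\mathcal{U}_{\rho_i,O})$ to $L^\infty(\mathcal{U}_{\rho_{i-1},O})$ with norm at most $\frac{C_d\,k}{R_2-R}\big(\max_j\|Y_j\|_{R_1,O}+\|W\|_{R_1,O}\big)$ for a constant $C_d$ depending only on $d$. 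Composing the $k$ steps from $\rho_k=R_2$ down to $\rho_0=R$, and then using Stirling's bound $k^k\leq e^k k!$ to turn the resulting $k^k$ into $k!$, proves the first statement with $C(R):=e\,C_d$.

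Passing from the first to the second statement is then routine: given $\psi\in\mathcal{A}_{R_1,O}$ its chart representatives extend holomorphically to $\mathcal{U}_{R_1,O}$, so the first statement applies to $\tilde\psi$; one transports the resulting $L^\infty(\mathcal{U}_{R,O})$ bound back into the norm $\|\cdot\|_{R,O}$ via Cauchy's inequality (the first inequality of Lemma~\ref{l:comparison-norm}), controls $\|\tilde\psi\|_{L^\infty(\mathcal{U}_{R_2,O})}$ by a constant multiple of $\|\psi\|_{R_2,O}$ through the same lemma (using once more $R_2\leq R_1/2$), and sums over the finitely many charts $M_1,\dots,M_{n_0}$ defining $\|\cdot\|_{R,O}$, absorbing the harmless constants ($n_0$, $2^{d-1}$, $e$, ...) into $C(R)$.

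The whole conceptual content sits in the first statement, and there the only delicate point is the bookkeeping of scales: one must keep the intermediate radii commensurate, with equal gaps $\sim(R_2-R)/k$, so that \emph{exactly one} factor $(R_2-R)^{-1}$ is produced per application of the first-order operator and so that the $k^k$ accumulated over the $k$ steps is precisely the factorial gain $k!$ characteristic of the analytic category; a careless subdivision easily costs a power $(R_2-R)^{-2k}$ instead. The remaining work, namely the dictionary between the global analytic norms on $\IS^{d-1}$ and sup-norms of holomorphic extensions on the polydisc neighbourhoods $\mathcal{U}_{\rho,O}$, and the verification that an analytic first-order operator has, in each chart, coefficients in the relevant local analytic spaces with controlled norms, is of the same nature as Lemmas~\ref{l:comparison-norm}, \ref{c:product} and \ref{l:gradient} already established and requires no new idea.
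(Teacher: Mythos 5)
Your argument is correct and coincides in all essentials with the paper's own proof: chart-by-chart holomorphic extension of the coefficients with Cauchy-type bounds, an equally spaced ladder of radii $\rho_i=R+\tfrac{i}{k}(R_2-R)$ so that each application of the first-order operator costs exactly one factor $k/(R_2-R)$, accumulation of $k^k$ over the $k$ steps converted to $e^k k!$ by Stirling, and transfer back to the $\|\cdot\|_{R,O}$ norms via the local version of Lemma~\ref{l:comparison-norm}. The only (cosmetic) deviation is that you run the ladder up to $R_2$ itself where the paper stops at $\tilde R_2=(R+R_2)/2$; this merely changes the constant absorbed into $C(R)^k$.
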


The first item of Lemma~\ref{l:power-lie-derivative}  will be used ``away from the vanishing points of a vector field'' (i.e. with $O$ being the complement of a neighborhood of the vanishing points of $Y = \nabla_{\IS^{d-1}}\big( \omega \cdot \mathbf{v} \big)$) in Section~\ref{s:Anal-away-crit}. The proof of this lemma follows from the properties of the norm $\|\cdot\|_R$ (and similar properties for the norm $\|\cdot\|_{R,O}$) obtained in the previous lemmas from this paragraph. 
\begin{proof} We proceed as in the proof of Lemma~\ref{l:gradient} and we work in local charts without loss of generality. With 
$\psi \in \mathcal{A}_{R_1}(\IS^{d-1})$, we can write the holomorphic extension $\tilde{\psi}$ of $\psi$ to  $\mathcal{U}_{R_1,O}$. 

As in Lemma~\ref{l:comparison-norm}, 
setting $\tilde{R}_1=\frac{R+R_1}{2}$ and $\tilde{R}_2=\frac{R+R_2}{2}$, we have 
\begin{align}
\label{e:truc-chose}
\|\tilde{Y}_{j}\|_{L^\infty(\mathcal{O}_{\tilde{R}_1,O})}\leq \frac{n_0 (2R_1)^{d-1}}{(R_1-R)^{d-1}}\|Y_j\|_{R_1,O},\quad \|\widetilde{W}\|_{L^{\infty}(\mathcal{U}_{\tilde{R}_1,O})}\leq  \frac{n_0 (2R_1)^{d-1}}{(R_1-R)^{d-1}}\|W\|_{R_1,O} .
\end{align}
As in~\eqref{eq:gradient-step2}, this leads (through an application of Cauchy's formula) to the upper bound
\begin{equation}\label{eq:boundLinfty}
 \|(\widetilde{Y}+\widetilde{W})\tilde{\psi}\|_{L^\infty(\mathcal{U}_{R,O})}\leq \frac{2(d-1)}{\tilde{R}_2-R}\left(\max_j\|\tilde{Y}_{j}\|_{L^\infty(\mathcal{U}_{R,O})}+\|\widetilde{W}\|_{L^\infty(\mathcal{U}_{R,O})}\right)\|\tilde{\psi}\|_{L^\infty(\mathcal{U}_{\tilde{R}_2,O})},
\end{equation}
which yields the result of the lemma for $k=1$.
We now want to iterate this argument and to this end, we set 
$$
\mathsf{R}_\ell := R+\frac{\ell}{k}(\tilde{R}_2-R) ,  \quad \text{ for } \ell \in \{0,\dots , k\}, 
$$
to interpolate between $R$ and $\tilde{R}_1$, i.e.
 $$\mathsf{R}_0 = R , \quad \mathsf{R}_k = \tilde{R}_2, \quad \text{ and }\quad \mathsf{R}_\ell -\mathsf{R}_{\ell -1} = \frac1{k}(\tilde{R}_2-R).
$$
With the notation
\begin{align}
\label{e:defCYW}
C_{Y,W} := 2(d-1)\left(\max_j\|\tilde{Y}_{j}\|_{L^\infty(\mathcal{U}_{\tilde{R}_1,O})}+\|\widetilde{W}\|_{L^\infty(\mathcal{U}_{\tilde{R}_1,O})}\right), 
\end{align}
the  exact same argument as  in~\eqref{eq:gradient-step2} and~\eqref{eq:boundLinfty} shows
\begin{align*}
\|(\widetilde{Y}+\widetilde{W})^k\tilde{\psi}\|_{L^\infty(\mathcal{U}_{\mathsf{R}_0,O})}
\leq \frac{C_{Y,W}}{\mathsf{R}_1-\mathsf{R}_0} \|(\widetilde{Y}+\widetilde{W})^{k-1}\tilde{\psi}\|_{L^\infty(\mathcal{U}_{\mathsf{R}_1,O})} 
=  \frac{C_{Y,W}k }{\tilde{R}_2-R} \|(\widetilde{Y}+\widetilde{W})^{k-1}\tilde{\psi}\|_{L^\infty(\mathcal{U}_{\mathsf{R}_1,O})} .
\end{align*}
Iterating this argument $\ell$ times, we obtain, for all $\ell\in \{0,\dots,k\}$, 
\begin{align*}
\|(\widetilde{Y}+\widetilde{W})^k\tilde{\psi}\|_{L^\infty(\mathcal{U}_{\mathsf{R}_0,O})}
\leq\left(  \frac{C_{Y,W} k }{\tilde{R}_2-R} \right)^\ell \|(\widetilde{Y}+\widetilde{W})^{k-\ell}\tilde{\psi}\|_{L^\infty(\mathcal{U}_{\mathsf{R}_\ell,O})} , 
\end{align*}
and finally, for $\ell=k$,
\begin{align*}
\|(\widetilde{Y}+\widetilde{W})^k\tilde{\psi}\|_{L^\infty(\mathcal{U}_{R,O})}
\leq\left(  \frac{C_{Y,W} k }{\tilde{R}_2-R} \right)^k  \|\tilde{\psi}\|_{L^\infty(\mathcal{U}_{\mathsf{R}_k,O})} =\left(  \frac{C_{Y,W} k }{\tilde{R}_2-R} \right)^k  \|\tilde{\psi}\|_{L^\infty(\mathcal{U}_{\tilde{R}_2,O})} .
\end{align*}
The $k^k$ in factor comes from the fact that at each iterate, the radius decreases like $\frac{(\tilde{R}_1-R)}{k}$.
Combining this inequality with~\eqref{e:truc-chose} and recalling the definition of $C_{Y,W}$ in~\eqref{e:defCYW}, we can infer that, for all $k\geq 1$
\begin{align*}
\|(\widetilde{Y}+\widetilde{W})^k \tilde{\psi}\|_{L^\infty(\mathcal{U}_{R,O})}
 \leq  
 \frac{2^k(d-1)^k k^k}{(\tilde{R}_2-R)^k}
\tilde{C}_{Y,W}^k\|\tilde{\psi}\|_{L^\infty(\mathcal{U}_{\tilde{R}_2,O})},\quad  \text{with} \\
\tilde{C}_{Y,W}  =  \frac{n_0  (2R_1)^{d-1}}{(R_1-R)^{d-1}}\left(\max_{j=1,\ldots d}\|Y_j\|_{R_1,O}+\|W\|_{R_1,O}\right) , 
\end{align*}
and the conclusion of the first item of the lemma follows from Stirling's formula. The second item of the lemma follows from another application of (a local version of) Lemma~\ref{l:comparison-norm}.
\end{proof}

\section{Analyzing the low frequencies and proof of Proposition~\ref{p:low-freq}}
\label{s:low-freq}

The goal of this section is to analyze the holomorphic properties of $\mathcal{P}_{K_0}^{<N}$ which is a finite sum of integrals of the form $I^{(\ell)}_{\lambda,\omega}(z,F,\phi)$, defined in~\eqref{eq:def-I(stuff)}. The ultimate goal is to prove Proposition~\ref{p:low-freq} in \S\ref{ss:zeta-low-freq}.
The analysis essentially reduces to studying the continuation of $I^{(\ell)}_{\lambda,\omega}(z,F,\phi)$.  
As opposed to the upcoming Section~\ref{s:high-freq}, no precise control with respect to the parameter $\lambda$ is required at this stage (there is a finite number of such terms). Yet some care is required to describe the singularity at $z= i \omega\cdot\mathbf{v}(\pm \omega)$.
We first write
\begin{equation}
\label{e:decomp-I-I}
 I^{(\ell)}_{\lambda,\omega}(z, F,X)  =   \mathcal{I}_{(\ell)}\Big( z ,\omega\cdot\mathbf{v}(\cdot) , e^{i\lambda X(\cdot)}F \Big),
\end{equation}
where the second term depends holomorphically on $z$ and where we have set, for $\Re(z)>0$,
\begin{align}
\label{e:def-Ibis}
\mathcal{I}_{(\ell)}(z,\f,G) & :=  
 \int_{\IS^{d-1}}\frac{ G (\theta)}{(z-i\f(\theta))^{\ell+1}}d\Vol_{\IS^{d-1}}(\theta) . 
\end{align}
 The purpose of this section is to analyze the holomorphic properties of these integrals, hence deducing holomorphic properties of $\mathcal{P}_{K_0}^{<N}$ when specified to $\f=\omega\cdot\mathbf{v}$ and $G=e^{i\lambda X(\cdot)}F$. We shall see in Lemma~\ref{l:Iell-I} that the proof reduces to the study of the resolvent of the operator $\mathbf{m}_\f$ (with some care paid to the dependence in $\omega$), and thus prove Theorem~\ref{t:maintheo-multiplication} along the way.

\subsection{Preliminary reduction}

 The following lemma reduces the analysis of the family of integrals $\mathcal{I}_{(\ell)}(z,\f,G)$ in~\eqref{e:def-Ibis}  for $\ell \in \{0,\dots,d-1\}$ to a single integral.
\begin{lemma}
\label{l:Iell-I}
Setting 
\begin{align}
\label{e:def-II}
\mathcal{I}(z) :=  \int_{\IS^{d-1}}\frac{G(\theta)}{z-\f(\theta)}d\Vol_{\IS^{d-1}}(\theta) ,
\end{align}
we have for all $\ell \in \{0, \dots , d-1\}$, 
\begin{align}
\label{link-I-ell}
\mathcal{I}_{(\ell)}(z,\f,G)  = - i^{\ell+1} (\d_z^{\ell} \mathcal{I})\big(\frac{z}{i}\big)  , \quad \text{ for } \Re(z)>0 . 
\end{align}
\end{lemma}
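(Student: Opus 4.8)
The plan is to recognise $\mathcal{I}_{(\ell)}(z,\f,G)$ from \eqref{e:def-Ibis} as, up to an explicit nonzero scalar, the $\ell$-th $z$-derivative of the Cauchy-type integral $\mathcal{I}$ of \eqref{e:def-II}, evaluated at the rotated point $z/i$. The substitution $z\mapsto z/i$ is chosen precisely so that it carries the real slit $\f(\IS^{d-1})\subset\R$, on which $\mathcal{I}$ is singular, onto the imaginary slit $i\,\f(\IS^{d-1})$, on which $z\mapsto\mathcal{I}_{(\ell)}(z,\f,G)$ is singular; thus, once this identity is established, all holomorphic continuation and singularity statements for the whole family $(\mathcal{I}_{(\ell)})_{0\le\ell\le d-1}$ reduce to those for the single resolvent $\mathcal{I}$ and its derivatives.

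First I would record that, $\f:\IS^{d-1}\to\R$ being continuous and $\IS^{d-1}$ connected (here $d\geq 2$), the image $\f(\IS^{d-1})$ is a compact interval of $\R$, so $w\mapsto\mathcal{I}(w)$ is well defined and holomorphic on $\C\setminus\f(\IS^{d-1})$; in particular $\mathcal{I}$ is holomorphic on the open lower half-plane, which contains $z/i$ for every $z$ with $\Re(z)>0$ because then $\Im(z/i)=-\Re(z)<0$. Next, for $z$ in any compact subset of $\{\Re(z)>0\}$ one has the uniform lower bound $|z-i\f(\theta)|\geq\Re(z)>0$ over $\theta\in\IS^{d-1}$ ($G$ being moreover bounded, as it is continuous by Lemma~\ref{l:continuous} in the intended application), so one may differentiate under the integral sign in \eqref{e:def-II}; iterating this $\ell$ times yields, for every $w\in\C\setminus\f(\IS^{d-1})$,
\[
(\d_w^\ell\mathcal{I})(w)=(-1)^\ell\,\ell!\int_{\IS^{d-1}}\frac{G(\theta)}{\big(w-\f(\theta)\big)^{\ell+1}}\,d\Vol_{\IS^{d-1}}(\theta).
\]

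It then remains to specialise this to $w=z/i$, $\Re(z)>0$. Since $z/i-\f(\theta)=i^{-1}\big(z-i\f(\theta)\big)$, raising to the power $\ell+1$ brings out a factor $i^{-(\ell+1)}$, and comparison with \eqref{e:def-Ibis} gives
\[
(\d_z^\ell\mathcal{I})\big(\tfrac{z}{i}\big)=(-1)^\ell\,\ell!\,i^{\ell+1}\,\mathcal{I}_{(\ell)}(z,\f,G),\qquad \Re(z)>0,
\]
from which \eqref{link-I-ell} follows by solving for $\mathcal{I}_{(\ell)}(z,\f,G)$ and simplifying the scalar prefactor with $i^{2(\ell+1)}=(-1)^{\ell+1}$. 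The argument presents no genuine obstacle; the only two points deserving a word of care are the legitimacy of differentiation under the integral sign — secured by the uniform bound $|z-i\f(\theta)|\geq\Re(z)>0$ on compacta of the right half-plane — and the bookkeeping of the powers of $i$ in the final normalisation.
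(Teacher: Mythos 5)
Your route is the same as the paper's, which treats this lemma as a direct computation (differentiate the Cauchy-type integral $\mathcal{I}$ under the integral sign, then substitute $w=z/i$ so that $w-\f(\theta)=i^{-1}(z-i\f(\theta))$); your justification of differentiation under the integral via the uniform bound $|z-i\f(\theta)|\geq \Re(z)$ is correct, as are both displayed identities. The one slip is the final sentence: solving your own (correct) identity $(\d_z^\ell\mathcal{I})(z/i)=(-1)^\ell\,\ell!\,i^{\ell+1}\,\mathcal{I}_{(\ell)}(z,\f,G)$ for $\mathcal{I}_{(\ell)}$ gives
\begin{align*}
\mathcal{I}_{(\ell)}(z,\f,G)=-\frac{i^{\ell+1}}{\ell!}\,(\d_z^{\ell}\mathcal{I})\Big(\frac{z}{i}\Big),\qquad \Re(z)>0,
\end{align*}
so the powers of $i$ do simplify to $-i^{\ell+1}$ as you say, but the factorial does not cancel. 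In other words, your computation in fact shows that \eqref{link-I-ell} as printed is off by the constant $\ell!$ whenever $\ell\geq 2$ (for $\ell\in\{0,1\}$ the two coincide); this is a typo in the displayed statement rather than a flaw in your method, and it is harmless for the rest of the paper, since Lemma~\ref{l:mathcal-I} and Corollary~\ref{c:I-} only use the structural form of $\d^\ell\mathcal{I}$ (holomorphic factor times the singular model function plus holomorphic remainder), the constant being absorbed into the holomorphic factors $\mathcal{H}$ and $\mathcal{K}$. Still, since the entire content of this lemma is the bookkeeping of constants, you should not assert that the prefactor simplifies exactly to $-i^{\ell+1}$; either state the identity with the $1/\ell!$, or flag the discrepancy with the printed formula.
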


\begin{remark}
Note that $\mathcal{I}_{(\ell)}$ is a priori defined on $\{\Re(z)>0\}$ that we want to understand its holomorphic extension through the imaginary axis (especially through the segment $i[\min(\f),\max(\f)]$). Hence, we need to understand the holomorphic extension of $\mathcal{I}$ from $\{\Im(z)<0\}$.              
             \end{remark}

As already explained,  the study of continuation properties of $I^{(\ell)}_{\lambda,\omega}(z, F,X)$ starting from $\Re(z)>0$ is reduced to that of $\mathcal{I}_{(\ell)}(z,\f,G)$. The latter reduces to the analysis of $\ml{I}(z)$ in~\eqref{e:def-II} according to~\eqref{link-I-ell}, with 
\begin{align}
\label{e:values-f-G}
\f(\theta) =\f_\omega(\theta) = \omega\cdot\mathbf{v}(\theta) , \quad G(\theta) = e^{i\lambda X(\theta)}F(\theta) , \quad \text{ for }X, F\in \mathcal{A}_{R_1} .
\end{align}
 Analyzing the holomorphic properties of $\mathcal{I}$ is the main purpose of this section.

 Thus, thanks to Lemma~\ref{l:product-exp}, $G$ belongs to $\mathcal{A}_R$ for every $0<R<R_1$ (with an uniform control in terms of the analytic norms of $F$ and $X$). Moreover, the (family of) function(s) $\f$ of interest is $\f(\theta)=\omega\cdot\mathbf{v}(\theta)$ with $\omega\in\IS^{d-1}$ and $\mathbf{v}\in\mathcal{A}_{R_1}^d$. In particular (see Lemma~\ref{l:geometric-lemma} and its proof), $\f$ has only two critical points that are nondegenerate and given by $\theta=\pm\omega$. In this situation, we thus have 
$$
\min_{\IS^{d-1}}\f= \omega\cdot\mathbf{v}(-\omega) <0<\omega\cdot\mathbf{v}(\omega) = \max_{\IS^{d-1}} \f ,
$$
since~\eqref{e:asspt-hyper-importante} yields that $\theta \cdot \mathbf{v}(\theta) >0$, for all $\theta \in \IS^{d-1}$ (consequence of the convexity of $K$ together with the assumption $0 \in \mathring{K}$).

\subsection{Main result for low frequencies}

The main goal of the present section is to prove the following result concerning the integral $\mathcal{I}_{(\ell)}(z,\f,G)$ defined in~\eqref{e:def-II}. 
We refer to Appendix~\ref{s:remainder-multivalued} for a brief reminder on multivalued holomorphic functions.

\begin{theorem} 
\label{t:extension-integral}  Let $0<R<\frac{1}{2\sqrt{d-1}}$ and let $\f$ and $g$ be two elements in $\mathcal{A}_R(\mathbb{S}^{d-1})$. Assume that $\f$ is a Morse function on $\mathbb{S}^{d-1}$ which admits only two critical points.
Then, setting
\begin{align*}
\mathcal{U}_\delta :=\{z\in \C,  \dist(z,i[\min(\f),\max(\f)] ) < \delta  \},
\end{align*}
there exists $\delta>0$ 
such that the function $z\mapsto \mathcal{I}_{(\ell)}(z,\f,G)$ has a {\em multivalued holomorphic extension} to the set $\mathcal{U}_\delta\setminus \{i\max(\f),i\min(\f)\} $, in the sense of Definition~\ref{def:multivalued} (from any $z_0$ such that $\Re(z_0)>0)$), satisfying,
for $z\in\mathcal{U}_\delta\cap\{\Re(z)>0\}$, 
\begin{align*}
\mathcal{I}_{(\ell)}(z,\f,G)=\mathcal{H}(z) (z-i\min(\f))^{\frac{d-3}{2}-\ell}(z-i\max(\f))^{\frac{d-3}{2}-\ell}+\mathcal{K}(z)
\end{align*}
where $\mathcal{H},\mathcal{K}$ are holomorphic on $\mathcal{U}_\delta$ if $d$ is even, and 
\begin{align*}
\mathcal{I}_{(\ell)}(z,\f,G)&= \left((z-i\min(\f))(i\max(\f)-z)\right) ^{\frac{d-3}{2}-\ell}\left(\ln \left(z-i\min(\f)\right)-\ln\left(z-i\max(\f)\right)\right) \mathcal{H}(z)\\
&+\mathcal{K}(z)\quad \text{ if }\ell\leqslant \frac{d-3}{2} , \\
\mathcal{I}_{(\ell)}(z,\f,G)&= \left((z-i\min(\f))(i\max(\f)-z)\right) ^{\frac{d-3}{2}-\ell}\mathcal{H}(z) , \\
&+\left(\ln \left(z-i\min(\f)\right)-\ln\left(z-i\max(\f)\right)\right) \mathcal{K}(z)\quad  \text{ if }\ell > \frac{d-3}{2} ,
\end{align*}
where $\mathcal{H},\mathcal{K}$ are holomorphic on $\mathcal{U}_\delta$ if $d\geqslant 3$ is odd.
\end{theorem}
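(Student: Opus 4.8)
The plan is to reduce everything, via Lemma~\ref{l:Iell-I}, to the single integral $\mathcal{I}(z) = \int_{\IS^{d-1}} \frac{G(\theta)}{z - \f(\theta)}\, d\Vol_{\IS^{d-1}}(\theta)$ and to understand its holomorphic continuation from $\{\Im(z) < 0\}$ (equivalently, $\mathcal{I}_{(0)}$ from $\{\Re(z)>0\}$) across the critical slit $[\min\f,\max\f]$. Once $\mathcal{I}$ is understood as a multivalued holomorphic function on a neighborhood of $[\min\f,\max\f]$ minus the two critical values, the formula~\eqref{link-I-ell} shows $\mathcal{I}_{(\ell)}$ is (up to the constant $-i^{\ell+1}$ and the rescaling $z \mapsto z/i$) the $\ell$-th derivative of $\mathcal{I}$; differentiating the singular expansion of $\mathcal{I}$ $\ell$ times produces exactly the claimed structure, with the exponent $\frac{d-3}{2}$ decreasing to $\frac{d-3}{2}-\ell$ and the logarithm possibly promoted to its derivative (hence moving from the $\mathcal{H}$-term to the $\mathcal{K}$-term when $\ell$ is large enough to kill the branch power in front). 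So the real content is the case $\ell = 0$.

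**Local model near a nondegenerate critical point.** Since $\f$ is analytic Morse with only two critical points, I would localize: write $1 = \chi_+ + \chi_- + \chi_0$ with $\chi_\pm$ supported near the maximum/minimum and $\chi_0$ supported away from both critical points. The $\chi_0$-piece gives a function holomorphic in a full neighborhood of $[\min\f,\max\f]$ (denominator stays away from zero there after a small complex deformation of the contour — this is the "away from critical points" analysis that Lemma~\ref{l:power-lie-derivative} and the complex deformation philosophy of Helffer–Sjöstrand are designed for), contributing to $\mathcal{K}$. For the $\chi_\pm$-pieces, apply the quantitative analytic Morse lemma (Appendix~\ref{ss:analyticMorse}) to put $\f$ in the form $\f(\theta) = \max\f - |w|^2$ (resp. $\min\f + |w|^2$) in analytic coordinates $w \in \R^{d-1}$ near the critical point, with the volume density and $G$ pulled back to analytic functions of $w$. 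The integral near the max then becomes, after polar coordinates $r = |w|$,
\[
\int_0^{\varepsilon} \frac{r^{d-2}\, a(r)}{z - \max\f + r^2}\, dr, \qquad a \text{ analytic, even in } r,
\]
modulo a holomorphic remainder. Substituting $u = r^2$ reduces this to $\frac12\int_0^{\varepsilon^2} \frac{u^{\frac{d-3}{2}} b(u)}{(z-\max\f) + u}\, du$ with $b$ analytic. This is a classical one-dimensional integral whose continuation in the parameter $\zeta := z - \max\f$ across $\zeta = 0$ is governed entirely by the exponent $\frac{d-3}{2}$: for $d$ even it produces a $\zeta^{\frac{d-3}{2}}\cdot(\text{holomorphic})$ singularity (the exponent being a half-integer, this is a genuine square-root-type branch point), while for $d$ odd the exponent is an integer and one instead gets a $\zeta^{\frac{d-3}{2}}\ln\zeta \cdot(\text{holomorphic}) + (\text{holomorphic})$ behavior (with, when $\frac{d-3}{2} < 0$, the branch power appearing alone and the log in the lower-order term, matching the $\ell > \frac{d-3}{2}$ dichotomy in the statement). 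The two determinations $\ln_{-\pi/2}$ appear naturally because the continuation is performed from $\{\Im z < 0\}$, i.e. from below the slit, consistent with Remark~\ref{r:log-definition} and the $F_d(z)$ of Theorem~\ref{t:maintheo-multiplication}. Adding the two endpoint contributions gives the stated combination $(z - i\min\f)^{\frac{d-3}{2}}(z-i\max\f)^{\frac{d-3}{2}}$ (resp. the difference of logarithms), after the overall rotation $z \mapsto z/i$ that turns the real slit $[\min\f,\max\f]$ into $i[\min\f,\max\f]$.

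**Assembling and differentiating.** Having established the $\ell=0$ expansion $\mathcal{I}(z/i) = \tilde{\mathcal{H}}(z)(z - i\min\f)^{\frac{d-3}{2}}(z-i\max\f)^{\frac{d-3}{2}} + \tilde{\mathcal{K}}(z)$ (even $d$), or the logarithmic analogue (odd $d$), with $\tilde{\mathcal{H}},\tilde{\mathcal{K}}$ holomorphic on $\mathcal{U}_\delta$ — here $\delta>0$ is chosen small enough that the Morse normal form and the complex deformation of $\chi_0$ are valid — I would then apply $-i^{\ell+1}\partial_z^\ell$ as in~\eqref{link-I-ell}. By the Leibniz rule and the fact that $\partial_z[\zeta^\gamma] = \gamma \zeta^{\gamma-1}$ and $\partial_z[\ln\zeta] = \zeta^{-1}$, the differentiated expression is again of the form (new holomorphic function)$\times (z-i\min\f)^{\frac{d-3}{2}-\ell}(z-i\max\f)^{\frac{d-3}{2}-\ell}$ plus a holomorphic remainder in the even case; in the odd case the $\ln$-term survives with a shifted power, and when $\ell > \frac{d-3}{2}$ the power $\frac{d-3}{2}-\ell$ is a negative integer so the pure branch power becomes meromorphic (absorbed into $\mathcal{H}$, holomorphic on $\mathcal{U}_\delta$ after clearing the pole is not possible — rather one keeps the stated form with $\mathcal{H}$ holomorphic and the singularity explicit) while the logarithmic part carries the coefficient $\mathcal{K}$; this is exactly the trichotomy written in the theorem. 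The multivalued-holomorphic statement on $\mathcal{U}_\delta \setminus\{i\min\f, i\max\f\}$ then follows because each term is manifestly a product of a single-valued holomorphic function with an elementary multivalued function (power or logarithm) of $z - i\min\f$ and $z - i\max\f$, hence lies in the Nilsson class in the sense of Definition~\ref{def:multivalued}.

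**Main obstacle.** The delicate point is the uniform control in the analytic Morse reduction: one must verify that the change of variables straightening $\f$ near each critical point preserves analyticity with a quantitative radius, and that the resulting integrand $u^{\frac{d-3}{2}}b(u)$ has $b$ genuinely analytic up to $u=0$ (not merely smooth) so that the one-dimensional continuation lemma applies with the clean power/log dichotomy — this is where Appendix~\ref{ss:analyticMorse} does the heavy lifting, and where a naive $\mathcal{C}^\infty$ argument would only give the limiting-absorption continuation to the slit itself rather than genuine holomorphic continuation beyond it. The second, more bookkeeping-type difficulty is tracking which determination of the logarithm/root one lands on: since the continuation starts from $\{\Re z > 0\}$ (i.e. from the right of the imaginary slit $i[\min\f,\max\f]$), one gets precisely the $\ln_{-\pi/2}$ branch, and one must be careful that the two endpoint contributions are continued consistently so that their combination is single-valued across the complement of the slit (which it is, since the monodromies at the two branch points cancel on loops encircling both) — this is what makes the "multivalued on $\mathcal{U}_\delta\setminus\{$two points$\}$" statement correct rather than needing a cut joining the two critical values.
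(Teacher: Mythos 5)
Your overall architecture is the right one (reduce to $\ell=0$ via \eqref{link-I-ell}, identify the local model $\int_0^{\eps^2} u^{\frac{d-3}{2}}b(u)(\zeta+u)^{-1}\,du$ at each critical value through the analytic Morse lemma, read off the power/log dichotomy, then differentiate $\ell$ times), and your endgame — monodromy cancellation on loops encircling both branch points, the bookkeeping of determinations — matches what the paper does via the explicit model $\mathsf{F}_\gamma$ of Lemma~\ref{l:laplace-bessel}. The genuine gap is in your first step, the decomposition $1=\chi_++\chi_-+\chi_0$ with cutoffs in the variable $\theta$. Since compactly supported analytic cutoffs do not exist, $\chi_0 G$ is only $\mathcal{C}^\infty$, and the contour deformation that is supposed to show the $\chi_0$-piece is ``holomorphic in a full neighborhood of $[\min\f,\max\f]$'' is obstructed: Stokes' theorem requires the integrand to be holomorphic in the region swept by the deformation, and with an almost-analytic extension of $\chi_0$ one only gets an $O(|\Im|^\infty)$ error, i.e.\ exactly the $\mathcal{C}^\infty$ limiting-absorption statement up to the slit that you yourself flag as insufficient. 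The same defect contaminates the $\chi_\pm$-pieces: after Morse and polar coordinates the density $b(u)$ contains the cutoff and is not analytic on the whole integration range, so none of the three pieces individually admits a holomorphic continuation across the interior of the slit — only their sum does, and your write-up never performs the matching that would recover this.

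The paper avoids the issue by never cutting the integral in $\theta$. It fibers the \emph{whole} integral over the values of $\f$ via the coarea formula (Lemma~\ref{l:coarea}), producing a single one-dimensional density $\mathcal{J}(\tau)=\int_{\f^{-1}(\tau)}\iota_{Y_\f}(G\Vol_{\IS^{d-1}})$ with $Y_\f=\nabla_{\IS^{d-1}}\f/|\nabla_{\IS^{d-1}}\f|^2$, so that $\mathcal{I}(z)=\int_{\min\f}^{\max\f}\frac{\mathcal{J}(\tau)}{z-\tau}\,d\tau$. The near/far dichotomy is then applied not to the integral but to the analyticity of the globally defined function $\mathcal{J}$: away from the critical values one bounds $\mathcal{J}^{(k)}$ by iterating $\mathcal{L}_{Y_\f}$ (Lemmas~\ref{l:desintegration} and~\ref{l:power-lie-derivative}), near them the Morse normal form gives $\mathcal{J}(u)=(u-\min\f)^{\frac{d-3}{2}}\mathsf{A}(u-\min\f)$ (Lemma~\ref{l:J-near-crit}), and the two statements glue by uniqueness of analytic continuation into the global factorization $\mathcal{J}=(u-\min\f)^{\frac{d-3}{2}}(\max\f-u)^{\frac{d-3}{2}}\mathcal{H}(u)$ with $\mathcal{H}$ holomorphic on a full rectangle (Corollary~\ref{c:equality-J}). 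Only then is the Cauchy integral continued, by writing $\mathcal{H}(u)=\mathcal{H}(z)+(u-z)\mathbf{R}(u,z)$ and invoking the explicit computation of $\mathsf{F}_{\frac{d-3}{2}}$. To repair your argument you would either have to adopt this fibration, or at minimum localize in the spectral variable $\tau=\f(\theta)$ rather than in $\theta$ and exploit that the cutoffs sum to the constant $1$; as written, the decomposition destroys the analyticity that the whole theorem is built on.
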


Theorem~\ref{t:extension-integral} is presented here for fixed real-analytic functions $\f$ and $G$. 
In view of applications to Poincar\'e series, {\em families} depending on $\lambda$ and $\omega$ of such functions  $\f$ and $G$ need  to be considered (see~\eqref{e:values-f-G}), with a uniform control on the size $\delta$ of the domain of the extension. 
Henceforth all results and proofs presented below, in addition to proving Theorem~\ref{t:extension-integral}, also keep track of the uniformity of domains for the analytic extension with respect to the parameters $\lambda$ and $\omega$.
 As a corollary (of this quantitative approach of Theorem~\ref{t:extension-integral}), we deduce from decomposition \eqref{e:decomp-I-I} that the integral $I^{(\ell)}_{\lambda,\omega}(z, F,X)$ 
inherits exactly the same properties as $\mathcal{I}_{(\ell)}(z,\f=\omega.\mathbf{v} ,G=e^{i\lambda X}F)$ from Theorem \ref{t:extension-integral}. See \S\ref{ss:zeta-low-freq} for precise statements in view of Poincar\'e series. As a byproduct of our analysis, we deduce a result of independent interest describing the analytic continuation of the resolvent of multiplication operators, see Theorem~\ref{t:multiplicationoperators}.

The remainder of the section is devoted to the proof of Theorem~\ref{t:extension-integral} (and its relatives), and it proceeds in 
several steps:
\begin{enumerate}
\item Decomposing the integration form $Gd\text{Vol}_{\mathbb{S}^{d-1}}$ along level sets of the Morse function $\f$: this yields a new set of functions $[\min\f,\max\f]\ni \tau \mapsto\mathcal{J}(\tau)$ (the ``average'' of the form $Gd\text{Vol}_{\mathbb{S}^{d-1}}$ along level set $\{\f=\tau\}$) which allows to express $\mathcal{I}$ (and related derivatives $\mathcal{I}_{(\ell)}$) in terms of an analytic function $\mathcal{J}$ through the formula
\begin{equation}\label{e:relation-I-J}
 \mathcal{I}(z)=\int_{\min(\f)}^{\max(\f)} \frac{\mathcal{J}(\tau)}{z-\tau}d\tau.
\end{equation}
This is done in~\S\ref{ss:level-lines}.
\item Extending $\mathcal{J}$  holomorphically away from the critical values $\min\f,\max\f$, using the transport by a well-chosen analytic flow which is constructed using the Morse function $\f$. This is done in~\S\ref{ss:away-crit}.
\item Extending $\mathcal{J}$ near the critical values $\min\f,\max\f$ as multivalued function using the analytic Morse lemma together with explicit computations in the Morse chart. This is done in~\S\ref{s:multivalued-continuation-J}.
\item Using the formula~\eqref{e:relation-I-J} to relate the analytic continuation of $\mathcal{J}$ with that of $\mathcal{I}$. This is done in~\S\ref{ss:I-to-J-2}.
\item Taking  derivatives and deducing the analytic continuation of the functions $\mathcal{I}_{(\ell)}$ and $I^{(\ell)}_{\lambda,\omega}$ from the information obtained on $\mathcal{I}$. This is done in~\S\ref{ss:Iell-and-Iell} (hence proving Theorem~\ref{t:extension-integral}) and it allows us to conclude the proof of Proposition~\ref{p:low-freq} in~\S\ref{ss:zeta-low-freq}.
\end{enumerate}
In summary, we have the following chain of dependence between the analytical quantities introduced so far:
$$ \mathcal{J}\rightarrow \mathcal{I}\rightarrow \mathcal{I}_{(\ell)}\rightarrow I^{(\ell)}   \rightarrow \mathcal{P}_{K_0}^{<N},$$
where the last three arrows were already discussed at the beginning of this section and rely on relatively simple transformations. Hence, as already emphasized, the main focus of the upcoming sections is on the description of $\mathcal{I}$ (and thus of $\mathcal{J}$).

\subsection{Decomposing $\mathcal{I}$ according to the level sets of $\f$}\label{ss:level-lines}
 We denote by $\Crit(\f)$ the set containing the two critical points of $\f$;   in case $\f=\omega\cdot \mathbf{v}$, we have  $\Crit(\f) = \{-\omega,\omega\}$.
We  introduce the vector field 
\begin{align}
\label{e:def-champ-Yf}
Y_{\f}:=\frac{\nabla_{\IS^{d-1}}\f}{\left|\nabla_{\IS^{d-1}}\f\right|^2},
\end{align}
which is well defined away from $\Crit(\f)$ (that is to say, away from $\theta=\pm\omega$ in case $\f=\omega\cdot \mathbf{v}$). It has the same flow lines as $\nabla_{\IS^{d-1}}\f $ (only their parametrizations change). Here, we have one more time taken the canonical metric $g$ on  $\IS^{d-1}$ to define $|v|^2 = g(v,v)$ and $g(\nabla_{\IS^{d-1}} \f , v) = d\f(v)$ for all $v \in T\mathbb{S}^{d-1}$. We denote by $\Phi^t$ the flow associated to the vector field $Y_\f$. Given $x \in\mathbb{S}^{d-1}\setminus \Crit(\f)$ and by the Cauchy-Lipschitz Theorem, the map $\tau \mapsto \Phi^\tau(x)$ is well-defined in a neighborhood of $0$ (depending on the point $x$). 
Since $\mathcal{L}_{Y_\f}\f= d\f (Y_\f) =\frac{d\f(\nabla \f)}{|\nabla \f|^2 } = 1$, we have $\f(\Phi^\tau(x)) = \f(x)+\tau$ on the interval of definition of $\tau \mapsto \Phi^\tau(x)$. As a consequence, we deduce that it is well-defined for all $\tau \in \big(\min(\f)-\f(x) , \max(\f)-\f(x) \big)$ and that it satisfies
 $$\f(\Phi^\tau(x)) = \f(x)+\tau , \quad \text{ for all } x \in\mathbb{S}^{d-1}\setminus \Crit(\f) ,  \tau \in \big(\min(\f)-\f(x) , \max(\f)-\f(x) \big).  $$
We now fix $u_0\in (\min(\f) , \max(\f))$ and we deduce from the previous line that 
$$
\f(\Phi^{u-u_0}(x)) = \f(x) + u-u_0 = u , \quad \text{ for all } x \in \f^{-1}(u_0)  ,
$$
whence $\Phi^{(u-u_0)}_*\big(\f^{-1}(u_0)\big)= \f^{-1}(u)$.

The relevance of this vector field towards the analysis of $\mathcal{I}(z)$ follows from the following decomposition of the integral.

\begin{lemma}
\label{l:coarea}
Let $\f$ be a Morse function on $\IS^{d-1}$ with only two critical points, $Y_\f$ be defined in~\eqref{e:def-champ-Yf}, and $G \in C^0(\IS^{d-1})$. We have
$$
\mathcal{I}(z)=\int_{\min \f}^{\max\f}\frac{d\tau}{z-\tau}\left(\int_{\f^{-1}(0)}\iota_{Y_{\f}}\Phi^{\tau *}\left(G\Vol_{\IS^{d-1}}\right)\right).
$$
\end{lemma}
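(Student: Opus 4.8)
The plan is to prove Lemma~\ref{l:coarea} as a coarea-type formula, writing the integral over $\IS^{d-1}$ as an iterated integral over the level sets of $\f$. First I would fix the parametrization of $\IS^{d-1}\setminus\Crit(\f)$ by the flow $\Phi^\tau$ of $Y_\f$: given the level set $\f^{-1}(0)$ (a smooth hypersurface of $\IS^{d-1}$, since $0$ is a regular value of $\f$ under the hypothesis that $\f$ is Morse with exactly two critical points, which must be the global min and max), the map
$$
\Psi : (\tau, x)\in (\min\f,\max\f)\times \f^{-1}(0)\longmapsto \Phi^\tau(x)\in \IS^{d-1}\setminus\Crit(\f)
$$
is a diffeomorphism onto its image, which is $\IS^{d-1}$ minus the two critical points, hence of full measure. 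This follows from the discussion just preceding the lemma: $\Phi^\tau$ maps $\f^{-1}(u_0)$ to $\f^{-1}(u_0+\tau)$, the flow of $Y_\f$ is defined on the stated time interval, and $\f\circ\Phi^\tau(x) = \f(x)+\tau$ together with $\mathcal L_{Y_\f}\f=1$ gives injectivity in $\tau$; injectivity in $x$ at fixed $\tau$ comes from invertibility of $\Phi^\tau$.

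Next I would perform the change of variables in the integral defining $\mathcal{I}(z)$, namely
$$
\mathcal{I}(z)=\int_{\IS^{d-1}}\frac{G(\theta)}{z-\f(\theta)}\,\Vol_{\IS^{d-1}}(\theta) = \int_{\IS^{d-1}\setminus\Crit(\f)}\frac{1}{z-\f(\theta)}\,\big(G\,\Vol_{\IS^{d-1}}\big)(\theta),
$$
the removal of $\Crit(\f)$ being harmless since it is a null set. Pulling back the $(d-1)$-form $G\,\Vol_{\IS^{d-1}}$ by $\Psi$ and using the standard identity for the pullback of a top-degree form under a flow-box type parametrization, namely $\Psi^*\big(G\,\Vol_{\IS^{d-1}}\big) = d\tau\wedge \iota_{Y_\f}\Phi^{\tau*}(G\,\Vol_{\IS^{d-1}})\big|_{\f^{-1}(0)}$ (this is just the coordinate expression of the flow of $Y_\f$: along the $\tau$-direction the form contracts with $Y_\f$, and the remaining transverse part is transported by $\Phi^\tau$). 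Since $\f\circ\Psi(\tau,x)=\tau$ when $x\in\f^{-1}(0)$, the factor $\frac{1}{z-\f(\theta)}$ becomes $\frac{1}{z-\tau}$, which is constant on each level set. Applying Fubini's theorem (justified because for $\Re(z)>0$ the integrand is bounded, $|z-\tau|\geq \Re(z)>0$ uniformly, and the total mass is finite) gives exactly
$$
\mathcal{I}(z)=\int_{\min\f}^{\max\f}\frac{d\tau}{z-\tau}\left(\int_{\f^{-1}(0)}\iota_{Y_\f}\Phi^{\tau*}\big(G\,\Vol_{\IS^{d-1}}\big)\right),
$$
as claimed.

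The main obstacle, and the step deserving the most care, is the orientation/measure-theoretic bookkeeping in the change of variables: one must check that the pushforward of Lebesgue measure on $\IS^{d-1}$ under $\Psi^{-1}$ decomposes as claimed (equivalently, that $\Psi^*\Vol_{\IS^{d-1}}$ has no cross terms and that $\iota_{Y_\f}$ produces the correct density on $\f^{-1}(0)$), and that the orientation conventions are consistent so that no sign is lost — here it is cleanest to work with the coarea formula $\Vol_{\IS^{d-1}} = \frac{1}{|\nabla_{\IS^{d-1}}\f|}\,d\f\wedge \sigma_\tau$ on $\f^{-1}(\tau)$ for the induced hypersurface measure $\sigma_\tau$, observe that $\iota_{Y_\f}(d\f\wedge\sigma_\tau) = \sigma_\tau$ since $\iota_{Y_\f}d\f = 1$ and $\iota_{Y_\f}\sigma_\tau=0$ (as $Y_\f\perp \ker d\f = T\f^{-1}(\tau)$), and that $\Phi^{\tau*}$ identifies $\sigma_\tau$ on $\f^{-1}(\tau)$ with the corresponding form on $\f^{-1}(0)$. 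A secondary point is the smoothness of $\tau\mapsto \int_{\f^{-1}(0)}\iota_{Y_\f}\Phi^{\tau*}(G\Vol_{\IS^{d-1}})$ up to (but not at) the endpoints $\min\f,\max\f$, which follows from smoothness of the flow $\Phi^\tau$ away from $\Crit(\f)$; this function is precisely the object denoted $\mathcal{J}(\tau)$ in~\eqref{e:relation-I-J} and analyzed in the subsequent subsections, so for the present lemma it suffices to record that the inner integral is well-defined and continuous in $\tau$ on the open interval, which is all that is needed to make the right-hand side meaningful.
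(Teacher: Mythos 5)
Your argument is correct and follows the same underlying idea as the paper: disintegration of $G\Vol_{\IS^{d-1}}$ along the level sets of $\f$ using the flow of $Y_\f$. The difference is one of packaging. The paper simply quotes the truncated identity
$\int_{\min \f+\varepsilon}^{\max\f-\varepsilon}\frac{d\tau}{z-\tau}\big(\int_{\f^{-1}(0)}\iota_{Y_{\f}}\Phi^{\tau *}(G\Vol_{\IS^{d-1}})\big)= \int_{\IS^{d-1}} \mathds{1}_{ [\min \f+\varepsilon,\max\f-\varepsilon] }(\f) \frac{ G }{z-\f} d\Vol_{\IS^{d-1}}$
from an earlier reference and then lets $\varepsilon\to 0^+$ by dominated convergence, whereas you prove the flow-box change of variables from scratch on all of $\IS^{d-1}\setminus\Crit(\f)$ and dispose of the endpoint degeneracy by Fubini--Tonelli on this full-measure set. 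Both treatments of the critical points are legitimate (Tonelli applied to $|G|/|z-\f|$ shows the iterated integral converges absolutely, which is the same information the $\varepsilon$-truncation extracts), and your version has the advantage of being self-contained. One slip to correct: the claim that $|z-\tau|\geq \Re(z)>0$ for $\tau\in[\min\f,\max\f]$ is false whenever $\max\f>0$ (which is the case in the application, where $\max\f=h_K(\xi)>0$); e.g.\ $z=\max\f/2$ satisfies $\Re(z)>0$ but lies in the interval, and there $\mathcal{I}(z)$ is not even defined. The correct hypothesis for the lemma is $z\notin[\min\f,\max\f]$, under which $|z-\tau|\geq \dist(z,[\min\f,\max\f])>0$ uniformly and likewise $|z-\f(\theta)|$ is bounded below, which is all your Fubini step needs. (In the paper's use of the lemma, $\mathcal{I}$ is evaluated at $z/i$ with $\Re(z)>0$, i.e.\ off the real axis, so this is harmless, but the justification as written should be amended.)
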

This is a version of the coarea formula (see e.g.~\cite[Chapter~7]{AGZV}) which follows from~\cite[Lemma~2.1]{DangRiviere20d}
after the following modifications.  
\begin{proof}
For every $\varepsilon>0$,~\cite[Lemma~2.1]{DangRiviere20d} shows
$$\int_{\min \f+\varepsilon}^{\max\f-\varepsilon}\frac{d\tau}{z-\tau}\left(\int_{\f^{-1}(0)}\iota_{Y_{\f}}\Phi^{\tau *}\left(G\text{Vol}_{\IS^{d-1}}\right)\right)= \int_{\mathbb{S}^{d-1}} \mathds{1}_{ [\min \f+\varepsilon,\max\f-\varepsilon] }(\f) \frac{ G }{z-\f} d\text{Vol}_{\IS^{d-1}} $$
since we removed the critical value of the Morse function $\f$. 
Then for $z\notin  [\min \f,\max\f]$ an application of the dominated convergence theorem allows to let $\varepsilon\rightarrow 0^+$ on both sides. 
  \end{proof}
  
  In particular, it is natural to introduce the following function, for all $\tau \in (\min(\f) , \max(\f))$, 
\begin{align}
\label{e:def-J}
\mathcal{J}(\tau)=\mathcal{J}(\tau,\f,G):=\int_{\f^{-1}(0)}\iota_{Y_{\f}}\Phi^{\tau *}\left(G\text{Vol}_{\IS^{d-1}}\right)=\int_{\f^{-1}(\tau)}\iota_{Y_{\f}}\left(G\text{Vol}_{\IS^{d-1}}\right)
\end{align}
in which case the integral of interest becomes 
\begin{align}
 \label{e:I-in-terms-of-J}
\mathcal{I}(z)=\int_{\min(\f)}^{\max(\f)} \frac{\mathcal{J}(\tau)}{z-\tau}d\tau , 
\end{align}
Our next goal is to prove that $\mathcal{J}$ is actually a real analytic function of $u \in (\min(\f) , \max(\f))$, and even that it has a {\em multivalued holomorphic extension} to $\mathcal{U}\setminus \{\sup (f),\inf (f)\} $, in the sense of Definition~\ref{def:multivalued}, where $\mathcal U$ is a neighborhood of $[\min(\f) , \max(\f)]$ in $\C$. This will then allow to deform the integration contour in~\eqref{e:I-in-terms-of-J} in order to derive the expected analytic properties of $\mathcal{I}$.

In order to prove these analytic properties of $\mathcal{J}$, we split the analysis in two cases. We first prove analytic continuation of $\mathcal{J}$ away from the singular points $\{\min(\f),\max(\f)\}$ in~\S\ref{ss:away-crit} and then prove multivalued holomorphic continuation of $\mathcal{J}$ near $\{\min(\f),\max(\f)\}$ in~\S\ref{s:multivalued-continuation-J}.

\subsection{Analytic properties of $\mathcal{J}$ away from the critical values}\label{ss:away-crit}
\label{s:Anal-away-crit}

The next lemma shows that $\mathcal{J}$ defines an analytic function of $\tau$ with a precise control on the radius of analyticity in terms of the distance to the critical values:
\begin{lemma}\label{l:desintegration}
 Let $0<R<\frac{1}{2\sqrt{d-1}}$ and let $\mathbf{v}\in\mathcal{A}_R(\IS^{d-1})^d$. Then, there exists a constant $C(R,\mathbf{v})>1$ such that, for every $0<R'<R/2$, for every $\omega\in\IS^{d-1}$, for every $k\geq 0$, for every $G\in\mathcal{A}_{R'}(\IS^{d-1})$, and for every $\tau_0\in(\min\f,\max\f)$, 
 $$
 \left|\mathcal{J}^{(k)}(\tau_0,\f,G)\right|\leq \|G\|_{R'}k! (R')^{-k} \left(\frac{C(R,\mathbf{v})}{\min\{|\tau_0-\min\f|,|\tau_0-\max\f|\}}\right)^{k+1} 
 $$
where we recall that $\f(\theta)=\omega\cdot\mathbf{v}(\theta)$.
\end{lemma}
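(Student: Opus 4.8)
The plan is to reduce everything to a statement in a single local chart and to exploit that $\mathcal{J}(\tau) = \int_{\f^{-1}(\tau)} \iota_{Y_\f}(G\,\Vol_{\IS^{d-1}})$ can be rewritten, using the flow $\Phi^\tau$ of $Y_\f$, as an integral over the \emph{fixed} level set $\f^{-1}(\tau_0)$ of a pullback form. Concretely, fix $\tau_0 \in (\min\f,\max\f)$ and write, by the flow identity $\Phi^{(\tau-\tau_0)}_*(\f^{-1}(\tau_0)) = \f^{-1}(\tau)$,
\begin{align*}
\mathcal{J}(\tau) = \int_{\f^{-1}(\tau_0)} \Phi^{(\tau-\tau_0)*}\Big( \iota_{Y_\f}\big( G\, \Vol_{\IS^{d-1}}\big) \Big) = \int_{\f^{-1}(\tau_0)} \iota_{Y_\f}\Big(  (G\circ \Phi^{\tau-\tau_0})\, \Phi^{(\tau-\tau_0)*}\Vol_{\IS^{d-1}}\Big),
\end{align*}
where I used $\Phi^{s*} Y_\f = Y_\f$ (as $Y_\f$ is $\Phi$-invariant, being its own generator). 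The point is then that $\mathcal{J}^{(k)}(\tau_0)$ is obtained by differentiating $k$ times in $\tau$ and evaluating at $\tau=\tau_0$, which produces $\int_{\f^{-1}(\tau_0)} \iota_{Y_\f}\big( \mathcal{L}_{Y_\f}^k(G\,\Vol_{\IS^{d-1}})\big)$, i.e. a $k$-fold Lie derivative along $Y_\f$; and $\mathcal{L}_{Y_\f}$ acts on $G$ (a function) as the first-order differential operator $\mathcal{L}_{Y_\f} = \mathcal{L}_{\nabla_{\IS^{d-1}}\f/|\nabla_{\IS^{d-1}}\f|^2}$, plus a zeroth-order term coming from $\mathcal{L}_{Y_\f}\Vol_{\IS^{d-1}} = (\div_{g} Y_\f)\Vol_{\IS^{d-1}}$.

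The key quantitative input is then Lemma~\ref{l:power-lie-derivative} (first item), applied on the open set $O = O_{\tau_0}$ which is a suitable neighborhood of the level set $\f^{-1}(\tau_0)$ staying at controlled distance from $\Crit(\f)=\{\pm\omega\}$; the distance of $\f^{-1}(\tau_0)$ to the critical points is, by Lemma~\ref{l:geometric-lemma} and the fact that $|\f(\theta) - \f(\pm\omega)| \leq c_0^{-1}d_{\IS^{d-1}}(\theta,\pm\omega)^2$, comparable to $\sqrt{\min\{|\tau_0-\min\f|, |\tau_0-\max\f|\}}$ — wait, more carefully: on $\f^{-1}(\tau_0)$ one has $d_{\IS^{d-1}}(\theta,\pm\omega)^2 \gtrsim \min\{|\tau_0-\min\f|,|\tau_0-\max\f|\}$ by the lower bound in Lemma~\ref{l:geometric-lemma}, so the relevant gradient lower bound $|\nabla_{\IS^{d-1}}\f|^2 \gtrsim d_{\IS^{d-1}}(\theta,\{\pm\omega\})^2 \gtrsim \min\{|\tau_0-\min\f|,|\tau_0-\max\f|\}$ holds near $\f^{-1}(\tau_0)$. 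Hence $Y_\f$ and $\div_g Y_\f$ are holomorphic (after complexification) on a neighborhood of size a fixed fraction of $\min\{|\tau_0-\min\f|,|\tau_0-\max\f|\}$, with $\ml{A}_{R_1,O}$-norms of $Y_\f$ and $W=\div_g Y_\f$ bounded by $C(R,\mathbf{v})/\min\{|\tau_0-\min\f|,|\tau_0-\max\f|\}$ (the extra power coming from differentiating the $|\nabla\f|^{-2}$ factor and the Cauchy estimates in the complexified chart). Plugging into Lemma~\ref{l:power-lie-derivative} with $R_2 - R$ chosen comparable to $\min\{|\tau_0-\min\f|,|\tau_0-\max\f|\}$ (so that the complexification is valid) gives
\begin{align*}
\big\| \mathcal{L}_{Y_\f}^k(G\,\Vol) \big\|_{L^\infty(O)} \leq \Big( \frac{C(R,\mathbf{v})}{\min\{|\tau_0-\min\f|,|\tau_0-\max\f|\}} \Big)^{k} k!\, \|G\|_{R'},
\end{align*}
uniformly in $\omega$ (since all constants in Lemma~\ref{l:geometric-lemma} are uniform in $\omega$), and integrating over $\f^{-1}(\tau_0)$ — whose $(d-2)$-volume is uniformly bounded — together with the $\iota_{Y_\f}$ contraction (which costs another factor $\|Y_\f\| \leq C/\min\{\dots\}$) produces the claimed bound with the $(R')^{-k}$ factor absorbed into the redefinition $R' < R/2$ (one uses $\|G\|_{R'}$ rather than $\|G\|_{R'/2}$ at the price of $(R')^{-k}$, exactly as in the passage from item (1) to item (2) of Lemma~\ref{l:power-lie-derivative}).

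The main obstacle I expect is \emph{bookkeeping the dependence of the various analyticity radii and open sets on $\tau_0$} so as to extract the sharp power $\big(\min\{|\tau_0-\min\f|,|\tau_0-\max\f|\}\big)^{-(k+1)}$: one must simultaneously (i) choose the real neighborhood $O_{\tau_0}$ of $\f^{-1}(\tau_0)$ large enough to contain the flowed-out level sets needed to represent $k$ derivatives, yet small enough that $|\nabla_{\IS^{d-1}}\f|$ stays bounded below by a multiple of $\sqrt{\mathrm{dist}}$ — actually by a multiple of $\min\{|\tau_0-\min\f|,|\tau_0-\max\f|\}^{1/2}$ — on all of $O_{\tau_0}$; and (ii) choose the complexification width of the chart comparable to this same quantity. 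A clean way to handle (i) is to first establish the estimate for $\tau$ in, say, the middle third $[\min\f + c, \max\f - c]$ by a compactness argument (giving a fixed constant), and then near a critical value, say near $\max\f$, to work in the analytic Morse chart (Appendix~\ref{ss:analyticMorse}) in which $\f = \max\f - |w|^2$: there $\f^{-1}(\tau_0)$ is the sphere of radius $\sqrt{\max\f - \tau_0}$, the flow $\Phi^\tau$ and $Y_\f$ are explicit (radial), $\div_g Y_\f$ and $Y_\f$ have the size $(\max\f-\tau_0)^{-1/2}$ and $(\max\f-\tau_0)^{-1/2}$ respectively — hmm, this gives $(\max\f-\tau_0)^{-(k+1)/2}$ which is \emph{better} than claimed, so the claimed bound is safely implied. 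One subtlety: the representation of $\mathcal{J}^{(k)}$ as $\int \iota_{Y_\f}\mathcal{L}_{Y_\f}^k(\cdot)$ requires justifying that differentiation under the integral sign commutes with the flow pullback, which follows from smoothness of $\Phi^\tau$ jointly in $(\tau,x)$ on the relevant domain together with dominated convergence, exactly as in the proof of Lemma~\ref{l:coarea}. Once the representation and the chart-wise estimates are in place, the final bound is assembled by combining the middle-range estimate with the two near-critical-value estimates via a partition of $(\min\f,\max\f)$, keeping the worst constant.
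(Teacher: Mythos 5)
Your proposal follows the paper's proof essentially verbatim: the representation $\mathcal{J}^{(k)}(\tau_0)=\int_{\f^{-1}(\tau_0)}\iota_{Y_\f}\mathcal{L}_{Y_\f}^k\left(G\,\Vol_{\IS^{d-1}}\right)$, the quantitative gradient lower bound from Lemma~\ref{l:geometric-lemma} on a neighborhood of the level set whose size is controlled by $\min\{|\tau_0-\min\f|,|\tau_0-\max\f|\}$, the complexification of $Y_\f=\nabla_{\IS^{d-1}}\f/|\nabla_{\IS^{d-1}}\f|^2$ there, and the application of Lemma~\ref{l:power-lie-derivative}. The extra detour through the analytic Morse chart near the critical values is not needed (the paper handles all $\tau_0$ uniformly via the $\varepsilon_1$-dependent complex neighborhood), but it does no harm.
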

Here we keep track of the dependence in $\omega$ of $\f$ in view of our applications but the proof works as well with more general $\f$ having only two critical points that are \emph{nondegenerate}.
The proof of Lemma~\ref{l:desintegration} relies on the formula~\eqref{e:def-J} together with an application of Lemma~\ref{l:power-lie-derivative} to the vector field $Y=Y_{\f}$ away from $\Crit(\f)$.

\begin{proof} For simplicity, we drop the indices $\omega$ in the function $\f=\f_\omega$.

Given $\tau_0\in(\min\f,\max\f)$, we aim at computing the derivative of order $k$ at $\tau_0$. To do that, we write
$$
\mathcal{J}(\tau_0+h)=\int_{\f^{-1}(\tau_0)}\iota_{Y_{\f}}\Phi^{h *}\left(G\text{Vol}_{\IS^{d-1}}\right),
$$
from which we infer that, for every $k\geq 0$,
$$
\mathcal{J}^{(k)}(\tau_0)=\int_{\f^{-1}(\tau_0)}\iota_{Y_{\f}}\mathcal{L}_{Y_{\f}}^k\left(G\text{Vol}_{\IS^{d-1}}\right).
$$
 We now fix 
\begin{equation}\label{e:security-distance}
0<\varepsilon_1^2<c_0\min\left\{|\tau_0-\min\f|,|\tau_0-\max\f|,R\right\},
\end{equation}
where $0<c_0<1/4$ is smaller than the geometric constant, also called $c_0$, appearing in Lemma~\ref{l:geometric-lemma}. From Lemma~\ref{l:geometric-lemma}, we also deduce that every point in $\f^{-1}(\tau_0)$ is at distance at least $\varepsilon_1$ of $\Crit(\f) = \{-\omega,\omega\}$. 
To prove it, just apply the second inequality of Lemma~\ref{l:geometric-lemma} to $\theta$ in the level set $\f^{-1}(\tau_0)$, $ c_0\vert \max(\f)-\tau_0 \vert\leqslant d_{\mathbb{S}^{d-1}}(\theta,\omega)^2 $ which implies
that
$\varepsilon_1 \leqslant d_{\mathbb{S}^{d-1}}(\theta,\omega) $.

In particular, still according to Lemma~\ref{l:geometric-lemma}, one has that, for every point $\theta$ in $\mathbb{S}^{d-1}$ 
\begin{align}
\label{e:nabla-bdd-below}
d_{\mathbb{S}^{d-1}}(\pm\omega, \theta)\geq\varepsilon_1 \implies |\nabla_{\IS^{d-1}}\f(\theta)|\geq c_0\varepsilon_1 , 
\end{align} for some positive constant $c_0$ that depends only on the convex set $K$ (but not on $\omega$). 

Technically, we need to prove in a quantitative way 
that the $(d-2)$-form
$\iota_{Y_\mathbf{f}}G\text{Vol}_{\mathbb{S}^{d-1}}$ has analytic continuation in the complex domain outside some neighborhood of the critical points $\pm \omega$, and the discussion should be uniform in $\omega\in \mathbb{S}^{d-1}$.
The function 
\begin{align}
\label{e:def-tilde-g}
 \tilde{g} : =|\nabla_{\mathbb{S}^{d-1}}\f|^2
 \end{align}  belongs to the space $\mathcal{A}_{R_1}(\IS^{d-1})$ for all $0<R_1<R$ according to Corollary~\ref{c:normgradient}.
We study the function $\tilde{g}$ in a local chart $\kappa_M:U_M\subset\IS^{d-1}\to B_\R^{d-1}(0,1/2)$, as introduced in Section~\ref{s:complexification}.
Given $y_0\in B_\R^{d-1}(0,1/2) \setminus \kappa_M \big( B_{\IS^{d-1}}(\pm\omega,\varepsilon_1)\big)$, the Taylor expansion of the function $\tilde{g}$ at $y_0$ reads
\begin{align}
\label{e:fMj-decomp}
\tilde{g}(z)=\sum_{\alpha\in\IN^{d-1}}\frac{(z-y_0)^{\alpha}}{\alpha!}\partial^\alpha \tilde{g}(y_0), 
\end{align}
where, for every $0<R_1<R$ and for every $\alpha\in\mathbb{N}^{d-1}$, $|\partial^\alpha \tilde{g}(y_0)|\leq C(R,R_1)\alpha! R_1^{-|\alpha|}\|\mathbf{v}\|_{R}^2$ for some constant $C(R,R_1)>0$ depending only on $0<R_1<R$. In fact, one can verify that, for $0<R_1\leq R/2$, this constant can be chosen as depending only on $R>0$. Thus, we will suppose this from now on and set $C(R,R_1)\leq C(R)<\infty$.
From this, we deduce that, for $|z-y_0|<R_1$, 
\begin{align*}
\sum_{|\alpha|\geq 1}\frac{|z-y_0|^{|\alpha|}}{\alpha!}|\partial^\alpha \tilde{g}(y_0)|
& \leq  C(R)\sum_{|\alpha|\geq 1} (R_1^{-1}|z-y_0|)^{|\alpha|} \|\mathbf{v}\|_R^2   \\
& \leq  C(R) \left( \frac{R_1^{-1}|z-y_0|}{(1-R_1^{-1}|z-y_0|)}\right)^{d-1} \|\mathbf{v}\|_R^2
\end{align*}
where we used the generating series identity
$\sum_{\vert\alpha\vert\geqslant 1, \alpha\in \mathbb{N}^{d-1}}z^{\vert\alpha\vert}=(\frac{z}{1-z})^{d-1}$.
To get the upper bound, $\sum_{|\alpha|\geq 1}\frac{|z-y_0|^{|\alpha|}}{\alpha!}|\partial^\alpha \tilde{g}(y_0)|\leqslant \frac{c_0^2\varepsilon_1^2}{2}$, it is enough that
$$
\frac{R_1^{-1}|z-y_0|}{(1-R_1^{-1}|z-y_0|)}\leqslant \left(\frac{c_0^2\varepsilon_1^2}{ 2C(R)\| \mathbf{v}\|^2_R } \right)^{\frac{1}{d-1}}
$$ so we need that
$$
|z-y_0|\leqslant R_1\frac{\left(c_0^2\varepsilon_1^2 \right)^{\frac{1}{d-1}}}{\left( 2C(R)\| \mathbf{v}\|^2_R  \right)^{\frac{1}{d-1}}+\left(c_0^2\varepsilon_1^2\right)^{\frac{1}{d-1}}} .
$$ 
Hence, there exists
 $\eps_2= \eps_2\big(C(R)\| \mathbf{v}\|_R^2 , c_0\eps_1 \big)\leqslant \varepsilon_1$
such that for all $y_0,z$ such that $0<|z-y_0|\leq\eps_2 R_1$, we have 
\begin{align*}
\sum_{|\alpha|\geq 1}\frac{|z-y_0|^{|\alpha|}}{\alpha!}|\partial^\alpha \tilde{g}(y_0)|
& \leq   \frac{c_0^2\varepsilon_1^2}{2}.
\end{align*}

In particular, for such $z$, recalling~\eqref{e:nabla-bdd-below}, we deduce that $|\tilde{g}(z)-\tilde{g}(y_0)|\leq c_0^2\varepsilon_1^2/2$ and hence $|\tilde{g}(z)|\geq c_0^2\varepsilon_1^2/2$.
As a consequence, the function $z\mapsto1/\tilde{g}(z)$ is holomorphic on the open set 
$$
\mathcal{U}_{R_1,\varepsilon_1,\varepsilon_2}  := \Big\{z\in\mathbb{C}^{d-1}: \exists y_0\in  B_\R^{d-1}(0,1/2) \setminus \kappa_M \big( B_{\IS^{d-1}}(\pm\omega,\varepsilon_1)\big):\ |z-y_0|<  \varepsilon_2 R_1 \Big\}.
$$ 
By construction, this concludes the proof of the following claim with $\tilde{g}$ defined in~\eqref{e:def-tilde-g} (and extended to the complex domain):
\begin{align}
\label{e:intermediate-claim}
\Bigg\|\frac{1}{\tilde{g}}\Bigg\|_{L^{\infty}(\mathcal{U}_{R_1,\varepsilon_1,\varepsilon_2})}\leq \frac{2}{c_0^2\varepsilon_1^2}.
\end{align}
We can now apply Lemma~\ref{l:power-lie-derivative} inside the open set $\mathcal{U}_{R_1,\varepsilon_1,\varepsilon_2}$. The $(d-1)$-form $G\Vol_{\mathbb{S}^{d-1}}$ is real-analytic on $\IS^{d-1}$ and hence admits a holomorphic extension of the form 
$$
\Omega(z,d z)=\omega_1(z)dz_1\wedge\ldots \wedge dz_{d-1},
$$
where $\omega_1$ is a holomorphic complex-valued function and where, according to Lemma~\ref{l:comparison-norm}, it verifies $\|\omega_1\|_{L^\infty(\mathcal{U}_{R'/2})}\leq C_d\|G\|_{R'}$ for some constant $C_d>0$ that depends only on the dimension $d$. Then, one has 
$$
\mathcal{L}_{Y_{\f}}(\Omega)=\left(\mathcal{L}_{\tilde{Y}_{\f}}(\omega_1)+\omega_1(z)\sum_{j=1}^{d-1}\partial_{z_j}\widetilde{Y}_{\f,j}(z)\right)dz_1\wedge\ldots \wedge dz_{d-1}.
$$
Hence, in a local chart, the operator is of the form $\mathcal{L}_{Y_{\f}}+W$ as in Lemma~\ref{l:power-lie-derivative}. Using the Cauchy formula and the above upper bounds, one can verify that $W$ lies in some analytic space as the one appearing in the upper bound of Lemma~\ref{l:power-lie-derivative} with $U=\IS^{d-1}\setminus B(\pm\omega,\varepsilon_1)$ and $\tilde{R}<C(R,\varepsilon_1,\mathbf{v})\varepsilon_1$. The same also holds for the components of the vector field $Y_{\f}$ so that we are in position to apply this lemma. The conclusion follows directly from it.
\end{proof}
In the next sections, given $\mathcal{U} \subset \C$ a bounded open set, we will denote by $\ml{H}^\infty(\mathcal{U})$ the space of bounded holomorphic functions on $\mathcal{U}$, normed by $\|f\|_{\ml{H}^\infty(\mathcal{U})} := \|f\|_{L^\infty(\mathcal{U})}$. From the quantitative estimates of Lemma~\ref{l:desintegration}, we deduce the following.
\begin{lemma}
\label{l:J-far-crit}
Let $R>0$ and assume that $\f\in \ml{A}_{R}(\IS^{d-1})$ is real-valued and has only two nondegenerate critical points.
For any $\delta$ such that $0< \delta< \max(\f)-\min(\f)  $, there exists $\nu(\delta)>0$ such that for all  $G \in \ml{A}_{R}(\IS^{d-1})$ the function $\mathcal{J}$, defined in~\eqref{e:def-J} on $(\min(\f),\max(\f))$, extends as a bounded holomorphic function on $$
\ml{W}_\delta : = (\min(\f)+\delta,\max(\f)-\delta) + i (-\nu(\delta),\nu(\delta)).
$$
Moreover, the map
\begin{equation}
\label{e:continuity-map-G}
\begin{array}{rcl}
\ml{A}_{R}(\IS^{d-1}) & \to & \ml{H}^\infty(\ml{W}_\delta), \\
G & \mapsto&\mathcal{J}
\end{array}
\end{equation}
is linear continuous.
Finally, assume that $\mathcal{J}_\omega$ is defined by~\eqref{e:def-J} for the function $\f_\omega=\omega\cdot\mathbf{v}$, with $\omega \in\mathbb{S}^{d-1}$. 
Then, for any 
\begin{align}
\label{e:delta-range}
0<\delta< \frac{\min_{\omega}\max_{\theta}(\f_\omega(\theta))-\max_\omega\min_\theta(\f_\omega(\theta))}{2} ,
\end{align} 
there exists $\nu(\delta)>0$ such that, for all $\omega \in \IS^{d-1}$, the function $\mathcal{J}_\omega$ defined on $(\min(\f_\omega),\max(\f_\omega))$ extends as a bounded holomorphic function on $\ml{W}_\delta$.  
\end{lemma}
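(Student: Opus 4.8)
The plan is to obtain the holomorphic extension of $\mathcal{J}$ to a strip directly from the quantitative Cauchy-type estimates of Lemma~\ref{l:desintegration}, by controlling, \emph{uniformly in the base point}, the radius of convergence of the Taylor series of $\mathcal{J}$ at the interior points of $(\min(\f),\max(\f))$.

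First I would fix any $R'\in(0,R/2)$, so that $\mathcal{A}_R(\IS^{d-1})\subset\mathcal{A}_{R'}(\IS^{d-1})$ with $\|G\|_{R'}\leq\|G\|_R$ by Lemma~\ref{l:AR-Banach}, and apply Lemma~\ref{l:desintegration} (using the remark following it, which permits a general Morse function $\f\in\mathcal{A}_R(\IS^{d-1})$ with two nondegenerate critical points) to produce a constant $C=C(R,\f)>1$ with
\[
\left|\mathcal{J}^{(k)}(\tau_0,\f,G)\right|\leq \|G\|_{R}\,k!\,(R')^{-k}\left(\frac{C}{\min\{|\tau_0-\min\f|,|\tau_0-\max\f|\}}\right)^{k+1}
\]
for all $\tau_0\in(\min\f,\max\f)$ and $k\geq0$. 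On the subinterval $I_\delta:=(\min\f+\delta,\max\f-\delta)$ the denominator is $\geq\delta$, so the $k$-th Taylor coefficient of $\mathcal{J}$ at $\tau_0\in I_\delta$ is bounded by $\|G\|_R\,(C/\delta)\,\big(C/(R'\delta)\big)^k$. Hence the Taylor series of $\mathcal{J}$ at $\tau_0$ converges on the disc $D(\tau_0,\rho)$ with $\rho:=R'\delta/C$ \emph{independent of} $\tau_0\in I_\delta$, defining there a holomorphic function that coincides with $\mathcal{J}$ on $D(\tau_0,\rho)\cap\R$ (in particular these bounds already show $\mathcal{J}$ is real-analytic on $(\min\f,\max\f)$).

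Next I would glue these local extensions. For $\tau_0,\tau_1\in I_\delta$, the set $D(\tau_0,\rho)\cap D(\tau_1,\rho)$ is convex, hence connected, and when nonempty it meets $\R$ in a nonempty open interval on which both extensions equal $\mathcal{J}$; by the identity theorem they agree on the overlap. This yields a single-valued holomorphic function on $\bigcup_{\tau_0\in I_\delta}D(\tau_0,\rho)$, which contains $\mathcal{W}_\delta=I_\delta+i(-\nu(\delta),\nu(\delta))$ as soon as $\nu(\delta)\leq\rho$; I would take $\nu(\delta):=R'\delta/(2C)$. Summing the Taylor series on $\mathcal{W}_\delta$ (a geometric series of ratio $\leq1/2$) gives $\|\mathcal{J}\|_{\ml{H}^\infty(\mathcal{W}_\delta)}\leq\frac{2C}{\delta}\|G\|_R$. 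Since $G\mapsto\mathcal{J}$ and all the $G\mapsto\mathcal{J}^{(k)}$ are linear by~\eqref{e:def-J}, the extension is linear in $G$, and this bound shows the map~\eqref{e:continuity-map-G} is bounded, hence continuous.

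Finally, for the family $\f_\omega=\omega\cdot\mathbf{v}$, the point is that the constant $C(R,\mathbf{v})$ of Lemma~\ref{l:desintegration} does not depend on $\omega$, so $\rho(\delta)$ and $\nu(\delta)$ are uniform in $\omega$; one only needs $I_\delta=(-h_{-K}(\omega)+\delta,h_K(\omega)-\delta)$ to be nonempty for every $\omega$, which holds precisely when $2\delta<\min_\omega\max_\theta\f_\omega-\max_\omega\min_\theta\f_\omega$ (a positive number, since $h_{\pm K}$ are continuous and positive on the compact sphere $\IS^{d-1}$), i.e.\ the range~\eqref{e:delta-range}. Repeating the two previous steps then gives the holomorphic extension to $\mathcal{W}_\delta$ uniformly in $\omega$. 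I do not expect a genuine obstacle here: all the analytic work is already in Lemma~\ref{l:desintegration}, and what remains is the standard passage from uniform derivative bounds to a holomorphic extension on a strip, plus elementary bookkeeping of the dependence on $\omega$.
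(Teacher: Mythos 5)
Your proposal is correct and follows essentially the same route as the paper: both deduce the extension by summing the Taylor series of $\mathcal{J}$ at points $\tau_0\in(\min(\f)+\delta,\max(\f)-\delta)$, using the uniform derivative bounds of Lemma~\ref{l:desintegration} (with the constant independent of $\omega$ for $\f_\omega=\omega\cdot\mathbf{v}$) to get a convergence radius depending only on $\delta$, and then choosing $\nu(\delta)$ accordingly; the resulting $\ml{H}^\infty$ bound linear in $\|G\|_R$ gives the continuity of~\eqref{e:continuity-map-G}. Your explicit gluing of the local extensions via the identity theorem is a detail the paper leaves implicit, but it is the same argument.
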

\begin{proof} We only discuss the last part as the first part works analogously except that we consider a general Morse function $\f$ with two critical points on $\mathbb{S}^{d-1}$ (so the bounds of Lemma~\ref{l:desintegration} apply with a constant $C(R,\f)$ instead of $C(R,\mathbf{v})$). Now we fix $\delta$ as in~\eqref{e:delta-range}. 
For every $\tau_0\in   (\min(\f)+\delta,\max(\f)-\delta) $, we have $\delta\leq \min(  \vert \tau_0-\min f\vert,\vert \tau_0-\max(f) \vert)$ and we choose any $\nu(\delta)$ such that  
$\nu(\delta)< \inf(\frac{\delta}{C(R,\mathbf{v})},\delta)$. Then for $z\in\ml{W}_\delta$, we may write $z=\tau_0+ it$ with $\tau_0\in   (\min(\f)+\delta,\max(\f)-\delta)$ and the series 
$$
\mathcal{J}_\omega(z) = \sum_{k \in \N} \frac{(z-\tau_0)^k}{k!}\mathcal{J}_\omega^{(k)}(\tau_0)
$$
 converges according to Lemma~\ref{l:desintegration} with
\begin{eqnarray*}
\Vert \mathcal{J}_\omega \Vert_{\mathcal{H}^\infty(\ml{W}_\delta)}\leqslant \frac{C(R,\mathbf{v})}{\delta} \Vert G\Vert_R\sum_{k=0}^\infty \left(\frac{C(R,\mathbf{v})}{\delta}\nu(\delta)\right)^k = \frac{C(R,\mathbf{v})}{\delta-C(R,\mathbf{v})\nu(\delta)} \Vert G\Vert_R,
\end{eqnarray*}
which concludes the proof of the lemma.
\end{proof}

\subsection{Multivalued holomorphic continuation of $\mathcal{J}$ near $\{\min(\f),\max(\f)\}$}
\label{s:multivalued-continuation-J}
We discuss the continuation of $\mathcal{J}$ near $\min(\f)$, and the proof near $\max(\f)$ follows the same procedure.

\begin{lemma}
\label{l:J-near-crit}
Let $R>0$ and assume that $\f\in \ml{A}_{R}(\IS^{d-1})$ is a Morse function with two critical points.
Then, there exists $\delta_0>0$ such that for all $G \in \ml{A}_{R}(\IS^{d-1})$ there exists a bounded holomorphic function $\mathsf{A}$ in $\{|z|<\delta_0^2\}$ such that the function $\mathcal{J}$ defined in~\eqref{e:def-J} satisfies
\begin{align}
\label{e:J-near-critical}
\mathcal{J}(u)=\big(u -\min(\f)\big)^{\frac{d-3}{2}} \mathsf{A}(u -\min(\f)) ,\quad \text{ for all } u \in (\min(\f),\min(\f)+\delta_0^2)  .
\end{align}
 Moreover, the map 
$$
\begin{array}{rcl}
\ml{A}_{R}(\IS^{d-1}) & \to & \ml{H}^\infty(\{|z|<\delta_0^2\}) , \\
G& \mapsto& \mathsf{A}
\end{array}
$$
is linear continuous. 

Finally, assume that $\mathcal{J}_\omega$ is defined by~\eqref{e:def-J} for the function $\f_\omega=\omega\cdot\mathbf{v}$, with $\omega \in\mathbb{S}^{d-1}$. 
Then there is $\delta_0>0$ such that for all $\omega \in \IS^{d-1}$, for all $G \in \ml{A}_{R}(\IS^{d-1})$, one can find a holomorphic function $\mathsf{A}=\mathsf{A}_{\omega}(G)$ in $\{|z|<\delta_0^2\}$ such that the function $\mathcal{J}_\omega$ satisfies~\eqref{e:J-near-critical}.
\end{lemma}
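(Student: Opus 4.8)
The plan is to establish the local structure of $\mathcal{J}$ near $\min(\f)$ by working in the analytic Morse chart provided by the quantitative analytic Morse Lemma (Appendix~\ref{ss:analyticMorse}), where $\f$ takes the normal form $\f = \min(\f) + |y|^2$ in suitable real-analytic coordinates $y \in \R^{d-1}$ near the critical point. In these coordinates, the push-forward volume form $G\,\Vol_{\IS^{d-1}}$ becomes $\rho(y)\,dy_1\wedge\cdots\wedge dy_{d-1}$ for some real-analytic $\rho$ (extending holomorphically to a fixed polydisc, with norm controlled linearly by $\|G\|_R$), and the level set $\f^{-1}(u)$ becomes the sphere $\{|y|^2 = u-\min(\f)\}$ of radius $\sqrt{u-\min(\f)}$. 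The vector field $Y_\f = \nabla\f/|\nabla\f|^2$ becomes $y/(2|y|^2)$ (up to the conjugating diffeomorphism), so that $\iota_{Y_\f}(\rho\,dy)$ restricted to $\{|y|=r\}$ is, after passing to polar coordinates $y = r\sigma$ with $\sigma \in \IS^{d-2}$, equal to $\tfrac{1}{2r}\, r^{d-2}\,\tilde\rho(r,\sigma)\,d\Vol_{\IS^{d-2}}(\sigma) = \tfrac12 r^{d-3}\tilde\rho(r,\sigma)\,d\Vol_{\IS^{d-2}}(\sigma)$, where $\tilde\rho(r,\sigma) = \rho(r\sigma)$ is real-analytic jointly in $(r,\sigma)$ and even in $r$ is NOT required — it is analytic in $r$ near $0$.

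\smallskip
\emph{Key steps, in order.} First, invoke the analytic Morse Lemma from Appendix~\ref{ss:analyticMorse} to get the normal form with quantitative control of the conjugating chart (radius of the domain, analytic norms), uniformly in $\omega$ when $\f = \f_\omega = \omega\cdot\mathbf{v}$ — here one uses that the Hessian of $\f_\omega$ at its critical points is uniformly nondegenerate by Lemma~\ref{l:geometric-lemma} (its proof shows the Hessian equals the shape operator, which is uniformly negative definite by compactness). Second, transport $G\,\Vol_{\IS^{d-1}}$ through this chart; using Lemma~\ref{l:comparison-norm} and the calculus of \S\ref{s:calculus-analytic}, the density $\rho$ lies in a fixed analytic space with $\|\rho\| \lesssim \|G\|_R$, and it extends holomorphically to a polydisc of fixed radius $\delta_0$. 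Third, compute $\mathcal{J}(u) = \int_{\f^{-1}(u)}\iota_{Y_\f}(G\,\Vol_{\IS^{d-1}})$ in polar coordinates: with $r = \sqrt{u-\min(\f)}$,
\begin{align*}
\mathcal{J}(u) = \tfrac12\, r^{d-3} \int_{\IS^{d-2}} \rho(r\sigma)\, J(r,\sigma)\, d\Vol_{\IS^{d-2}}(\sigma),
\end{align*}
where $J$ is the (real-analytic, jointly in $(r,\sigma)$) Jacobian factor from passing to polar coordinates in the normal-form chart and from the conjugating diffeomorphism. Fourth, set $\mathsf{A}(w) := \tfrac12 \int_{\IS^{d-2}} \rho(\sqrt{w}\,\sigma) J(\sqrt{w},\sigma)\, d\Vol_{\IS^{d-2}}(\sigma)$ for $w$ in a disc of radius $\delta_0^2$, and observe that $\rho(\sqrt{w}\,\sigma)$ and $J(\sqrt w,\sigma)$ are each even functions of $\sqrt w$ after integration? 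This is the crux: one must check that the $\sigma$-integral of $\rho(\sqrt w\,\sigma)J(\sqrt w,\sigma)$ depends only on $w = r^2$, i.e.\ that the odd-in-$r$ part integrates to zero over $\IS^{d-2}$. Expanding $\rho(r\sigma)J(r,\sigma) = \sum_k r^k P_k(\sigma)$ with $P_k$ polynomial (or analytic) on $\IS^{d-2}$, the odd terms $r^{2j+1}P_{2j+1}(\sigma)$ — where $P_{2j+1}$ is an odd-degree homogeneous expression in $\sigma$, hence odd under $\sigma\mapsto-\sigma$ — integrate to zero by symmetry of $\Vol_{\IS^{d-2}}$. Hence $\mathsf{A}$ is a genuine holomorphic function of $w$ on $\{|w|<\delta_0^2\}$, and $\mathcal{J}(u) = (u-\min(\f))^{(d-3)/2}\mathsf{A}(u-\min(\f))$ as claimed. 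Fifth, linearity and continuity of $G\mapsto\mathsf{A}$ follow from the linear continuous dependence of $\rho$ on $G$ and the boundedness of the $\sigma$-integration; uniformity in $\omega$ follows from the uniform Morse Lemma in the first step.

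\smallskip
\emph{Main obstacle.} The delicate point is the parity/symmetry argument in the fourth step — ensuring that after integrating over the level-set sphere $\IS^{d-2}$, all the half-integer powers of $(u-\min(\f))$ recombine into an honest analytic function of $(u-\min(\f))$ times the single prefactor $(u-\min(\f))^{(d-3)/2}$. Concretely one must be careful that the Jacobian $J(r,\sigma)$ coming from the Morse-chart diffeomorphism is genuinely analytic in $r$ (not merely in $r^2$) near $r=0$, and that the antipodal cancellation on $\IS^{d-2}$ applies to \emph{every} odd-order term; this is where the explicit structure — $\f$ having exactly two critical points, so the level sets near $\min(\f)$ are spheres and the Morse chart is genuinely of the stated form — is essential. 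A secondary technical point is propagating all constants uniformly in $\omega$, which reduces to the uniform nondegeneracy of the Hessian via Lemma~\ref{l:geometric-lemma} and the quantitative statement of the analytic Morse Lemma in Appendix~\ref{ss:analyticMorse}; once that uniform Morse chart is available, the rest of the argument is parameter-independent.
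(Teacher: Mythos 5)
Your proposal is correct and follows essentially the same route as the paper: apply the quantitative analytic Morse Lemma of Appendix~\ref{ss:analyticMorse} (uniformly in $\omega$ via the uniform nondegeneracy of the Hessian), transport $G\,\Vol_{\IS^{d-1}}$ to the Morse chart, pass to polar coordinates to extract the factor $\tfrac12 r^{d-3}$, and use the antipodal cancellation on $\IS^{d-2}$ of the odd-degree Taylor coefficients to see that the remaining series is a genuine power series in $r^2=u-\min(\f)$. The "crux" you flag is resolved exactly as in the paper, which Taylor-expands the transported density $\mathsf{G}$ in the Cartesian Morse coordinates so that the coefficient of $r^{|\alpha|}$ is the monomial $\varphi^\alpha$, whose spherical average vanishes for odd $|\alpha|$.
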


Note also that, if we denote by $\theta_{\min}$ the point such that $\f(\theta_{\min})=\min\f$ ($\theta_{\min}= -\omega$ for $\f=\omega\cdot\mathbf{v}$), up to some computations, one can express explicitly the value $\mathsf{A}(0)$ (and in principle, also the values of $\d^\alpha\mathsf{A}(0)$ but the computations are cumbersome) in terms of $G(\theta_{\min})$ and the determinant of the Morse chart $\phi$, which can be expressed as a function of $\Hess\f(\theta_{\min})$ (respectively, derivatives of $G$ and $\f$ at $\theta_{\min}$).

\begin{proof}[Proof of Lemma~\ref{l:J-near-crit}]
One more time, we deal with the case of $\f_{\omega}=\omega\cdot\mathbf{v}$ in order to keep track of the dependence in $\omega$. The proof for a fixed $\f$ works analoguously. 
We choose the analytic chart $(\kappa_{-\omega},U_{-\omega})$ centered at $-\omega$ defined in~\eqref{e:def-kappa}-\eqref{e:kappaM}, that is to say $\kappa_{-\omega} : U_{-\omega} \to B^{d-1}(0,1/2)$ such that $U_{-\omega}$ is a sphere cap centered at $-\omega$, $\kappa_{-\omega}(-\omega)=0$, and $\kappa_{-\omega}$ is an analytic diffeomorphism.
Recall from Remark~\ref{r:bas-point} that the norm $\|.\|_R$ depends on the base points $(M_j)_{j=1,\ldots,N}$ used for our analytic atlas. Yet, for two distinct choices $(M_j)_{j=1,\ldots,N}$ and  $(\tilde{M}_j)_{j=1,\ldots,N}$ and for all $0<R_0<R_1<R_2$, one has, according to this remark and to Lemma~\ref{l:comparison-norm},
$$
\|\psi\|_{R_0,(M_j)}\leq N\frac{d!R_1^{d+1}}{(R_1-R_0)^{d+1}}\|\psi\|_{R_1,(\tilde{M}_j)}\leq N^2\frac{(d!)^2R_2^{2d+2}}{(R_1-R_0)^{d+1}(R_2-R_1)^{d+1}}\|\psi\|_{R_2,(M_j)}.
$$
Hence, up to an arbitrarly small loss in the analyticity radius, we can choose a  local chart centered at $-\omega$.

According to the real-analytic Morse Lemma~\ref{theo:morselemmauniform} (using that $\f_\omega$ is a Morse function), there is a local real-analytic diffeomorphism $\phi_\omega$ near $0$ with $\phi_\omega(0)=0$, depending continuously on $\omega$, such that 
$$
\f_\omega\circ\kappa_{-\omega}^{-1} \circ\phi_\omega(x') =\min(\f_\omega)+ |x'|^2 , \qquad |x'|^2=\sum_{i=1}^{d-1}x_i^2 .
$$ 
In particular, since $\kappa_{-\omega}$ can be chosen continuous in $\omega$, as well as $\f_\omega,\phi_\omega$, there exists $\delta_1>0$ such that, for all $\omega\in\IS^{d-1}$, $\f_\omega^{-1}(u) \subset U_{-\omega} $ for all $u \in (\min\f_\omega,\min\f_{\omega}+\delta_1]$.

Changing variables in the integral~\eqref{e:def-J} defining $\mathcal{J}_{\omega}$, we deduce that 
\begin{align*}
\mathcal{J}_\omega(u)  = \int_{\phi_\omega^{-1}\circ\kappa_{-\omega}(\f_\omega^{-1}(u))} (\kappa_{-\omega}^{-1}\circ \phi_\omega)^*\left( \iota_{Y_{\f_\omega}} \left( G \ d\Vol_{\IS^{d-1}} \right)\right)
 , \quad \text{ for } u \in (\min(\f_\omega) , \min(\f_\omega)+\delta_1) .
\end{align*}
Since we have $\phi_\omega^{-1}\circ\kappa_{-\omega}(\f_\omega^{-1}(u)) = \{x'\in \R^{d-1}, \min(\f_\omega) + |x'|^2=u\}$, together with 
$$(\kappa_{-\omega}^{-1}\circ \phi_\omega)^*\left(G \ d\Vol_{\IS^{d-1}}\right) = \mathsf{G}(x')\ d\Vol_{\IR^{d-1}}(x'),\quad d\Vol_{\IR^{d-1}} = dx_1'\wedge \dots \wedge dx_{d-1}',$$ 
for some real-analytic function $\mathsf{G}$ in a neighborhood of $0$ in $\R^{d-1}$, depending implicitly and continuously on the parameter $\omega$, 
this now 
 implies that 
\begin{align*}
\mathcal{J}(u)  = \frac{1}{2} \int_{\{|x'|^2=u-\min(\f_\omega)\} } \mathsf{G}(x')  \iota_{Y_{\f_\omega}} d\Vol_{\R^{d-1}}
 , \quad \text{ for } u \in (\min(\f_\omega) , \min(\f_\omega)+\delta_1) .
\end{align*}
We change again variables to spherical coordinates in $\R^{d-1}$, namely write $x' = \rho \varphi$ with $\rho>0$ (small) and $\varphi \in \IS^{d-2}$. Remarking that $d\Vol_{\R^{d-1}} = \rho^{d-2} d\rho \wedge d\Vol_{\IS^{d-2}}(\varphi)$ and that the current of integration $[|x'|^2=u-\min\f_\omega]$ reads $ \delta_0(\rho-\sqrt{u-\min\f_\omega})d\rho$, we only need to compute the $\partial_\rho$ component of the vector field $Y_{\f_{\omega}}$. By definition of $Y_{\f_{\omega}}$ in~\eqref{e:def-champ-Yf} (see \S\ref{ss:level-lines}), one has $d\rho(Y_{\f_{\omega}})=\rho^{-1}$, this finally yields

\begin{align*}
\mathcal{J}(u)  = \frac{1}{2}  \int_{\IS^{d-2}} \big( \mathsf{G}(\rho \varphi)\rho^{d-3}  \big)\big|_{\{\rho^2=u-\min(\f_\omega)\}}d\Vol_{\IS^{d-2}}(\varphi)
 , \quad \text{ for } u \in (\min(\f_\omega) , \min(\f_\omega)+\delta_1) .
\end{align*}
Still for (real) $u \in (\min(\f_\omega) , \min(\f_\omega)+\delta_1)$, we may compute this integral by Taylor expanding the real-analytic function $\mathsf{G}$ to obtain  
\begin{align*}
\mathcal{J}_\omega(u)&= \frac{1}{2}\sum_{\alpha\in \mathbb{N}^{d-1}} \frac{\d^{\alpha}\mathsf{G}(0)}{\alpha !} \left(\int_{\mathbb{S}^{d-2}} \varphi^\alpha  d\Vol_{\IS^{d-2}}(\varphi) \right)   (u -\min(\f_\omega))^{\frac{\vert \alpha\vert+ d-3}{2}}\\
&=\frac{1}{2}\sum_{\alpha\in \mathbb{N}^{d-1}} \frac{\d^{\alpha}\mathsf{G}(0)}{\alpha !} c_\alpha  (u -\min(\f_\omega))^{\frac{\vert \alpha\vert+ d-3}{2}},
\end{align*}
where we have set $$c_\alpha := \int_{\mathbb{S}^{d-2}} \varphi^\alpha  d\Vol_{\IS^{d-2}}(\varphi) , \quad \alpha\in \mathbb{N}^{d-1}.$$
We also notice that $c_\alpha \in \R$, $\vert c_\alpha\vert\leq 1$ for all multiindex $\alpha\in \mathbb{N}^{d-1}$, and $c_{\alpha}$ vanishes when one of the $\alpha_i$ in the multiindex $(\alpha_1,\dots,\alpha_{d-1})$ is odd. We hence obtain, for all $u \in (\min(\f_\omega) ,\min(\f_\omega)+\delta_1)$, 
\begin{align*}
\mathcal{J}_\omega(u)=\frac{1}{2}\sum_{\beta\in \N^{d-1}} c_{2\beta} \frac{\d^{2\beta}\mathsf{G}(0)}{(2\beta) !}  \big(u -\min(\f_\omega)\big)^{\frac{d-3}{2}+\vert\beta\vert} .
\end{align*}
We conclude that if $R>0$ is the convergence radius of $\mathsf{G}$ at $0$, then 
\begin{align*}
\mathcal{J}_\omega(u)=\big(u -\min(\f_\omega)\big)^{\frac{d-3}{2}} \mathsf{A}(u -\min(\f_\omega)), \quad \text{ with }\quad \mathsf{A}(z) = \frac{1}{2}\sum_{\beta\in \N^{d-1}} c_{2\beta} \frac{\d^{2\beta}\mathsf{G}(0)}{(2\beta) !} z^{\vert\beta\vert} ,
\end{align*}
is a holomorphic function near zero with convergence radius $R^2$. By construction, the $\mathcal{H}^\infty$ norm of $\mathsf{A}$ is bounded in terms of the $\mathcal{A}_R$ norm of $G$.
\end{proof}

For the moment, combining the two analytic extensions given by Lemmata~\ref{l:J-far-crit} and~\ref{l:J-near-crit}, we control the analytic continuation of $\mathcal{J}$ in a region of the form 
$$
(\min(\f_\omega)+\delta,\max(\f_\omega)-\delta)+ i(-\nu(\delta),\nu(\delta))\cup \{|z-\min(\f_\omega)|<\delta_0^2\}\cup\{|z-\max(\f_\omega)|<\delta_0^2\}
$$ 
for any $0<\delta<\min_\omega\max_\theta(\f_\omega(\theta))-\max_\omega\min_\theta(\f_\omega(\theta))$ and for some $\delta_0$ depending on both $\mathbf{v}$ and $G$. Hence, if we define the open rectangular neighborhood $\mathcal{R}_{\nu}(\f)$ of size $\nu$ around the interval $[\min(\f) ,\max(\f)]$ by  
\begin{align}
\label{e:def-Unu}
\mathcal{R}_{\nu}(\f) := \big( \min(\f)- \nu ,\max(\f) + \nu\big) + i (-\nu,\nu) ,  
\end{align}
then the set $\mathcal{R}_{\nu}(\f)$ is contained in the above region (where $\mathcal{J}$ is analytically continued up to singular branches) for $\nu>0$ small enough.
As a corollary of Lemmata~\ref{l:J-far-crit} and~\ref{l:J-near-crit}, we obtain the following description of the structure of $\mathcal{J}$:

\begin{corollary}
\label{c:equality-J}
Let $R>0$ and assume that $\f\in \ml{A}_{R}(\IS^{d-1})$ is a Morse function with two critical points.
Then, there exists $\nu >0$ such that for all $G \in \ml{A}_{R}(\IS^{d-1})$ there exists a bounded holomorphic function $\mathcal{H}$ on $\mathcal{R}_{\nu}(\f)$ (defined in~\eqref{e:def-Unu}) such that
\begin{align}
\label{e:J-holom}
\mathcal{J}(u) =  \big(u -\min(\f)\big)^{\frac{d-3}{2}} \big(\max(\f) -u\big)^{\frac{d-3}{2}} \mathcal{H}(u) ,\quad \text{ for all } u \in \big(\min(\f),\max(\f)\big) ,
\end{align}
 Moreover, the map 
$$
\begin{array}{rcl}
\ml{A}_{R}(\IS^{d-1}) & \to & \ml{H}^\infty(\mathcal{R}_{\nu}(\f)) , \\
G& \mapsto&  \mathcal{H}
\end{array}
$$
is linear continuous. 

Finally, assume that $\mathcal{J}_\omega$ is defined by~\eqref{e:def-J} for the function $\f_\omega=\omega\cdot\mathbf{v}$ with $\omega \in \IS^{d-1}$. Then there is $\nu>0$ such that for all $\omega \in \IS^{d-1}$, for all $G \in \ml{A}_{R}(\IS^{d-1})$, the above statements hold for $\mathcal{J}_\omega$ in $\mathcal{R}_{\nu}(\f_\omega)$.
\end{corollary}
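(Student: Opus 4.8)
The plan is to patch together the two local descriptions of $\mathcal{J}$ provided by Lemmata~\ref{l:J-far-crit} and~\ref{l:J-near-crit} over a common rectangular neighborhood of $[\min(\f),\max(\f)]$, and to read off the factorization~\eqref{e:J-holom}. I first treat a fixed Morse function $\f$ with two critical points and handle the family $\f_\omega=\omega\cdot\mathbf{v}$ at the end. Fix $\delta>0$ admissible in Lemma~\ref{l:J-far-crit}; that lemma makes $\mathcal{J}$ extend to a bounded holomorphic function on the strip $\ml{W}_\delta=(\min(\f)+\delta,\max(\f)-\delta)+i(-\nu(\delta),\nu(\delta))$, with $G\mapsto\mathcal{J}|_{\ml{W}_\delta}$ linear continuous into $\ml{H}^\infty(\ml{W}_\delta)$. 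Applying Lemma~\ref{l:J-near-crit} near $\min(\f)$ and its identical analogue near $\max(\f)$ furnishes $\delta_0>0$ and bounded holomorphic functions $\mathsf{A},\mathsf{B}$ on $\{|z|<\delta_0^2\}$, depending linearly continuously on $G$, characterised on the real interval by $\mathsf{A}\big(u-\min(\f)\big)=\mathcal{J}(u)\big(u-\min(\f)\big)^{-\frac{d-3}{2}}$ for $u>\min(\f)$ close to $\min(\f)$, and $\mathsf{B}\big(\max(\f)-u\big)=\mathcal{J}(u)\big(\max(\f)-u\big)^{-\frac{d-3}{2}}$ for $u<\max(\f)$ close to $\max(\f)$, all powers being the positive real ones.

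Next, fix a radius $r<\delta_0^2$ with $2r<\max(\f)-\min(\f)$, put $D_{\min}=\{|u-\min(\f)|<r\}$, $D_{\max}=\{|u-\max(\f)|<r\}$, and define $\mathcal{H}$ on the three open sets by
\[
\mathcal{H}(u):=\frac{\mathsf{A}(u-\min(\f))}{(\max(\f)-u)^{(d-3)/2}}\ \text{ on }D_{\min},\qquad \mathcal{H}(u):=\frac{\mathcal{J}(u)}{(u-\min(\f))^{(d-3)/2}(\max(\f)-u)^{(d-3)/2}}\ \text{ on }\ml{W}_\delta,
\]
together with $\mathcal{H}(u):=\mathsf{B}(\max(\f)-u)\,(u-\min(\f))^{-(d-3)/2}$ on $D_{\max}$. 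On $D_{\min}$ the quantity $\max(\f)-u$ stays in a small disc around the positive number $\max(\f)-\min(\f)$, hence avoids $\R_-$, so its fractional power is the holomorphic principal determination, non-vanishing, with bounded reciprocal; symmetrically on $D_{\max}$; and on $\ml{W}_\delta$ both $u-\min(\f)$ and $\max(\f)-u$ have real part $>\delta$, so the same holds. With the boundedness of $\mathsf{A},\mathsf{B}$ and $\mathcal{J}|_{\ml{W}_\delta}$, each local $\mathcal{H}$ is therefore bounded holomorphic. The three definitions agree on overlaps: $D_{\min}\cap D_{\max}=\emptyset$, while $D_{\min}\cap\ml{W}_\delta$ and $D_{\max}\cap\ml{W}_\delta$ are convex (hence connected) and meet $(\min(\f),\max(\f))$ in a nonempty open segment along which all three formulas equal $\mathcal{J}(u)\big(u-\min(\f)\big)^{-(d-3)/2}\big(\max(\f)-u\big)^{-(d-3)/2}$ with positive powers, so the identity theorem gives agreement on the whole overlaps. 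This produces a single bounded holomorphic $\mathcal{H}$ on $D_{\min}\cup\ml{W}_\delta\cup D_{\max}$ which, restricted to $(\min(\f),\max(\f))$, satisfies~\eqref{e:J-holom}.

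It remains to pick $\nu>0$ with $\nu<\min(\delta,\nu(\delta),r-\delta)$ (having arranged $\delta<r$ beforehand): splitting $\mathcal{R}_\nu(\f)$ according to whether $\Re(u)\le\min(\f)+\delta$, lies strictly between $\min(\f)+\delta$ and $\max(\f)-\delta$, or is $\ge\max(\f)-\delta$, one checks each point lands in $D_{\min}$, $\ml{W}_\delta$, or $D_{\max}$, so $\mathcal{R}_\nu(\f)\subset D_{\min}\cup\ml{W}_\delta\cup D_{\max}$ and $\mathcal{H}$ is bounded holomorphic on $\mathcal{R}_\nu(\f)$. Since $\sup_{\mathcal{R}_\nu(\f)}|\mathcal{H}|$ is the maximum of the three local suprema and each of these is a fixed ($G$-independent) constant times one of $\|\mathcal{J}|_{\ml{W}_\delta}\|_{\ml{H}^\infty}$, $\|\mathsf{A}\|_{\ml{H}^\infty}$, $\|\mathsf{B}\|_{\ml{H}^\infty}$, linear continuity of $G\mapsto\mathcal{H}$ follows from Lemmata~\ref{l:J-far-crit} and~\ref{l:J-near-crit}. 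For the family $\f_\omega=\omega\cdot\mathbf{v}$ one runs the same argument using the uniform-in-$\omega$ parts of those lemmata; since $0\in\mathring{K}$, the endpoints $\min(\f_\omega)=-h_{-K}(\omega)$ and $\max(\f_\omega)=h_K(\omega)$ stay in a compact subset of $\R$ with $\inf_\omega\big(h_K(\omega)+h_{-K}(\omega)\big)>0$, so $\delta$ (in the range~\eqref{e:delta-range}), $r$, and the final $\nu$ can be chosen independent of $\omega$.

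The only genuinely delicate point is the bookkeeping of branches: one must check that the determination of $\big(u-\min(\f)\big)^{(d-3)/2}$ implicit in Lemma~\ref{l:J-near-crit}, the one used in the denominator on $\ml{W}_\delta$, and the positive real power on $(\min(\f),\max(\f))$ are literally the same. This is exactly what lets $\mathcal{H}$ be single-valued on the simply connected rectangle $\mathcal{R}_\nu(\f)$ even though, for $d$ even, $\mathcal{J}$ has genuine branch points at $\min(\f)$ and $\max(\f)$; the rest is routine gluing of holomorphic functions.
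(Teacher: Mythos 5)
Your argument is correct and follows essentially the same route as the paper's proof: divide $\mathcal{J}$ by the two singular factors, use Lemma~\ref{l:J-near-crit} (and its analogue at $\max(\f)$) to extend $\mathcal{H}$ holomorphically on small discs around the critical values, use Lemma~\ref{l:J-far-crit} on the middle strip, and observe these three sets cover $\mathcal{R}_\nu(\f)$ for a suitable $\nu$, with the $G$-continuity and the uniformity in $\omega$ inherited from the two lemmata. Your extra care about overlaps, branch determinations and the explicit choice of $\nu$ just fills in details the paper leaves implicit.
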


\begin{proof}
We consider the function 
$$
\mathcal{H}(u) : =\mathcal{J}(u) \big(u -\min(\f)\big)^{-\frac{d-3}{2}} \big(\max(\f) -u\big)^{-\frac{d-3}{2}} , \quad  u \in \big(\min(\f),\max(\f)\big) .
$$
According to Lemma~\ref{l:J-near-crit} and~\eqref{e:J-near-critical}, we have 
\begin{align*}
\mathcal{H}(u) =\mathcal{J}(u) \big(\max(\f) -u\big)^{-\frac{d-3}{2}} \tilde{\mathsf{A}}(u),\quad \text{ for all } u \in (\min(\f),\min(\f)+\delta_0^2) ,
\end{align*} where 
$\tilde{\mathsf{A}}$ is a holomorphic function in $\{|z-\min(\f)|<\delta_0^2\}$. This implies that $\mathcal{H}$ extends holomorphically to 
$\{|z-\min(\f)|<\delta_0^2\}$ 
and the analogue of Lemma~\ref{l:J-near-crit} near $\max(\f)$ implies that $\mathcal{H}$ extends also holomorphically to 
$\{|z-\max(\f)|<\delta_0^2\}$. 
Picking $\delta<\delta_0^2$ small enough, Lemma~\ref{l:J-far-crit} directly implies that $\mathcal{H}$ extends holomorphically to $(\min(\f)+\delta, \max(\f)-\delta) + i (-\nu(\delta),\nu(\delta))$. These three sets cover  $\mathcal{R}_{\nu}(\f)$ for $\nu>0$ depending only on $\delta_0$ and $\nu(\delta)$ and it concludes the proof of the corollary.
\end{proof}

\subsection{From continuation of $\mathcal{J}$ to continuation of $\mathcal{I}$}\label{ss:I-to-J-2}

We now turn to the description of the function $\mathcal{I}_{(\ell)} =\mathcal{I}_{(\ell)}(\cdot ,\f,G) $, defined in~\eqref{e:def-Ibis} and rewritten in~\eqref{link-I-ell} in terms of $\mathcal{I}$ which was itself expressed in~\eqref{e:I-in-terms-of-J} as an integral of $\mathcal{J}=\mathcal{J}(\cdot ,\f,G)$.
We start with the first following elementary fact.
\begin{lemma}
\label{l:I-holo-domain}
The function $z\mapsto \mathcal{I}_{(\ell)}(z,\f,G)$, initially defined for $\Re(z)>0$ in~\eqref{e:def-Ibis}, extends (uniquely) as a holomorphic function to $\C \setminus [i \min(\f), i \max(\f)]$ satisfying
$$
\left| \mathcal{I}_{(\ell)}(z,\f,G) \right| \leq \frac{\|G\|_{L^1(\IS^{d-1})}}{\dist\left( z,[i\min(\f),i \max(\f)] \right)^{\ell+1}} .
$$

\end{lemma}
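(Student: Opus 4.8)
The plan is to establish the two claims of Lemma~\ref{l:I-holo-domain}—holomorphic extension off the slit $i[\min(\f),\max(\f)]$, and the quantitative bound—directly from the integral representation~\eqref{e:def-Ibis}, without yet invoking the finer structure encoded in $\mathcal{J}$.

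First I would observe that for every fixed $\theta\in\IS^{d-1}$, the map $z\mapsto (z-i\f(\theta))^{-(\ell+1)}$ is holomorphic on $\C\setminus\{i\f(\theta)\}$, and since $\f$ is real-valued with range exactly $[\min(\f),\max(\f)]$, the union of the excluded points $\{i\f(\theta):\theta\in\IS^{d-1}\}$ is precisely the segment $i[\min(\f),\max(\f)]$. Thus for any $z_0\in\C\setminus i[\min(\f),\max(\f)]$ one has $\dist(z_0, i[\min(\f),\max(\f)])=:\rho>0$, and on the ball $\{|z-z_0|<\rho/2\}$ the integrand is uniformly bounded: $|z-i\f(\theta)|\geq \rho/2$ for all $\theta$, whence $|(z-i\f(\theta))^{-(\ell+1)}G(\theta)|\leq (2/\rho)^{\ell+1}|G(\theta)|$, which is integrable against $d\Vol_{\IS^{d-1}}$ since $G\in\mathcal{A}_R(\IS^{d-1})\subset C^0(\IS^{d-1})$ by Lemma~\ref{l:continuous}. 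Holomorphy of $z\mapsto\mathcal{I}_{(\ell)}(z,\f,G)$ on $\{|z-z_0|<\rho/2\}$ then follows from the standard theorem on holomorphy of parameter integrals (differentiation under the integral sign, justified by this uniform domination, or equivalently by Morera's theorem combined with Fubini). Since $z_0$ was arbitrary in the open connected-components of $\C\setminus i[\min(\f),\max(\f)]$, and the formula~\eqref{e:def-Ibis} already defines $\mathcal{I}_{(\ell)}$ and agrees with this extension on $\{\Re(z)>0\}$, uniqueness of the holomorphic extension is automatic by the identity principle on each component.

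For the quantitative bound, I would simply estimate pointwise inside the integral: for $z\notin i[\min(\f),\max(\f)]$ and every $\theta$, since $i\f(\theta)\in i[\min(\f),\max(\f)]$ one has $|z-i\f(\theta)|\geq \dist(z, i[\min(\f),\max(\f)])$, hence
$$
|\mathcal{I}_{(\ell)}(z,\f,G)|\leq \int_{\IS^{d-1}}\frac{|G(\theta)|}{|z-i\f(\theta)|^{\ell+1}}\,d\Vol_{\IS^{d-1}}(\theta)\leq \frac{\|G\|_{L^1(\IS^{d-1})}}{\dist(z, i[\min(\f),\max(\f)])^{\ell+1}},
$$
which is exactly the asserted inequality.

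There is no real obstacle here; the only point requiring a line of care is checking that the integrand is genuinely dominated uniformly in $z$ on a neighborhood of each $z_0$—which is where one uses that the slit is a \emph{closed} set and that $\f$ takes values only in $[\min(\f),\max(\f)]$, so that the distance to the slit controls $|z-i\f(\theta)|$ from below uniformly in $\theta$. One should also note at the outset that the definition~\eqref{e:def-Ibis} makes sense (the integral converges) precisely because $\mathcal{A}_R(\IS^{d-1})\hookrightarrow C^0(\IS^{d-1})\hookrightarrow L^1(\IS^{d-1})$, so $\|G\|_{L^1(\IS^{d-1})}<\infty$; this is the quantity appearing on the right-hand side and no stronger norm of $G$ is needed for this lemma.
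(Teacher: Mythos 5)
Your proof is correct and follows essentially the same route as the paper, which simply invokes the explicit integral representation together with holomorphy under the integral sign; your domination argument and the pointwise bound $|z-i\f(\theta)|\geq \dist(z,i[\min(\f),\max(\f)])$ are exactly the details behind that one-line justification. (One tiny remark: since the slit is a compact segment, its complement in $\C$ is in fact a single connected open set, so the identity-principle uniqueness step applies globally without any discussion of components.)
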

\begin{proof}
This follows from the expression of $\mathcal{I}_{(\ell)}$ in~\eqref{e:def-Ibis}-\eqref{e:def-II}-\eqref{link-I-ell} and holomorphy under the integral.
\end{proof}

We can now state the main continuation result concerning the holomorphic properties of $\mathcal{I}$. We recall that, in application to Poincaré series, the function $\mathcal{I}$ is evaluated at the point $z/i$ with $\Re(z)>0$. We hence study the continuation properties of $\mathcal{I}$ starting from the region $\Im(z)<0$.

\begin{proposition}
\label{p:main-I}
  Let $R>0$ and assume that $\f\in \ml{A}_{R}(\IS^{d-1})$ is a Morse function with two critical points. Let $\ln$ be the standard determination of the logarithm on $\C\setminus\R_-$.

  With $\nu>0$ (and $\mathcal{R}_{\nu}(\f)$ defined in~\eqref{e:def-Unu}) and $\mathcal{H}=\mathcal{H}(G)$ given by Corollary~\ref{c:equality-J}, there exist a holomorphic function $\mathcal{K}$ on $\mathcal{R}_{\nu}(\f)$ (depending also continuously on $G$) such that
\begin{align}
\label{e:I-z-holo-decompo}
\mathcal{I}(z) = \mathcal{H}(z) \mathbf{F}_{\frac{d-3}{2}}(z) + \mathcal{K}(z) , \quad \text{ for all } z \in \mathcal{R}_{\nu}(\f) , \quad \Im(z)< 0 ,
\end{align}
where
  \begin{align}
  \label{e:F-chelou-utile-1}
  \mathbf{F}_{\frac{d-3}{2}}(z) & = \big(z -\min(\f)\big)^{\frac{d-3}{2}} \big(\max(\f) -z\big)^{\frac{d-3}{2}} \big(  \ln(iz-i\min(\f)) - \ln(iz-i\max(\f)) \big)   , \\ & \quad \text{ for }d\geq 3 \text{ odd}, \nonumber \\
    \label{e:F-chelou-utile-2}
  \mathbf{F}_{\frac{d-3}{2}}(z)&  =(-1)^{\frac{d-2}{2}}  (-i)^{d-3} \pi \big(iz -i\min(\f)\big)^{\frac{d-3}{2}} \big(iz-i\max(\f) \big)^{\frac{d-3}{2}}    , \quad \text{ for }d\geq 2 \text{ even} .
  \end{align}

If $\mathcal{J}_\omega$ is defined by~\eqref{e:def-J} for the function $\f_\omega=\omega\cdot\mathbf{v}$, with $\omega \in \IS^{d-1}$. 
Then, there is $\nu>0$ (given by Corollary~\ref{c:equality-J}) such that, for all $\omega \in \IS^{d-1}$ and for all $G \in \ml{A}_{R}(\IS^{d-1})$, the above statements hold.

 Finally, the map 
$$
\begin{array}{rcl}
\ml{A}_{R}(\IS^{d-1}) & \to & \ml{H}^\infty(\mathcal{R}_{\nu}(\f)) \times \ml{H}^\infty(\mathcal{R}_{\nu}(\f)) , \\
G& \mapsto& ( \mathcal{H} ,  \mathcal{K} ) 
\end{array}
$$
is linear continuous. 

  \end{proposition}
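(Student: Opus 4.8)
The plan is to deduce Proposition~\ref{p:main-I} from Corollary~\ref{c:equality-J} together with the integral representation~\eqref{e:I-in-terms-of-J}, by an explicit contour-deformation argument. First I would recall that, for $\Re(z)>0$, the function $\mathcal{I}$ evaluated at $z/i$ reads
\[
\mathcal{I}(w)=\int_{\min(\f)}^{\max(\f)}\frac{\mathcal{J}(\tau)}{w-\tau}\,d\tau,\qquad \Im(w)<0,
\]
where by Corollary~\ref{c:equality-J} we may write $\mathcal{J}(\tau)=\big(\tau-\min(\f)\big)^{\frac{d-3}{2}}\big(\max(\f)-\tau\big)^{\frac{d-3}{2}}\mathcal{H}(\tau)$ with $\mathcal{H}$ holomorphic and bounded on the rectangle $\mathcal{R}_\nu(\f)$, and depending linearly continuously on $G$. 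The strategy is then to substitute this factorization into the integral and compute the result as a function of $w$ on $\mathcal{R}_\nu(\f)\cap\{\Im(w)<0\}$ by a Cauchy-type deformation of the contour $[\min(\f),\max(\f)]$ inside $\mathcal{R}_\nu(\f)$, carefully tracking the branch cuts of the power functions emanating from the two endpoints.

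The key computation is the model integral. After the affine change of variables sending $[\min(\f),\max(\f)]$ to a fixed interval, the singular behaviour is governed by integrals of the shape $\int \sigma^{\frac{d-3}{2}}(1-\sigma)^{\frac{d-3}{2}}\,\psi(\sigma)/(w-\sigma)\,d\sigma$ for $\psi$ holomorphic near the interval. I would split $\psi$ by writing $\psi(\sigma)=\psi(w)+\big(\psi(\sigma)-\psi(w)\big)$; the second piece yields $\big(\psi(\sigma)-\psi(w)\big)/(w-\sigma)$, which is holomorphic in $(\sigma,w)$ jointly, so integrating in $\sigma$ produces a function holomorphic in $w$ on all of $\mathcal{R}_\nu(\f)$, contributing to $\mathcal{K}$. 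For the first piece one is left with $\psi(w)\int_{0}^{1}\sigma^{\frac{d-3}{2}}(1-\sigma)^{\frac{d-3}{2}}/(w-\sigma)\,d\sigma$, and this is a classical elementary integral whose closed form distinguishes the parity of $d$: when $d$ is even, $\frac{d-3}{2}$ is a half-integer, $\sigma^{\frac{d-3}{2}}(1-\sigma)^{\frac{d-3}{2}}$ has genuine branch points, and the integral evaluates (modulo entire pieces absorbed into $\mathcal{K}$) to a constant multiple of $\big(w-\min(\f)\big)^{\frac{d-3}{2}}\big(w-\max(\f)\big)^{\frac{d-3}{2}}$, reproducing~\eqref{e:F-chelou-utile-2} with the constant $(-1)^{\frac{d-2}{2}}(-i)^{d-3}\pi$; when $d$ is odd, $\frac{d-3}{2}$ is a nonnegative integer, the weight is a polynomial, and partial-fraction expansion yields a polynomial-times-$\ln$ term of the form $\big(w-\min(\f)\big)^{\frac{d-3}{2}}\big(w-\max(\f)\big)^{\frac{d-3}{2}}\big(\ln(iw-i\min(\f))-\ln(iw-i\max(\f))\big)$ plus a rational remainder, giving~\eqref{e:F-chelou-utile-1}. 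The determination of the logarithm and of the powers is fixed by requiring agreement with the original integral on $\{\Im(w)<0\}$, which forces the $\ln$ and $\ln_{-\pi/2}$-type choices and the sign conventions of Remark~\ref{r:log-definition}; this bookkeeping is where the main care is required.

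Having obtained~\eqref{e:I-z-holo-decompo} with $\mathcal{H}$ exactly the function from Corollary~\ref{c:equality-J} and $\mathcal{K}$ built from the two ``regular'' contributions above, I would note that $\mathcal{K}$ is holomorphic and bounded on $\mathcal{R}_\nu(\f)$ and that $G\mapsto\mathcal{K}$ is linear and continuous: indeed $\mathcal{K}$ is obtained from $\mathcal{H}$ (hence from $G$) by integral operators with kernels holomorphic on a fixed neighborhood, so continuity of $G\mapsto\mathcal{H}$ from Corollary~\ref{c:equality-J} transfers, through uniform bounds on these kernels on $\overline{\mathcal{R}_\nu(\f)}$, to continuity of $G\mapsto(\mathcal{H},\mathcal{K})$ into $\ml{H}^\infty(\mathcal{R}_\nu(\f))\times\ml{H}^\infty(\mathcal{R}_\nu(\f))$. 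For the family $\f_\omega=\omega\cdot\mathbf{v}$ one simply invokes the last statement of Corollary~\ref{c:equality-J}, which furnishes a single $\nu>0$ valid for all $\omega$, and observes that all constants in the model integral computation depend only on $d$, so the whole argument is uniform in $\omega$; this gives the final uniform assertion. The main obstacle I anticipate is not any single estimate but the coherent choice of branches: one must verify that the determinations $\ln$, $\ln_{-\pi/2}$, and ${}_{-\pi/2}\sqrt{\cdot}$ appearing in~\eqref{e:F-chelou-utile-1}--\eqref{e:F-chelou-utile-2} are precisely those obtained by analytic continuation of the defining integral from $\{\Re(z)>0\}$ (equivalently $\{\Im(w)<0\}$ after the $w=z/i$ substitution), and that the split into $\mathcal{H}\mathbf{F}$ plus holomorphic $\mathcal{K}$ is consistent across the three overlapping regions (near $\min\f$, near $\max\f$, and the central strip) used to build $\mathcal{H}$ in Corollary~\ref{c:equality-J}.
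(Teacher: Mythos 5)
Your proposal is correct and follows essentially the same route as the paper: factor $\mathcal{J}$ via Corollary~\ref{c:equality-J}, split the holomorphic factor as $\mathcal{H}(u)=\mathcal{H}(z)+(u-z)\mathbf{R}(u,z)$ so that the remainder integrates to a holomorphic $\mathcal{K}$, and reduce the leading term to the model integral $\mathsf{F}_{\frac{d-3}{2}}$ evaluated explicitly according to the parity of $d$, with the branch determinations fixed by matching on $\{\Im(z)<0\}$. The only cosmetic difference is that the paper evaluates the model integral through the recurrence of Lemma~\ref{l:laplace-bessel} in the appendix (starting from $\mathsf{F}_0$ and $\mathsf{F}_{-1/2}$) rather than by your direct partial-fraction/branch-cut computation, and the remainder there is in fact a polynomial rather than a general rational function.
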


 \begin{remark}
\label{e:extension-box}
As a direct corollary of Proposition~\ref{p:main-I}, together with the properties of the singular functions $\mathbf{F}_{\frac{d-3}{2}}$ involved, we may now extend the integral $\mathcal{I}(z)$ either as a holomorphic function to a subset of the complex plane with  a cut at any curve $\Gamma\subset \overline{\ml{R}_{\nu}(\f)}$ joining $\min(\f)$ and $\max(\f)$ and homotopic to the segment $[\min(\f) , \max(\f)]$, or as a multivalued holomorphic function in the sense of Appendix~\ref{a:branched}. This extension part follows now directly from Proposition~\ref{p:main-I}  combined with the last three items of Lemma~\ref{l:laplace-bessel} (see also Remark~\ref{r:lemme-pret-a-utiliser}) together with the link between $\mathbf{F}_{\frac{d-3}{2}}$ and $\mathsf{F}_{\frac{d-3}{2}}$ given by 
\begin{align*}
\mathbf{F}_{\frac{d-3}{2}}(z) = \left( \frac{\max(\f)-\min(\f)}{2}\right)^{d-3} \mathsf{F}_{\frac{d-3}{2}}\left(  \frac{2}{\max(\f)-\min(\f)} z -  \frac{\max(\f)+\min(\f)}{\max(\f)-\min(\f)} \right)  + \mathsf{H}_d(z), 
 \end{align*}
 where $ \mathsf{H}_d$ is an entire function.
\end{remark}

\begin{proof}
Recalling the expression~\eqref{e:J-holom} of $\mathcal{J}$ in Corollary~\ref{c:equality-J}, together with the expression of $\mathcal{I}$ in terms of $\mathcal{J}$ in~\eqref{e:I-in-terms-of-J}, we obtain
\begin{align*}
\mathcal{I}(z) =\int_{\min(\f)}^{\max(\f)} \frac{ \big(u -\min(\f)\big)^{\frac{d-3}{2}} \big(\max(\f) -u\big)^{\frac{d-3}{2}} \mathcal{H}(u)}{z-u}du  ,
\end{align*}
where $\mathcal{H}$ is a holomorphic in $\mathcal{R}_\nu(\f)$. We may thus write 
\begin{align}
\label{scrounch-scrou}
\mathcal{H}(u)= \mathcal{H}(z) + (u-z)\mathbf{R}(u,z) , \quad \mathbf{R}(u,z) = \int_0^1  \mathcal{H}'(su+(1-s)z) ds .
\end{align}
Since $\mathcal{R}_\nu(\f)$ is a convex set, the function $\mathbf{R}$ is holomorphic in $\mathcal{R}_\nu(\f)\times\mathcal{R}_\nu(\f)$ according to holomorphy under the integral.
We have now obtained
\begin{align}
\label{e:almost-decomp}
\mathcal{I}(z) =\mathcal{H}(z) \mathcal{F}_\frac{d-3}{2}(z) +  \mathcal{K}_{d,0}(z)  ,
\end{align}
with 
\begin{align*}
 \mathcal{K}_{d,0}(z) & := 
- \int_{\min(\f)}^{\max(\f)} \big(u -\min(\f)\big)^{\frac{d-3}{2}} \big(\max(\f) -u\big)^{\frac{d-3}{2}} \mathbf{R}(u,z)du , \\
\mathcal{F}_\frac{d-3}{2}(z)&  := \int_{\min(\f)}^{\max(\f)} \frac{\big(u -\min(\f)\big)^{\frac{d-3}{2}} \big(\max(\f) -u\big)^{\frac{d-3}{2}}}{z-u}du  .
\end{align*}
The function $\mathcal{K}_{d,0}$ is a holomorphic function in $z \in \mathcal{R}_\nu(\f)$ according to holomorphy under the integral since $\frac{d-3}{2}\geq -\frac12 > -1$.
Concerning $\mathcal{F}_\gamma(z)$, making the affine change of variable $t\mapsto u := \frac{\max(\f)-\min(\f)}{2} t +\frac{\max(\f)+\min(\f)}{2}$, we obtain 
\begin{align*}
& \mathcal{F}_\gamma(z) = \left( \frac{\max(\f)-\min(\f)}{2}\right)^{2\gamma} \mathsf{F}_\gamma\left(  \frac{2}{\max(\f)-\min(\f)} z -  \frac{\max(\f)+\min(\f)}{\max(\f)-\min(\f)} \right) , 
 \end{align*}
 with $\mathsf{F}_\gamma$ defined in~\eqref{e:def-Fgamma}.
According to Lemma~\ref{l:laplace-bessel}, under the form of Remark~\ref{r:lemme-pret-a-utiliser}, we deduce that for $ \Im(z)<0$, 
  \begin{align*}
  \mathcal{F}_{\frac{d-3}{2}}(z) & = \big(z -\min(\f)\big)^{\frac{d-3}{2}} \big(\max(\f) -z\big)^{\frac{d-3}{2}} \big(  \ln(iz-i\min(\f)) - \ln(iz-i\max(\f)) \big)  + P_d(z)  ,\\ 
  & \quad \text{ for }d\geq 3 \text{ odd}, \\
  \mathcal{F}_{\frac{d-3}{2}}(z)&  = (-1)^{\frac{d-2}{2}}\pi  (-i)^{d-3} \big(iz -i\min(\f)\big)^{\frac{d-3}{2}} \big(iz-i\max(\f) \big)^{\frac{d-3}{2}} + P_d(z)  , \quad \text{ for }d\geq 2 \text{ even},
  \end{align*}
  where $P_d$ is a polynomial.   Coming back to~\eqref{e:almost-decomp}, this concludes the proof of the first part of the proposition.
  Finally, the continuity statement follows from the continuity statement in Corollary~\ref{c:equality-J} together with the explicit expression of $\mathbf{R}$ in terms of $\ml{H}$ in~\eqref{scrounch-scrou}.  
\end{proof}

\subsection{Analytic continuation of multiplication operators}
\label{ss:multiplication}
As a first byproduct of Proposition~\ref{p:main-I}, we deduce a proof of Theorem~\ref{t:maintheo-multiplication} stating continuation properties for the resolvent of multiplication operators by analytic Morse functions having two critical points. 
This functional point of view is not necessary for application  to the Poincar\'e series described here but we think the result is of independent interest. 
\begin{theorem}
\label{t:multiplicationoperators}
  Let $R>0$, assume that $\f\in \ml{A}_{R}(\IS^{d-1})$ is a Morse function with two critical points and denote by $\mathbf{m}_\f : L^2(\IS^{d-1}) \to L^2(\IS^{d-1})$ the operator $\psi \mapsto \f \psi$.
  Then, there is $\nu>0$ and two families of operators $\left(\mathcal{H}(z) , \mathcal{K}(z)\right)_{z \in \mathcal{R}_{\nu}(\f)} \in \ml{L}(\ml{A}_R , \ml{A}_R')^{2\mathcal{R}_{\nu}(\f)}$
    such that the bilinear map $$(\varphi_1,\varphi_2) \mapsto \left\langle \varphi_2 ,(z-\mathbf{m}_\f)^{-1}\varphi_1\right\rangle , $$ defined on $L^2(\IS^{d-1}) \times L^2(\IS^{d-1})$ for all $z \in \C \setminus [\min(\f),\max(\f)]$, extends when $\varphi_1,\varphi_2\in \mathcal{A}_R^2$ as 
\begin{align*}
 \left\langle (z-\mathbf{m}_\f)^{-1}\varphi_2 ,\varphi_1\right\rangle = \mathbf{F}_{\frac{d-3}{2}}(z)\left\langle \varphi_2 , \mathcal{H}(z)\varphi_1\right\rangle  + \left\langle \varphi_2 , \mathcal{K}(z)\varphi_1\right\rangle , \quad \text{ for all } z \in \mathcal{R}_{\nu}(\f)\cap\{  \Im(z)< 0\} ,
\end{align*}
where $\mathbf{F}_{\frac{d-3}{2}}$ is defined in Proposition~\ref{p:main-I}
 and where $z \mapsto \left\langle \varphi_2 ,\mathcal{H}(z)\varphi_1\right\rangle$ and $z \mapsto\left\langle \varphi_2 ,\mathcal{K}(z)\varphi_1\right\rangle  $ are holomorphic on $\mathcal{R}_\nu(\f)$, with
$$
\sup_{z\in \mathcal{R}_\nu(\f)} \left(\vert  \left\langle \varphi_2 ,\mathcal{H}(z)\varphi_1\right\rangle \vert +\vert  \left\langle \varphi_2 ,\mathcal{K}(z)\varphi_1\right\rangle \vert\right)  \leq C_{R,\f} \Vert \varphi_1 \Vert_{\mathcal{A}_{R}}  \Vert \varphi_2 \Vert_{\mathcal{A}_{R}}.
$$
\end{theorem}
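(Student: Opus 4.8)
\textbf{Proof strategy for Theorem~\ref{t:multiplicationoperators}.}
The plan is to reduce the bilinear form $(\varphi_1,\varphi_2)\mapsto\langle(z-\mathbf{m}_\f)^{-1}\varphi_2,\varphi_1\rangle$ to an integral of the type $\mathcal{I}(z)$ already analyzed in Proposition~\ref{p:main-I}, and then to read off the claimed decomposition together with its uniform bounds from that proposition. First I would observe that for $z\in\C\setminus[\min(\f),\max(\f)]$ the operator $z-\mathbf{m}_\f$ is invertible on $L^2(\IS^{d-1})$ with $(z-\mathbf{m}_\f)^{-1}$ acting as multiplication by the bounded function $\theta\mapsto(z-\f(\theta))^{-1}$, so that
\begin{align*}
\langle(z-\mathbf{m}_\f)^{-1}\varphi_2,\varphi_1\rangle
=\int_{\IS^{d-1}}\frac{\overline{\varphi_1(\theta)}\,\varphi_2(\theta)}{z-\f(\theta)}\,d\Vol_{\IS^{d-1}}(\theta)
=\mathcal{I}(z),
\end{align*}
where $\mathcal{I}$ is the integral from~\eqref{e:def-II} with the choice $G=\overline{\varphi_1}\,\varphi_2$. (Strictly speaking one works with the sesquilinear pairing; since $\varphi_1,\varphi_2$ are analytic the product $\overline{\varphi_1}\varphi_2$ is real-analytic on $\IS^{d-1}$, hence lies in $\mathcal{A}_{R'}(\IS^{d-1})$ for any $R'<R$ by Lemma~\ref{c:product}, with $\|\overline{\varphi_1}\varphi_2\|_{R'}\lesssim\|\varphi_1\|_{R}\|\varphi_2\|_{R}$; alternatively one can polarize and work with $\varphi_1\varphi_2$ throughout.) The only real point here is to check that the product map $(\varphi_1,\varphi_2)\mapsto G$ is continuous from $\mathcal{A}_R\times\mathcal{A}_R$ into the space $\mathcal{A}_{R'}$ to which Proposition~\ref{p:main-I} applies, which is exactly the content of the calculus lemmas of Section~\ref{s:analytic-norms}.

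Next I would invoke Proposition~\ref{p:main-I} directly: it gives $\nu>0$ (independent of $G$, depending only on $\f$ and $R$) and holomorphic functions $\mathcal{H}=\mathcal{H}(G),\mathcal{K}=\mathcal{K}(G)$ on $\mathcal{R}_\nu(\f)$ with
\begin{align*}
\mathcal{I}(z)=\mathbf{F}_{\frac{d-3}{2}}(z)\,\mathcal{H}(z)+\mathcal{K}(z),\qquad z\in\mathcal{R}_\nu(\f)\cap\{\Im(z)<0\},
\end{align*}
and with $G\mapsto(\mathcal{H},\mathcal{K})$ linear continuous into $\ml{H}^\infty(\mathcal{R}_\nu(\f))^2$. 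Combining this continuity with the bilinear continuity of $(\varphi_1,\varphi_2)\mapsto G$ from the previous paragraph, the maps
\begin{align*}
(\varphi_1,\varphi_2)\longmapsto\big(z\mapsto\langle\varphi_2,\mathcal{H}(z)\varphi_1\rangle\big),\qquad
(\varphi_1,\varphi_2)\longmapsto\big(z\mapsto\langle\varphi_2,\mathcal{K}(z)\varphi_1\rangle\big)
\end{align*}
are bounded bilinear maps from $\mathcal{A}_R\times\mathcal{A}_R$ into $\ml{H}^\infty(\mathcal{R}_\nu(\f))$, which yields the bound $\sup_{z\in\mathcal{R}_\nu(\f)}(|\langle\varphi_2,\mathcal{H}(z)\varphi_1\rangle|+|\langle\varphi_2,\mathcal{K}(z)\varphi_1\rangle|)\le C_{R,\f}\|\varphi_1\|_{\mathcal{A}_R}\|\varphi_2\|_{\mathcal{A}_R}$. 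Defining $\mathcal{H}(z),\mathcal{K}(z)\in\ml{L}(\mathcal{A}_R,\mathcal{A}_R')$ by these bounded bilinear forms (here one uses that a bounded bilinear form $\mathcal{A}_R\times\mathcal{A}_R\to\C$ defines an element of $\ml{L}(\mathcal{A}_R,\mathcal{A}_R')$) gives the operator-valued families in the statement, holomorphic in $z\in\mathcal{R}_\nu(\f)$ since holomorphy of scalar pairings against a dense/total family suffices. The fact that this is genuinely an \emph{extension} of the $L^2$-resolvent on $\C\setminus[\min(\f),\max(\f)]$ is immediate from the identity $\langle(z-\mathbf{m}_\f)^{-1}\varphi_2,\varphi_1\rangle=\mathcal{I}(z)$ valid there, combined with the uniqueness of holomorphic extension (Lemma~\ref{l:I-holo-domain}).

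The last point to address, and the only place requiring a little care rather than bookkeeping, is the uniformity of $\nu$: Proposition~\ref{p:main-I} provides $\nu$ depending on $\f$ but not on $G$, which is exactly what is needed since $\f$ is fixed throughout Theorem~\ref{t:multiplicationoperators}; there is no family of $\f$'s to track here (that refinement is the $\f_\omega=\omega\cdot\mathbf{v}$ version, also covered by Proposition~\ref{p:main-I}, but not needed for this statement). So the main obstacle is essentially notational: making sure the sesquilinear-versus-bilinear conventions are handled consistently and that the continuity of the product map $\mathcal{A}_R\times\mathcal{A}_R\to\mathcal{A}_{R'}$ is quoted with the correct loss of radius $R'<R$, after which the theorem follows by assembling Proposition~\ref{p:main-I} with the algebra properties of the analytic norms. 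I do not foresee any genuinely hard analytic step beyond what Proposition~\ref{p:main-I} already establishes.
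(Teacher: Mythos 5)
Your proposal is correct and follows exactly the paper's own (very short) argument: identify $\left\langle (z-\mathbf{m}_\f)^{-1}\varphi_2 ,\varphi_1\right\rangle$ with $\mathcal{I}(z,\f,\varphi_2\overline{\varphi_1})$ and apply Proposition~\ref{p:main-I} together with Lemma~\ref{c:product}. The extra care you take about the loss of radius $R'<R$ in the product and about passing from bounded bilinear forms to elements of $\ml{L}(\ml{A}_R,\ml{A}_R')$ is a faithful elaboration of what the paper leaves implicit, not a different route.
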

From the statement of Theorem~\ref{t:multiplicationoperators}, together with the properties of the functions $\mathbf{F}_{\frac{d-3}{2}}$, described precisely in Lemma~\ref{l:laplace-bessel} in  Appendix~\ref{a:branched} (the link between $\mathbf{F}_{\frac{d-3}{2}}$ and $\mathsf{F}_{\frac{d-3}{2}}$ is given in Remark~\ref{e:extension-box}) we may deduce several extension results (such as holomorphic function with cuts or multivalued holomorphic function on the set $\mathcal{R}_{\nu}(\f)\setminus\{\min(\f),\max(\f)\}$). In particular, Lemma~\ref{l:laplace-bessel}  implies that the resolvent 
$(z-\textbf{m}_{\f})^{-1}:L^2(\IS^{d-1})\rightarrow L^2(\IS^{d-1})$ extends from $\{z\in\mathcal{R}_\nu(\f):\ \Im(z)<0\}$ as a multivalued holomorphic function in $\mathcal{R}_{\nu}(\f)\setminus\{\min(\f),\max(\f)\}$ with values in continuous linear mappings from $\mathcal{A}_R$ to its dual $\mathcal{A}_R'$. Finally, Theorem~\ref{t:maintheo-multiplication} is a reformulation of Theorem~\ref{t:multiplicationoperators} in which we continue the resolvent starting from $\Im(z)>0$ instead of $\Im(z)<0$  (but the change $z\to \bar{z}$ does not play any role with respect to $\mathbf{m}_\f$). 
\begin{proof} 
We notice that 
$$\left\langle (z-\mathbf{m}_\f)^{-1}\varphi_2 ,\varphi_1\right\rangle=\mathcal{I}(z,\f,\varphi_2\overline{\varphi_1}).$$
As a consequence, the proof follows from Proposition~\ref{p:main-I} applied to $\f$ and $G=\varphi_2\overline{\varphi_1}$ (together with Lemma~\ref{c:product}). 
\end{proof}

\subsection{Analytic continuation of $\mathcal{I}_{(\ell)}$ and $I^{(\ell)}_{\lambda,\omega}$}\label{ss:Iell-and-Iell}

After having analyzed the properties of $\mathcal{I}$, we can now come back to the description of the analytic functions $\mathcal{I}_{(\ell)}$ defined in~\eqref{e:def-Ibis} and to $I^{(\ell)}_{\lambda,\omega}$ (the difference being a holomorphic function according to~\eqref{e:decomp-I-I}, see also the discussion right after this equation). Ultimately, this will give us the proof of Proposition~\ref{p:low-freq} (hence the expected analytic properties for $\mathcal{P}_{K_0}^{<N}$). We begin with the following lemma.

\begin{lemma}
\label{l:mathcal-I}
Let $\mathcal{I}_{(\ell)}(z)=\mathcal{I}_{(\ell)}(z,\f_\omega,G)$ be defined on $\{\Re(z)>0\}$ by~\eqref{e:def-Ibis} for the function $\f_\omega=\omega\cdot\mathbf{v}$, with $\omega \in \IS^{d-1}$. 
Assume that there is $R>0$ such that $(\mathbf{v},G)\in\mathcal{A}_R(\IS^{d-1})^{d+1}$.

Then, there is $\nu>0$ (given by Corollary~\ref{c:equality-J}) such that for all $\omega \in \IS^{d-1}$, for all $G \in \ml{A}_{R}(\IS^{d-1})$ and $\ell \in \{0,\dots,d-1\}$,
 there exist holomorphic functions $\mathcal{H}_{\ell,\omega,G},\mathcal{K}_{\ell,\omega,G}$ on $i \mathcal{R}_{\nu}(\f_\omega)$ such that, for all $z \in i \mathcal{R}_{\nu}(\f_\omega)\cap\{\Re(z)> 0\}$,
  \begin{align}
\label{e:I-z-holo-decompo-Iell0}
\mathcal{I}_{(\ell)} (z) =\mathcal{H}_{\ell,\omega,G} (z) 
\Big(\big(z - i \min(\f_\omega)\big)\Big)^{\frac{d-3}{2}-\ell}\Big(\big( z-i\max(\f_\omega) \big) \Big)^{\frac{d-3}{2}-\ell} 
+ \mathcal{K}_{\ell,\omega,G} (z) , 
\end{align}
 if $d\geq 2$ is even, and, for all $z \in i \mathcal{R}_{\nu}(\f_\omega) \cap\{ \Re(z)> 0\}$,
  \begin{align*}
\mathcal{I}_{(\ell)} (z) &=\mathcal{H}_{\ell,\omega,G} (z) \Big(\big(z - i \min(\f_\omega)\big)\big( i\max(\f_\omega) - z\big) \Big)^{\frac{d-3}{2}-\ell} \left(\ln\Big(z - i \min(\f_\omega)\Big)-\ln\Big( z-i\max(\f_\omega) \Big)\right) \\
 & \quad  + \mathcal{K}_{\ell,\omega,G} (z) , \quad \text{ if } \ell \leq \frac{d-3}{2} ,\\
\mathcal{I}_{(\ell)} (z) &=\mathcal{H}_{\ell,\omega,G} (z) \Big(\big(z - i \min(\f_\omega)\big)\big( i\max(\f_\omega) - z\big) \Big)^{\frac{d-3}{2}-\ell}\\
& \quad  + \mathcal{K}_{\ell,\omega,G} (z) \left(\ln\Big(z - i \min(\f_\omega)\Big)-\ln\Big( z-i\max(\f_\omega) \Big)\right)  , \quad \text{ if } \ell > \frac{d-3}{2},
\end{align*} 
 if $d\geq 3$ is odd.
\end{lemma}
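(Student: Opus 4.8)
The plan is to deduce Lemma~\ref{l:mathcal-I} from Proposition~\ref{p:main-I} by differentiating the identity~\eqref{link-I-ell}, namely $\mathcal{I}_{(\ell)}(z,\f,G) = -i^{\ell+1}(\d_z^\ell \mathcal{I})(z/i)$. First I would apply Proposition~\ref{p:main-I} (with $\f=\f_\omega=\omega\cdot\mathbf{v}$ and $G$, which lie in $\ml{A}_R$ by hypothesis and Lemma~\ref{l:product-exp}) to write, for $w\in\ml{R}_\nu(\f_\omega)$ with $\Im(w)<0$,
\begin{align*}
\mathcal{I}(w) = \mathcal{H}(w)\, \mathbf{F}_{\frac{d-3}{2}}(w) + \mathcal{K}(w),
\end{align*}
with $\mathcal{H},\mathcal{K}$ holomorphic on $\ml{R}_\nu(\f_\omega)$ and depending continuously on $G$. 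Setting $w=z/i$ (so $\Re(z)>0$ corresponds to $\Im(w)<0$, and $i\ml{R}_\nu(\f_\omega)$ is the relevant neighborhood of $i[\min\f_\omega,\max\f_\omega]$), I would differentiate $\ell$ times in $z$ using the Leibniz rule:
\begin{align*}
\d_z^\ell\big[\mathcal{I}(z/i)\big] = \sum_{k=0}^\ell \binom{\ell}{k} \d_z^k\big[\mathcal{H}(z/i)\big]\, \d_z^{\ell-k}\big[\mathbf{F}_{\frac{d-3}{2}}(z/i)\big] + \d_z^\ell\big[\mathcal{K}(z/i)\big].
\end{align*}
The terms with $k\geq 1$ and the last term are holomorphic on all of $i\ml{R}_\nu(\f_\omega)$ (derivatives of holomorphic functions), so they contribute to $\mathcal{K}_{\ell,\omega,G}$; only $\mathcal{H}(z/i)\,\d_z^\ell[\mathbf{F}_{\frac{d-3}{2}}(z/i)]$ carries the singular part.

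The core computation is therefore to identify $\d_z^\ell\big[\mathbf{F}_{\frac{d-3}{2}}(z/i)\big]$. Recall from~\eqref{e:F-chelou-utile-1}--\eqref{e:F-chelou-utile-2} that, up to nonzero constants and up to a change $w\mapsto z/i$, $\mathbf{F}_{\frac{d-3}{2}}$ equals (for $d$ even) a constant times $(z-i\min\f_\omega)^{\frac{d-3}{2}}(z-i\max\f_\omega)^{\frac{d-3}{2}}$, and (for $d$ odd) $(z-i\min\f_\omega)^{\frac{d-3}{2}}(z-i\max\f_\omega)^{\frac{d-3}{2}}\big(\ln(z-i\min\f_\omega)-\ln(z-i\max\f_\omega)\big)$ plus a polynomial. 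I would prove by a direct induction on $\ell$ (using $\d_z (z-a)^\gamma = \gamma(z-a)^{\gamma-1}$ and $\d_z\ln(z-a)=(z-a)^{-1}$) that differentiating $\ell$ times such a product lowers each exponent by $\ell$, i.e. produces a function of the form $(z-i\min\f_\omega)^{\frac{d-3}{2}-\ell}(z-i\max\f_\omega)^{\frac{d-3}{2}-\ell}$ times a polynomial in $z$ (the even case and the non-log contributions in the odd case), plus — in the odd case — $(z-i\min\f_\omega)^{\frac{d-3}{2}-\ell}(z-i\max\f_\omega)^{\frac{d-3}{2}-\ell}$ times a polynomial times $(\ln(z-i\min\f_\omega)-\ln(z-i\max\f_\omega))$. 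One must be slightly careful with the threshold $\ell$ versus $\frac{d-3}{2}$: when $\ell\le\frac{d-3}{2}$ the factors $(z-i\min\f_\omega)^{\frac{d-3}{2}-\ell}$ etc. vanish to positive (half-integer or integer) order and the logarithm sits \emph{outside} as a genuine branch point, giving the first odd-case formula; when $\ell>\frac{d-3}{2}$ the power is negative so the polynomial prefactor of the log can be absorbed and one must collect the leftover holomorphic pieces, giving the second odd-case formula. This bookkeeping — matching the various polynomial prefactors and constants against the two stated normal forms, and checking that the holomorphic remainders really are holomorphic (no residual branch cut) on $i\ml{R}_\nu(\f_\omega)$ — is the main, if elementary, obstacle; it amounts to the same calculation already performed (in a different parametrization) in Lemma~\ref{l:laplace-bessel} of the appendix, to which I would appeal for the precise structure of derivatives of $\mathsf{F}_\gamma$.

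Finally I would record the continuity statement: since $G\mapsto(\mathcal{H},\mathcal{K})$ is linear continuous $\ml{A}_R\to\ml{H}^\infty(\ml{R}_\nu(\f_\omega))^2$ by Proposition~\ref{p:main-I}, and since differentiation is continuous on spaces of holomorphic functions (Cauchy estimates, at the cost of slightly shrinking $\nu$), the resulting functions $\mathcal{H}_{\ell,\omega,G},\mathcal{K}_{\ell,\omega,G}$ depend linearly and continuously on $G$, uniformly in $\omega\in\IS^{d-1}$ (the uniformity in $\omega$ being inherited from the uniform-in-$\omega$ part of Corollary~\ref{c:equality-J} and Proposition~\ref{p:main-I}). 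This yields the lemma, and combined with~\eqref{e:decomp-I-I} gives the analogous statement for $I^{(\ell)}_{\lambda,\omega}$, which is then summed over $0<|\xi|<N$ in~\S\ref{ss:zeta-low-freq} to prove Proposition~\ref{p:low-freq}.
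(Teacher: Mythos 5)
Your overall route is the same as the paper's: combine~\eqref{link-I-ell} with the decomposition $\mathcal{I}=\mathcal{H}\,\mathbf{F}_{\frac{d-3}{2}}+\mathcal{K}$ from Proposition~\ref{p:main-I}, differentiate $\ell$ times, and read off the singular structure from the explicit derivatives of $\mathbf{F}_{\frac{d-3}{2}}$ (the computation of Lemma~\ref{l:laplace-bessel}). There is, however, one concrete misstep in your Leibniz bookkeeping: you assert that the terms with $k\geq 1$, i.e. $\binom{\ell}{k}\,\d_z^{k}\big[\mathcal{H}(z/i)\big]\,\d_z^{\ell-k}\big[\mathbf{F}_{\frac{d-3}{2}}(z/i)\big]$, are holomorphic on $i\mathcal{R}_{\nu}(\f_\omega)$ and can be absorbed into $\mathcal{K}_{\ell,\omega,G}$. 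This is false: the factor $\d_z^{\ell-k}\big[\mathbf{F}_{\frac{d-3}{2}}(z/i)\big]$ is a derivative of the \emph{singular} factor and still has branch points at $z=i\min(\f_\omega)$ and $z=i\max(\f_\omega)$ (half-integer powers of order $\frac{d-3}{2}-(\ell-k)$ when $d$ is even, logarithmic or negative-integer-power singularities when $d$ is odd). If these terms were put into $\mathcal{K}_{\ell,\omega,G}$, that function would not be holomorphic on $i\mathcal{R}_{\nu}(\f_\omega)$, so the argument as written does not deliver the stated normal form; only $\d_z^{\ell}\big[\mathcal{K}(z/i)\big]$ is genuinely holomorphic.

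The repair is exactly the regrouping the paper performs. For $d$ even, $\d^{j}\mathbf{F}_{\frac{d-3}{2}}=h_{j}\,\mathbf{F}_{\frac{d-3}{2}-j}$ with $h_j$ polynomial, and, up to nonzero constants and in the $z$ variable, $\mathbf{F}_{\frac{d-3}{2}-j}$ equals the target factor $\big((z-i\min\f_\omega)(z-i\max\f_\omega)\big)^{\frac{d-3}{2}-\ell}$ multiplied by the \emph{polynomial} $\big((z-i\min\f_\omega)(z-i\max\f_\omega)\big)^{\ell-j}$ for every $j\leq\ell$; hence all Leibniz terms with $k\geq 1$ contribute to $\mathcal{H}_{\ell,\omega,G}$, not to $\mathcal{K}_{\ell,\omega,G}$, and $\mathcal{K}_{\ell,\omega,G}$ is (up to constants) $\d_z^{\ell}\big[\mathcal{K}(z/i)\big]$. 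For $d$ odd the same must be done for every order $j\leq\ell$, using the structure of $\d^{j}\big(h^{k}\ln h\big)$ with $h(w)=(w-\min\f_\omega)(\max\f_\omega-w)$ and $k=\frac{d-3}{2}$ (the paper's computation with $f_k(x)=x^{k}\ln x$), after which the contributions with $j\leq\frac{d-3}{2}$ and $j>\frac{d-3}{2}$ recombine into the two stated odd-dimensional normal forms. So your ingredients are the right ones, but the claim that ``only the top derivative of $\mathbf{F}_{\frac{d-3}{2}}$ carries the singular part'' must be replaced by this regrouping over all derivative orders $0\leq j\leq\ell$.
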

Note that in the case $d$ even, all powers $\frac{d-3}{2}-\ell$ are half integers (square root singularities) whereas in case $d$ odd, all powers $\frac{d-3}{2}-\ell$ are nonnegative integers if $\ell \leq \frac{d-3}{2}$ and negative integers if $\ell > \frac{d-3}{2}$. Holomorphic continuation results (as a holomorphic function with a cut at a curve $\Gamma\subset i \overline{\mathcal{R}_{\nu}(\f_\omega)}$ linking the two singularities, or as a multivalued holomorphic Nilsson function on the set $i \mathcal{R}_{\nu}(\f_\omega)$) follow from Remark~\ref{e:extension-box}, itself relying on Lemma~\ref{l:laplace-bessel} in Appendix~\ref{a:branched}. This Lemma when specified for a fixed function $\f$ implies Theorem~\ref{t:extension-integral}.

\begin{proof}
The expression of $\mathcal{I}_{(\ell)}$ in terms of $\mathcal{I}$ is given in Lemma~\ref{l:Iell-I} by Equation~\eqref{link-I-ell} and the result will thus follow from the analytic properties of $\mathcal{I}$ obtained in Proposition~\ref{p:main-I} together with Remark~\ref{e:extension-box} and Lemma~\ref{l:laplace-bessel} in Appendix~\ref{a:branched}.

Assume $d\geq 2$ is even. Then, we notice from the expression of $\mathbf{F}_{\frac{d-3}{2}}$ in~\eqref{e:F-chelou-utile-2} that for any $\ell \in \N$, we have
$$
\d^\ell \mathbf{F}_{\frac{d-3}{2}} = h_{\ell} \mathbf{F}_{\frac{d-3}{2}-\ell} , 
$$
for some polynomial function $h_{\ell}$. We thus deduce from~\eqref{e:I-z-holo-decompo} in Proposition~\ref{p:main-I} that 
$$
\d^\ell \mathcal{I} = H_{\ell} \mathbf{F}_{\frac{d-3}{2}-\ell}  + K_{\ell}  ,
$$
where $H_{\ell},K_{\ell}$ are holomorphic in $\mathcal{R}_{\nu}(\f_\omega)$. 
Recalling the expression of $\mathcal{I}_{(\ell)}$ in terms of $\mathcal{I}$ in Equation~\eqref{link-I-ell} implies~\eqref{e:I-z-holo-decompo-Iell0} in this case.

Assume now that $d\geq 3$ is odd. We first notice that, if we set
$$
f_k(x) = x^k\ln(x) , 
$$
then one can check by induction that for any $k , \ell \in \N$, we have
\begin{align*}
f_k^{(\ell)}(x) &= \frac{k!}{(k-\ell)!}x^{k-\ell}\ln(x) + \alpha_{k,\ell} x^{k-\ell}, \quad \text{ for } \ell \leq k, \text{ with } \alpha_{k,\ell} \in \N  ,\\
f_k^{(\ell)}(x) &= (-1)^{\ell-k-1}k!(\ell-k-1)! x^{k-\ell}, \quad \text{ for } \ell > k .
\end{align*}
We then notice that the function $\mathbf{F}_{\frac{d-3}{2}}$ in~\eqref{e:F-chelou-utile-1} can be rewritten for $\Im(z)>0$ as
$$
  \mathbf{F}_{\frac{d-3}{2}}(z) = f_k \circ h(z) , \quad \text{ with }  h(z) = \big(z -\min(\f_\omega)\big) \big(\max(\f_\omega) -z\big), \quad k=\frac{d-3}{2}\in \N
$$ 
Writing $(f_k \circ h )^{(\ell)}= \sum_{j=0}^\ell h_j f_k^{(j)}\circ h$ for some entire functions $h_j$, we see from the above explicit expression of $f_k^{(j)}$ that 
\begin{align*}
  \mathbf{F}_{\frac{d-3}{2}}^{(\ell)} & = (f_k \circ h )^{(\ell)}= h_{\ell,0} h^{k-\ell}\ln(h) +  h_{\ell,1} \quad \text{ for } \ell \leq k=\frac{d-3}{2}  , \\
    \mathbf{F}_{\frac{d-3}{2}}^{(\ell)} & = (f_k \circ h )^{(\ell)}= h_{\ell,0} h^{k-\ell} +  h_{\ell,1} \ln(h)  \quad \text{ for } \ell > k=\frac{d-3}{2} ,
\end{align*}
where $h_{\ell,0},h_{\ell,1}$ are entire functions. Finally, using again Proposition~\ref{p:main-I} together with the expression of $\mathcal{I}_{(\ell)}$ in terms of $\mathcal{I}$ in Equation~\eqref{link-I-ell} concludes the proof in the case $d$ odd.
\end{proof}

We may then come back to the description of $I^{(\ell)}_{\lambda,\omega}$.
 The  expression of $I^{(\ell)}_{\lambda,\omega}$ in terms of $\mathcal{I}_{(\ell)}$ is given in~\eqref{e:decomp-I-I}, where $\f_\omega=\omega\cdot \mathbf{v}$ and $G=e^{i\lambda X}F$ (see~\eqref{e:values-f-G}).
 Recalling that the second term in the right-hand side of~\eqref{e:decomp-I-I} is an entire function, we directly obtain the following corollary of  Lemma~\ref{l:mathcal-I}.

\begin{corollary}
\label{c:I-}
 Let $R>0$ and assume $\mathbf{v}\in \ml{A}_{R}(\IS^{d-1})^d$.
Then, there is $\nu>0$ (given by Corollary~\ref{c:equality-J}) such that for all $\omega \in \IS^{d-1}$, $F \in \ml{A}_{R}(\IS^{d-1})$, $X\in \ml{A}_{R}(\IS^{d-1})$, $\ell \in \{0,\dots,d-1\}$, and $\lambda >0$,
 there exist holomorphic functions $\mathcal{H} = \mathcal{H}_{\ell,\omega,F,X,\lambda},\mathcal{K} = \mathcal{K}_{\ell,\omega,F,X,\lambda}$ on 
 \begin{align}
\label{e:def-Vnu}
 i \mathcal{R}_{\nu}(\f_\omega) = 
i (\omega\cdot  \mathbf{v}(-\omega) - \nu , \omega\cdot  \mathbf{v}(\omega)+\nu) + (-\nu,\nu)
\end{align}
 such that  for all $z \in i\mathcal{R}_{\nu}(\f_\omega) \cap\{ \Re(z)> 0\}$,
  \begin{align}
\label{e:I-z-holo-decompo-Iell}
 I^{(\ell)}_{\lambda,\omega}(z, F,X)  =\mathcal{H} (z) 
\Big(\big(z -i \omega\cdot\mathbf{v}(-\omega)\big)\Big)^{\frac{d-3}{2}-\ell}\Big(\big(z- i\omega\cdot\mathbf{v}(\omega)\big) \Big)^{\frac{d-3}{2}-\ell} 
+ \mathcal{K} (z) , 
\end{align}
 if $d\geq 2$ is even, and for all $z \in  i\mathcal{R}_{\nu}(\f_\omega) \cap\{ \Re(z)> 0\}$,
  \begin{align*}
 I^{(\ell)}_{\lambda,\omega}(z, F,X) &=\mathcal{H} (z) \Big(\big(z - i\omega\cdot\mathbf{v}(-\omega)\big)\big( i\omega\cdot\mathbf{v}(\omega)- z\big) \Big)^{\frac{d-3}{2}-\ell} \left(\ln\Big(z -i \omega\cdot\mathbf{v}(-\omega)\Big)-\ln\Big(z -i\omega\cdot\mathbf{v}(\omega) \Big)\right) \\
 & \quad  + \mathcal{K}  (z) , \quad \text{ if } \ell \leq \frac{d-3}{2} ,\\
 I^{(\ell)}_{\lambda,\omega}(z, F,X) &=\mathcal{H}  (z) \Big(\big(z -i \omega\cdot\mathbf{v}(-\omega)\big)\big( i\omega\cdot\mathbf{v}(\omega) - z\big) \Big)^{\frac{d-3}{2}-\ell}\\
& \quad  + \mathcal{K} (z) \left(\ln\Big(z -i \omega\cdot\mathbf{v}(-\omega)\Big)-\ln\Big(z -i\omega\cdot\mathbf{v}(\omega) \Big)\right)  , \quad \text{ if } \ell > \frac{d-3}{2},
\end{align*} 
 if $d\geq 3$ is odd.

 In particular, given any curve $\Gamma_\omega \in C^1_{pw} \big([0,1];  i\mathcal{R}_{\nu}(\f_\omega) \cap\{ \Re(z) \leq 0\}\big)$ be such that $\Gamma_\omega (0) = i \omega\cdot\mathbf{v}(-\omega)$ and $\Gamma_\omega(1)=i \omega\cdot\mathbf{v}(\omega)$ and $\Gamma_\omega([0,1])$ is {\em strictly homotopic}  to $ i\big[\omega\cdot\mathbf{v}(-\omega),  \omega\cdot\mathbf{v}(\omega)\big]$ in $i\mathcal{R}_{\nu}(\f_\omega) \cap\{ \Re(z) \leq 0\}$, the function $I^{(\ell)}_{\lambda,\omega}(z, F,X)$ extends holomorphically to $\C \setminus \Gamma_\omega$.
 
\end{corollary}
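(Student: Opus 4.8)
\textbf{Plan of proof for Corollary~\ref{c:I-}.}

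The strategy is to transfer, term by term, the analytic structure obtained for $\mathcal{I}_{(\ell)}(z,\f_\omega,G)$ in Lemma~\ref{l:mathcal-I} to $I^{(\ell)}_{\lambda,\omega}(z,F,X)$, exploiting the identity~\eqref{e:decomp-I-I}
$$
I^{(\ell)}_{\lambda,\omega}(z,F,X) = \mathcal{I}_{(\ell)}\big( z,\omega\cdot\mathbf{v}(\cdot), e^{i\lambda X}F\big),
$$
together with the observation that the map $z\mapsto$ (right-hand side) differs from $I^{(\ell)}_{\lambda,\omega}$ only through the $z$-independent integrand, so that there is in fact \emph{no} extra holomorphic correction term coming from this step: the decomposition~\eqref{e:decomp-I-I} is an \emph{equality}, and the dependence on $z$ sits entirely in the factor $(z-i\f_\omega(\theta))^{-(\ell+1)}$. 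First I would check that $G := e^{i\lambda X}F$ lies in $\mathcal{A}_{R'}(\IS^{d-1})$ for every $0<R'<R$, with norm controlled by $C_{R',R}e^{C_{R',R}|\lambda|\|X\|_R}\|F\|_R$; this is precisely Lemma~\ref{l:product-exp}, and it is what makes Lemma~\ref{l:mathcal-I} applicable (after shrinking $R$ slightly, which does not affect the value of $\nu$ since $\nu$ in Corollary~\ref{c:equality-J} depends only on $\mathbf{v}$). Then I would simply invoke Lemma~\ref{l:mathcal-I} with this $G$: recalling $\min(\f_\omega)=\omega\cdot\mathbf{v}(-\omega)$ and $\max(\f_\omega)=\omega\cdot\mathbf{v}(\omega)$ (which follows from~\eqref{e:hK-fct-v} and the strict convexity, cf.\ Lemma~\ref{l:geometric-lemma}), the three displayed formulas are literally the formulas of Lemma~\ref{l:mathcal-I} rewritten with $\mathcal{H}_{\ell,\omega,G}, \mathcal{K}_{\ell,\omega,G}$ renamed $\mathcal{H}_{\ell,\omega,F,X,\lambda}, \mathcal{K}_{\ell,\omega,F,X,\lambda}$, and with the domain $i\mathcal{R}_\nu(\f_\omega)$ being exactly the set written in~\eqref{e:def-Vnu} since the affine map $w\mapsto iw$ sends the rectangle $\mathcal{R}_\nu(\f_\omega)=(\min\f_\omega-\nu,\max\f_\omega+\nu)+i(-\nu,\nu)$ of~\eqref{e:def-Unu} to $i(\omega\cdot\mathbf{v}(-\omega)-\nu,\omega\cdot\mathbf{v}(\omega)+\nu)+(-\nu,\nu)$.

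It remains to deduce the last statement about holomorphic continuation across a curve $\Gamma_\omega$. Here the point is that the singular prefactors appearing in~\eqref{e:I-z-holo-decompo-Iell} and its odd-$d$ analogues are, in each case, branched holomorphic functions whose only obstruction to single-valuedness is a monodromy around the two points $i\omega\cdot\mathbf{v}(\pm\omega)$; this is exactly the content of Lemma~\ref{l:laplace-bessel} and Remark~\ref{e:extension-box} (for $d$ even the prefactor is a product of square roots $({}_{-\pi/2}\sqrt{\,\cdot\,})$, for $d$ odd it involves a difference of logarithms and integer or half-integer powers). By the standard fact recalled in Appendix~\ref{a:branched}, a function holomorphic on $i\mathcal{R}_\nu(\f_\omega)\setminus\{i\omega\cdot\mathbf{v}(-\omega),\,i\omega\cdot\mathbf{v}(\omega)\}$ with finite monodromy only around these two branch points admits, once one removes from the domain any simple $C^1_{pw}$ arc joining the two branch points and homotopic (rel.\ endpoints) to the straight segment between them, a single-valued holomorphic branch; and that branch agrees with $I^{(\ell)}_{\lambda,\omega}$ on $i\mathcal{R}_\nu(\f_\omega)\cap\{\Re(z)>0\}$ by uniqueness of analytic continuation, since this set is connected and does not meet $\Gamma_\omega$ (which lies in $\{\Re(z)\le 0\}$). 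Applying this with $\Gamma=\Gamma_\omega$ as in the statement, and using that outside $i\mathcal{R}_\nu(\f_\omega)$ the function $I^{(\ell)}_{\lambda,\omega}$ is already holomorphic on all of $\C\setminus i[\omega\cdot\mathbf{v}(-\omega),\omega\cdot\mathbf{v}(\omega)]$ by Lemma~\ref{l:I-holo-domain} (so the two extensions glue on the overlap), we obtain a holomorphic extension to $\C\setminus\Gamma_\omega$.

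\textbf{Main obstacle.} The genuinely delicate point is not in this corollary itself — once Lemma~\ref{l:mathcal-I} is granted, the present statement is essentially bookkeeping — but in making sure the gluing in the last paragraph is legitimate, i.e.\ that the branch of the singular prefactor selected by the cut $\Gamma_\omega$ coincides on $i\mathcal{R}_\nu(\f_\omega)\cap\{\Re(z)<0\}\setminus\Gamma_\omega$ with the (unique) holomorphic continuation from $\{\Re(z)>0\}$ of the a priori holomorphic function $I^{(\ell)}_{\lambda,\omega}$ furnished by Lemma~\ref{l:I-holo-domain}. This is where the homotopy hypothesis on $\Gamma_\omega$ is used: it guarantees that $i\mathcal{R}_\nu(\f_\omega)\setminus\Gamma_\omega$ is connected and contains $\{\Re(z)>0\}\cap i\mathcal{R}_\nu(\f_\omega)$, so there is at most one holomorphic extension, and the branched formula of Lemma~\ref{l:mathcal-I} provides one. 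The remaining care is purely in tracking the determinations of $\ln$ and of $z^{\frac{d-3}{2}-\ell}$ (equivalently the ${}_{-\pi/2}\sqrt{\,\cdot\,}$ conventions of Remark~\ref{r:log-definition}) consistently through the substitution $z\mapsto z/i$ implicit in~\eqref{link-I-ell}; this is routine but must be done with the sign conventions fixed once and for all.
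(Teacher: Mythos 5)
Your proof is correct and follows essentially the same route as the paper: the displayed formulas are obtained by applying Lemma~\ref{l:mathcal-I} to $G=e^{i\lambda X}F$ (made legitimate by Lemma~\ref{l:product-exp}, at the cost of a harmless shrinking of the analyticity radius), and the continuation across $\Gamma_\omega$ comes from the single-valuedness of the singular prefactors on the cut domain, i.e.\ Lemma~\ref{l:laplace-bessel}~Item~\eqref{i:move-slit} via Remark~\ref{e:extension-box}, glued with the a priori holomorphy of Lemma~\ref{l:I-holo-domain} outside the box. The only nitpick is the phrase ``finite monodromy'' in your last paragraph: for $d$ odd the monodromy group is $\Z$, but what you actually use — triviality of the monodromy along a loop encircling both branch points, hence a single-valued branch once the joining arc is removed — is exactly what the cited lemma provides, so the argument stands.
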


				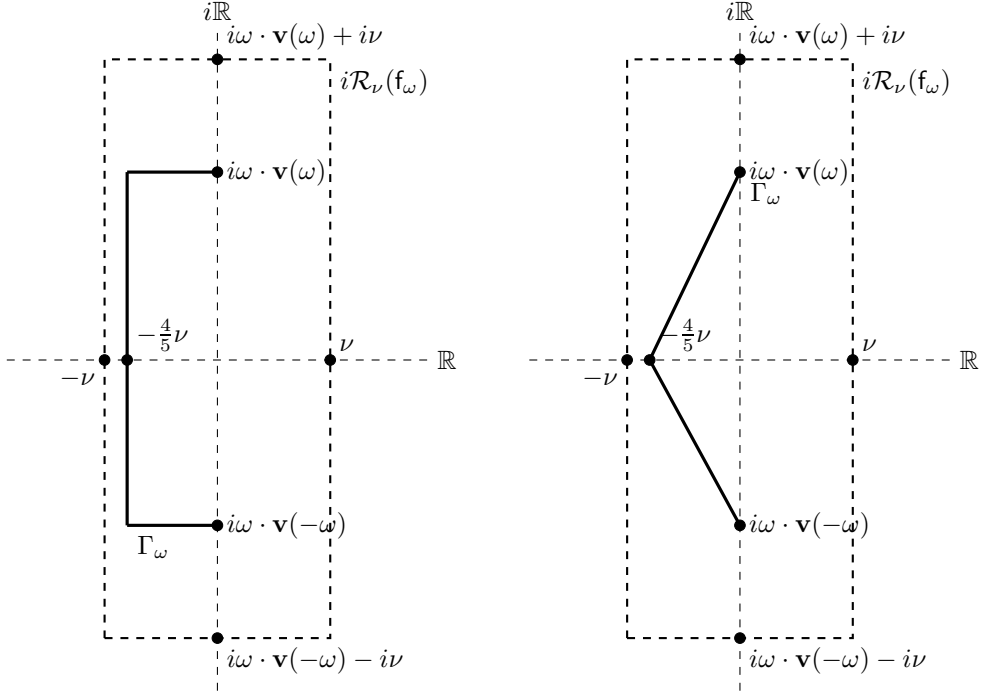
\begin{figure}[!h]
\begin{minipage}[b]{0.4\textwidth} 
		\begin{tikzpicture}
				\draw[dashed] (-2.8,0) -- (2.8,0) node[anchor=west]{$\R$};
				\draw[dashed] (0,-4.4)--(0,4.4) node[above]{$i\R$};
				\filldraw[black] (0,2.5) circle (2pt) node[right]{$i\omega\cdot \mathbf{v}(\omega)$};
				\filldraw[black] (0,-2.2) circle (2pt) node[right]{$i\omega\cdot \mathbf{v}(-\omega)$};
				\filldraw[black] (-1.2,0) circle (2pt) node[anchor=south west]{$-\frac{4}{5}\nu$};
				\draw[very thick] (-1.2,2.5) -- (-1.2,-2.2) node[anchor=north west]{$\Gamma_{\omega}$} ; 
				\draw[very thick]  (-1.2,2.5) -- (0,2.5)  ; 
				\draw[very thick]  (0,-2.2) -- (-1.2,-2.2)  ; 
				\draw[thick,dashed] (-1.5,-3.7) rectangle (1.5,4) node[anchor=north west]{$i\ml{R}_\nu(\f_\omega)$};
				\filldraw[black] (0,4) circle (2pt) node[anchor=south west]{$i\omega\cdot \mathbf{v}(\omega)+i\nu$};
				\filldraw[black] (0,-3.7) circle (2pt) node[anchor=north west]{$i\omega\cdot \mathbf{v}(-\omega)-i\nu$};
				\filldraw[black] (-1.5,0) circle (2pt) node[anchor=north east]{$-\nu$};
				\filldraw[black] (1.5,0) circle (2pt) node[anchor=south west]{$\nu$};
			\end{tikzpicture}
\end{minipage} 
\hspace{0.05\textwidth} 
\begin{minipage}[b]{0.4\textwidth} 
			\begin{tikzpicture}
				\draw[dashed] (-2.8,0) -- (2.8,0) node[anchor=west]{$\R$};
				\draw[dashed] (0,-4.4)--(0,4.4) node[above]{$i\R$};
				\filldraw[black] (0,2.5) circle (2pt) node[right]{$i\omega\cdot \mathbf{v}(\omega)$};
				\filldraw[black] (0,-2.2) circle (2pt) node[right]{$i\omega\cdot \mathbf{v}(-\omega)$};
				\filldraw[black] (-1.2,0) circle (2pt) node[anchor=south west]{$-\frac{4}{5}\nu$};
				\draw[very thick]  (-1.2,0) -- (0,2.5)  node[anchor=north west]{$\Gamma_{\omega}$} ; 
				\draw[very thick]  (0,-2.2) -- (-1.2,0)  ; 
				\draw[thick,dashed] (-1.5,-3.7) rectangle (1.5,4) node[anchor=north west]{$i\ml{R}_\nu(\f_\omega)$};
				\filldraw[black] (0,4) circle (2pt) node[anchor=south west]{$i\omega\cdot \mathbf{v}(\omega)+i\nu$};
				\filldraw[black] (0,-3.7) circle (2pt) node[anchor=north west]{$i\omega\cdot \mathbf{v}(-\omega)-i\nu$};
				\filldraw[black] (-1.5,0) circle (2pt) node[anchor=north east]{$-\nu$};
				\filldraw[black] (1.5,0) circle (2pt) node[anchor=south west]{$\nu$};
			\end{tikzpicture}
\end{minipage} 
			\caption{Two possible curves $\Gamma_{\omega}$ (in thick lines) in the box $i\ml{R}_\nu(\f_\omega)\subset\C$ (the thick dashed square) for the function $\f_\omega=\omega\cdot\mathbf{v}(\theta)$, in the general case in which $\mathbf{v}(-\theta)\neq- \mathbf{v}(\theta)$.}
			\label{f:slit-1}
\end{figure}

Note that in the last statement, illustrated on Figure~\ref{f:slit-1}, we identify the map $\Gamma_\omega$ and its image $\Gamma_\omega([0,1])$. 
The first part of the corollary is a straightforward consequence of Lemma~\ref{l:mathcal-I}.
The holomorphic continuation part 
follows from Remark~\ref{e:extension-box}, itself relying on Lemma~\ref{l:laplace-bessel} Item~\eqref{i:move-slit} in Appendix~\ref{a:branched}. 
Other possible holomorphic continuation statements (e.g. as a multivalued holomorphic Nilsson function) follow from Lemma~\ref{l:laplace-bessel} (see Remark~\ref{e:extension-box}). We do not state them since they are inconvenient to sum in view of applying Corollary~\ref{c:I-} to continue $\mathcal{P}_{K_0}^{<N}$. 

Note that recalling~\eqref{e:hK-fct-v}, we have $\omega\cdot\mathbf{v}(\omega)=h_K(\omega)>0$ and $\omega\cdot\mathbf{v}(-\omega)=-h_K(-\omega)<0$.

\subsection{Analytic continuation of $\mathcal{P}_{K_0}^{<N}$: proof of Proposition~\ref{p:low-freq}}
\label{ss:zeta-low-freq}

From Corollary~\ref{c:I-} together with the holomorphic continuation results it implies, we now deduce the expected results for $\mathcal{P}_{K_0}^{<N}$. We recall the expression of $\mathcal{P}_{K_0}^{<N}$ as a finite sum of terms of the form $I^{(\ell)}_{\lambda,\omega}(z, F,X)$ in~\eqref{e:def-zeta-inf}.

We first deduce from Corollary~\ref{c:I-} that (in addition to the uniform statement) $I^{(\ell)}_{\lambda,\omega}(z, F,X)$ extends as a holomorphic function to 
 the set $\C \setminus \Gamma_\omega$, where $\Gamma_\omega\subset i\mathcal{R}_{\nu}(\f_\omega) \cap\{ \Re(z) \leq 0\}$ is any curve satisfying the statement of Corollary~\ref{c:I-}.
 As a consequence, for any given $\xi\in\Z^d\setminus\{0\}$,  we deduce that $s \mapsto I_{|\xi|,\frac{\xi}{|\xi|}}\left(\frac{s}{|\xi|},\Omega_\ell,-\frac{\xi}{|\xi|}\cdot x_{K_0} \right)$ extends to 
  $\C \setminus |\xi| \Gamma_{\frac{\xi}{|\xi|}}$, where, according to~\eqref{e:shape-cut}
   $$
  |\xi|\Gamma_{\frac{\xi}{|\xi|}} \subset i|\xi|\mathcal{R}_{\nu}(\f_{\frac{\xi}{|\xi|}}) \cap\{ \Re(z) \leq 0\} = \ml{V}_{\xi,\nu} .
  $$
 Now notice that any curve $\mathscr{C}_{\xi}$ satisfying the requirements of the statement of the proposition may be written as $\mathscr{C}_{\xi}=  |\xi|\Gamma_{\frac{\xi}{|\xi|}}$ where $\Gamma_\omega\subset i\mathcal{R}_{\nu}(\f_\omega) \cap\{ \Re(z) \leq 0\}$ is curve satisfying the statement of Corollary~\ref{c:I-}.
 Summing over all $\xi \in \Z^d, 0<|\xi|<N$ and recalling the expression of $\mathcal{P}_{K_0}^{<N}$ in~\eqref{e:def-zeta-inf} concludes the proof of the first part of the proposition.
Concerning the second part of the proposition, for any given $\xi\in\Z^d\setminus\{0\}$, the appropriate explicit expression of $s \mapsto I_{|\xi|,\frac{\xi}{|\xi|}}\left(\frac{s}{|\xi|},\Omega_\ell,-\frac{\xi}{|\xi|}\cdot x_{K_0} \right)$ in a neighborhood of $i\xi \cdot  \mathbf{v}(\pm\frac{\xi}{|\xi|}) +(-\nu|\xi|,0]$ also follows from Corollary~\ref{c:I-}.
Summing over all $\xi \in \Z^d, 0<|\xi|<N$ and recalling the expression of $\mathcal{P}_{K_0}^{<N}$ in~\eqref{e:def-zeta-inf} concludes the proof of the proposition, noticing that
\begin{itemize}
\item  there is at most a finite number of $\xi \in \Z^d$ such that  $\xi \cdot\mathbf{v}\Big(\frac{\xi}{|\xi|}\Big) = \xi_0 \cdot\mathbf{v}\Big(\pm \frac{\xi_0}{|\xi_0|}\Big)$;
\item in Corollary~\ref{c:I-}, the exponent of the singularity is $\frac{d-3}{2}-\ell$ for $\ell \in \{ 0, \dots , d-1\}$ and hence the most singular power in  the expression of $\mathcal{P}_{K_0}^{<N}$ is $\frac{d-3}{2}-(d-1) = -\frac{d+1}{2}$.
\end{itemize}
This concludes the proof of Proposition~\ref{p:low-freq}.
  \hfill \qedsymbol \endproof

\section{Analyzing the high frequency terms and proof of Proposition~\ref{p:highfrequency}}
\label{s:high-freq}

The goal of this section is to complete the proofs of Proposition~\ref{p:highfrequency} by analyzing the high frequency function $\mathcal{P}_{K_0}^{\geq N}(s)$ defined in~\eqref{e:def-zeta-sup}. Recall from Lemma~\ref{l:decomp-zeta-I} that this part of the Poincar\'e series can be written as a series
\begin{align*}
 \mathcal{P}_{K_0}^{\geq N}(s)& =\frac{1}{(2\pi)^d}\sum_{|\xi|\geq N}\sum_{\ell=0}^{d-1}\sum_{j=0}^\ell \frac{\ell!}{j!} \mathbf{I}_{\ell,j}(s,\xi), \quad \text{with }\\
\mathbf{I}_{\ell,j}(s,\xi)&:=\int_{\R}\chi_1'(t)t^j e^{-st}\left(\int_{\IS^{d-1}}\frac{\Omega_\ell(\theta) e^{i\xi\cdot(t\mathbf{v}(\theta)+x_{K_0}(\theta))}}{\left(s-i\xi\cdot\mathbf{v}(\theta)\right)^{\ell+1-j}}\right)dt,\quad \xi\in\mathbb{Z}^d, |\xi|\geq N ,
\end{align*}
where $\chi_1'$ is compactly supported in $(\kappa_1,2\kappa_1)$ and $0\leq j\leq\ell\leq d-1$. As a consequence of Lemma~\ref{l:coarea} with $\f(\theta)=\xi\cdot\mathbf{v}(\theta)$, this integral can be rewritten as
$$
\mathbf{I}_{\ell,j}(s,\xi)=\int_{\R}\chi_1'(t)t^j e^{-st}\left(\int_{\xi\cdot\mathbf{v}(-\xi)}^{\xi\cdot\mathbf{v}(\xi)}\frac{e^{it\tau}}{(s-i\tau)^{\ell+1-j}}\mathcal{J}\left(\frac{\tau}{|\xi|},\frac{\xi}{|\xi|}\cdot\mathbf{v},\Omega_\ell e^{i\xi\cdot x_{K_0}}\right) d\tau \right)dt,
$$
where $\mathcal{J}$ is the function defined in~\eqref{e:def-J}. In particular, it is analytic and it satisfies the estimates from Lemma~\ref{l:desintegration}. After $d$ integration by parts in the time variable, one has
$$
\mathbf{I}_{\ell,j}(s,\xi)=\int_{\R}(\chi_1'(t)t^j)^{(d)} e^{-st}\left(\int_{\xi\cdot\mathbf{v}(-\xi)}^{\xi\cdot\mathbf{v}(\xi)}\frac{e^{it\tau}}{(s-i\tau)^{\ell+d+1-j}}\mathcal{J}\left(\frac{\tau}{|\xi|},\frac{\xi}{|\xi|}\cdot\mathbf{v},\Omega_\ell e^{i\xi\cdot x_{K_0}}\right) d\tau \right)dt.
$$
Note that the number $d$ of integration by parts is chosen so that to obtain the convergent series $\sum_{|\xi| \geq N}|\xi|^{-d-1}$ in the end of the argument.
We now fix some (small) parameter $\epsilon_0>0$ and we split this integral into three parts:
\begin{align}
\mathbf{I}_{\ell,j}(s,\xi) & =  \mathbf{I}_{\ell,j}^{(+ 1)}(s,\xi) +  \mathbf{I}_{\ell,j}^{(- 1)}(s,\xi) +  \mathbf{I}_{\ell,j}^{(0)}(s,\xi) ,  \quad \text{with} \nonumber \\
\label{e:close-critical}
 \mathbf{I}_{\ell,j}^{(\pm 1)}(s,\xi)&:=\int_{\R}(\chi_1'(t)t^j)^{(d)} e^{-st}\left(\int_{\xi\cdot\mathbf{v}(\pm\xi)\mp\epsilon_0|\xi|}^{\xi\cdot\mathbf{v}(\pm\xi)}\frac{e^{it\tau}}{(s-i\tau)^{\ell+d+1-j}}\mathcal{J}\left(\frac{\tau}{|\xi|},\frac{\xi}{|\xi|}\cdot\mathbf{v},\Omega_\ell e^{i\xi\cdot x_{K_0}}\right) d\tau \right)dt, \\
\label{e:far-critical}
 \mathbf{I}_{\ell,j}^{(0)}(s,\xi)&: =\int_{\R}(\chi_1'(t)t^j)^{(d)} e^{-st}\left(\int_{\xi\cdot\mathbf{v}(-\xi)+\epsilon_0|\xi|}^{\xi\cdot\mathbf{v}(\xi)-\epsilon_0|\xi|}\frac{e^{it\tau}}{(s-i\tau)^{\ell+d+1-j}}\mathcal{J}\left(\frac{\tau}{|\xi|},\frac{\xi}{|\xi|}\cdot\mathbf{v},\Omega_\ell e^{i\xi\cdot x_{K_0}}\right) d\tau \right)dt.
\end{align}

We begin with $\mathbf{I}_{\ell,j}^{(\pm 1)}(s,\xi)$ which is slightly easier to handle. Let $s\in\mathbb{C}$ such that $|\text{Im}(s)|\leq\delta_0 N$ for some small enough $\delta_0>0$ to be determined. On the interval of integration in the $\tau$-variable, one has $|s-i\tau|\geq c_1|\xi|-\delta_0N$ for some constant $c_1>0$ depending only on the convex set $K$ and on $\epsilon_0>0$. Unfolding the argument in \S\ref{ss:level-lines}, one finds then
$$
\left|\int_{\xi\cdot\mathbf{v}(\pm\xi)\mp\epsilon_0|\xi|}^{\xi\cdot\mathbf{v}(\pm\xi)}\frac{e^{it\tau}}{(s-i\tau)^{\ell+d+1-j}}\mathcal{J}\left(\frac{\tau}{|\xi|},\frac{\xi}{|\xi|}\cdot\mathbf{v},\Omega_\ell e^{i\xi\cdot x_{K_0}}\right) d\tau \right|\leq\frac{\int_{\IS^{d-1}}|\Omega_\ell(\theta)|d\text{Vol}_{\IS^{d-1}}(\theta)}{(c_1|\xi|-\delta_0N)^{\ell+d+1-j}}.
$$
As $\ell+d+1-j\geq d+1$, $c_1|\xi|-\delta_0 N \geq |\xi|(c_1-\delta_0)$ for $|\xi|\geq N$, and recalling that $\chi_1$ is compactly supported, this yields a uniformly convergent sum in $|\xi|$ and we find that the  sum 
$$
\sum_{|\xi|\geq N}\sum_{\ell=0}^{d-1}\sum_{j=0}^{\ell} \sum_\pm \frac{\ell!}{j!} \mathbf{I}_{\ell,j}^{(\pm 1)}(s,\xi)
$$
is holomorphic in the strip $\{|\text{Im}(s)|\leq \delta_0 N\}$ provided that  $\delta_0$ is chosen so that $\delta_0\in(0,c_1)$. Observe that this step of the argument does not require any analyticity of the convex sets (only finite regularity is required).

Hence, the proof of Proposition~\ref{p:highfrequency} reduces to proving the analytic continuation of
\begin{align}
\label{e:PK0-tilde}
\widetilde{\mathcal{P}}_{K_0}^{\geq N}(s)=\sum_{|\xi|\geq N}\sum_{\ell=0}^{d-1}\sum_{j=0}^{\ell} \frac{\ell!}{j!}\mathbf{I}_{\ell,j}^{(0)}(s,\xi).
\end{align}
Again, we analyze each term individually and, to that aim, we introduce the following (counterclockwisely oriented, see Figure~\ref{f:high-frequency-1}) curve
\begin{multline*}
\widetilde{\mathscr{C}}_\xi:=\left\{ \text{Re}(z)=\xi\cdot\mathbf{v}(\pm\xi)\mp \epsilon_0|\xi|,\ \text{Im}(z)\in[0,\epsilon_0|\xi|]\right\}\\
\cup\left\{ \text{Re}(z)\in [\xi\cdot\mathbf{v}(-\xi)+\epsilon_0|\xi|,\xi\cdot\mathbf{v}(\xi)-\epsilon_0|\xi|],\ \text{Im}(z)=\epsilon_0 |\xi|\right\}.
\end{multline*}

				\begin{figure}[!h]
		\begin{tikzpicture}
				\draw[dashed] (-5,0) -- (5,0) node[anchor=west]{$\R$};
				\draw[dashed] (0,-1)--(0,3) node[above]{$i\R$};
				\filldraw[black] (-2.2,0) circle (2pt) node[anchor=north]{\ \ \ $\xi\cdot \mathbf{v}(-\xi)+\epsilon_0|\xi|$};
				\filldraw[black] (2.5,0) circle (2pt) node[anchor=north]{$\xi\cdot \mathbf{v}(\xi)-\epsilon_0|\xi|$\ \ \ };
				\draw[->, very thick] (-2.2,1.7) -- (2.5,1.7) node[anchor=north west]{$\tilde{\mathscr{C}}_{\xi}$} ; 
				\draw[->, very thick]   (-2.2,0) -- (-2.2,1.7)  ; 
				\draw[->, very thick]   (2.5,1.7) -- (2.5,0)  ; 
				\filldraw[black] (4,0) circle (2pt) node[anchor=south west]{$\xi\cdot \mathbf{v}(\xi)$};
				\filldraw[black] (-3.7,0) circle (2pt) node[anchor=south east]{$\xi\cdot \mathbf{v}(-\xi)$};
				\filldraw[black] (0,1.7) circle (2pt) node[anchor=north east]{$i\epsilon_0|\xi|$};
			\end{tikzpicture}
				\caption{Deformation of the interval $\big[\xi\cdot \mathbf{v}(-\xi)+\epsilon_0|\xi|, \xi\cdot \mathbf{v}(\xi)-\epsilon_0|\xi|\big]$ into the contour $\tilde{\mathscr{C}}_{\xi}$, where the variable $z$ lives.}
				\label{f:high-frequency-1}
\end{figure}
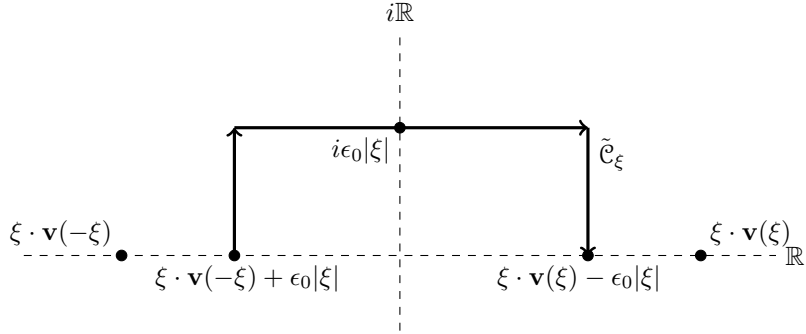
According to Lemma~\ref{l:desintegration}, the function $\tau \mapsto \mathcal{J}\left(\frac{\tau}{|\xi|},\frac{\xi}{|\xi|}\cdot\mathbf{v},\Omega_\ell e^{i\xi\cdot x_{K_0}}\right)$ is holomorphic on a neighborhood of the rectangle surrounded by $\widetilde{\mathscr{C}}_\xi$ and $[\xi\cdot\mathbf{v}(-\xi),\xi\cdot\mathbf{v}(\xi)]$. Hence, for $\Re(s)>0$, the integrand in the $\tau$-variable in~\eqref{e:far-critical} is holomorphic on the same region and we can thus change the contour of integration in $\tau$ from $[\xi\cdot\mathbf{v}(-\xi),\xi\cdot\mathbf{v}(\xi)]$ to  $\widetilde{\mathscr{C}}_\xi$, namely
\begin{align}
\label{e-last-I0}
\mathbf{I}_{\ell,j}^{(0)}(s,\xi)=\int_{\R}(\chi_1'(t)t^j)^{(d)} e^{-st}\left(\int_{\widetilde{\mathscr{C}}_\xi}\frac{e^{itz}}{(s-iz)^{\ell+d+1-j}}\mathcal{J}\left(\frac{z}{|\xi|},\frac{\xi}{|\xi|}\cdot\mathbf{v},\Omega_\ell e^{i\xi\cdot x_{K_0}}\right) dz \right)dt.
\end{align}
The integral in $t$ is over a compact interval, so we only need to provide a bound on the integral on $\widetilde{\mathscr{C}}_\xi$, which is summable in $\xi \in \Z^d,|\xi|\geq N$. To this aim, we will need the following property.
\begin{lemma}\label{l:tricky} With the above conventions, there exists a constant $C_1>0$ depending only on $R$, $K_0$, $\mathbf{v}$ and $\epsilon_0$ such that, for all $N>1$, for all $|\xi|\geq N$ and for all $z\in\tilde{\mathscr{C}}_\xi$,
$$
\left|\mathcal{J}\left(\frac{z}{|\xi|},\frac{\xi}{|\xi|}\cdot\mathbf{v},\Omega_\ell e^{i\xi\cdot x_{K_0}}\right) \right|
 \leq 2C_1\|\Omega_\ell\|_R e^{C_1|\operatorname{Im}(z)|}.
 $$ 
\end{lemma}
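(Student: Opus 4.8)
The plan is to combine the quantitative analyticity of $\mathcal{J}$ from Lemma~\ref{l:desintegration} with the explicit geometry of the contour $\widetilde{\mathscr{C}}_\xi$. First I would recall that $\mathcal{J}\left(\frac{\cdot}{|\xi|},\frac{\xi}{|\xi|}\cdot\mathbf{v},\Omega_\ell e^{i\xi\cdot x_{K_0}}\right)$ is (after rescaling) the function $\mathcal{J}(\cdot,\f_{\xi/|\xi|},G)$ with $G=\Omega_\ell e^{i\xi\cdot x_{K_0}}$, so I need to control $\|G\|_{R'}$ for suitable $R'$. By Lemma~\ref{l:product-exp} (or directly Lemma~\ref{c:product} and Corollary~\ref{c:exponential}), one has $\|\Omega_\ell e^{i\xi\cdot x_{K_0}}\|_{R'}\leq C_{R',R} e^{C_{R',R}|\xi|\,\|\tfrac{\xi}{|\xi|}\cdot x_{K_0}\|_{R}}\|\Omega_\ell\|_{R}$, and since $\|\tfrac{\xi}{|\xi|}\cdot x_{K_0}\|_R$ is bounded uniformly in $\xi$ (the unit vectors $\xi/|\xi|$ range over the compact $\IS^{d-1}$ and $x_{K_0}\in\mathcal{A}_R^d$), this gives $\|G\|_{R'}\leq C e^{C|\xi|}\|\Omega_\ell\|_R$ for a constant depending only on $R,R',K_0$.

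Next I would apply the Taylor expansion of $\mathcal{J}$ at a real base point. Fix $z\in\widetilde{\mathscr{C}}_\xi$ and write $w=z/|\xi|$; then $\Re(w)=\tau_0/|\xi|$ lies in the rescaled interval $(\min\f_{\xi/|\xi|}+\epsilon_0-\text{small},\max\f_{\xi/|\xi|}-\epsilon_0+\text{small})$ by the very definition of $\widetilde{\mathscr{C}}_\xi$, so $\dist(\Re(w),\{\min\f,\max\f\})\geq \epsilon_0$ (up to harmless constants depending on $c_0$ in Lemma~\ref{l:geometric-lemma}). Hence Lemma~\ref{l:desintegration} gives $|\mathcal{J}^{(k)}(\Re(w))|\leq \|G\|_{R'} k! (R')^{-k}\big(C(R,\mathbf{v})/\epsilon_0\big)^{k+1}$. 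Summing the Taylor series with $|w-\Re(w)|=|\Im(w)|=|\Im(z)|/|\xi|\leq \epsilon_0/|\xi|$ (since $\Im(z)\in[0,\epsilon_0|\xi|]$ on $\widetilde{\mathscr{C}}_\xi$), I get
\[
\left|\mathcal{J}(w,\f_{\xi/|\xi|},G)\right|\leq \frac{C(R,\mathbf{v})}{\epsilon_0}\|G\|_{R'}\sum_{k\geq 0}\left(\frac{C(R,\mathbf{v})}{\epsilon_0 R'}\,|\Im(w)|\right)^{k},
\]
which converges provided $\Im(w)$ is small enough, i.e. provided $\epsilon_0$ is chosen small (depending on $R,R',\mathbf{v}$) so that $C(R,\mathbf{v})\epsilon_0/(\epsilon_0 R')<1/2$, say; actually since $|\Im(w)|\leq\epsilon_0/|\xi|\leq\epsilon_0$, it suffices to take $\epsilon_0$ small relative to $R'$ and $C(R,\mathbf{v})$. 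This bounds $|\mathcal{J}(w,\dots)|$ by $\frac{2C(R,\mathbf{v})}{\epsilon_0}\|G\|_{R'}\leq C_1\|\Omega_\ell\|_R e^{C|\xi|}$.

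The gap between this and the statement is the exponential factor $e^{C|\xi|}$ versus the claimed $e^{C_1|\Im(z)|}$: I would need to be more careful and extract the dependence on $|\Im(z)|$ rather than $|\Im(w)|$. The point is that the factor $e^{C|\xi|}$ comes from $\|e^{i\xi\cdot x_{K_0}}\|_{R'}$, but actually only the imaginary part of $w$ matters for the size of the holomorphic extension of $\mathcal{J}$, while the real oscillation $e^{i\xi\cdot x_{K_0}}$ along the real level set contributes no growth at all (it is a unimodular function on $\IS^{d-1}$). So the correct bookkeeping is: decompose the level-set integral defining $\mathcal{J}(w)$, extend only in the direction transverse to the real level set by an amount proportional to $|\Im(w)|$, and absorb the $e^{i\xi\cdot x_{K_0}}$ as a bounded (by $1$ on the reals, by $e^{C|\xi||\Im(w)|}=e^{C|\Im(z)|}$ on the complexification) factor. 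Concretely I would revisit the proof of Lemma~\ref{l:desintegration}: the holomorphic extension of the $(d-2)$-form $\iota_{Y_\f}(G\,\Vol)$ is needed only on a polydisc of radius $\sim\epsilon_0 R'/|\xi|$ around real points, on which $|e^{i\xi\cdot \widetilde{x_{K_0}}(z)}|\leq e^{\|\xi\|\cdot \mathrm{Im}\,\text{part}}\leq e^{C_1\epsilon_0}$ uniformly, and the $w$-Taylor expansion then needs radius $|\Im(w)|$, contributing the factor $e^{C_1|\xi|\,|\Im(w)|}=e^{C_1|\Im(z)|}$. \textbf{The main obstacle} is precisely this separation: showing that the exponential growth is governed by $|\Im(z)|$ and not by $|\xi|$, which requires tracking that the $|\xi|$-dependent oscillation lives tangentially to the contour (where it stays bounded) while only the normal excursion of size $|\Im(z)|/|\xi|$ triggers growth when multiplied back by $|\xi|$. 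Once this is done, plugging the bound of Lemma~\ref{l:tricky} into \eqref{e-last-I0} and using $|s-iz|\geq c|\xi|$ on $\widetilde{\mathscr{C}}_\xi$ (for $|\Im s|<\delta_0 N$), $\ell+d+1-j\geq d+1$, and the compact support of $\chi_1$ together with the exponential decay $e^{-st}$ on $\Re(s)>-\delta_0 N$ absorbing $e^{C_1|\Im z|}\leq e^{C_1\epsilon_0|\xi|}$ only on the short vertical pieces of $\widetilde{\mathscr{C}}_\xi$ — where $t$ stays in $(\kappa_1,2\kappa_1)$ so $e^{-st}$ contributes $e^{2\kappa_1|\Re s|}$, harmless — yields a bound $O(|\xi|^{-d-1})$ on $\mathbf{I}^{(0)}_{\ell,j}(s,\xi)$, hence convergence of \eqref{e:PK0-tilde} and holomorphy in $\Omega_{\delta_0}(N)$, completing Proposition~\ref{p:highfrequency}.
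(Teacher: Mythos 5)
Your corrected argument is essentially the paper's proof: the paper makes the choice $R'=R'(z)\propto|\Im(z)|/|\xi|$ in Lemma~\ref{l:desintegration} explicit (see~\eqref{e:choice-R'}), so that the Taylor series of $\mathcal{J}$ at $\Re(z)/|\xi|$ sums to a factor $2$, while $\|e^{i\xi\cdot x_{K_0}}\|_{\mathcal{H}_{2R'}}\le e^{C|\xi|R'}=e^{C|\Im(z)|}$ because the imaginary part of the holomorphic extension of the real-valued $x_{K_0}$ on a polydisc of radius $2R'$ is $O(R')$. The obstacle you flag --- that the exponential must be governed by $|\Im(z)|$ rather than by $|\xi|$ --- is resolved exactly by this $z$-dependent shrinking of the analyticity radius, which is what your final paragraph describes.
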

The proof of this Lemma is postponed to the end of this section and we first explain how it allows to conclude. Indeed, we infer from this Lemma that
$$
\left|\frac{e^{itz}}{(s-iz)^{\ell+d+1-j}}\mathcal{J}\left(\frac{z}{|\xi|},\frac{\xi}{|\xi|}\cdot\mathbf{v},\Omega_\ell e^{i\xi\cdot x_{K_0}}\right)\right|\leq \frac{2C_1\|\Omega_\ell\|_R e^{C_1|\text{Im}(z)|-t\text{Im}(z)}}{|s-iz|^{\ell+d+1-j}}.
$$
In particular, if we pick $\text{Im}(z)\geq 0$ and $t\geq \kappa_1$, one finds
$$
\left|\frac{e^{itz}}{(s-iz)^{\ell+d+1-j}}\mathcal{J}\left(\frac{z}{|\xi|},\frac{\xi}{|\xi|}\cdot\mathbf{v},\Omega_\ell e^{i\xi\cdot x_{K_0}}\right)\right|\leq \frac{2C_1\|\Omega_\ell\|_R e^{-(\kappa_1-C_1)|\text{Im}(z)|}}{|s-iz|^{\ell+d+1-j}}.
$$
Hence, fixing $\kappa_1>C_1$, the upper bound is of the form $2C_1\|\Omega_\ell\|_R |s-iz|^{-\ell-d-1+j}.$

Observe now that, for $z \in \widetilde{\mathscr{C}}_\xi$ (hence  $i z \in i\widetilde{\mathscr{C}}_\xi$) and for $s\in\{\text{Re}(s)\geq -\delta_0N, |\text{Im}(s)|\leq\delta_0N\}$, one has $|s-iz| \geq |\epsilon_0|\xi|- \delta_0N| \geq |\xi|(\epsilon_0-\delta_0)$ (see also Figure~\ref{f:high-frequency-2}).
\begin{figure}
		\begin{tikzpicture}
				\draw[dashed] (-2.8,0) -- (2.8,0) node[anchor=west]{$\R$};
				\draw[dashed] (0,-4.4)--(0,4.4) node[above]{$i\R$};
				\filldraw[black] (0,2.5) circle (2pt) node[right]{$i(\xi\cdot \mathbf{v}(\xi)-\epsilon_0|\xi|)$};
				\filldraw[black] (0,-2.2) circle (2pt) node[right]{$i(\xi\cdot \mathbf{v}(-\xi)+\epsilon_0|\xi|)$};
				\draw[very thick] (-1.7,2.5) -- (-1.7,-2.2) node[anchor=north west]{$i\tilde{\mathscr{C}}_{\xi}$} ; 
				\draw[very thick]  (-1.7,2.5) -- (0,2.5)  ; 
				\draw[very thick]  (0,-2.2) -- (-1.7,-2.2)  ; 
				\filldraw[black] (0,4) circle (2pt) node[right]{$i\xi\cdot \mathbf{v}(\xi)$};
				\filldraw[black] (0,-3.7) circle (2pt) node[right]{$i\xi\cdot \mathbf{v}(-\xi)$};
				\filldraw[black] (-1.7,0) circle (2pt) node[anchor=north east]{$-\epsilon_0|\xi|$};
				\filldraw[black] (0,-1.2) circle (2pt) node[anchor=north west]{$-i\delta_0 N$};
				\filldraw[black] (-1.2,0) circle (2pt) node[anchor=north west]{$-\delta_0 N$};
				\filldraw[black] (0,1.2) circle (2pt) node[anchor=south west]{$i\delta_0 N$};
				\draw[pattern=north west lines, pattern color=blue] (-1.2,-1.2) rectangle (1.2,1.2);
			\end{tikzpicture}
		\caption{Region where $s$ lives (striped) and contour $i\tilde{\mathscr{C}}_{\xi}$ where $iz$ lives.}
			\label{f:high-frequency-2}
\end{figure}
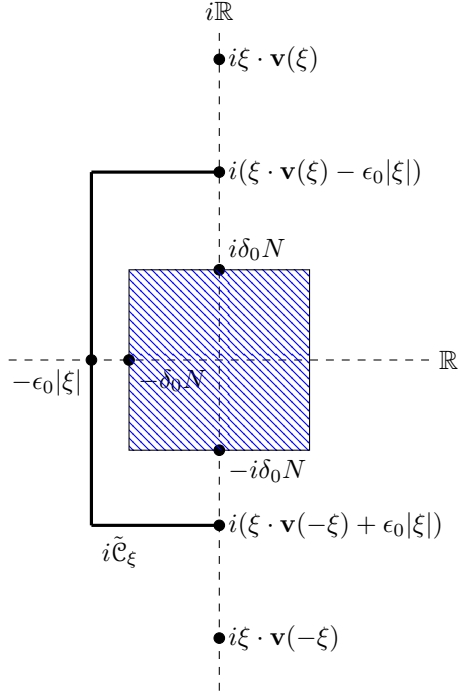
Hence, recalling that $\ell+d+1-j\geq d+1$, we obtain in~\eqref{e-last-I0} that $|\mathbf{I}_{\ell,j}^{(0)}(s,\xi)| \leq C|\xi|^{-d-1}$, hence an absolutely convergent contribution in~\eqref{e:PK0-tilde} which is thus a holomorphic function in the $s$ variable. This completes the proof of Proposition~\ref{p:highfrequency}.
  \hfill \qedsymbol \endproof

\begin{proof}[Proof of Lemma~\ref{l:tricky}]
Thanks to Lemma~\ref{l:desintegration}, one knows that, for all $z \in \widetilde{\mathscr{C}}_\xi$ and $R'\in (0,R/4)$ ($R'$ to be fixed later on wisely), 
\begin{multline}\label{e:Linfty-norm-J}
 \left|\mathcal{J}\left(\frac{z}{|\xi|},\frac{\xi}{|\xi|}\cdot\mathbf{v},\Omega_\ell e^{i\xi\cdot x_{K_0}}\right) \right| = \left| \sum_{k=0}^\infty \left(i\frac{\Im(z)}{|\xi|}\right)^k \frac{\mathcal{J}^{(k)}}{k!}\left(\frac{\Re(z)}{|\xi|},\frac{\xi}{|\xi|}\cdot\mathbf{v},\Omega_\ell e^{i\xi\cdot x_{K_0}}\right) \right|  \\
 \leq \|\Omega_\ell e^{i\xi\cdot x_{K_0}}\|_{R'}\sum_{k=0}^\infty \left(\frac{|\text{Im}(z)|}{R'|\xi|}\right)^k\left(\frac{C(R,\mathbf{v})}{\min\left\{\frac{\xi}{|\xi|}\cdot\mathbf{v}\left(\frac{\xi}{|\xi|}\right)-\epsilon_0,\frac{\xi}{|\xi|}\cdot\mathbf{v}\left(-\frac{\xi}{|\xi|}\right)+\epsilon_0\right\}}\right)^{k+1}.
\end{multline}
From Lemma~\ref{c:product}, one finds that $\|\Omega_\ell e^{i\xi\cdot x_{K_0}}\|_{R'}\leq C(R)\|\Omega_\ell\|_R\|e^{i\xi\cdot x_{K_0}}\|_{2R'}$ for some constant $C(R)>0$ depending only on $R$. Thanks to Lemma~\ref{l:comparison-norm}, one also has $\|e^{i\xi\cdot x_{K_0}}\|_{2R'}\leq\|e^{i\xi\cdot x_{K_0}}\|_{\mathcal{H}_{2R'}}$. Now, as  $R'\in (0,R/4)$ can be chosen arbitrarily, we fix $z \in \widetilde{\mathscr{C}}_\xi$ such that $\text{Im}(z)\neq 0$ and the radius $R'$ to be equal to 
\begin{align}
\label{e:choice-R'}
R'= R'(z) := \frac{2 C(R,\mathbf{v})|\text{Im}(z)|}{|\xi|\min\left\{\frac{\xi}{|\xi|}\cdot\mathbf{v}\left(\frac{\xi}{|\xi|}\right)-\epsilon_0,\frac{\xi}{|\xi|}\cdot\mathbf{v}\left(-\frac{\xi}{|\xi|}\right)+\epsilon_0\right\}} ,
\end{align}
 which implies 
\begin{align}
\label{e:Linfty-norm-J-bis}
\left|\mathcal{J}\left(\frac{z}{|\xi|},\frac{\xi}{|\xi|}\cdot\mathbf{v},\Omega_\ell e^{i\xi\cdot x_{K_0}}\right) \right|
 \leq 2C_R\|\Omega_\ell\|_R\| e^{i\xi\cdot x_{K_0}}\|_{\mathcal{H}_{2R'}}.
 \end{align}
Recalling the definition of the $\mathcal{H}_{2R'}$ norm in~\eqref{e:define-HR-norm}, we now estimate the term $$
\| e^{i\xi\cdot x_{K_0}}\|_{\mathcal{H}_{2R'}} =\sum_{j=1}^{n_0}\left\| e^{i\xi\cdot \tilde{x}_{K_0,M_j}}  \right\|_{L^\infty(\mathcal{U}_{2R'})} ,
$$
where $\tilde{x}_{K_0,M_j}$ is a holomorphic extension of the function $x_{K_0}$ in chart $M_j$ to the polydic-shaped domain in~\eqref{e:def-UR}. We estimate each term in the sum and remove the index $M_j$ for readability. On the one hand, we have 
\begin{align}
\label{e:intermediate-expo-i}
\left\| e^{i\xi\cdot \tilde{x}_{K_0} } \right\|_{L^\infty(\mathcal{U}_{2R'})} 
= \sup_{\zeta \in \mathcal{U}_{2R'}}\left| e^{i\xi\cdot \tilde{x}_{K_0}(\zeta)}\right| 
\leq \sup_{\zeta \in \mathcal{U}_{2R'}} e^{|\xi| | \Im( \tilde{x}_{K_0} )(\zeta)|} = e^{|\xi| \| \Im( \tilde{x}_{K_0})\|_{L^\infty(\mathcal{U}_{2R'})}} .
\end{align}
On the other hand, we have for $\zeta \in \mathcal{U}_{2R'}$
$$
\tilde{x}_{K_0} (\zeta) = \tilde{x}_{K_0} (\Re(\zeta)) + i \Im(\zeta)\cdot \int_0^1 d \tilde{x}_{K_0} \big(\Re(\zeta)+ i\sigma \Im(\zeta)\big) d\sigma ,
$$
and hence, again for $\zeta \in \mathcal{U}_{2R'}$ (which implies for $|\Im(\zeta_j)| \leq 2R'$ for all $j\in \{1,\dots n\}$), 
\begin{align*}
\| \Im( \tilde{x}_{K_0})\|_{L^\infty(\mathcal{U}_{2R'})}
&  = \left\|  \Im(\zeta)\cdot \int_0^1 d \tilde{x}_{K_0} \big(\Re(\zeta)+ i\sigma \Im(\zeta)\big) d\sigma  \right\|_{L^\infty(\mathcal{U}_{2R'})}  \\
& \leq CR' \|\tilde{x}_{K_0}  \|_{L^\infty(\mathcal{U}_{3R'})} \leq  CR' \|x_{K_0}  \|_{R/2} , 
\end{align*}
for constants $C>1$ depending only on $K_0$ and $R$. Recalling the choice of $R'=R'(z)$ in~\eqref{e:choice-R'} and the estimate~\eqref{e:intermediate-expo-i}, this implies existence of a constant $C(K_0,K,R)>1$ depending only on $K_0$, $K$ and $R$ such that for all $|\xi|\geq N$, 
$$
\| e^{i\xi\cdot x_{K_0}}\|_{\mathcal{H}_{2R'}} \leq e^{C(K_0,K,R)|\text{Im}(z)|} .
$$
Hence,~\eqref{e:Linfty-norm-J-bis} becomes, for $N>1$ large enough and all $|\xi|\geq N$,
$$
\left|\mathcal{J}\left(\frac{z}{|\xi|},\frac{\xi}{|\xi|}\cdot\mathbf{v},\Omega_\ell e^{i\xi\cdot x_{K_0}}\right) \right|
 \leq 2C(R,\mathbf{v},\epsilon_0)\|\Omega_\ell\|_R e^{C(K_0,K,R)|\text{Im}(z)|},
 $$
for some constant $C(R,\mathbf{v},\epsilon_0)$ depending only on $R$, $\mathbf{v}$ and $\epsilon_0$.
\end{proof}

\appendix

\section{Brief reminder on multivalued holomorphic functions}\label{a:branched}
\label{s:remainder-multivalued}

When taking $\IC\setminus\IR_-$ for the domain of the holomorphic extension for $\ln(z)$ or $\sqrt{z}$, one makes a somehow arbitrary choice of simply connected domain. One way to fix this arbitrary choice is to introduce the more adapted notion of multivalued analytic functions. Another (essentially equivalent) way consists in defining the proper Riemann surface associated with functions involving several logarithmic (or squareroot) singularities. We briefly review these notions in this appendix.

\subsection{Multivalued holomorphic functions: definitions}
\label{s:multivalued-holom-fcts-def}

We follow the approach of~\cite{CandelpergherNosmasPham93,Ebeling07}. For every open subset $U\subset \mathbb{C}$, we denote by $\mathcal{O}(U)$ the algebra of holomorphic functions on $U$ and for every $a\in U$, $\mathcal{O}_a$ denotes the algebra of holomorphic germs at $a$. Recall that two holomorphic functions $f$ and $g$ defined in a neighborhood of $a$ give the same holomorhic germ at $a$ if there exists $r>0$ such that $f=g$ on $D(a,r)$ (or equivalently if $f^{(k)}(a) = g^{(k)}(a)$ for all $k \in \N$). Using these conventions, we can define the notion of analytic continuation along paths~\cite[\S1.4, p.27]{Ebeling07}:

\begin{definition}[Analytic continuation along paths]
\label{d:analytic-path}
Let $\gamma:[0,1]\rightarrow\IC$ be a continuous curve connecting $(a,b)\in\mathbb{C}^2$, that is to say such that $\gamma(0)=a$ and $\gamma(1)=b$ and let $f\in \mathcal{O}_a$ be a germ of holomorphic function at $a$.
Then, given a holomorphic germ at $b$, $g \in \mathcal{O}_b$, we say that $g$ is obtained from analytic continuation of $f$ along $\gamma$ if there exists a subdivision
$$0=t_0<t_1<t_2\ldots<t_{p}=1$$
of $[0,1]$, open sets $(U_i)_{i=1}^p$ of $\mathbb{C}$ such that $\gamma([t_{i-1},t_{i}])\subset U_i$, holomorphic functions $g_i\in \mathcal{O}(U_i)$ such that
\begin{enumerate}
 \item $g_i=g_{i-1}$ on the connected component of $U_{i-1}\cap U_i$ containing $\gamma(t_i)$;
 \item one has
 $$f=g_1\ \text{in}\ \mathcal{O}_a,\quad\text{and}\quad g=g_p\ \text{in}\ \ml{O}_b.$$
\end{enumerate}
We say that $f$ can be analytically continued along $\gamma$ if there is such a $g \in \mathcal{O}_b$ obtained from analytic continuation of $f$ along $\gamma$.
\end{definition}
We emphasize that we may have $a=b$ but not necessarly $g=f$ in $\ml{O}_a$. The most important fact on these analytic continuations along path is the following (see~\cite[Prop.~1.21]{Ebeling07} or~\cite[Th\'eor\`eme~1.7 p84]{CandelpergherNosmasPham93}).
\begin{proposition}[Monodromy Theorem] Let $(a,b)\in\mathbb{C}^2$. Let $\gamma$ be a continuous map $\gamma:(s,t)\in[0,1]^2\mapsto \gamma_s(t)\in\mathbb{C}$ such that, for every $s\in[0,1]$, $\gamma_s(0)=a$ and $\gamma_s(0)=b$. Let $f\in \mathcal{O}_a$. Suppose that, for every $s\in[0,1]$, $f_s\in\mathcal{O}_b$ is an analytic continuation of $f$ along $\gamma_s$. 

Then
$f_s$ is independent of $s$. 
\end{proposition}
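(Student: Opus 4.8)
The statement to prove is the Monodromy Theorem: with $\gamma:[0,1]^2\to\mathbb{C}$ continuous, $\gamma_s(0)=a$, $\gamma_s(1)=b$ for all $s$, and $f_s\in\mathcal{O}_b$ an analytic continuation of the fixed germ $f\in\mathcal{O}_a$ along $\gamma_s$, one wants $f_s$ independent of $s$. The plan is to show that the function $s\mapsto f_s$ (valued in the algebra of germs at $b$) is locally constant on the connected interval $[0,1]$, which forces it to be constant. First I would fix $s_0\in[0,1]$ and produce, via Definition~\ref{d:analytic-path} applied to $\gamma_{s_0}$, a subdivision $0=t_0<\dots<t_p=1$, open sets $U_1,\dots,U_p$ with $\gamma_{s_0}([t_{i-1},t_i])\subset U_i$, and holomorphic functions $g_i\in\mathcal{O}(U_i)$ realizing the continuation, with $g_1=f$ near $a$ and $g_p=f_{s_0}$ near $b$.

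The key step is a uniform-continuity / compactness argument: since $[t_{i-1},t_i]\times\{s_0\}$ is compact and contained in the open set $\gamma^{-1}(U_i)$ (here I view $\gamma$ as a map of two variables), there is $\eta>0$ such that for all $s$ with $|s-s_0|<\eta$ one still has $\gamma_s([t_{i-1},t_i])\subset U_i$ for every $i=1,\dots,p$ simultaneously (take the minimum of finitely many such $\eta$'s). Then for such $s$, the \emph{same} data $(t_i)$, $(U_i)$, $(g_i)$ furnishes an analytic continuation of $f$ along $\gamma_s$: conditions (1) and (2) of Definition~\ref{d:analytic-path} are verified because $g_i=g_{i-1}$ on the connected component of $U_{i-1}\cap U_i$ containing $\gamma_{s_0}(t_i)$, and this component also contains $\gamma_s(t_i)$ once $s$ is close enough to $s_0$ (the points $\gamma_s(t_i)$ form, for $i$ and $s_0$ fixed, a continuous family landing in the open connected component at $s=s_0$, so they stay in it for $|s-s_0|$ small). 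Invoking the uniqueness of analytic continuation along a fixed path — two germs at $b$ obtained as analytic continuation of the same germ at $a$ along the same path coincide, which follows from the identity theorem applied successively on the overlaps $U_{i-1}\cap U_i$ — we conclude $f_s=g_p=f_{s_0}$ in $\mathcal{O}_b$ for all $s$ with $|s-s_0|<\eta$.

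Thus the set $\{s\in[0,1]:f_s=f_{s_0}\ \text{in}\ \mathcal{O}_b\}$ is open; by the same argument applied at any point it is also closed (its complement is a union of such open sets), and it is nonempty. Since $[0,1]$ is connected, this set is all of $[0,1]$, whence $f_s=f_0$ for every $s$, proving the theorem.

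\textbf{Main obstacle.} The delicate point is not the compactness bookkeeping but making precise that ``the connected component of $U_{i-1}\cap U_i$ containing $\gamma_{s_0}(t_i)$ also contains $\gamma_s(t_i)$ for $|s-s_0|$ small,'' and that the prescribed germ-matching condition (1) is genuinely preserved — i.e. that one may reuse the identical functions $g_i$ rather than having to re-extend. This requires observing that $\gamma_s(t_i)$ depends continuously on $s$ and that connected components of an open subset of $\mathbb{C}$ are open, so a point near a point of a component lies in that component. One should also take care that the subdivision and cover chosen for $\gamma_{s_0}$ are held fixed throughout the local argument; it is tempting but unnecessary (and circular) to let them vary with $s$. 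I expect this to be the only real subtlety; everything else is a routine application of the identity theorem and the connectedness of $[0,1]$.
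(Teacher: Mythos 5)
Your proof is correct. The paper does not actually prove this proposition---it is quoted from the literature (Ebeling, Prop.~1.21; Candelpergher--Nosmas--Pham, Th.~1.7)---and your argument is precisely the standard one found there: fix $s_0$, reuse the subdivision, cover and functions $(t_i,U_i,g_i)$ for all $s$ near $s_0$ via compactness of $[t_{i-1},t_i]\times\{s_0\}$ in $\gamma^{-1}(U_i)$ and openness of connected components, conclude by uniqueness of continuation along a fixed path that $s\mapsto f_s$ is locally constant, hence constant on the connected interval $[0,1]$. The only step you leave as a black box is that uniqueness lemma (two continuations of the same germ along the same path, possibly with different subdivisions, yield the same germ at $b$); it requires passing to a common refinement and an open--closed argument in $t$, but it is itself a standard prerequisite and your appeal to it is legitimate.
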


Let $f\in \mathcal{O}_{z_0}$ be some holomorphic germ centered at some base point $z_0$. We denote by $\mathcal{P}_{z_0,z}(f)$, the homotopy classes of the path $\gamma$ starting from $z_0$, ending at $z$ and such that $f$ can be analytically continued along $\gamma$ in the sense of the previous proposition. By the monodromy Theorem there is a natural action of the space $\mathcal{P}_{z_0,z}(f)$ on the germ $f$ such that, for any $[\gamma]\in \mathcal{P}_{z_0,z}(f)$ with endpoint $z$, the element $[\gamma].f$ denotes a germ in $\mathcal{O}_z$ obtained by analytically continuing the germ $f$ along $\gamma$. 

\begin{remark}
For the sake of clarity, we will distinguish the analytic continuation along the path $\gamma$ denoted by $\gamma.f$ and the action of elements $[\gamma]$ of $\mathcal{P}_{z_0,z}(f)$ on the germ $f\in\ml{O}_{z_0}$ which is denoted $[\gamma].f$. 
\end{remark}

By construction, one has
\begin{lemma}[Distributive action of continuation and group action]
Let $f$ and $g$ be two holomorphic germs that can be analytically continued along $\gamma$. Then, $fg$ and $f+g$ can be analytically continued along $\gamma$ and we have the identities
\begin{eqnarray*}
\gamma.(fg)=(\gamma.f)(\gamma.g)\\
\gamma.(f+g)=\gamma.f+\gamma.g
\end{eqnarray*}
\end{lemma}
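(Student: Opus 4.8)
The statement to prove is the ``distributive action'' lemma: if two holomorphic germs $f \in \mathcal{O}_a$ and $g \in \mathcal{O}_a$ can each be analytically continued along a continuous path $\gamma : [0,1] \to \mathbb{C}$ with $\gamma(0) = a$, then so can $fg$ and $f+g$, and moreover $\gamma.(fg) = (\gamma.f)(\gamma.g)$ and $\gamma.(f+g) = \gamma.f + \gamma.g$. The plan is to unwind Definition~\ref{d:analytic-path} for $f$ and for $g$ separately and then produce a \emph{common refinement} of the two subdivisions of $[0,1]$ along which both continuations are realized simultaneously by holomorphic functions on common open sets; on each piece, the algebra operations of $\mathcal{O}(U_i)$ commute with restriction to connected components, which is exactly what is needed.

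First I would apply the definition to $f$: there is a subdivision $0 = t_0 < t_1 < \dots < t_p = 1$, open sets $U_1, \dots, U_p$ with $\gamma([t_{i-1},t_i]) \subset U_i$, and $g_i \in \mathcal{O}(U_i)$ with $g_i = g_{i-1}$ on the connected component of $U_{i-1} \cap U_i$ containing $\gamma(t_i)$, such that $g_1 = f$ in $\mathcal{O}_a$ and $g_p$ represents $\gamma.f$ in $\mathcal{O}_b$. Likewise for $g$ there is a subdivision $0 = s_0 < \dots < s_q = 1$, open sets $V_1, \dots, V_q$, and $h_j \in \mathcal{O}(V_j)$ with the analogous properties. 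Next I would take the common refinement $\{u_0 < u_1 < \dots < u_m\}$ of the two subdivisions (the sorted union of the $t_i$'s and $s_j$'s). For each index $k$, let $i(k)$ and $j(k)$ be such that $[u_{k-1}, u_k] \subset [t_{i(k)-1}, t_{i(k)}] \cap [s_{j(k)-1}, s_{j(k)}]$, and set $W_k := U_{i(k)} \cap V_{j(k)}$; this is an open set containing $\gamma([u_{k-1}, u_k])$. On $W_k$ define $\phi_k := g_{i(k)} h_{j(k)} \in \mathcal{O}(W_k)$ (and $\psi_k := g_{i(k)} + h_{j(k)}$ for the sum).

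The verification then has two routine parts. For the compatibility condition, I must check that $\phi_k = \phi_{k-1}$ on the connected component $C$ of $W_{k-1} \cap W_k$ containing $\gamma(u_k)$. The point is that $C$ is contained in the connected component of $U_{i(k-1)} \cap U_{i(k)}$ containing $\gamma(u_k)$ — because if $i(k-1) = i(k)$ this is automatic, and if $i(k-1) \neq i(k)$ then $u_k$ is one of the original breakpoints $t_i$, and $C$ sits inside $U_{i(k-1)} \cap U_{i(k)}$ which connects through $\gamma(u_k)$ — so by the property of the $g_i$'s one has $g_{i(k)} = g_{i(k-1)}$ on $C$; similarly $h_{j(k)} = h_{j(k-1)}$ on $C$; multiplying (resp.\ adding) gives $\phi_k = \phi_{k-1}$ on $C$. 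For the endpoint conditions, at $a = \gamma(u_0)$ the germs satisfy $g_{i(1)} = f$ and $h_{j(1)} = g$ in $\mathcal{O}_a$, so $\phi_1 = fg$ in $\mathcal{O}_a$; and at $b = \gamma(u_m)$, the germ of $\phi_m$ is the germ of $g_p h_q$, which by definition is $(\gamma.f)(\gamma.g)$. This shows $fg$ continues along $\gamma$ and $\gamma.(fg) = (\gamma.f)(\gamma.g)$; the additive case is identical word for word with products replaced by sums.

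\textbf{Main obstacle.} There is no deep difficulty here — the content is purely bookkeeping with subdivisions and connected components. The one place requiring genuine care is the compatibility step: one must argue that the connected component of $W_{k-1} \cap W_k = (U_{i(k-1)} \cap V_{j(k-1)}) \cap (U_{i(k)} \cap V_{j(k)})$ through $\gamma(u_k)$ lands \emph{inside} the relevant connected components of $U_{i(k-1)} \cap U_{i(k)}$ and of $V_{j(k-1)} \cap V_{j(k)}$ simultaneously, which follows from the trivial fact that a connected subset of an intersection lies in a single connected component of each larger set. Once this is phrased correctly the rest is automatic, and the monodromy theorem quoted just above is not even needed for this particular lemma (it guarantees well-definedness of $\gamma.f$ up to homotopy, which we take as given).
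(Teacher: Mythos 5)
Your proof is correct, and it is exactly the argument the paper has in mind: the paper states this lemma with no proof at all (just ``By construction, one has''), and your common-refinement bookkeeping — intersecting the two chains of open sets, multiplying/adding the local representatives, and checking compatibility on connected components through the breakpoints — is precisely the way to make that ``by construction'' precise. The one step you rightly flag (a connected subset of $W_{k-1}\cap W_k$ lies in a single connected component of each of the larger intersections, so the compatibility relations for the $g_i$'s and $h_j$'s apply simultaneously) is handled correctly, and the implicit use of uniqueness of continuation along a fixed path to identify your constructed germ with $\gamma.(fg)$ is consistent with the paper's notation.
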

Given two continuous paths $\gamma_1$ and $\gamma_2$ such that $\gamma_1(1)=\gamma_2(0)$,  the concatenation path $\gamma:=\gamma_1\gamma_2$ is defined by 
$$\gamma(\tau)=\gamma_1(2\tau)\ \text{if}\ \tau\in[0,1/2],\ \text{and}\ \gamma(\tau)=\gamma_2(2\tau-1)\ \text{if}\ \tau\in[1/2,1].$$
Then the following holds.
\begin{lemma}\label{l:compositionpath} Let $f\in\ml{O}_a$, let $\gamma_1$ be a continuous path connecting $a$ to $b\in\IC$ and let $\gamma_2$ be a continuous path connecting $b$ to $c\in\IC$. If $f\in\ml{O}_a$ can be analytically continued along $\gamma_1$ and if $\gamma_1.f\in\ml{O}_b$ can be analytically continued along $\gamma_2$, then $f$ can be analytically continued along $\gamma_1\gamma_2$ and one has
$$(\gamma_1\gamma_2).f=\gamma_2.(\gamma_1.f).$$ 
\end{lemma}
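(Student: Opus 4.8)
The statement to prove is Lemma~\ref{l:compositionpath}: if $f\in\ml{O}_a$ can be analytically continued along a path $\gamma_1$ from $a$ to $b$, and if the resulting germ $\gamma_1.f\in\ml{O}_b$ can be analytically continued along a path $\gamma_2$ from $b$ to $c$, then $f$ can be continued along the concatenation $\gamma_1\gamma_2$, with $(\gamma_1\gamma_2).f=\gamma_2.(\gamma_1.f)$. The plan is to unwind the definition of analytic continuation along paths (Definition~\ref{d:analytic-path}) and simply glue the two chains of local data. First I would invoke the hypothesis on $\gamma_1$ to obtain a subdivision $0=t_0<\cdots<t_p=1$ of $[0,1]$, open sets $(U_i)_{i=1}^p$ with $\gamma_1([t_{i-1},t_i])\subset U_i$, and holomorphic functions $g_i\in\mathcal{O}(U_i)$ satisfying the compatibility conditions, with $f=g_1$ in $\ml{O}_a$ and $\gamma_1.f=g_p$ in $\ml{O}_b$. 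Similarly, the hypothesis on $\gamma_2$ gives a subdivision $0=s_0<\cdots<s_q=1$ of $[0,1]$, open sets $(V_j)_{j=1}^q$ with $\gamma_2([s_{j-1},s_j])\subset V_j$, and holomorphic functions $h_j\in\mathcal{O}(V_j)$ with $\gamma_1.f=h_1$ in $\ml{O}_b$ and $\gamma_2.(\gamma_1.f)=h_q$ in $\ml{O}_c$.

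The second step is to produce a single adapted subdivision of $[0,1]$ for the concatenation $\gamma_1\gamma_2$. Recall $\gamma:=\gamma_1\gamma_2$ is defined by $\gamma(\tau)=\gamma_1(2\tau)$ on $[0,1/2]$ and $\gamma(\tau)=\gamma_2(2\tau-1)$ on $[1/2,1]$. I would take the subdivision consisting of the points $\tfrac{t_i}{2}$ for $i=0,\dots,p$ together with the points $\tfrac{1+s_j}{2}$ for $j=0,\dots,q$; note these overlap at $1/2=\tfrac{t_p}{2}=\tfrac{1+s_0}{2}$, so after discarding the repetition we get a genuine subdivision $0=\tau_0<\tau_1<\cdots<\tau_{p+q}=1$. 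On the sub-intervals coming from the first half I assign the open set $U_i$ and the function $g_i$ (reparametrized: $\gamma([\tfrac{t_{i-1}}{2},\tfrac{t_i}{2}])=\gamma_1([t_{i-1},t_i])\subset U_i$), and on the sub-intervals coming from the second half I assign $V_j$ and $h_j$. The compatibility condition $(1)$ of Definition~\ref{d:analytic-path} for consecutive pairs within the first block is inherited from the chain for $\gamma_1$, and likewise within the second block from the chain for $\gamma_2$.

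The only new verification, and the place where a little care is needed, is the junction at $\tau=1/2$: one must check that $g_p$ and $h_1$ agree on the connected component of $U_p\cap V_1$ containing $\gamma(1/2)=b$. This is exactly where the hypothesis $\gamma_1.f=h_1$ in $\ml{O}_b$ enters: by construction $\gamma_1.f$ is the germ of $g_p$ at $b$, so $g_p$ and $h_1$ have the same germ at $b$, hence agree on some disc $D(b,r)$, hence (being holomorphic on the connected open set which is the relevant component of $U_p\cap V_1$) agree on that whole component by the identity principle. Thus the glued data satisfy all the requirements of Definition~\ref{d:analytic-path}, showing $f$ continues along $\gamma_1\gamma_2$ and that the terminal germ is the germ of $h_q$ at $c$, which is precisely $\gamma_2.(\gamma_1.f)$. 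I expect no real obstacle here; the mild technical point is bookkeeping the reparametrization of the two subdivisions and confirming the identity-principle argument at the single junction point, which is routine.
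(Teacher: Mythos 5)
Your proof is correct and is exactly the routine gluing argument the paper has in mind: indeed the paper states Lemma~\ref{l:compositionpath} without proof, treating it as immediate from Definition~\ref{d:analytic-path}. Your concatenated subdivision, the inherited compatibility conditions within each block, and the identity-principle argument at the junction point $b$ (where the hypothesis $\gamma_1.f=h_1$ in $\ml{O}_b$ is used) supply precisely the missing verification, including the terminal identification $(\gamma_1\gamma_2).f=\gamma_2.(\gamma_1.f)$.
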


We can now define the notion of multivalued analytic continuation:
\begin{definition}[Multivalued analytic continuation]\label{def:multivalued} Let $U$ be a connected open subset of $\IC$. Let $z_0\in U$ and let $f\in\ml{O}_{z_0}$. 
\begin{enumerate}
 \item We say that $f$ has a {\em multivalued holomorphic extension} to $U$ from $z_0$ if for any continuous path $\gamma:[0,1]\rightarrow U$ such that $\gamma(0)=\gamma(1)=z_0$, $f$ can be analytically continued along $\gamma$ (in the sense of Definition~\ref{d:analytic-path}).
 \item We say that the germ $f\in\ml{O}_{z_0}$ has {\em finite determination} if the vector space 
\begin{align}
\label{e:def-sheaf}
\mathcal{V}_{z_0} := \vect\left\{([\gamma].f )(z_0),[\gamma]\in\mathcal{P}_{z_0}(f) \right\} \subset \mathcal{O}_{z_0}
\end{align}
is a {\em finite dimensional} subspace of $\mathcal{O}_{z_0}$ over $\C$ where $\mathcal{P}_{z_0}(f)=\mathcal{P}_{z_0,z_0}(f)$.
\end{enumerate}
 \end{definition}

\begin{remark}
 From the connectedness of the open set $U$, such a germ $f$ can actually be analytically continued along {\em any} path starting at $z_0$ and ending at $z_1\in U$ (if $\gamma(0)=z_0, \gamma(1)=z_1$, then $\gamma \gamma^{-1}=\Id$, hence according to Definition~\ref{def:multivalued}, $f$ can actually be analytically continued along $\gamma \gamma^{-1}$ and thus along $\gamma$ as a consequence of Definition~\ref{d:analytic-path}.
\end{remark}

The multivalued holomorphic function implicitly defined from the germ $f \in \mathcal{O}_{z_0}$ in Definition~\ref{def:multivalued} is explicitly given by
\begin{equation}
\label{e:multivalued-extension}
F : U \ni z \mapsto \left\{ (\gamma.f)(z) , \gamma \in C^0([0,1];U) , \gamma(0)=z_0, \gamma(1)=z \right\} \in \mathcal{P}(\C)  ,
\end{equation}
 where $(\gamma.f)(z)$ denotes the evaluation of the germ $\gamma.f\in\mathcal{O}_z$ at $z$ and  $\mathcal{P}(\C)$ the set of subsets of $\C$.
  In the following, we will often make the abuse of notation to denote by $f$ the multivalued extension $F$.

  \begin{remark}
We refer to~\cite[Section~1.5]{Ebeling07} for a different definition of branched meromorphic continuation (which includes the definition of the associated Riemann surface).
\end{remark}

\begin{definition}[Multivalued holomorphic function]\label{def:multi-holom}
Let $U$ be a connected open subset of $\IC$.
\begin{enumerate}
 \item We say that $F$ is a {\em multivalued holomorphic function} on $U$ if there is $z_0\in U$ and $f\in\ml{O}_{z_0}$ having a multivalued holomorphic extension to $U$ from $z_0$ such that $F$ is defined by~\eqref{e:multivalued-extension} with respect to $f$.
 \item If $F$ is a multivalued holomorphic function on $U$, we say that it has finite determination if the germ $f\in\ml{O}_{z_0}$ has {\em finite determination} (in the sense of Definition~\ref{def:multivalued}). This property does not depend on $z_0$ and on the germ $f\in\ml{O}_{z_0}$.
\end{enumerate}

\end{definition}

If the germ $f\in\ml{O}_{z_0}$ has finite determination on $U$, then we recall that the vector space $\mathcal{V}_{z_0} \subset \mathcal{O}_{z_0}$ defined in~\eqref{e:def-sheaf} is finite dimensional. For any $[\gamma]\in\pi_1(U,z_0)$, the linear map $\mathcal{T}_{[\gamma]} : \mathcal{V}_{z_0} \to \mathcal{V}_{z_0}$ 
defined by  $\mathcal{T}_{[\gamma]}g : = [\gamma].g$ satisfies $\mathcal{T}_{[\gamma]}\in\GL(\mathcal{V}_{z_0})$. The map
$$
\begin{array}{rcl} 
\mathcal{T}:\pi_1(U,z_0)&\to&\GL(\mathcal{V}_{z_0}),\\
\left[\gamma\right]&\mapsto&\mathcal{T}_{[\gamma]} ,
\end{array}
$$
is then a group homomorphism, and its image defines the monodromy of the couple $(U,f)$.
\begin{definition}[Monodromy]
\label{d:monodromy}
Let $U$ be a connected open subset of $\IC$, let $f$ be a multivalued holomorphic function on $U$, with finite determination, and let $z_0\in U$.
For any $[\gamma] \in \pi_1(U,z_0)$, the map $\mathcal{T}_{[\gamma]} \in \GL(\mathcal{V}_{z_0})$ is called a {\em monodromy transformation} of $(U,f)$, and its matrix in a given basis of $\mathcal{V}_{z_0}$ is called a {\em monodromy matrix} of $(U,f)$.
Finally, the image of this homomorphism $\Ran(\mathcal{T}) \subset \GL(\mathcal{V}_{z_0})$ (i.e. the group of all monodromy transformations) is called the {\em monodromy group} of $(U,f)$.
\end{definition}
Note that, as defined, the monodromy group of $(U,f)$ also depends on the point $z_0$. However, given two different points $z_0,z_1\in U$, the two monodromy groups defined with respect to $z_0$ and $z_1$ are isomorphic and we thus omit the point $z_0$ in the notation.

Let us finally mention that all functions considered in this article belong to the so-called {\em Nilsson class} of multivalued holomorphic functions (see the original article~\cite[Def. p.~463]{Nilsson65} or the textbooks~\cite{Deligne} and~\cite[Ch.~VIII, Def.~1.1]{Pham}). The latter class (the simplest one appearing in singularity theory) corresponds to those multivalued holomorphic function having finite determination in the sense of Definition~\ref{def:multi-holom} and having in addition ``moderate growth'' (i.e. blowing up at most like a power) near any of its (isolated) singular points.

\subsection{A few examples}
\label{ex:monodromyexplicit}
We finally give a few concrete standard examples related to the problem studied in the main part of this article.

\begin{ex}\label{ex:trivial}
If $f\in\ml{O}_{z_0}$ extends as a holomorphic function on the open set $U$, then $f$ has a multivalued holomorphic extension to $U$ (which is single-valued in that case). 
\end{ex}

\begin{ex}\label{ex:log} The germ $\ln(z)$ is a well defined holomorhic function on $\{\operatorname{Re}(z)>0\}$. In particular, $\ln\in\ml{O}_1$. Let now $\gamma:[0,1]\rightarrow\IC^*$ be a continuous path verifying $\gamma(0)=\gamma(1)=1$. In order to make the analytic continuation along this loop, we consider the universal cover of $\mathbb{C}^*$:
$$\exp:\mathbb{C}\rightarrow\mathbb{C}^*.$$
One has $\exp(0)=1$ and there exists a unique lift $\tilde{\gamma}$ of $\gamma$ to $\mathbb{C}$ such that $\tilde{\gamma}(0)=1$. By construction of the universal cover, one has $\tilde{\gamma}(1)=2i\pi k_0$ for some $k_0\in\mathbb{Z}$. The lift of the function $\ln(z)$ to $\mathbb{C}$ (through $\exp$) is given by the identity map which is obviously holomorphic on $\IC$. In particular, it can be analytically continued along $\tilde{\gamma}$ and the projection of this analytic continuation along $\tilde{\gamma}$ gives the expected analytic continuation along $\gamma$ on $\mathbb{C}^*$. Note also that $[\gamma].\ln =\ln+ 2i\pi k_0.$ In particular, if we let $\gamma$ be a representative of a generator of $\pi_1(\mathbb{C}^*,1)\simeq\mathbb{Z}$, then it acts on the 
multivalued extension of $\ln$ as
\begin{eqnarray*}
\boxed{[\gamma]. \ln =\ln +2i\pi\ \text{in}\ \ml{O}_1.}
\end{eqnarray*} 
In particular, it has finite determination as $\mathcal{V}_1$ is spanned by $\ln$ and $1$, and the monodromy matrix in the basis $(\ln,1)$ is given by $\begin{pmatrix}1&0\\ 2i\pi& 1\end{pmatrix}$ since $\mathcal{T}_{[\gamma]}1=1$ and $\mathcal{T}_{[\gamma]}\ln = \ln+2i\pi$. Finally, the extension of $\ln$ on $\IC\setminus\IR_-$ could be set through the explicit formula (consequence of $\tan(\theta/2)=\frac{\sin\theta}{\cos\theta+1}$)
$$
z\mapsto \ln(|z|)+2i \operatorname{arctan}\left(\frac{\Im(z)}{\Re(z)+|z|}\right).
$$
This directly yields an expression for the logarithm on $\C\setminus e^{-i\theta}\R_-$ by letting $z\mapsto \ln(e^{i\theta}z)-i\theta.$
\end{ex}

\begin{ex}\label{ex:squareroot}
The germ $\sqrt{z-a}$ near $a+1$ has a multivalued holomorphic continuation from $\{\operatorname{Re}(z)>\operatorname{Re}(a)\}$ to $\mathbb{C}\setminus\{a\}$. Indeed, one has $\sqrt{z-a}=\exp\frac{1}{2}\ln(z-a)$. In particular, the holomorhic continuation along closed paths in $\IC\setminus\{a\}$ follows from the the holomorhic continuation of $\ln$ along closed paths in $\IC^*$ that was discussed in example~\ref{ex:log}. Still according to this example, if $\gamma$ is a generator of $\pi_1(\mathbb{C}\setminus\{a\},a+1)$, then it acts on the multivalued extension of $\sqrt{z-a}$ as
\begin{eqnarray*}
\boxed{[\gamma]. \sqrt{z-a}=-\sqrt{z-a}.}
\end{eqnarray*} 
Again, it has finite determination (equal to $\dim \mathcal{V}_{1+a}=1$, with monodromy matrix given by $-\Id$).
\end{ex}

\subsection{Riemann surface associated to a multivalued holomorphic function}

Given a multivalued holomorphic function, we would like to describe briefly how one can obtain a natural Riemann surface following~\cite[p.~85-87]{CandelpergherNosmasPham93}, \cite[p.~33-34]{de2016uniformization}.   
If we are given some holomorphic germ $f\in \mathcal{O}_a$, then 
we consider the set of all paths $\gamma$ starting from $a$ such that $f$ has well defined analytic continuation along $\gamma$. Denote by $U\subset \mathbb{C}$ the open subset containing all possible ends of such paths and define the 
\emph{\'etale space}: 
\begin{eqnarray*}
\mathcal{S}=\{ [\gamma].f:\ \gamma \in \mathcal{P}_{a, z}(f),\ z\in U\} .
\end{eqnarray*}
The reader should think that intuitively, 
$$\mathcal{S}=\{ \gamma.f \text{ where $\gamma$ is a path in $\mathbb{C}$ along which $f$ admits an analytic continuation} \},$$
with the natural projection $\pi:\mathcal{S}\mapsto U $ mapping every holomorphic germ $\gamma. f$ to its center which is the endpoint $z\in \mathbb{C}$ of the path $\gamma$.  
Then, it is proved in~\cite{CandelpergherNosmasPham93, de2016uniformization}  that $\mathcal{S}$ has a natural structure of a Riemann surface which is the Riemann surface associated to the germ $f$. We refer to these references for further informations.

\subsection{Integral expression of $\ln z$ and $\sqrt{z}$} Two natural multivalued holomorphic functions in the Nilsson class appear in our Theorem, namely $\ln(z)$ and $\sqrt{z}$. The following elementary lemma describing the singularities of certain integrals is the central result in the description of the singularities of the multiplication operator and the Poincar\'e series in Section~\ref{s:low-freq}. The functions $\mathsf{F}_\gamma$ first appear (after a rescaling) in Proposition~\ref{p:main-I}.
We refer to Remark~\ref{r:log-definition} for the convention notation regarding $\ln$ and $\sqrt{\cdot}$.

\begin{lemma}\label{l:laplace-bessel} 
We set
 \begin{align}
 \label{e:def-Fgamma}
  \mathsf{F}_\gamma(\zeta) := \int_{-1}^1\frac{(1-t^2)^\gamma}{\zeta-t} dt,\quad \zeta\notin[-1,1] .
\end{align}
\begin{enumerate}
\item \label{i:explicit-012}  We have
\begin{align*}
 \mathsf{F}_0(\zeta)&= \int_{-1}^1\frac{1}{\zeta-t} dt  = \ln(\zeta+1)-\ln(\zeta-1), \quad \text{ for all }\zeta >1,  \\
 \mathsf{F}_{-\frac12}(\zeta)&= \int_{-1}^1\frac{(1-t^2)^{-\frac12}}{\zeta-t} dt  =  \frac{\pi}{(\zeta+1)^{\frac12}(\zeta-1)^{\frac12}}, \quad \text{ for all } \zeta>1.
\end{align*}
\item \label{i:recurrence} For all $\gamma >-1$, there is $C_\gamma>0$ such that 
 \begin{align*}
  \mathsf{F}_{\gamma+1}(\zeta) =  (1-\zeta^2)  \mathsf{F}_{\gamma}(\zeta) +C_{\gamma} \zeta  , \quad \text{ for all }\zeta \in \C \setminus[-1,1] .
  \end{align*}
  \item \label{i:straight-induct} For any $d \in \N, d \geq 2$, there is a nontrivial polynomial $P_d$ with real coefficients such that 
 \begin{align}
 \label{e:fonctions-speciales-1}
  \mathsf{F}_{\frac{d-3}{2}}(\zeta) & =  (1-\zeta^2)^{\frac{d-3}{2}} \big( \ln(\zeta+1)-\ln(\zeta-1) \big)  + P_d(\zeta)  , \quad \text{ for }d\geq 3 \text{ odd}, \\
   \label{e:fonctions-speciales-2}
  \mathsf{F}_{\frac{d-3}{2}}(\zeta)&  =  (-1)^{\frac{d-2}{2}}\pi (\zeta+1)^{\frac{d-3}{2}}(\zeta-1)^{\frac{d-3}{2}}  + P_d(\zeta)  , \quad \text{ for }d\geq 2 \text{ even} .
  \end{align}
 \item  \label{i:move-slit} For any $\gamma>-1$, fix $\zeta_0 \in \C\setminus [-1,1]$, and let $\Gamma \in C^1_{pw}([0,1],\C \setminus\{\zeta_0\})$ (the set of continuous and piecewise $C^1$ path) be such that $\Gamma(0) = -1$ and $\Gamma(1)=1$ and $\Gamma([0,1])$ is {\em strictly homotopic} (in the sense that the homotopy $(\Gamma_s(t))_{s,t\in [0,1]}$ satisfies $\Gamma_s(0) = -1$ and $\Gamma_s(1)=1$ for all values of the homotopy parameter $s \in [0,1]$) to $[-1,1]$ in $\C \setminus\{\zeta_0\}$. 
Then, the function $\mathsf{F}_{\gamma}$ admits a (unique) analytic continuation from $\zeta_0$ to $\C \setminus \Gamma([0,1])$ (in the sense of Definition~\ref{d:analytic-path}), explicitly given by $$\mathsf{F}_\gamma(\zeta) = \int_\Gamma \frac{(1-t^2)^\gamma}{\zeta-t} dt.$$
\item \label{i:Nilsson-F} For any $d \in \N, d \geq 2$, $\zeta_0 \in \C\setminus [-1,1]$, the function $\mathsf{F}_{\frac{d-3}{2}}$ admits a (unique) {\em multivalued holomorphic extension} from $\zeta_0$ to $\C\setminus \{- 1,1\} $, as a function of Nilsson class.  
 Moreover, the monodromy group of $\mathsf{F}_{\frac{d-3}{2}}$ is $\mathbb{Z}/2\mathbb{Z} $ if $d$ is even with monodromy relation $$\gamma_{+1}\gamma_{-1}\mathsf{F}_{\frac{d-3}{2}} =\gamma_{\pm 1}^2 \mathsf{F}_{\frac{d-3}{2}}=\mathsf{F}_{\frac{d-3}{2}} $$
and the monodromy group of $\mathsf{F}_{\frac{d-3}{2}}$ is $\mathbb{Z}$ if $d$ is odd with monodromy relation 
$$ \gamma_{\pm 1}\mathsf{F}_{\frac{d-3}{2}}=\mathsf{F}_{\frac{d-3}{2}}\pm 2i\pi(1-\zeta^2)^{\frac{d-3}{2}} ,$$
where $\gamma_{\pm 1}$ is a path in $\mathbb{C}\setminus\{-1,1\}$ of index $1$ around $\pm 1$.
  \end{enumerate}
\end{lemma}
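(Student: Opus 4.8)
\textbf{Plan of proof for Lemma~\ref{l:laplace-bessel}.}
The plan is to establish the five items essentially in the order stated, since each relies on the previous ones. For Item~\eqref{i:explicit-012}, I would simply compute the two elementary integrals: for $\mathsf{F}_0$ the primitive of $1/(\zeta-t)$ is $-\ln(\zeta-t)$, giving $\ln(\zeta+1)-\ln(\zeta-1)$ for real $\zeta>1$; for $\mathsf{F}_{-1/2}$ the substitution $t=\sin\varphi$, $\varphi\in(-\pi/2,\pi/2)$, reduces the integral to $\int_{-\pi/2}^{\pi/2} d\varphi/(\zeta-\sin\varphi)$, which is a standard rational-in-$\tan(\varphi/2)$ integral evaluating to $\pi/\sqrt{\zeta^2-1} = \pi (\zeta+1)^{-1/2}(\zeta-1)^{-1/2}$ for $\zeta>1$. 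For Item~\eqref{i:recurrence}, I would write $(1-t^2)^{\gamma+1} = (1-t^2)(1-t^2)^\gamma$ and then use the polynomial division $\frac{1-t^2}{\zeta-t} = (t+\zeta) + \frac{1-\zeta^2}{\zeta-t}$, so that
$$
\mathsf{F}_{\gamma+1}(\zeta) = \int_{-1}^1 (t+\zeta)(1-t^2)^\gamma\,dt + (1-\zeta^2)\mathsf{F}_\gamma(\zeta).
$$
The term $\int_{-1}^1 t(1-t^2)^\gamma\,dt$ vanishes by oddness, and $\int_{-1}^1 (1-t^2)^\gamma\,dt =: C_\gamma>0$ (a Beta integral), giving the claimed recurrence with the linear term $C_\gamma\zeta$; this identity holds on $\C\setminus[-1,1]$ by analytic continuation of both sides.

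Item~\eqref{i:straight-induct} follows from Items~\eqref{i:explicit-012} and~\eqref{i:recurrence} by induction on $d$, treating even and odd $d$ separately. For $d$ odd, starting from $\mathsf{F}_0(\zeta) = \ln(\zeta+1)-\ln(\zeta-1)$ and applying the recurrence $(d-3)/2$ times multiplies the logarithmic part by successive factors $(1-\zeta^2)$ while adding polynomial corrections at each step (the products $(1-\zeta^2)^k$ times the linear remainder terms are polynomials); one checks that the resulting polynomial $P_d$ is nontrivial by comparing leading behaviour as $\zeta\to\infty$ (e.g. $\mathsf{F}_\gamma(\zeta)\sim c/\zeta$ while $(1-\zeta^2)^{(d-3)/2}(\ln(\zeta+1)-\ln(\zeta-1))$ has a different asymptotic expansion). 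For $d$ even, the same induction starting from $\mathsf{F}_{-1/2}(\zeta) = \pi(\zeta+1)^{-1/2}(\zeta-1)^{-1/2}$ and noting $(1-\zeta^2)\cdot(\zeta+1)^{-1/2}(\zeta-1)^{-1/2}$ carries the sign factor $(-1)$, accumulating to $(-1)^{(d-2)/2}$; here one must be a little careful about which determination of the square root is used, but since the statement is about $\zeta>1$ (or real $\zeta$ on the appropriate side) all roots are positive reals and the signs are transparent. The nontriviality of $P_d$ is again checked by an asymptotic comparison at $\infty$.

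Items~\eqref{i:move-slit} and~\eqref{i:Nilsson-F} are the part requiring the most care — this is the main obstacle. For Item~\eqref{i:move-slit}, the idea is that $\mathsf{F}_\gamma(\zeta) = \int_{-1}^1 (1-t^2)^\gamma/(\zeta-t)\,dt$ is, for $\zeta$ off $[-1,1]$, the Cauchy-type integral of a fixed $L^1$ density on the segment; deforming the contour of integration from $[-1,1]$ to a homotopic path $\Gamma$ avoiding a point $\zeta_0$ changes nothing as long as $\zeta_0$ is not crossed, by Cauchy's theorem applied to the holomorphic integrand $t\mapsto (1-t^2)^\gamma/(\zeta-t)$ in the region swept by the homotopy (the endpoints $\pm 1$ are fixed, and $(1-t^2)^\gamma$ is integrable there since $\gamma>-1$; one uses a limiting argument to handle the mild singularity, cutting out $\varepsilon$-neighborhoods of $\pm 1$ and letting $\varepsilon\to 0$). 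Thus $\zeta\mapsto \int_\Gamma (1-t^2)^\gamma/(\zeta-t)\,dt$ is holomorphic on $\C\setminus\Gamma([0,1])$ and agrees with $\mathsf{F}_\gamma$ near $\zeta_0$, giving the desired analytic continuation; uniqueness is standard. For Item~\eqref{i:Nilsson-F}, the multivalued holomorphic extension follows by combining Item~\eqref{i:move-slit} (which lets us continue along any path avoiding $\{-1,1\}$, choosing $\Gamma$ appropriately homotopic) with the monodromy Theorem; the Nilsson-class property and the moderate-growth bound near $\pm 1$ follow from the explicit formulae in Item~\eqref{i:straight-induct}, since the singular parts are $(1-\zeta^2)^{(d-3)/2}\ln(\cdot)$ (odd $d$) or $(\zeta\mp 1)^{(d-3)/2}$ (even $d$), both of moderate growth. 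The monodromy relations are read off from Examples~\ref{ex:log} and~\ref{ex:squareroot}: circling $\pm 1$ once changes $\ln(\zeta\mp 1)$ by $\pm 2i\pi$ (odd case, giving the $\gamma_{\pm 1}\mathsf{F}_{(d-3)/2} = \mathsf{F}_{(d-3)/2}\pm 2i\pi(1-\zeta^2)^{(d-3)/2}$ relation and monodromy group $\Z$), or changes the half-integer power $(\zeta\mp 1)^{(d-3)/2}$ by a sign $(-1)^{d-3}=-1$ (even case, giving monodromy group $\Z/2\Z$ with $\gamma_{\pm 1}^2\mathsf{F}_{(d-3)/2} = \mathsf{F}_{(d-3)/2}$), noting that in the even case the product $\gamma_{+1}\gamma_{-1}$ picks up two sign changes and hence acts trivially. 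The determination space $\mathcal{V}_{\zeta_0}$ is spanned by $\mathsf{F}_{(d-3)/2}$ together with $(1-\zeta^2)^{(d-3)/2}$ (odd case) or is one-dimensional (even case), confirming finite determination.
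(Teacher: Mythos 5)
Your proposal is correct and follows essentially the same route as the paper's proof: elementary evaluation of $\mathsf{F}_0$ and $\mathsf{F}_{-1/2}$, the division identity $1-t^2=(\zeta-t)(\zeta+t)+(1-\zeta^2)$ plus oddness for the recurrence, induction for Item~\eqref{i:straight-induct}, contour deformation with an $\varepsilon$-regularization at $\pm1$ for Item~\eqref{i:move-slit}, and reading off the monodromy from the explicit formul\ae{} for Item~\eqref{i:Nilsson-F}. The only (cosmetic) difference is that you evaluate $\mathsf{F}_{-1/2}$ via the $\tan(\varphi/2)$ substitution where the paper converts to a contour integral over the unit circle and applies the residue theorem.
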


\begin{remark}
\label{r:lemme-pret-a-utiliser}
According to Remark~\ref{r:log-definition} (in which the functions $\ln_\theta$, $_\theta\sqrt{\cdot}$ are defined), we also deduce from~\eqref{e:fonctions-speciales-1}--\eqref{e:fonctions-speciales-2}, that for all $\theta \in [-\pi,\pi]$, for all $\zeta$ belonging to either the upper or the lower half-plane (depending on the cut of $\ln_\theta$ on $e^{-i\theta}\R_-$),
 \begin{align*}
  \mathsf{F}_{\frac{d-3}{2}}(\zeta) & =  (1-\zeta^2)^{\frac{d-3}{2}} \big( \ln_\theta(\zeta+1)-\ln_\theta(\zeta-1) \big)  + P_d(\zeta)  , \quad \text{ for }d\geq 3 \text{ odd}, \\
  \mathsf{F}_{\frac{d-3}{2}}(\zeta)&  =   \pi (-1)^{\frac{d-2}{2}}  {}_\theta\sqrt{\zeta+1}^{ d-3}  {}_\theta\sqrt{\zeta-1}^{d-3}  + P_d(\zeta)  , \quad \text{ for }d\geq 2 \text{ even} .
  \end{align*}
The present remark is used with $\theta=\frac{\pi}{2}$ in the statement and proof of Proposition~\ref{p:main-I}. In this setting, recalling the explicit expression  of  $\ln_{\frac{\pi}{2}}$ and $_{\frac{\pi}{2}}\sqrt{\cdot}$ in Remark~\ref{r:log-definition},
 it states that for all $\zeta\in \C$ with $\Im(\zeta)>0$, we have 
  \begin{align*}
  \mathsf{F}_{\frac{d-3}{2}}(\zeta) & =  (1-\zeta^2)^{\frac{d-3}{2}} \big( \ln(i\zeta+i)-\ln(i\zeta-i) \big)  + P_d(\zeta)  , \quad \text{ for }d\geq 3 \text{ odd}, \\
  \mathsf{F}_{\frac{d-3}{2}}(\zeta)&  =   \pi (-1)^{\frac{d-2}{2}} (-i)^{d-3} \sqrt{i\zeta+i}^{d-3}\sqrt{i\zeta-i}^{d-3}  + P_d(\zeta)  , \quad \text{ for }d\geq 2 \text{ even} .
  \end{align*}
The present remark is also used with $\theta=-\frac{\pi}{2}$ in the statement of Theorem~\ref{t:maintheo-multiplication}.

Note finally from Item~\eqref{i:move-slit} combined with~\eqref{e:fonctions-speciales-1}--\eqref{e:fonctions-speciales-2}, that we can actually consider the functions 
$\zeta \mapsto (1-\zeta^2)^{\frac{d-3}{2}} \big( \ln(\zeta+1)-\ln(\zeta-1) \big)$ for $d$ odd and $\zeta \mapsto(\zeta+1)^{\frac{d-3}{2}}(\zeta-1)^{\frac{d-3}{2}}$ for $d$ even as holomorphic functions in $\C \setminus \Gamma$, where $\Gamma \in C^1_{pw}([0,1],\C \setminus\{\zeta_0\})$ is any path with $\Gamma(0) = -1$ and $\Gamma(1)=1$ and $\Gamma([0,1])$ is {\em strictly homotopic} to $[-1,1]$. 
This furnishes a holomorphic continuation statement of these special functions in the main results of the article.

\end{remark}

\begin{proof}[Proof of Lemma~\ref{l:laplace-bessel}]
The first statement of Item~\ref{i:explicit-012} follows from the fact that for $\zeta\in\R$ such that $\zeta> 1$, we have
$$
 \mathsf{F}_0(\zeta)= \int_{-1}^1\frac{1}{\zeta-t} dt = - \int_{-1}^1\d_t\big(\ln(\zeta-t)\big)  dt  =- \big( \ln (\zeta-1)- \ln(\zeta+1)\big) .
$$
Concerning the second statement, we set $t=\sin\tau$ to obtain, for $ \zeta > 1$,
\begin{align*}
 \mathsf{F}_{-\frac12}(\zeta) &= \int_{-\frac{\pi}{2}}^{\frac{\pi}{2}} \frac{d\tau}{\zeta-\sin(\tau)} = \frac{1}{2} \int_{0}^{2\pi} \frac{d\tau}{\zeta-\sin(\tau)} .
 \end{align*}
 Denoting by $\gamma =\{e^{i\tau}, \tau \in [0,2\pi]\}$ the unit circle and setting $z=e^{i\tau}$ whence $\frac{dz}{z}=id\tau$, we deduce 
\begin{align*}
 \mathsf{F}_{-\frac12}(\zeta) & =\frac{1}{2i} \int_{\gamma} \frac{1}{\zeta-\frac{z-z^{-1}}{2i}}\frac{dz}{z}
=-\int_{\gamma} \frac{1}{z^2-2iz\zeta-1}dz\\
&=-\int_\gamma \frac{dz}{(z-i\zeta + i\sqrt{\zeta^2-1})(z-i\zeta -i\sqrt{\zeta^2-1 })} .
 \end{align*}
Since $\zeta - \sqrt{\zeta^2-1} < 1$ for $\zeta >1$ the Cauchy residue formula then yields
\begin{align*}
 \mathsf{F}_{-\frac12}(\zeta) &=- \frac{2i\pi}{i\zeta -i\sqrt{\zeta^2-1 }-i\zeta -i\sqrt{\zeta^2-1 }  } =\frac{2i\pi}{2i\sqrt{\zeta^2-1 }}=\frac{\pi}{\sqrt{\zeta^2-1}} ,
\end{align*}
which concludes the proof of Item~\ref{i:explicit-012}.

 \medskip
 We now prove Item~\ref{i:recurrence}. 
Recalling~\eqref{e:def-Fgamma} and writing $1-t^2 = (\zeta-t)(\zeta+t)+(1-\zeta^2)$, we obtain
 \begin{align*}
  \mathsf{F}_{\gamma+1}(\zeta) & = \int_{-1}^1(1-t^2)^\gamma \frac{(1-t^2)}{\zeta-t} dt  =  \int_{-1}^1(1-t^2)^\gamma (\zeta+t) dt  + (1-\zeta^2) \int_{-1}^1(1-t^2)^\gamma \frac{1}{\zeta-t} dt \\
  & = C_{\gamma} \zeta  + (1-\zeta^2)  \mathsf{F}_{\gamma}(\zeta) , \quad \text{ with }C_{\gamma}=\int_{-1}^1(1-t^2)^\gamma dt >0  , 
  \end{align*}
 where we have used that $ \int_{-1}^1(1-t^2)^\gamma t dt =0$ since the function $(1-t^2)^\gamma t$ is odd.

  \medskip
 An induction argument in Item~\ref{i:recurrence} shows that for all $k \in \N^*$, 
 $$
 \mathsf{F}_{\gamma+k}(\zeta) = (1-\zeta^2)^k  \mathsf{F}_{\gamma}(\zeta) + (-1)^{k-1} C_\gamma\zeta^{2k-1} + R_k(\zeta),\quad\text{with}\quad R_k\in\R_{2k-2}[X].
 $$ 
 This, together with Item~\ref{i:explicit-012} and the fact that $C_\gamma>0$ proves Item~\ref{i:straight-induct}.
 
 \medskip
As for Item~\ref{i:move-slit}, assume first that $\gamma \in \N$. Then the function $t \mapsto (1-t^2)^\gamma$ is holomorphic in $\C$. 
As a consequence, there exists $\eps>0$ such that for all $\zeta \in \C, |\zeta-\zeta_0| <\eps$, the function $t \mapsto \frac{(1-t^2)^\gamma}{\zeta-t}$ is holomorphic in a neighborhood of $\bigcup_{s\in [0,1]}\Gamma_s([0,1])$. The classical deformation argument (see e.g.~\cite[Theorem~5.1 p93]{SteinShak}) then shows that, for all $s\in[0,1]$,
$$
  \mathsf{F}_{\gamma}(\zeta) = \int_{\Gamma_s} \frac{(1-t^2)^\gamma}{\zeta-t} dt   ,\quad \text{ for all } \zeta \in \C, |\zeta-\zeta_0| <\eps .
$$
The expression in the right-hand side defines a holomorphic function in $\C \setminus \Gamma_s([0,1])$, coinciding with $\mathsf{F}_{\gamma}$ on $\zeta \in \C, |\zeta-\zeta_0| <\eps$. Analytic continuation concludes the proof of the lemma in this case.

If we now assume that $\gamma \notin \N$ but $\gamma \geq 0$, then the same proof works except that $t \mapsto (1-t^2)^\gamma$ is not holomorphic in a neighborhood of $\bigcup_{s\in [0,1]}\Gamma_s([0,1])$.
It is however holomorphic in the interior of $\bigcup_{s\in [0,1]}\Gamma_s([0,1])$, and continuous on $\bigcup_{s\in [0,1]}\Gamma_s([0,1])$. The proof then goes through without any further modification. 

Finally, if $\gamma \in (-1,0)$, $t \mapsto (1-t^2)^\gamma$ is no longer continuous at $\pm 1$. it is however integrable on any path ending at/starting from the points $\pm1$, and thus a regularization argument (if we remove an $\eps$-neighborhood of these two points from the deformation of contour argument, the contribution of the parts removed converges to zero as $\eps\to 0$) shows that the proof goes through as in the previous two cases.

\medskip
Finally, Item~\ref{i:Nilsson-F} is a direct consequence of the explicit expression of $\mathsf{F}_{\frac{d-3}{2}}$ in Item~\ref{i:straight-induct} together with the monodromy of the functions $\ln$ and $\sqrt{\cdot}$ described in Appendix~\ref{ex:monodromyexplicit}. When $d$ is even, the determination of $\mathsf{F}_{\frac{d-3}{2}}$ is equal to $2$ since $P_d(z)\neq 0$. and the monodromy group is equal to $\mathbb{Z}/2\mathbb{Z}$. Similarly, when $d$ is odd, the determination is equal to $2$ and the monodromy is equal to $\mathbb{Z}$.

\end{proof}

\section{A quantitative Morse lemma in the analytic category} 
\label{ss:analyticMorse}

The goal of this appendix is to state and prove the Morse lemma in the analytic category with some attention paid to the parameters.  
Our proof is sketched in~\cite[p.~6-7]{zoladek2006} and also is an analytic adaptation of the proof in \cite[Lemma 6.C.1 p.~502]{hormander2007analysis}.

\begin{theorem}\label{theo:morselemmauniform}
Let $U \subset \R^{d-1}$ be an open set with $0\in U$ and $f_\omega:U\subset \mathbb{R}^{d-1}\to \mathbb{R}$ be a family of real-analytic functions parametrized by $\omega$ in some compact space $K\subset \mathbb{C}^N$ such that $\forall\omega\in K$,
$0$ is a critical point of $f_\omega$ with Hessian $d^2f_\omega(0)$ definite positive.
Then there exist $\rho>0$, an open set $U_\rho$ with $0\in U_\rho \subset U$ and, for any $\omega \in K$, a real-analytic diffeomorphism $\phi_\omega:B_{\R}^{d-1}(0,\rho) \to U_\rho$ depending analytically on $\omega\in K$ such that 
$$f_\omega\circ \phi_\omega(x)=f_\omega(0) + \frac12\sum_{i=1}^{d-1} x_i^2 , \quad \text{ for all } x \in B_{\R}^{d-1}(0,\rho) .$$ 
\end{theorem}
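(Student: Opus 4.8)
The strategy is the classical Morse Lemma argument (following \cite{hormander2007analysis}), keeping track of the analytic dependence on $\omega$ throughout. First I would reduce to the case $f_\omega(0)=0$ and $d^2 f_\omega(0) = \Id$: writing the Taylor expansion with integral remainder,
\begin{align*}
f_\omega(x) - f_\omega(0) = \sum_{i,j=1}^{d-1} x_i x_j \, g_{ij,\omega}(x) , \qquad g_{ij,\omega}(x) := \int_0^1 (1-t)\, \partial_i\partial_j f_\omega(tx)\, dt ,
\end{align*}
where $(g_{ij,\omega}(x))_{i,j}$ is a symmetric matrix of real-analytic functions of $(x,\omega)$, with $g_{ij,\omega}(0) = \tfrac12 \partial_i\partial_j f_\omega(0)$, hence close to $\tfrac12 \Id$ (after normalising the Hessian by a fixed linear change of variable, which can be done analytically in $\omega$ since $\omega \mapsto d^2f_\omega(0)$ is analytic with values in positive definite matrices, using an analytic square-root). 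Then the point is to find an analytic matrix square root: I want $A_\omega(x)$, an analytic function of $(x,\omega)$ near $(0,\omega)$, with $A_\omega(x)^{\mathsf T} A_\omega(x) = G_\omega(x) := (g_{ij,\omega}(x))_{i,j}$ and $A_\omega(0)$ symmetric positive definite. The existence of such an $A_\omega(x)$ follows from the holomorphic (or analytic) implicit function theorem applied to the map $A \mapsto A^{\mathsf T} A - G$ near the point $A = (\tfrac12\Id)^{1/2}$, restricted to symmetric matrices, whose differential there is invertible; one uses the real-analytic implicit function theorem \cite[Prop.~6.1]{cartanelementary} with the compact parameter space $K$ absorbed into the "parameters" of the implicit function theorem so that the radius $\rho$ can be taken uniform in $\omega\in K$ by compactness.

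Once $A_\omega(x)$ is constructed, define $\psi_\omega(x) := A_\omega(x)\, x$, which is real-analytic in $(x,\omega)$, satisfies $\psi_\omega(0) = 0$, and $d\psi_\omega(0) = A_\omega(0)$ is invertible; hence by the real-analytic inverse function theorem (again \cite[Prop.~6.1]{cartanelementary}, with uniformity over the compact $K$) there is $\rho>0$ and an open neighbourhood $U_\rho \ni 0$ such that $\psi_\omega : U'_\rho \to B_{\R}^{d-1}(0,\rho)$ is an analytic diffeomorphism for every $\omega \in K$, depending analytically on $\omega$; set $\phi_\omega := \psi_\omega^{-1}$. By construction
\begin{align*}
f_\omega(\phi_\omega(x)) - f_\omega(0) = \langle G_\omega(\phi_\omega(x))\, \phi_\omega(x), \phi_\omega(x)\rangle = |A_\omega(\phi_\omega(x))\,\phi_\omega(x)|^2 = |\psi_\omega(\phi_\omega(x))|^2 = |x|^2 ,
\end{align*}
which is the claimed normal form (up to the harmless factor $\tfrac12$, absorbed by rescaling $x$ or by the normalisation of the Hessian).

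\textbf{Main obstacle.} The only genuinely delicate point is getting the \emph{uniformity in $\omega$} of the radius $\rho$ and of the analyticity estimates: the implicit/inverse function theorems give, for each fixed $\omega_0$, a neighbourhood of $\omega_0$ and a radius $\rho(\omega_0)$; one then has to invoke compactness of $K$ together with openness of the domains to extract a single $\rho$ working for all $\omega\in K$, and to check that the locally-defined diffeomorphisms glue into a globally (in $\omega$) analytic family — this is where one must be careful that the implicit function theorem is being applied in a form (e.g. with quantitative control of the neighbourhoods in terms of the norm of $G_\omega - \tfrac12\Id$ and of $\|G_\omega\|$ on a fixed polydisc) that makes the extraction legitimate. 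The algebraic heart of the argument — constructing the analytic square root $A_\omega(x)$ — is routine once phrased via the implicit function theorem on the (open) set of symmetric matrices near a positive definite one, since $dF(A)\cdot H = A^{\mathsf T}H + H^{\mathsf T}A$ is a bijection of symmetric matrices when $A$ is symmetric positive definite.
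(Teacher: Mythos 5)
Your proposal is correct and follows essentially the same route as the paper's proof: Taylor expansion with integral remainder, an analytic matrix square root of the resulting symmetric-matrix-valued map obtained via the real-analytic implicit/inverse function theorem of \cite[Prop.~6.1]{cartanelementary}, then inversion of $\psi_\omega(x)=A_\omega(x)x$, with uniformity in $\omega$ from compactness of $K$. The only (minor) difference is that you normalise the Hessian first and solve $A^{\mathsf T}A=G$ on symmetric matrices where the differential is bijective, whereas the paper keeps $Q_{0,\omega}=d^2f_\omega(0)$ and uses a local section of the submersion $M\mapsto M^{\mathsf T}Q_{0,\omega}M$ near the identity before factoring $Q_{0,\omega}=P_\omega^{\mathsf T}P_\omega$; both variants are equivalent.
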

In fact, it might be possible that we only need continuous dependence in $\omega$ for this lemma but since in our problem all the dependence are analytic we choose to keep the analyticity assumption. Notice that this is a real-analytic Morse lemma, in the sense that it is formulated on the real domain only.

\begin{proof}
The Taylor formula with integral remainder writes
 $$f_\omega(y')=f_\omega(0)+y'^{T}\left(\int_{0}^1(1-t)d^2f_\omega(ty')dt\right)y'.$$
 One would like to express the $y'$ dependent symmetric matrix $\left(\int_{0}^1(1-t)d^2f_\omega(ty')dt\right)$ as a matrix product of the form $M^T(y',\omega)Q_{0,\omega} M(y',\omega)$ where the matrix $M$ depends analytically on $(y',\omega)$, $M(0,\omega)=\Id$ and $Q_{0,\omega}=d^2f(0,\omega)$ is the Hessian of $f_\omega$ at $0$. The idea is to use the inverse funtion Theorem in the analytic category.
 Given an invertible symmetrix matrix $Q_{0,\omega}$, the map 
 $$\mathcal{M}_\omega:M\in GL_n(\IR) \mapsto M^T Q_{0,\omega} M\in S_n(\IR)$$
is analytic in a neighborhood of $\Id$. Moreover, the differential $d_{\Id}\mathcal{M}_\omega$ at the identity is given by 
$$ d_{\Id}\mathcal{M}_\omega: H\in M_n(\mathbb{R})\mapsto H^TQ_{0,\omega}+Q_{0,\omega}H \in S_n(\mathbb{R})$$ which is surjective on $S_n(\mathbb{R})$, indeed a solution to $H^TQ_{0,\omega}+Q_{0,\omega}H =C$ reads $H=\frac{1}{2}Q_{0,\omega}^{-1}C$ using the invertibility of $Q_{0,\omega}$. Hence, the map $\mathcal{M}_\omega$ is invertible in a small (uniformly in $\omega\in K$) neighborhood of the identity with an inverse which depends holomorphically/real-analytically on all variables of the problem. By applying the result of~\cite[Prop 6.1 p.~138]{cartanelementary}, there exists $M(C,\omega)$ as a function of $C$ defined near $Q_{0,\omega}$ (uniformly in $\omega\in K$) such that $\mathcal{M}_\omega(M(C,\omega))=C$, $M$ is analytic in all variables $\omega,C$. We can now apply this result to the analytic map 
$$Q_\omega:y'\mapsto 2 \int_{0}^1(1-t)d^2f_\omega(ty')dt \in S_n(\IR),\quad \text{verifying}\quad  Q_\omega(0)=d^2f_\omega(0)\in S_n(\IR)\cap GL_n(\IR),$$
and one finds that
$$f_\omega(y')=f_\omega(0)+\frac{1}{2}\left\langle\ml{M}_\omega\circ Q_\omega(y')y',d^2f_\omega(0)\ml{M}_\omega\circ Q_\omega(y')y'\right\rangle.$$ 
Since we assume that $d^2f_\omega(0)$ is positive definite, taking $P_\omega := \sqrt{d^2f_\omega(0)} \in GL_n(\IR)$ (analytically depending on $\omega$) we have $d^2f_\omega(0)=P_\omega^TP_\omega$ and thus
$$f_\omega(y')=f_\omega(0)+\frac{1}{2}\left\|P_\omega\ml{M}_\omega\circ Q_\omega(y')y'\right\|^2.$$ 
To conclude, we remark that the map $\psi_\omega : y'\mapsto P_\omega\mathcal{M}_\omega\circ Q_\omega(y')y'$ is real-analytic and satisfies $\psi_\omega(0)=0$ and $d\psi_\omega(0)=P_\omega\ml{M}_\omega\circ Q_\omega(0) = P_\omega$ invertible. Using the local inversion theorem in the real-analytic/holomorphic category~\cite{cartanelementary}, $\psi_\omega$ is a local real-analytic diffeomorphism fixing $0$ and such that $f_\omega(y')=f_\omega(0)+\frac{1}{2}\left\|\psi_\omega(y')\right\|^2.$ Denoting by $\phi_\omega : = \psi_\omega^{-1}$ its local inverse $B_{\R^{d-1}}(0,\rho) \to U_\rho$ (up to reducing its domain of definition), which is still a real-anaytic diffeomorphism, we obtain that 
 $f_\omega(\phi_\omega(x))=f_\omega(0)+\frac{1}{2}\left\|x\right\|^2,$ which is the sought result.
\end{proof}

\bibliographystyle{alpha}
\bibliography{allbiblio}

\end{document}